\title{Ideal Topologies in Higher Descriptive Set Theory}
\author[P.\ Holy]{Peter Holy}
\address{University of Udine\\
Via delle Scienze 206\\
33100 Udine\\
Italy}
\email{peter.holy@uniud.it}
\author[M.\ Koelbing]{Marlene Koelbing}
\address{Universit\"at Wien\\
Institut f\"ur Mathematik\\
Kurt G\"odel Research Center\\
Kolingasse 14--16\\
1090 Wien\\
Austria}
\email{marlenekoelbing@web.de}
\author[P.\ Schlicht]{Philipp Schlicht}
\address{Universit\"at Bonn, Mathematisches Institut, Endenicher Allee 60, 53115 Bonn, Germany,
and
University of Bristol, School of Mathematics, Fry Building, Woodland Road, Bristol, BS8 1UG, UK}
\email{schlicht@math.uni-bonn.de}
\author[W.\ Wohofsky]{Wolfgang Wohofsky}
\address{Universit\"at Wien\\
Institut f\"ur Mathematik\\
Kurt G\"odel Research Center\\
Kolingasse 14--16\\
1090 Wien\\
Austria}
\email{wolfgang.wohofsky@gmx.at}
\newtheorem{definition2}{Definition}[section]
\newtheorem{lemma}[definition2]{Lemma}
\newtheorem{remark2}[definition2]{Remark}
\newtheorem{theorem}[definition2]{Theorem}
\newtheorem{observation}[definition2]{Observation}
\newtheorem*{Mobservation2}{Main Observation}
\newtheorem{example2}[definition2]{Example}
\newtheorem{corollary}[definition2]{Corollary}
\newtheorem{proposition}[definition2]{Proposition}
\newtheorem{question}{Question}
\newtheorem*{claim*}{Claim}
\newenvironment{definition}{\begin{definition2} \upshape}{\end{definition2}}
\newenvironment{remark}{\begin{remark2} \upshape}{\end{remark2}}
\newenvironment{example}{\begin{example2} \upshape}{\end{example2}}
\newenvironment{enumerate-(a)}{\begin{enumerate}[label={\upshape (\alph*)}, leftmargin=2pc]}{\end{enumerate}}
\newenvironment{enumerate-(a)-r}{\begin{enumerate}[label={\upshape (\alph*)}, leftmargin=2pc,resume]}{\end{enumerate}}
\newenvironment{enumerate-(A)}{\begin{enumerate}[label={\upshape (\Alph*)}, leftmargin=2pc]}{\end{enumerate}}
\newenvironment{enumerate-(A)-r}{\begin{enumerate}[label={\upshape (\Alph*)}, leftmargin=2pc,resume]}{\end{enumerate}}
\newenvironment{enumerate-(i)}{\begin{enumerate}[label={\upshape (\roman*)}, leftmargin=2pc]}{\end{enumerate}}
\newenvironment{enumerate-(i)-r}{\begin{enumerate}[label={\upshape (\roman*)}, leftmargin=2pc,resume]}{\end{enumerate}}
\newenvironment{enumerate-(I)}{\begin{enumerate}[label={\upshape (\Roman*)}, leftmargin=2pc]}{\end{enumerate}}
\newenvironment{enumerate-(I)-r}{\begin{enumerate}[label={\upshape (\Roman*)}, leftmargin=2pc,resume]}{\end{enumerate}}
\newenvironment{enumerate-(1)}{\begin{enumerate}[label={\upshape (\arabic*)}, leftmargin=2pc]}{\end{enumerate}}
\newenvironment{enumerate-(1)-r}{\begin{enumerate}[label={\upshape (\arabic*)}, leftmargin=2pc,resume]}{\end{enumerate}}
\newcommand{\Lim}{Lim}
\newcommand{\id}{id}
\newcommand{\PP}{\mathbb P}
\newcommand{\NS}{\mathsf{NS}}
\newcommand{\Closed}{\mathrm{Closed}}
\newcommand{\Club}{\mathrm{Club}}
\newcommand{\I}{\mathsf I}
\newcommand{\cof}{\mathsf{cof}}
\newcommand{\conc}{\smallfrown}
\newcommand{\club}{\mathrm{Club}}
\newcommand{\cluf}{\mathcal{C}}
\newcommand{\dom}{\mathrm{dom}}
\newcommand{\non}{\mathsf{non}}
\newcommand{\restr}{\upharpoonright}
\newcommand{\Restr}{\upharpoonright\!\upharpoonright}
\newcommand{\zero}{\mathbf 0}
\newcommand{\one}{\mathbf 1}
\newcommand{\II}{\mathcal{I}}
\newcommand{\JJ}{\mathcal{J}}
\newcommand{\with}{\mid}
\newcommand{\reap}{\mathfrak{r}(\kappa)}
\newcommand{\edinburgh}{\text{Edinburgh}}
\DeclareMathSymbol{\shortminus}{\mathbin}{AMSa}{"39}
\newcommand{\prevspl}[2]{{#1}^{\,\shortminus}_{#2}}
\newcommand{\marlene}{p}
\newcommand{\todoi}[1]{}
\newcommand{\todoe}[1]{}
\subjclass[2020]{(Primary) 54A10; (Secondary) 03E15, 03E05, 03E17}
\keywords{Ideal topologies, Nonstationary topology, Higher Cantor Space}
\thanks{The first author would like to thank Vincenzo Dimonte for some helpful discussions related to the topics of this paper. The authors thank the anonymous referee for some useful comments. The research of the first author was supported by the Italian PRIN 2017 Grant \emph{Mathematical Logic: models, sets, computability}.
The second author was supported by a \"OAW Doc Stipendium and by the FWF grants number I1921, P28420 and Y1012.
This project has received funding from the European Union's Horizon 2020 research and innovation programme under the Marie Sk{\l}odowska-Curie grant agreement No 794020 of the third author (Project \emph{IMIC: Inner models and infinite computations}).
The third author was partially supported by FWF grant number I4039 and EPSRC grant number EP/V009001/1.
The fourth author was supported by German Research Foundation (DFG) under grant SP 683/4-1, and FWF grants number Y1012 and I4039}
\newcommand{\bd}{\mathsf{bd}}
\newcommand{\ub}{\mathsf{ub}}
\newcommand{\heart}{\mathsf{Fn}}
\newcommand{\Jsubseq }{\hookrightarrow_{\JJ}}
\newcommand{\X}{A}
\newcommand{\B}{B}
\begin{document}

\begin{abstract}
  We investigate generalizations of the topology of the higher Cantor space on $2^\kappa$, based on arbitrary ideals rather than the bounded ideal on $\kappa$. Our main focus
  is on the topology induced by the nonstationary ideal, and we
  call this topology the \emph{nonstationary topology}, or also the \emph{Edinburgh topology} on~$2^\kappa$.

 It may be of independent interest that as a side result, we show $\kappa$-Silver forcing to satisfy a strong form of Axiom $A$ not only if $\kappa$ is inaccessible (which is well-known), but also under the assumption $\diamondsuit_\kappa$.
\end{abstract}

\maketitle

\section{Introduction}

Let $\kappa$ be a regular and uncountable cardinal, and let $\bd_\kappa$ denote the bounded ideal on $\kappa$. 
For an ideal~$\II$ on~$\kappa$, let $\II^+=\{ x \subseteq \kappa \mid x\notin\II\}$
denote the 
collection of 
$\II$-positive sets. 
Let $\ub_\kappa$ denote the collection of all unbounded subsets of~$\kappa$, i.e.,  $\ub_\kappa=(\bd_\kappa)^+$. 
Let $\II^*=\{\kappa\setminus x\mid x\in\II\}$ denote the filter dual to $\II$. Let $\NS_\kappa$ denote the nonstationary ideal on $\kappa$. We want to consider different topologies on~$2^\kappa$, induced by ideals other than the bounded ideal on $\kappa$. Let $\II$ be a ${<}\kappa$-complete proper ideal on $\kappa$ that extends $\bd_\kappa$ throughout our paper.

\begin{definition}
  For any set $I$, let $\heart_{I}=\{f\mid f\colon D\to 2$ is a function with $D\in I\}$. For an ideal $\II$ as above, the \emph{$\II$-topology} $\tau_\II$ of $\II$-open sets is provided by the basis $\{[f]\mid f\in\heart_\II\}$ of $\II$-clopen subsets of $2^\kappa$ (we also call those $\II$-clopen sets \emph{$\II$-cones}), where, for any partial function $f\colon\kappa\to 2$, $[f]=\{g\in 2^\kappa\mid f\subseteq g\}$, and where we say that a set is \emph{$\II$-clopen} / \ldots\ if it is clopen / \ldots\ in the $\II$-topology. An \emph{ideal topology} is an $\II$-topology for an ideal $\II$ as above. If $\II=\NS_\kappa$, we also say that a set is \emph{Edinburgh open} / \ldots\ in case it is $\II$-open / \ldots,
  and we refer to the $\II$-topology on $2^\kappa$ as the \emph{Edinburgh topology}
or the \emph{nonstationary topology}
on $2^\kappa$.
As usual, we say that a set is \emph{open} / \ldots\ in case it is $\bd_\kappa$-open / \ldots,
    and we refer to the $\bd_\kappa$-topology on $2^\kappa$ as the \emph{bounded topology} on $2^\kappa$.
\end{definition}

The topology of the higher Cantor space on $2^\kappa$ is the $\bd_\kappa$-topology.
Clearly, if $\II_1\supseteq\II_0$, then $\tau_{\II_1}$ refines $\tau_{\II_0}$.
In particular, the Edinburgh topology on $2^\kappa$ thus refines the topology of the higher Cantor space $2^\kappa$.
Since the $\bd_\kappa$-topology on $2^\kappa$ is Hausdorff, the same holds for every $\II$-topology.
 A simple observation shows that as soon as we allow our ideals to contain unbounded sets, they yield topologies with the maximal possible number of open sets:

\begin{observation}\label{observation:manyopensets}
  If $\II$ contains an unbounded subset of $\kappa$, then:
  \begin{enumerate}
    \item There are $2^\kappa$-many disjoint basic $\II$-open sets whose union is $2^\kappa$.
    \item $|\tau_\II|=2^{2^\kappa}$.
  \end{enumerate}
\end{observation}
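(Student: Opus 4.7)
The plan is to deduce both claims from one explicit family. Let $A\in\II$ be an unbounded subset of $\kappa$. Since $\kappa$ is regular and $A$ is unbounded, $|A|=\kappa$, so there are exactly $2^\kappa$ functions $f\colon A\to 2$, each lying in $\heart_\II$ because $A\in\II$. I would then observe that the family $\{[f]\mid f\in 2^A\}$ consists of $2^\kappa$ pairwise disjoint basic $\II$-open sets (any $g\in 2^\kappa$ extending $f$ cannot also extend a different $f'\colon A\to 2$), and that these sets cover $2^\kappa$, since any $g\in 2^\kappa$ belongs to $[g\restriction A]$. This gives (1).

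For (2), the lower bound is immediate from (1): unions of distinct subfamilies of this $2^\kappa$-sized pairwise disjoint family yield pairwise distinct $\II$-open sets, giving $|\tau_\II|\geq 2^{2^\kappa}$. For the upper bound, note that the basis $\{[f]\mid f\in\heart_\II\}$ has cardinality at most $2^\kappa$, because $|\II|\leq 2^\kappa$ and for each $D\in\II$ there are at most $2^{|D|}\leq 2^\kappa$ functions $D\to 2$. Every $\II$-open set is a union of basic $\II$-open sets, so $|\tau_\II|\leq 2^{2^\kappa}$, which combined with the lower bound yields equality.

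There is no real obstacle here; the only point where one should be careful is justifying $|A|=\kappa$ from unboundedness (which uses the regularity of $\kappa$ assumed at the outset of the paper) and bounding the size of the basis by $2^\kappa$ so that the upper half of (2) matches the lower half.
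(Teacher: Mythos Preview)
Your proposal is correct and follows essentially the same argument as the paper: both take an unbounded $A\in\II$, use the family $\{[f]\mid f\colon A\to 2\}$ for (1), and derive the lower bound in (2) from distinct unions of subfamilies. You are in fact slightly more thorough than the paper, which omits the (easy) upper-bound verification for $|\tau_\II|$ that you spell out.
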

\begin{proof}
  \begin{enumerate}
    \item Let $A$ be an unbounded subset of $\kappa$ in $\II$, and let $F=\{f \mid f\colon A\to 2\}$. Then, the $2^\kappa$-many $\II$-cones $[f]$ for $f\in F$ are pairwise disjoint, and their union is all of $2^\kappa$, as desired.
    \item For any $X\subseteq F$, let $\mathcal O_X=\bigcup_{f\in X}[f]$. Then, $X\neq Y$ implies $\mathcal O_X\neq\mathcal O_Y$.\qedhere
  \end{enumerate}
\end{proof}
Note that (1) implies in particular a strong failure of (generalized) compactness for $\II$-topologies whenever $\II$ contains an unbounded subset of $\kappa$.
Also note that (1) implies that, in such cases, the $\II$-topology does not have a basis of size strictly less than $2^\kappa$;
this is a strong failure of ``generalized second countability''.
On the other hand,
$\{[f]\mid f\in\heart_\II\}$ is a canonical basis of size~$2^\kappa$, hence
the $\II$-topology has
weight~$2^\kappa$.

\medskip

One of the most basic topological results holds outright for arbitrary ideal topologies, by the usual argument, which we would nevertheless like to present for the convenience of our readers.

\begin{proposition}\label{the:BCT} (Baire category theorem for ideal topologies)
The intersection of $\kappa$-many $\II$-open dense sets is $\II$-dense.\footnote{We write $\II$-open dense to mean $\II$-open and $\II$-dense. Furthermore, using our above convention, a set being $\II$-dense means that it intersects every $\II$-open set.}
\end{proposition}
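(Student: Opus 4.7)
The plan is to mimic the classical proof of the Baire category theorem via a fusion-style recursion of length $\kappa$, using the $\II$-cones $[f]$ with $f \in \heart_\II$ as our basic ``small'' neighborhoods. Fix $\II$-open dense sets $D_\alpha$ for $\alpha < \kappa$, and let $U$ be an arbitrary nonempty $\II$-open set; we want to show $U \cap \bigcap_{\alpha < \kappa} D_\alpha \neq \emptyset$. Since the $[f]$ for $f \in \heart_\II$ form a basis, without loss of generality $U = [f_0]$ for some $f_0 \in \heart_\II$.

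Next I would recursively construct a $\subseteq$-increasing sequence of partial functions $(f_\alpha)_{\alpha \leq \kappa}$ in $\heart_\II$ such that $[f_{\alpha+1}] \subseteq [f_\alpha] \cap D_\alpha$ for every $\alpha < \kappa$. At a successor stage $\alpha + 1$, given $f_\alpha \in \heart_\II$, the set $[f_\alpha]$ is a nonempty $\II$-open set, hence by $\II$-density of $D_\alpha$ it meets $D_\alpha$; since $D_\alpha$ is $\II$-open, this intersection contains some basic $\II$-cone $[f_{\alpha+1}]$ with $f_{\alpha+1} \in \heart_\II$, and we may assume $f_{\alpha+1} \supseteq f_\alpha$ by restricting further if necessary (any extension of a function in $\heart_\II$ by a function in $\heart_\II$ lies in $\heart_\II$ by $<\kappa$-completeness applied to two sets). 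At a limit stage $\lambda \leq \kappa$, let $f_\lambda = \bigcup_{\alpha < \lambda} f_\alpha$.

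The key point, and the only place where the hypotheses on $\II$ enter nontrivially, is verifying at each limit $\lambda < \kappa$ that $f_\lambda \in \heart_\II$, that is, $\dom(f_\lambda) = \bigcup_{\alpha < \lambda} \dom(f_\alpha) \in \II$. This is exactly where we need $\II$ to be $<\kappa$-complete: the union of $|\lambda| < \kappa$ many sets from $\II$ remains in $\II$. With this ensured, the recursion runs smoothly through all stages below $\kappa$.

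Finally, let $f = f_\kappa = \bigcup_{\alpha < \kappa} f_\alpha$; this is a partial function from $\kappa$ to $2$ (we no longer need $\dom(f) \in \II$ at this last stage). Choose any $g \in 2^\kappa$ with $g \supseteq f$, which exists since $\dom(f) \subseteq \kappa$. Then for every $\alpha < \kappa$, $g \in [f_{\alpha+1}] \subseteq D_\alpha$, and $g \in [f_0] = U$, so $g \in U \cap \bigcap_{\alpha < \kappa} D_\alpha$, as required. The main (mild) obstacle is the bookkeeping at limits, which is handled precisely by the standing assumption that $\II$ is $<\kappa$-complete.
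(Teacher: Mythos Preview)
Your proof is correct and follows essentially the same approach as the paper's: both construct a $\subseteq$-increasing sequence $(f_\alpha)_{\alpha<\kappa}$ in $\heart_\II$ with $[f_{\alpha+1}]\subseteq[f_\alpha]\cap D_\alpha$, using $\II$-density for the successor step and ${<}\kappa$-completeness of $\II$ at limits, and then observe that any total extension of $\bigcup_\alpha f_\alpha$ lies in every $D_\alpha$. The only cosmetic difference is that the paper writes each $D_\alpha$ explicitly as a union $\bigcup_i[f^\alpha_i]$ and sets $f_{\alpha+1}=f^\alpha_i\cup f_\alpha$, whereas you invoke the basis property of the cones; note also that your clause ``we may assume $f_{\alpha+1}\supseteq f_\alpha$'' is in fact automatic, since $[h]\subseteq[f_\alpha]$ already forces $h\supseteq f_\alpha$.
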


Note that this statement is equivalent to the fact that for every $f\in \heart_\II$, the
 $\II$-cone $[f]$ is not $\II$-meager.
\begin{proof}[Proof of Proposition~\ref{the:BCT}]
 Let $(D_\alpha)_{\alpha<\kappa}$ be a sequence of $\II$-open dense sets. For every $\alpha<\kappa$, there exists a set $I_\alpha$ and a sequence $\langle f^\alpha_i\mid i\in I_\alpha\rangle$, with each $f^\alpha_i\in\heart_\II$, such that $D_\alpha=\bigcup_{i\in I_\alpha}[f^\alpha_i]$. Let $f_0\in \heart_\II$. We construct a $\subseteq$-increasing sequence of functions $\langle f_\alpha\mid\alpha<\kappa\rangle$ in $\heart_\II$ such that $f_\kappa:=\bigcup_{\alpha<\kappa}f_\alpha$ satisfies $[f_\kappa]\subseteq\bigcap_{\alpha<\kappa}D_\alpha$.
  Given an ordinal $\alpha<\kappa$ and $f_\alpha\in\heart_\II$, since $D_\alpha$ is $\II$-dense, $D_\alpha\cap [f_\alpha]\neq \emptyset$. Therefore, there exists $i\in I_\alpha$ such that $[f^\alpha_i]\cap [f_\alpha]\neq \emptyset$, yielding that $f_{\alpha+1}:=f^\alpha_i\cup f_\alpha$ is a function. Since $\II$ is an ideal, $f_{\alpha+1}\in\heart_\II$.
  Assume now that $\alpha<\kappa$ is a limit ordinal, and that $\langle f_\beta\mid\beta<\alpha\rangle$ has been constructed. Then, $\bigcup_{\beta<\alpha}f_\beta$ is a function, and, using that $\II$ is ${<}\kappa$-complete, $\bigcup_{\beta<\alpha}f_\beta\in\heart_\II$.
 In the end, $f_\kappa$ is clearly as desired.
\end{proof}

The above proof essentially also shows the following
slightly stronger result.

\begin{corollary}\label{corollary:mycielski} (Mycielski's theorem for ideal topologies)
  The intersection of $\kappa$-many $\II$-open dense sets contains a perfect set (in the sense of the bounded topology on $2^\kappa$), that is, a closed set that is homeomorphic to the higher Cantor space $2^\kappa$.
\end{corollary}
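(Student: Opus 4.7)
The plan is to modify the proof of Proposition~\ref{the:BCT}, building a perfect tree of $\II$-cones in place of a single sequence. I would construct a family $\langle f_s : s \in 2^{<\kappa}\rangle \subseteq \heart_\II$ together with splitting coordinates $\gamma_s < \kappa$, so that (a) $s \subseteq t$ implies $f_s \subseteq f_t$, (b) $f_{s\conc\langle 0\rangle}$ and $f_{s\conc\langle 1\rangle}$ extend a common $\tilde f_s \supseteq f_s$ with $[\tilde f_s] \subseteq D_{|s|}$ and differ only at $\gamma_s$, and (c) the domains grow fast enough that $f_b := \bigcup_{\alpha<\kappa} f_{b\restr \alpha}$ lies in $2^\kappa$ for every $b \in 2^\kappa$. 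The candidate perfect set will then be $P := \{f_b : b \in 2^\kappa\}$.

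The construction itself should go through essentially as in the proof of the Baire category theorem: at each successor stage use the $\II$-density of $D_{|s|}$ to pick $\tilde f_s$, then pick the splitting coordinate $\gamma_s$ in the (nonempty) complement of the ideal-small set $\dom(\tilde f_s) \cup |s|$ --- this leaves $\gamma_s$ free to encode the branch and makes $|s| \subseteq \dom(f_{s\conc\langle i\rangle})$; at limit stages, take unions, using that $\II$ is ${<}\kappa$-complete. It is then immediate that $P \subseteq \bigcap_\alpha D_\alpha$. The map $b \mapsto f_b$ should be a homeomorphism onto $P$: it is continuous by the tree structure, and open because $b\restr \alpha$ can be decoded from $f_b$ by reading its values at the splitting coordinates $\{\gamma_{b\restr \alpha'} : \alpha' < \alpha\}$, all of which lie below some $\sigma_\alpha < \kappa$ by regularity of $\kappa$.

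The step I expect to require the most care is verifying that $P$ is closed in the bounded topology. For any $g$ in the closure of $P$, I would recursively define $b \in 2^\kappa$ by $b(\alpha) := g(\gamma_{b\restr \alpha})$: if $b_\beta \in 2^\kappa$ witnesses $f_{b_\beta}\restr \beta = g\restr \beta$ for $\beta$ large enough (past every $\gamma_{b\restr \alpha'}$, $\alpha' \leq \alpha$), an induction shows $b_\beta\restr(\alpha+1) = b\restr(\alpha+1)$, so $f_{b_\beta} \supseteq f_{b\restr(\alpha+1)}$, and consequently $g$ agrees with $f_b$ at every coordinate in $\dom(f_{b\restr(\alpha+1)})$ that is below $\beta$. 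Letting $\alpha$ and $\beta$ vary, every $\xi<\kappa$ is eventually captured, forcing $g = f_b \in P$. The crux of the whole argument is that the splitting coordinates $\gamma_s$ can always be chosen outside $\dom(\tilde f_s)$ --- which needs $\II$ to be proper --- so that they truly encode the branch, and that this encoding survives under bounded-topology limits.
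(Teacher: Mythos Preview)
Your proposal is correct and follows essentially the same approach as the paper: the paper's proof is a two-sentence sketch saying to run the Baire category argument while splitting each $f_\alpha$ into two incompatible extensions at every stage, and you have spelled out exactly this tree construction with the additional care (domain growth, choice of splitting coordinates, closedness of $P$, and the homeomorphism) that the paper leaves implicit.
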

\begin{proof}
  As for Proposition~\ref{the:BCT}, but extending $f_0$ to two incompatible functions, and then extending those to some $f_1$ and $f'_1$ in the same way that we extended $f_0$ to $f_1$ in the proof of Proposition~\ref{the:BCT}. Continuing now with these extensions, and carrying on like this throughout all $\kappa$-many stages of our construction, we eventually obtain our desired perfect set in the intersection of our $\kappa$-many $\II$-open dense sets.
\end{proof}

We will later show that in many situations, the nonstationary topology satisfies an even stronger form of Mycielski's theorem (see Theorem~\ref{theorem:cones2}).

Observe that whenever $\mathcal{B} \subseteq \II$ is a basis for~$\II$ (i.e., for every $A \in \II$ there exists a $B \in \mathcal{B}$ with $A \subseteq B$), then each point
in~$2^\kappa$ has a neighbourhood basis of the same size as~$\mathcal{B}$; conversely, each neighbourhood basis yields a basis for~$\II$ of the same size.
In particular, $2^\kappa$ with the $\II$-topology is ``generalized first countable'' (i.e., each point has a neighbourhood basis of size~$\kappa$) if and only if $\II$ has a basis of size~$\kappa$. Note that
$\{ \alpha \with \alpha < \kappa \} \subseteq \bd_\kappa$ is a basis (of size~$\kappa$) for the bounded ideal, hence $2^\kappa$ with the bounded topology is always ``generalized first countable''.
There are also ideals
other than $\bd_\kappa$ which have a basis of size~$\kappa$ (e.g., the ideal from
Example~\ref{example:nonhomogeneous}).

We say that a set is $\II$-$F_\kappa$ if it is
the union of $\kappa$-many $\II$-closed sets.
The following proposition generalizes the fact that, in the bounded topology, every open set is $F_\kappa$ (even, if $\kappa^{<\kappa} > \kappa$; see~\cite[Lemma~4.15]{Luca}).

\begin{proposition}
If $\II$ has a basis of size $\kappa$, then every $\II$-open set is $\II$-$F_\kappa$.
\end{proposition}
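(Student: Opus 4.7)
The plan is to mimic the standard proof that open sets in the bounded topology are $F_\kappa$, using the fact that a ${<}\kappa$-complete basis of size $\kappa$ can be refined to a $\subseteq$-increasing basis of the same size.

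First, I would fix a basis $\{B_\alpha \mid \alpha < \kappa\}$ of $\II$, and, without loss of generality, assume the sequence is $\subseteq$-increasing (using that $\II$ is ${<}\kappa$-complete, we can replace $B_\alpha$ by $\bigcup_{\beta \leq \alpha} B_\beta \in \II$). The key structural observation is that for each $\alpha < \kappa$, the family
\[
\mathcal{P}_\alpha = \{\,[f] \mid f\colon B_\alpha \to 2\,\}
\]
is a partition of $2^\kappa$ into basic $\II$-clopen sets (each $[f]$ is $\II$-open since $\dom(f) = B_\alpha \in \II$, and its complement in $2^\kappa$ is the $\II$-open union of all the other elements of $\mathcal{P}_\alpha$).

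Given an $\II$-open set $U$, I would then define
\[
U_\alpha = \bigcup\{\,[f] \mid f\colon B_\alpha \to 2,\ [f] \subseteq U\,\}.
\]
Each $U_\alpha$ is $\II$-closed: its complement equals the $\II$-open set $\bigcup\{\,[f] \mid f\colon B_\alpha \to 2,\ [f] \not\subseteq U\,\}$, since $\mathcal{P}_\alpha$ partitions the whole space. Moreover, $U_\alpha \subseteq U$ by construction.

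The last step is to verify $U = \bigcup_{\alpha < \kappa} U_\alpha$. For any $x \in U$, pick $f \in \heart_\II$ with $x \in [f] \subseteq U$, and choose $\alpha < \kappa$ with $\dom(f) \subseteq B_\alpha$ (using that $\{B_\alpha\}$ is a basis for $\II$); then $g := x \restr B_\alpha$ extends $f$, so $[g] \subseteq [f] \subseteq U$, witnessing that $x \in [g] \subseteq U_\alpha$. There is no real obstacle here; the only subtle point is recognizing that a basis of size $\kappa$ lets one replace the problematic ``unrestricted'' union over all $f \in \heart_\II$ with the ``partition-indexed'' union over the $\kappa$-many $B_\alpha$, turning each piece into a clopen partition and hence each $U_\alpha$ into an $\II$-closed set.
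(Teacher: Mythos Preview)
Your proof is correct and follows essentially the same approach as the paper: both group the basic cones according to which basis element $B_\alpha$ serves as their domain, and use that any union of cones with a fixed domain is $\II$-clopen (since such cones partition $2^\kappa$). The paper's $X_i$ is defined via a choice of witness $A_x$ for each $x\in X$, whereas your $U_\alpha$ takes all $f\colon B_\alpha\to 2$ with $[f]\subseteq U$; these may differ as sets but are closed for the same reason, and your step of making the basis $\subseteq$-increasing is harmless but not actually used in your argument.
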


\begin{proof} Let $\mathcal{B}=\{ B_i \mid i<\kappa\}\subseteq \II$ be a basis for $\II$, and let $X\subseteq 2^\kappa$ be $\II$-open. Every $x\in X$ is in the $\II$-interior of $X$, so there exists $A_x\in \mathcal{B}$ such that $[x\restr A_x] \subseteq X$. Clearly, $X=\bigcup_{x\in X} [x\restr A_x]$. Let
$$
X_i:=\bigcup\{ [x\restr A_x] \mid x\in X \textrm{ and } A_x=B_i\}.
$$
 It is easy to see that the sets $X_i$ are $\II$-closed (in fact $\II$-clopen), so clearly $X=\bigcup_{i<\kappa} X_i$ is $\II$-$F_\kappa$.
\end{proof}

On the other hand, there are ideals for which the usual implication
``every $\II$-open set is $\II$-$F_\kappa$'' fails (e.g., for the nonstationary ideal~$\NS_\kappa$; see
Corollary~\ref{corollary:tall} and
Theorem~\ref{theorem:ubnotfsigma}).

 For the nonstationary topology, we also have the property that cones are isomorphic to the whole space (as is the case for the standard bounded topology).

\begin{lemma}\label{lem:every_cone_homeomorphic}
  If $\II=\NS_\kappa$, then
  the space $2^\kappa$ with the $\II$-topology
  is homeomorphic to
  any $\II$-cone with the induced topology.

More precisely,
if $f\in \heart_{\II}$
 then there exists
$\rho\colon [f] \rightarrow 2^\kappa$
which is a homeomorphism both with respect to the bounded
topology and with respect to the
nonstationary topology
(taking the respective induced topologies on~$[f]$).

\end{lemma}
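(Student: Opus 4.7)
The plan is to construct a bijection $\sigma \colon \kappa \setminus D \to \kappa$ (where $D := \dom f$) that takes bounded subsets to bounded subsets and nonstationary subsets to nonstationary subsets, in both directions, and then to set $\rho(g) := g \circ \sigma^{-1}$ for $g \in [f]$, with inverse $\rho^{-1}(h) := f \cup (h \circ \sigma)$. Viewing $[f]$ with the induced topology, one checks directly that the preimage under $\rho$ of a basic cone $[h]$ (with $\dom h = B$) equals $[f] \cap [h \circ \sigma]$, regarding $h \circ \sigma$ as a partial function on $\sigma^{-1}(B) \subseteq \kappa \setminus D$; conversely, $\rho$ sends a basic open set of the form $[f] \cap [h']$ (with $\dom h' \subseteq \kappa \setminus D$) to $[h' \circ \sigma^{-1}]$ with domain $\sigma(\dom h')$. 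Both sides are open in the respective topology on $2^\kappa$ precisely because $\sigma$ and $\sigma^{-1}$ preserve the corresponding ideal, so once the construction is in place, $\rho$ is simultaneously a homeomorphism for the bounded and the nonstationary topology.

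Fix a club $C \subseteq \kappa \setminus D$ with continuous increasing enumeration $\{c_\alpha : \alpha < \kappa\}$, possible since $D \in \NS_\kappa$. Partition $\kappa \setminus D$ into the initial segment $(\kappa \setminus D) \cap [0, c_0)$ together with the pieces $I_\alpha := (\kappa \setminus D) \cap [c_\alpha, c_{\alpha+1})$ for $\alpha < \kappa$; each has cardinality ${<}\kappa$ and contains $c_\alpha$. Recursively set $d_0 := |(\kappa \setminus D) \cap [0, c_0)|$, $d_{\alpha+1} := d_\alpha + |I_\alpha|$, and $d_\lambda := \sup_{\alpha < \lambda} d_\alpha$ at limit $\lambda$; regularity of $\kappa$ gives $d_\alpha < \kappa$ throughout, and $\psi \colon \alpha \mapsto d_\alpha$ is a normal function with club range $\{d_\alpha : \alpha < \kappa\}$. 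Setting $J_\alpha := [d_\alpha, d_{\alpha+1})$, one has $\kappa = [0, d_0) \sqcup \bigsqcup_\alpha J_\alpha$ with $|I_\alpha| = |J_\alpha|$, so define $\sigma$ piecewise by any bijections $I_\alpha \to J_\alpha$ sending $c_\alpha \mapsto d_\alpha$, plus an arbitrary bijection on the initial piece. A bounded $B \subseteq \kappa \setminus D$ meets only boundedly many $I_\alpha$, so its $\sigma$-image meets only boundedly many $J_\alpha$ and is bounded; the same argument applied to $\sigma^{-1}$ gives the converse.

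The main obstacle is nonstationarity preservation, which uses the standard fact that for any normal $\pi \colon \kappa \to \kappa$ and any $S \subseteq \kappa$, both $\pi[S]$ and $\pi^{-1}[S]$ are nonstationary iff $S$ is. Both $\phi \colon \alpha \mapsto c_\alpha$ and $\psi$ are normal functions. Given $B \subseteq \kappa \setminus D$, decompose $B = (B \cap C) \sqcup (B \setminus C)$. Since $\sigma$ maps $I_\alpha \setminus \{c_\alpha\}$ into $J_\alpha \setminus \{d_\alpha\}$ (and the initial piece of $\kappa \setminus D$ into $[0, d_0)$), the image $\sigma[B \setminus C]$ avoids the club $\{d_\alpha : \alpha < \kappa\}$ and is therefore nonstationary; meanwhile $\sigma[B \cap C] = \psi[\phi^{-1}[B \cap C]]$ is nonstationary iff $B \cap C$ is, by two applications of the normal-function fact. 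Hence $\sigma[B] \in \NS_\kappa$ iff $B \in \NS_\kappa$. The symmetric argument for $\sigma^{-1}[A]$, splitting $A \subseteq \kappa$ as $(A \cap \{d_\alpha\}) \sqcup (A \setminus \{d_\alpha\})$ and applying the same fact to $\phi$ and $\psi$, completes the verification and hence the homeomorphism claim.
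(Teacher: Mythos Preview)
Your proof is correct and follows the same strategy as the paper: build a bijection $\sigma\colon \kappa\setminus\dom(f)\to\kappa$ that preserves both bounded and nonstationary sets in both directions, then transport coordinatewise. The key mechanism---arrange that $\sigma$ restricts to a normal (continuous increasing) map on a fixed club $C\subseteq\kappa\setminus\dom(f)$, so stationarity is preserved on $C$ while the off-club part lands in a nonstationary set on either side---is identical.

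The only difference is the concrete choice of $\sigma$. The paper first shrinks $C$ so that $(\kappa\setminus\dom(f))\setminus C$ is unbounded, then sends $c_\alpha\mapsto 2\alpha$ and the $\alpha$th element of the complement to $2\alpha+1$; this even/odd split makes the verification a one-liner. Your block decomposition $I_\alpha\to J_\alpha$ with $c_\alpha\mapsto d_\alpha$ is a bit more elaborate but avoids the need to arrange the complement unbounded, and it makes the role of the two normal functions $\phi,\psi$ explicit. Both constructions are standard; the paper's is shorter, yours is slightly more self-contained about \emph{why} stationarity is preserved. One minor redundancy: once you have the ``iff'' for $\sigma$ on subsets of $\kappa\setminus D$, the symmetric argument for $\sigma^{-1}$ is unnecessary, since $A\in\NS_\kappa \Leftrightarrow \sigma^{-1}[A]\in\NS_\kappa$ follows by setting $B=\sigma^{-1}[A]$.
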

\begin{proof}
  Let $f\in\heart_\II$ be given, so that $A:=\dom(f)$ is nonstationary, and let us consider the $\II$-cone $[f]$. We want to construct a homeomorphism between $[f]$ and $2^\kappa$ based on a bijection $\pi$ between $B:=\kappa\setminus A$ and $\kappa$. Let $C\subseteq B$ be a club subset of $\kappa$, such that, in order to simplify the argument to follow, $B\setminus C$ is an unbounded subset of $\kappa$. Let $\langle c_\alpha\mid\alpha<\kappa\rangle$ be the increasing enumeration of $C$, and let $\langle b_\alpha\mid\alpha<\kappa\rangle$ be the increasing enumeration of $B\setminus C$. We define $\pi\colon B\to\kappa$ by setting $\pi(c_\alpha)=2\cdot\alpha$, and letting $\pi(b_\alpha)=2\cdot\alpha+1$.

  The point of our construction is now that $\pi$ is a bijection between $B$ and $\kappa$ such that if $D\subseteq B$, then $D$ is stationary if and only if $\pi[D]$ is stationary: This follows because if $E\subseteq B$ contains a club subset of~$\kappa$, then also $E\cap C$ contains a club subset of~$\kappa$, and since $\pi$ is continuous on $C$, it follows that $\pi[E]$ contains a club subset of~$\kappa$. On the other hand, if $E\subseteq\kappa$ contains a club subset of~$\kappa$, then its restriction to the even ordinals contains a club subset of~$\kappa$, and its pointwise preimage under $\pi$ contains a club subset of $C\subseteq B$.

  Next, we use $\pi$ to induce a bijection $\rho\colon[f]\to 2^\kappa$ in a natural way:
For each $x \in [f]$, we simply define $\rho(x)$ by letting
$\rho(x)(\alpha) := x(\pi^{-1}(\alpha))$
for each $\alpha \in \kappa$.
It is easy to see that $\rho$ is a homeomorphism between $2^\kappa$ with the bounded topology and $[f]$ (with the induced topology).
We will finish our argument by showing that $\rho$ is also a homeomorphism
between~$2^\kappa$ with the nonstationary topology and $[f]$ (with the induced topology).

  It suffices to show that both $\rho$ and its inverse map preserve basic open sets. A basic open subset of $[f]$ in its induced topology is of the form $[g]$ for $g\supseteq f$ in $\heart_\II$. Then, $\rho[[g]]=[h]$, where
    $\dom(h)=\pi[\dom(g) \setminus A]$ and $h(\pi(\alpha))= g(\alpha)$ for every $\alpha\in\dom(g)\setminus A$.
Since $\dom(h)$ is nonstationary by our above arguments, this shows that $[h]$ is a basic open set in the nonstationary topology.

  For the other direction, assume that $[h]$ is a basic open subset of $2^\kappa$ in the nonstationary topology. Then, $\rho^{-1}[[h]]=[g]$ where $g\supseteq f$ is such that $\dom(g)=A\cup\pi^{-1}[\dom(h)]$ and for $\alpha\in\dom(g)\setminus A$, $g(\alpha)=h(\pi(\alpha))$. Again by our above arguments, $\dom(g)$ is nonstationary, hence $[g]$ is a basic open set in the induced topology on $[f]$, as desired.
\end{proof}

There are counterexamples to the above homogeneity property for other $\II$-topologies:
\begin{example}\label{example:nonhomogeneous}
 Assume $2^{<\kappa}=\kappa$. Let
 $A$
 be an unbounded subset of $\kappa$ which also has an unbounded complement, and let $\II$ be the ideal generated by $\bd_\kappa$ together with the set $A$ -- that is, $B\in\II$ if and only if $B\setminus A$ is a bounded subset of $\kappa$. Then, for any $f\colon A\to 2$, $2^\kappa$ with the $\II$-topology is not homeomorphic to $[f]$ with its induced topology.
\end{example}
\begin{proof}
  Homeomorphic topological spaces need to have the same number of open sets.
  However,
  by Observation \ref{observation:manyopensets}, $|\tau_\II|=2^{2^\kappa}$, while there are only $2^\kappa$-many open sets in the induced topology on $[f]$, for it is clearly homeomorphic to the bounded topology on $2^\kappa$ (which has a basis of size~$2^{<\kappa} = \kappa$).
\end{proof}

Let us show that every $\II$-topology is \emph{homogeneous}, in the sense that for any two elements $x,y\in 2^\kappa$, there is a homeomorphism of $2^\kappa$ with respect to $\tau_\II$ that maps $x$ to $y$. Let $\zero$ denote the function with domain $\kappa$ and constant value $0$, and let $\one$ denote the function with domain $\kappa$ and constant value $1$.

\begin{proposition}\label{prop:flip}
The $\II$-topology is homogeneous for any ideal $\II$.
\end{proposition}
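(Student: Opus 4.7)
The plan is to use a coordinatewise XOR ``flip map''. For each $z \in 2^\kappa$, I would define $\phi_z \colon 2^\kappa \to 2^\kappa$ by $\phi_z(w)(\alpha) = w(\alpha) + z(\alpha) \pmod{2}$ for every $\alpha < \kappa$. This map is evidently an involution (so $\phi_z^{-1} = \phi_z$), and $\phi_z(\zero) = z$. Hence, once I establish that every $\phi_z$ is a $\tau_\II$-homeomorphism, homogeneity follows: given $x,y \in 2^\kappa$, the composition $\phi_y \circ \phi_x$ is a $\tau_\II$-homeomorphism sending $x$ to $\zero$ and then $\zero$ to $y$.

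The key step is thus to verify that $\phi_z$ is continuous with respect to $\tau_\II$ for every $z$; since $\phi_z$ is its own inverse, this will immediately give that it is a homeomorphism. For continuity, I would check that preimages of basic $\II$-open sets are $\II$-open. Let $f \in \heart_\II$, i.e., $f \colon D \to 2$ with $D \in \II$, and consider $\phi_z^{-1}([f]) = \{w \in 2^\kappa \mid \phi_z(w) \supseteq f\}$. This is exactly $[g]$, where $g \colon D \to 2$ is defined by $g(\alpha) = f(\alpha) + z(\alpha) \pmod{2}$. Since $\dom(g) = D \in \II$, we have $g \in \heart_\II$, so $[g]$ is a basic $\II$-open set, as desired.

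The step requiring the least amount of genuine work is in fact the most important point, and there is no real obstacle: the argument hinges on the observation that flipping by $z$ preserves the domain of any partial function, so membership of that domain in $\II$ is automatically preserved. This is precisely why the standard ``flip'' trick from the bounded topology generalizes verbatim to every ideal topology, and no additional hypothesis on $\II$ (such as closure properties beyond being a ${<}\kappa$-complete proper ideal) is needed.
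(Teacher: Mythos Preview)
Your proof is correct and is essentially identical to the paper's own argument: the paper defines $H_x(y)(i)=1-y(i)$ for $i\in x$ and $H_x(y)(i)=y(i)$ otherwise, which is exactly your coordinatewise XOR map $\phi_x$, and it verifies that $H_x[[f]]=[g]$ for a $g$ with the same domain as $f$, just as you do. The paper even remarks in a footnote that $H_x(y)=x+y$ modulo~$2$, so the two presentations differ only in notation.
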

\begin{proof}
  It suffices to provide, for each $x\in 2^\kappa$, a homeomorphism $H\colon 2^\kappa\rightarrow 2^\kappa$ with $H(\zero)=x$. For any subset $x$ of $\kappa$, we
  define\footnotemark{} the function $H_x\colon 2^\kappa\rightarrow 2^\kappa$ by $H_x(y)(i)=1-y(i)$ for all $i\in x$ and $H_x(y)(i)=y(i)$ otherwise. Clearly, $H_x$ is a bijection, and $H_x$ is a homeomorphism, since $H_x[[f]]=[g]$
  whenever $f,g\in\heart_\II$ such that $\dom(g) = \dom(f)$, and
  $g(i)=1-f(i)$ for $i\in \dom(f)\cap x$ and $g(i)=f(i)$ for $i\in \dom(f)\setminus x$. Clearly, $H_x(\zero)=x$.
\end{proof}

\footnotetext{Note that $H_x(y) = x + y$ for each $y \in 2^\kappa$, where $x + y$ is the bitwise sum (modulo $2$) of $x$ and~$y$.
In fact, it is easy to check that $2^\kappa$ together with the operation $+$ is a topological group (with respect to the $\II$-topology).}

The following trivial observation will be useful later on:
\begin{observation}\label{observation:homeomorphiccones}
Let $s\in 2^{<\kappa}$. Then $2^\kappa$ is homeomorphic to $[s]$ with respect to the $\II$-topology.

In particular, there are $\kappa$-many disjoint $\II$-cones that are homeomorphic to $2^\kappa$ with the $\II$-topology.
\end{observation}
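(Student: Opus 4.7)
The plan is to exhibit an explicit bijection $\rho\colon 2^\kappa\to [s]$ induced by the natural order shift $\gamma\mapsto \alpha+\gamma$, where $\alpha=\dom(s)<\kappa$, and verify that it is a homeomorphism for the $\II$-topology. The one non-trivial ingredient is that this shift preserves $\II$-membership, which will fall out of the regularity of $\kappa$ together with the fact that $\II$ extends $\bd_\kappa$.

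More explicitly, let $\alpha=\dom(s)<\kappa$ and consider $\phi\colon \kappa\to [\alpha,\kappa)$ defined by $\phi(\gamma)=\alpha+\gamma$; using $\phi$, define $\rho\colon 2^\kappa\to [s]$ by setting $\rho(x)\restr\alpha=s$ and $\rho(x)(\alpha+\gamma)=x(\gamma)$ for every $\gamma<\kappa$. This is clearly a bijection. To check that $\rho$ is a homeomorphism, one verifies that $\rho$ and $\rho^{-1}$ map basic opens to basic opens: the image under $\rho$ of a basic $\II$-open $[f]$ in $2^\kappa$ (with $f\colon D\to 2$, $D\in\II$) is $[s\cup(f\circ\phi^{-1})]$, whose domain is $\alpha\cup\phi[D]$; conversely, a basic open subset of $[s]$ can be written as $[s\cup g]$ for some $g\in\heart_\II$ with $\dom(g)\subseteq[\alpha,\kappa)$, and its $\rho$-preimage is $[g\circ\phi]$, whose domain is $\phi^{-1}[\dom(g)]$. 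Both tasks reduce to the single claim that $\phi$ preserves $\II$-membership in both directions.

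This is the main, and only really non-trivial, step. Since $\kappa$ is regular and uncountable and $\alpha<\kappa$, we have $\alpha\cdot\omega<\kappa$. Moreover, $\alpha+\gamma=\gamma$ for every $\gamma\geq\alpha\cdot\omega$, so $\phi$ agrees with the identity on $[\alpha\cdot\omega,\kappa)$. It follows that $\phi[D]\triangle D\subseteq\alpha\cdot\omega$ for every $D\subseteq\kappa$, and since $\II\supseteq\bd_\kappa$ is closed under finite unions and subsets, we obtain $D\in\II$ iff $\phi[D]\in\II$; the same identity-on-tail argument applied to $\phi^{-1}$ handles the other direction.

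For the ``in particular'' part, a concrete witness is the family $\{[s_\gamma]\mid \gamma<\kappa\}$, where $s_\gamma\colon \gamma+1\to 2$ is given by $s_\gamma(\delta)=0$ for $\delta<\gamma$ and $s_\gamma(\gamma)=1$; each $s_\gamma$ has bounded domain and so belongs to $\heart_\II$, these cones are pairwise disjoint (the position of the unique $1$ distinguishes them), and each is homeomorphic to $2^\kappa$ in the $\II$-topology by the first part.
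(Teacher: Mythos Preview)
Your proof is correct and takes essentially the same approach as the paper: both use the concatenation map $x\mapsto s^\smallfrown x$ and reduce the homeomorphism check to showing that the shift $\gamma\mapsto|s|+\gamma$ preserves $\II$-membership, which follows since this shift agrees with the identity on a tail and $\bd_\kappa\subseteq\II$. Your version is slightly more explicit (naming the threshold $\alpha\cdot\omega$ and writing down a concrete antichain for the second part), but the argument is the same.
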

\begin{proof}
We show that
$2^\kappa$ is homeomorphic to $[s]$ with respect to the $\II$-topology, using the bijection $\pi\colon 2^\kappa\to[s]$ which maps $x$ to $s^\smallfrown x$ (where we are thinking of $s$ and of $x$ as sequences of $0$'s and $1$'s, and $s^\smallfrown x\in 2^\kappa$ denotes their concatenation). To see that $\pi$ is an $\II$-homeomorphism, it is enough to show that
\begin{equation}\label{eq:arithmetic}
A\in \II \text{ if and only if } \{|s| + \beta \mid \beta \in A\}\in \II.
\end{equation}
 Observe that there exists $\gamma<\kappa$ such that $|s|+\beta=\beta$ for every $\beta\geq \gamma$, hence $\gamma\cup A = \gamma \cup \{|s| + \beta \mid \beta \in A\}$. Since $\bd_\kappa\subseteq \II$,
it follows that~\eqref{eq:arithmetic} holds.

The second statement of the observation easily follows by picking $\kappa$-many incompatible functions in $2^{<\kappa}$.
\end{proof}

\section{An overview}

Before we embark on the main parts of the paper, we would like to give a quick overview of the contents of this paper, and of some of our main results.

\medskip

Section \ref{section Borel hierarchy} studies
the class of Borel sets of ideal topologies.
We first investigate the complexity of some canonical subsets of $2^\kappa$ in $\II$-topologies.
Theorem~\ref{theorem:ubnotfsigma} shows that the bounded ideal is never $\II$-$G_\kappa$. Since the bounded ideal is $\II$-closed in many cases (and in particular when $\II=\NS_\kappa$, see Corollary \ref{corollary:tall}), this also shows that $\II$-closed sets need not be $\II$-$G_\kappa$ in general.
The set $\Club_\kappa$ of club subsets of $\kappa$ is always $\II$-$G_\kappa$;
Theorem \ref{theorem:clubnotfsigma} shows that
$\Club_\kappa$
is not $\II$-$F_\kappa$ when $\II$ is the nonstationary ideal on $\kappa$, and we also observe that in this case,
$\Club_\kappa$ can neither be $\II$-open nor $\II$-closed.
However, Observation \ref{observation:stationaryclubclosed} and Proposition \ref{proposition:clubopen} show that for certain choices of~$\II$ other than $\NS_\kappa$, $\Club_\kappa$ can sometimes be $\II$-closed or $\II$-open.
Corollary~\ref{club filter is no Edinburgh Borel} shows that the club filter is not $\II$-Borel in the nonstationary topology.
One of the main questions left open by the results of Section \ref{section Borel hierarchy} is certainly whether
there is an $\II$-Borel hierarchy which resembles
the usual Borel hierarchy on the Cantor space.

\medskip

\textbf{Question} (see Question \ref{question Borel hierarchy}).
Assuming  $\II=\NS_\kappa$, do the $\II$-Borel sets form a strict hierarchy of length $\kappa^+$?

\medskip

In Section \ref{section:sequences}, we investigate possible notions of convergence, accumulation points and subsequences in ideal topologies.

In Section \ref{section:connection}, we make some remarks on the connection between ideal topologies and forcing topologies, with the former being a special case of the latter. In particular, we show that the topology induced by $\kappa$-Silver forcing is exactly the nonstationary topology on $2^\kappa$.

In Section \ref{section:axioma}, we investigate a strengthening of Axiom~$A$ that was introduced as Axiom~$A^*$ in \cite{fkk}, and show
in Theorem \ref{theorem:axioma}
that if $\diamondsuit_\kappa$ holds, then this axiom is satisfied by $\kappa$-Silver forcing.
This was previously known under the assumption that $\kappa$ is inaccessible. We say that
a regular uncountable
cardinal~$\kappa$ is \emph{simple} in case it is inaccessible or $\diamondsuit_\kappa$ holds.

In Section \ref{section:premeager}, we start an investigation of the connections between meager and $\II$-meager sets. We make use of the results of Section \ref{section:axioma} in Theorem \ref{theorem:cones2}, where we show that for simple cardinals $\kappa$, the notions of meager and of nowhere dense sets coincide for the nonstationary topology on $\kappa$. We do not know whether this holds without the assumption of simplicity (see Question \ref{question:meagernowheredense})
or for
other ideal topologies
(see Question~\ref{question:other_ideal_Axiom_A}). In Theorem \ref{theorem:cones}, again for simple cardinals $\kappa$, we show that every comeager subset of $2^\kappa$ contains a dense set that is open in the nonstationary topology. From this, in Corollary \ref{corollary:imeagerbairemeager} we infer that if $\kappa$ is simple and $\II\supseteq\NS_\kappa$, then all $\II$-meager sets with the Baire property are in fact meager.

In Section \ref{section:reaping}, we investigate the reaping number $\reap$ at $\kappa$ and some of its variants, which we show to agree with each other in case $\kappa$ is simple
(see Theorem~\ref{theorem:comeagerreaping}).

In Section \ref{section:meager}, we continue our investigation of the notion of meagerness in ideal topologies. In Proposition~\ref{meagerbutnotImeager}, we show that if $\II\supsetneq\bd_\kappa$, then there is always a meager set that is not $\II$-meager. In Theorem \ref{theorem:indfromreaping}, we show that if $\kappa$ is simple, $\reap=2^\kappa$, and $\II$ is tall, then there is an $\II$-meager set that is not meager.

\medskip

\textbf{Question} (see Question \ref{question:invariants}). Is there always an $\II$-meager set that is not meager, at least if $\kappa$ is simple?

\medskip

Finally, in Section \ref{section:baire}, we investigate the connections between the Baire property in the bounded and in the nonstationary topology. In Proposition \ref{proposition:baire}, we show that there is a subset of $2^\kappa$ that has the Baire property (in the bounded topology),
yet
it does not have the Baire property in the nonstationary topology. In Theorem~\ref{Delta11Baire}, we extend another result about inaccessible cardinals from \cite{fkk}, showing that if $\kappa$ is simple and every $\boldsymbol{\Delta}^1_1$-subset of $2^\kappa$ has the Baire property, then every $\boldsymbol{\Delta}^1_1$-subset of $2^\kappa$ has the Baire property in the nonstationary topology.

\section{On the Borel hierarchy in ideal topologies}
\label{section Borel hierarchy}

\subsection{A normal form for closed sets}\label{section:normalform}

In this short section, we provide a normal form for closed sets in ideal topologies, that generalizes the usual normal form with respect to the bounded topology, and which will be very useful later on.

\medskip

For $x\in 2^\kappa$ and $J\subseteq\mathcal P(\kappa)$, let \[x {\Restr} J:=\{x\restr A\mid A\in J\},\] and, for $P\subseteq\heart_{\mathcal P(\kappa)}$, let \[[P]_J:=\{x\in 2^\kappa\mid x\Restr J\subseteq P\}.\]

Note that if $J=\kappa\subseteq\mathcal P(\kappa)$ and
$T\subseteq\heart_J = 2^{<\kappa}$ is a tree,
then $[T]_J=[T]$ is exactly the body of $T$, i.e., the set of branches (of length $\kappa$) through $T$. Our next result shows that the usual normal form for closed sets as sets of branches through trees generalizes to the context of arbitrary ideal topologies.

\begin{proposition}\label{normal form}
  If $P\subseteq\heart_{\mathcal P(\kappa)}$, then \[[P]_\II=\{x\in 2^\kappa\mid x\Restr\II\subseteq P\}\] is an $\II$-closed subset of $2^\kappa$.

Conversely, if $X\subseteq 2^\kappa$ is $\II$-closed, then there is $P\subseteq\heart_\II$ such that $X=[P]_\II$. Moreover, we may assume that $P$ is closed under restrictions, and that $P$ is \emph{pruned} -- that is, for every $p\in P$, there is $x\in 2^\kappa$ with $p\in x\Restr\II\subseteq P$.
\end{proposition}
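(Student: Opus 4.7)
The plan is to argue the two directions separately, with the first being a direct unpacking of definitions and the second using the natural candidate $P = \bigcup_{x \in X} x \Restr \II$.

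For the first direction, I would show that the complement of $[P]_\II$ is a union of basic $\II$-open sets. If $x \notin [P]_\II$, then by definition there is some $A \in \II$ with $f := x \restr A \notin P$, so $f \in \heart_\II$. Then $[f]$ is a basic $\II$-open neighbourhood of $x$, and every $y \in [f]$ satisfies $y \restr A = f \notin P$, hence $y \Restr \II \not\subseteq P$ and $y \notin [P]_\II$. Thus the complement of $[P]_\II$ is $\II$-open.

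For the converse, given an $\II$-closed $X$, I would set
\[
P := \bigcup_{x \in X} x \Restr \II = \{ y \restr A \mid y \in X,\ A \in \II \} \subseteq \heart_\II.
\]
The inclusion $X \subseteq [P]_\II$ is immediate from the definition of $P$. For $[P]_\II \subseteq X$, suppose for contradiction that some $x \in [P]_\II$ lies outside $X$. Since $X$ is $\II$-closed, there is $f \in \heart_\II$ with $x \in [f]$ and $[f] \cap X = \emptyset$; setting $A := \dom(f) \in \II$, we have $f = x \restr A \in x \Restr \II \subseteq P$. But then, by the definition of $P$, there exists $y \in X$ and $A' \in \II$ with $f = y \restr A'$, which forces $A' = \dom(f) = A$ and $f \subseteq y$; hence $y \in [f] \cap X$, a contradiction.

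Finally, for the ``moreover'' clause: $P$ is closed under restrictions because if $p = y \restr A \in P$ and $q \subseteq p$, then $\dom(q) \subseteq A \in \II$ gives $\dom(q) \in \II$ and $q = y \restr \dom(q) \in y \Restr \II \subseteq P$; and $P$ is pruned because for any $p = y \restr A \in P$ with $y \in X$, $A \in \II$, the very element $y$ witnesses $p \in y \Restr \II \subseteq P$. I do not anticipate a real obstacle here; the one point to state carefully is the observation, used in the $[P]_\II \subseteq X$ argument, that membership ``$f \in P$'' automatically pins down $\dom(f)$ and forces $f$ to be a genuine restriction of some member of $X$, so that the witness $y \in X$ lies in $[f]$.
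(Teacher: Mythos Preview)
Your proposal is correct and follows essentially the same approach as the paper: both directions use the same candidate $P=\{y\restr A\mid y\in X,\ A\in\II\}$ and the same argument via basic $\II$-open neighbourhoods. Your write-up is slightly more explicit than the paper's in justifying why $x\restr A\notin P$ (the paper leaves this implicit), but the underlying idea is identical.
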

\begin{proof}
  Let $P\subseteq\heart_{\mathcal P(\kappa)}$, and let $X=[P]_\II$. If $x\in 2^\kappa$ is not an element of $X$, then there is $A\in\II$ with $x\restr A\not\in P$. But then
  $[x\restr A]$
    is disjoint from $X$, hence the complement of $X$ is $\II$-open, and thus $X$ is $\II$-closed.

  Conversely, assume now that $X\subseteq 2^\kappa$ is $\II$-closed, and let $P=\{x\restr A\mid x\in X\,\land\,A\in\II\}$. Now if $x\in X$, then clearly $x\Restr\II\subseteq P$ by definition of $P$. If $x\not\in X$, then since $X$ is $\II$-closed, there is $A\in\II$ with
  $X\cap[x\restr A]=\emptyset$.
    But then, $x\restr A\not\in P$, hence also $x\Restr\II\not\subseteq P$. Moreover, observe that $P$ is clearly closed under restrictions and pruned.
\end{proof}

\subsection{Tallness, and related properties of ideals}

Tallness is a well-known notion in case $\kappa=\omega$. Generalizations of this concept will turn out to be very useful and natural in the context of ideal topologies.

\begin{definition}\label{definition:tall}
  Let $\II$ and $\JJ$ be ideals on $\kappa$. We say that $\II$ is \emph{$\JJ$-tall} if for every $A\in\JJ^+$, there is $B\subseteq A$ in $\JJ^+\cap\II$. We say that $\II$ is \emph{tall} if it is $\bd_\kappa$-tall. We say that $\II$ is \emph{stationarily tall} if it is $\NS_\kappa$-tall.
\end{definition}

The following is very easy to prove (compare with Proposition~\ref{proposition:NS_sequentially_tall}):
\begin{observation}
$\NS_\kappa$ is tall, and hence every $\II\supseteq\NS_\kappa$ is tall.
\end{observation}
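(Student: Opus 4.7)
The observation has two parts, and the second follows immediately from the first: if $\NS_\kappa$ is tall and $\II \supseteq \NS_\kappa$, then for any unbounded $A \subseteq \kappa$ a witness $B \subseteq A$ with $B \in \NS_\kappa$ automatically satisfies $B \in \II$ as well, so $B$ also witnesses tallness of $\II$. Thus the only real content is the tallness of $\NS_\kappa$ itself.

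To prove that, the plan is to start with an arbitrary unbounded $A \subseteq \kappa$ and, using that $|A| = \kappa$ by the regularity of $\kappa$, enumerate $A$ increasingly as $\langle a_\alpha \mid \alpha < \kappa \rangle$. I would then take $B := \{a_{\alpha+1} \mid \alpha < \kappa\}$, which is clearly an unbounded subset of $A$. The task then reduces to exhibiting a club in $\kappa$ that is disjoint from $B$.

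The natural candidate is $A'$, the set of limit points of $A$ in $\kappa$. Closure of $A'$ is immediate, since a limit of limit points of $A$ is itself a limit point of $A$. For unboundedness, given $\gamma < \kappa$ I would pick any $\alpha$ with $a_\alpha > \gamma$ and observe that $\sup\{a_{\alpha + n} \mid n < \omega\}$ lies in $A'$ above $\gamma$, and is itself below $\kappa$ by the regularity and uncountability of $\kappa$. For the disjointness $B \cap A' = \emptyset$: the elements of $A$ strictly below $a_{\alpha+1}$ are exactly $\{a_\beta \mid \beta \leq \alpha\}$, whose supremum is $a_\alpha < a_{\alpha+1}$, so $a_{\alpha+1}$ is not a limit point of $A$. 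I do not foresee any real obstacle here — the whole argument is essentially just the choice of $B$ together with the candidate club $A'$, and every verification is a short routine calculation.
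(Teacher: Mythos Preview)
Your proof is correct. The paper itself omits the proof, merely remarking that it is ``very easy'' and pointing forward to the proof of Proposition~\ref{proposition:NS_sequentially_tall} (sequential tallness of $\NS_\kappa$), which specializes to essentially the same idea: interleave a club with chosen elements of $A$. Your version---take the successor-indexed elements of $A$ and observe that the club of limit points of $A$ avoids them---is a clean direct argument for the weaker statement and requires no reference to the later material.
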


On the other hand, $\II$ being tall does not necessarily
imply that $\II \supseteq \NS_\kappa$.\footnote{As an example, let
$\kappa$ be strongly compact (i.e.,
every ${<}\kappa$-complete
filter on $\kappa$ can be extended to a ${<}\kappa$-complete ultrafilter).
Extend the filter generated by a fixed nonstationary set and the co-bounded sets to a ${<}\kappa$-complete ultrafilter
to get a maximal ideal which does not contain all nonstationary sets; by
Observation~\ref{observation:stationary_tall_etc}(2), each maximal ideal is tall, which
finishes the argument.}
Note that every normal ideal\footnote{Remember that an ideal is \emph{normal} if it is closed under the taking of diagonal unions.} contains the nonstationary ideal, and is thus tall by the above.
Clearly, any $\JJ$-tall ideal contains a set in $\JJ^+$, but this is not sufficient for being $\JJ$-tall. However, there is an easy property which implies $\JJ$-tallness:

\begin{proposition}\label{proposition:stronger_than_J_tall}
If $\II$ contains a set in $\JJ^*$ then $\II$ is $\JJ$-tall.
\end{proposition}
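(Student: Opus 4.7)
The plan is to prove this by an essentially one-line direct argument: if $C \in \JJ^* \cap \II$, then for any $\JJ$-positive set $A$, the witness to $\JJ$-tallness will simply be $B := A \cap C$.

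First I would note that $B \in \II$ comes for free, since $B \subseteq C \in \II$ and $\II$ is closed under subsets. Obviously $B \subseteq A$. The only content of the proof is thus to verify that $B \in \JJ^+$. For this, I would write $A = (A \cap C) \cup (A \setminus C)$, observe that $A \setminus C \subseteq \kappa \setminus C \in \JJ$ (using $C \in \JJ^*$), so $A \setminus C \in \JJ$. If $B = A \cap C$ were in $\JJ$, then $A$ would be a union of two elements of $\JJ$, hence in $\JJ$, contradicting the assumption that $A \in \JJ^+$.

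There is no real obstacle here; the statement is essentially unpacking the definitions of $\JJ^*$, $\JJ^+$, and $\JJ$-tallness, and using only that $\JJ$ is closed under finite (indeed binary) unions. No use of $\II \supseteq \bd_\kappa$ or of $<\kappa$-completeness is needed beyond the ideal axioms themselves.
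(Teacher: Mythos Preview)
Your proof is correct and is essentially identical to the paper's: both take $B = A \cap C$ where $C \in \II \cap \JJ^*$, the only difference being that the paper compresses the verification that $A \cap C \in \JJ^+$ into a single line, treating as standard the fact that the intersection of a $\JJ^+$-set with a $\JJ^*$-set is $\JJ^+$.
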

\begin{proof}
If $C\in \II\cap \JJ^*$ and $A\in \JJ^+$, then $C\cap A\subseteq A$ is in $\JJ^+\cap \II$.
\end{proof}

For $x\subseteq\kappa$, let $\chi_x\colon\kappa\to 2$ denote the characteristic function of $x$.
We will always identify $x$ and $\chi_x$.
We may thus view
a collection~$\mathcal K$ of subsets of $\kappa$
as a subset of $2^\kappa$ via the identification
\[
\mathcal K = \{\chi_x\mid x \in \mathcal K \}.
\]

\begin{observation}\label{observation:J_J*_same} Let $H_\one\colon 2^\kappa\rightarrow 2^\kappa$ be the homeomorphism defined as in the proof of
Proposition~\ref{prop:flip}.
Since for any ideal $\JJ$ on $\kappa$, $H_\one[\JJ]=\JJ^*$, it follows that $\JJ$ and $\JJ^*$ have the same topological properties in the $\II$-topology, in particular $\JJ$ is $\II$-open iff $\JJ^*$ is $\II$-open, and $\JJ$ is $\II$-closed iff $\JJ^*$ is $\II$-closed.
\end{observation}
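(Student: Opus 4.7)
The plan is to read off the observation directly from Proposition~\ref{prop:flip} together with the identification of subsets of $\kappa$ with elements of $2^\kappa$. First, I would recall from the proof of Proposition~\ref{prop:flip} that $H_\one$ is the map $y \mapsto \one + y$, i.e., $H_\one(y)(i) = 1 - y(i)$ for every $i \in \kappa$, and that it was shown there to be a homeomorphism of $2^\kappa$ with respect to the $\II$-topology (indeed, with respect to every $\II$-topology).

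Next, I would verify the claimed identity $H_\one[\JJ] = \JJ^*$ using the identification of $x \subseteq \kappa$ with its characteristic function $\chi_x$. For any $x \subseteq \kappa$, $H_\one(\chi_x)$ is the function sending $i$ to $1 - \chi_x(i)$, which is precisely $\chi_{\kappa \setminus x}$. Hence
\[
H_\one[\JJ] = \{\chi_{\kappa \setminus x} \mid x \in \JJ\} = \{\chi_y \mid y \in \JJ^*\} = \JJ^*,
\]
where the middle equality is just the definition $\JJ^* = \{\kappa \setminus x \mid x \in \JJ\}$.

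Finally, since any homeomorphism preserves and reflects arbitrary topological properties, and since $H_\one$ is an $\II$-homeomorphism with $H_\one[\JJ] = \JJ^*$, we conclude that $\JJ$ and $\JJ^*$ are $\II$-topologically indistinguishable as subsets of $2^\kappa$. Specializing to the properties mentioned in the statement: $\JJ$ is $\II$-open if and only if $\JJ^* = H_\one[\JJ]$ is $\II$-open, and similarly for $\II$-closed (and for $\II$-dense, $\II$-meager, $\II$-Borel, etc.). There is no real obstacle in the argument; the only mild subtlety is remembering that the notation $\JJ \subseteq 2^\kappa$ tacitly invokes the characteristic function identification fixed just before the observation.
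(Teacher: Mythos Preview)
Your proposal is correct and follows exactly the approach implicit in the paper: the observation is stated there without a separate proof, with the justification already contained in its wording (namely that $H_\one$ is an $\II$-homeomorphism by Proposition~\ref{prop:flip} and maps $\JJ$ onto $\JJ^*$). You have simply unpacked these two points carefully, in particular the verification that $H_\one(\chi_x)=\chi_{\kappa\setminus x}$ via the characteristic function identification, which is indeed the only mild subtlety.
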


Our next proposition provides a characterization of $\JJ$-tallness of an ideal $\II$ in terms of whether certain sets lie low down in the $\II$-Borel hierarchy.

\begin{proposition}\label{proposition:talliffpositiveopen}
  The following are equivalent:
  \begin{enumerate}
    \item $\II$ is $\JJ$-tall.
    \item $\JJ^+$ is $\II$-open.
    \item $\JJ$ is $\II$-closed.
    \item $\JJ^*$ is $\II$-closed.
  \end{enumerate}
\end{proposition}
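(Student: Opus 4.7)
My plan is to observe that the equivalences among (2), (3), (4) are essentially formal, and then to prove the heart of the proposition, namely $(1)\Leftrightarrow(2)$, by a direct unpacking of the definitions.

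First, $(2)\Leftrightarrow(3)$ is immediate because, under the identification of subsets of $\kappa$ with characteristic functions, $\JJ^+ = 2^\kappa\setminus \JJ$, so one of them is $\II$-open exactly when the other is $\II$-closed. The equivalence $(3)\Leftrightarrow(4)$ is Observation~\ref{observation:J_J*_same} applied with the homeomorphism $H_\one$, which sends $\JJ$ bijectively onto~$\JJ^*$.

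For $(1)\Rightarrow(2)$, I would take an arbitrary $x\in \JJ^+$ (viewed as a subset of $\kappa$) and use $\JJ$-tallness to pick $B\subseteq x$ with $B\in \II\cap \JJ^+$. Then $\chi_x{\restriction}B$ lies in $\heart_\II$, and I claim $[\chi_x{\restriction}B]\subseteq \JJ^+$: indeed, any $y\in [\chi_x{\restriction}B]$ satisfies $y(\alpha)=1$ for all $\alpha\in B$, so (viewing $y$ as a subset of $\kappa$) $B\subseteq y$, and since $B\in \JJ^+$ and $\JJ$ is an ideal, $y\in \JJ^+$. This exhibits an $\II$-open neighbourhood of $x$ contained in $\JJ^+$.

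For $(2)\Rightarrow(1)$, I would start with $A\in \JJ^+$ and apply $\II$-openness of $\JJ^+$ at the point $\chi_A$: there exists $f\in \heart_\II$ with $\chi_A\in [f]\subseteq \JJ^+$. Set $B:=\{\alpha\in\dom(f)\mid f(\alpha)=1\}$. Since $f\subseteq \chi_A$, we have $B\subseteq A$; and since $B\subseteq \dom(f)\in \II$, we have $B\in \II$. Finally, consider the element $y\in 2^\kappa$ that agrees with $f$ on $\dom(f)$ and is $0$ elsewhere; then $y=\chi_B$ and $y\in [f]\subseteq \JJ^+$, so $B\in \JJ^+$. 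Thus $B\subseteq A$ witnesses $\JJ$-tallness of~$\II$, completing the cycle.

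There is essentially no obstacle here beyond being careful about the identification of subsets of $\kappa$ with their characteristic functions; the only mildly delicate point is in $(2)\Rightarrow(1)$, where one must remember to extend $f$ by $0$'s (rather than by $1$'s) in order to recover a $\JJ$-positive set that is genuinely contained in $A$ and simultaneously in $\II$.
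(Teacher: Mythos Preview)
Your proposal is correct and follows essentially the same approach as the paper: the equivalences $(2)\Leftrightarrow(3)$ and $(3)\Leftrightarrow(4)$ are handled identically (complementation and Observation~\ref{observation:J_J*_same}), and your arguments for $(1)\Leftrightarrow(2)$ are the same as the paper's, just presented pointwise rather than via the global union $\JJ^+=\bigcup\{[\one\restr A]\mid A\in\II\cap\JJ^+\}$. Your extra remark about extending $f$ by $0$'s in $(2)\Rightarrow(1)$ makes explicit exactly what the paper's phrase ``$f$ has to take value $1$ on some $B\in\JJ^+$'' leaves implicit.
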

\begin{proof}
  \emph{(1) implies (2):}
  Assume that $\II$ is a $\JJ$-tall ideal. Then, \[\JJ^+=\bigcup\{[\one \restr A]\mid A\in\II\cap\JJ^+\}\] is clearly $\II$-open.

\medskip

  \emph{(2) implies (1):} Assume that $\JJ^+$ is $\II$-open, and let $A\in\JJ^+$. Then, $\JJ^+$ contains an $\II$-cone $[f]$ with $A\in[f]$. Since $[f]$ must only contain elements of $\JJ^+$, $f$ has to take value $1$ on some $B\in\JJ^+$ with $B\subseteq\dom(f)\in\II$. This shows that every element of $\JJ^+$ contains an element of $\JJ^+\cap\II$, which means that $\II$ is $\JJ$-tall, as desired.

\medskip

\emph{(2) and (3) are equivalent}, because $\JJ^+$ is the complement of $\JJ$.
The \emph{equivalence of (3) and (4)} follows from Observation~\ref{observation:J_J*_same}.
\end{proof}

The next proposition provides a similar characterization of a property stronger than $\JJ$-tallness (see  Proposition~\ref{proposition:stronger_than_J_tall}).
\begin{proposition}\label{proposition:containedimpliesopen}
  The following are equivalent:
  \begin{enumerate}
    \item $\II$ contains a set in $\JJ^*$.
    \item $\JJ^+$ is $\II$-closed.
    \item $\JJ$ is $\II$-open.
    \item $\JJ^*$ is $\II$-open.
  \end{enumerate}
\end{proposition}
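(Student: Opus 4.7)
The plan is to reduce the four-way equivalence to a single non-trivial implication by first observing that (2) $\Leftrightarrow$ (3) is immediate (since $\JJ^+$ is the complement of $\JJ$, one is $\II$-closed iff the other is $\II$-open), and (3) $\Leftrightarrow$ (4) follows directly from Observation~\ref{observation:J_J*_same}: the $\II$-homeomorphism $H_\one$ maps $\JJ$ to $\JJ^*$, so the two share all topological properties in the $\II$-topology. Hence it suffices to establish (1) $\Leftrightarrow$ (3).

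For (1) $\Rightarrow$ (3), suppose $C \in \II \cap \JJ^*$, and let $x \in \JJ$ be arbitrary. The natural $\II$-cone around $x$ to try is $[\chi_x \restr C]$: its defining partial function has domain $C \in \II$, so it is a genuine $\II$-cone, and $x$ obviously lies in it. For any $y \in [\chi_x \restr C]$, we have $y \cap C = x \cap C$, and thus $y = (x\cap C) \cup (y\setminus C)$ is a union of a subset of $x \in \JJ$ and a subset of $\kappa \setminus C \in \JJ$; therefore $y \in \JJ$. This shows $\JJ$ is $\II$-open at every one of its points.

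For (3) $\Rightarrow$ (1), the strategy is to exploit a single carefully chosen point. Since $\emptyset \in \JJ$ (identifying sets with characteristic functions, $\zero \in \JJ$), and $\JJ$ is $\II$-open, there exists $f \in \heart_\II$ with $\zero \in [f] \subseteq \JJ$. The condition $\zero \in [f]$ forces $f$ to be the constant-zero function on $\dom(f) \in \II$. Now test the inclusion $[f] \subseteq \JJ$ at the specific point $y := \chi_{\kappa \setminus \dom(f)}$, which agrees with $f$ on $\dom(f)$ and so belongs to $[f] \subseteq \JJ$; reading $y$ as a subset of $\kappa$ yields $\kappa \setminus \dom(f) \in \JJ$, i.e., $\dom(f) \in \JJ^*$. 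Combined with $\dom(f) \in \II$, this exhibits a member of $\II \cap \JJ^*$, proving (1).

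I do not foresee a real obstacle: the argument runs entirely parallel to the proof of Proposition~\ref{proposition:talliffpositiveopen}, with the singleton tests at $\zero$ and at $\chi_{\kappa \setminus \dom(f)}$ playing the role that the $\JJ$-positive family of cones $\{[\one\restr A] : A \in \II \cap \JJ^+\}$ played there. The only mild subtlety is remembering to use that $\II$ extends $\bd_\kappa$ only implicitly (via the standing assumption on $\II$), since nothing beyond the cone basis is needed.
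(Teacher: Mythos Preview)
Your proof is correct and follows essentially the same strategy as the paper's: both reduce (2)$\Leftrightarrow$(3) to complementation and (3)$\Leftrightarrow$(4) to Observation~\ref{observation:J_J*_same}, leaving only the equivalence with (1). The only cosmetic difference is that the paper establishes (1)$\Leftrightarrow$(4) directly (using the cone $[\one\restr(x\cap C)]$ for $x\in\JJ^*$, and testing an arbitrary cone $[f]\subseteq\JJ^*$ at the point $f^{-1}[\{1\}]$), whereas you prove the dual (1)$\Leftrightarrow$(3) (using $[\chi_x\restr C]$ for $x\in\JJ$, and testing a cone $[f]\subseteq\JJ$ around $\zero$ at $\chi_{\kappa\setminus\dom(f)}$); these are mirror images under $H_\one$ and neither is simpler than the other.
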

\begin{proof}
  \emph{(1) implies (4):}  Assume that there exists $C$ in $\JJ^*\cap\II$.
  Then, $\JJ^*=\bigcup_{x\in\JJ^*}[\one \restr (x\cap C)]$.

  \medskip

 \emph{(4) implies (1):}   Now assume that $\JJ^*$ is $\II$-open, and hence contains an $\II$-cone $[f]$. If the domain of $f$ were not in $\JJ^*$, then, since $f^{-1}[\{1\}]\in[f]$, we obtain a contradiction. But this implies that $\dom(f)\in\JJ^*\cap\II\ne\emptyset$, as desired.

 \medskip

Again, \emph{(2) and (3) are equivalent}, because $\JJ^+$ is the complement of $\JJ$, and
the \emph{equivalence of (3) and (4)} follows from Observation~\ref{observation:J_J*_same}.
\end{proof}

Note that the two propositions above show that for an ideal $\JJ$ the following implication holds:\footnote{By Observation \ref{observation:J_J*_same}, an analogous remark applies to filters rather than ideals.}
 $$ \JJ \text{ is } \II \text{-open} \Rightarrow \JJ \text{ is } \II \text{-closed}.$$
Of course this can also be shown directly
(the proof makes essential use of the property of an ideal to be closed under
unions).

Let us shed light on a few relationships between
(stationary) tallness and other properties of ideals that we are making use of in this paper:

\begin{observation}\
\label{observation:stationary_tall_etc}
  \begin{enumerate}
    \item
    If $\II$ contains a club subset of $\kappa$, then $\II$ is stationarily~tall.
    \item If $\II$ is a maximal ideal, then $\II$ is both tall and stationarily~tall.
    \item Ideals which contain a stationary subset of $\kappa$ are not necessarily tall or stationarily tall.
    \item Stationarily tall ideals are not necessarily tall.
  \end{enumerate}
\end{observation}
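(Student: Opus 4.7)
The plan is to dispatch each of the four assertions in turn, with the only real work hiding in part~(2).

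For~(1), observe that any club $C \subseteq \kappa$ lies in $\NS_\kappa^*$ (its complement is nonstationary), so the hypothesis of Proposition~\ref{proposition:stronger_than_J_tall} with $\JJ = \NS_\kappa$ is met as soon as $C \in \II$, yielding stationary tallness. Alternatively, one verifies directly that for any stationary $A$, the set $A \cap C$ is a stationary subset of $A$ which lies in $\II$.

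For~(2), I would use that maximality of $\II$ means $\II^+ = \II^*$, so for every $X \subseteq \kappa$ exactly one of $X$ and $\kappa \setminus X$ belongs to $\II$. Given an unbounded (respectively, stationary) $A \subseteq \kappa$, the plan is to partition $A$ into two disjoint unbounded (respectively, stationary) pieces $A_0, A_1$ --- for the unbounded case by splitting the increasing enumeration of $A$ into even and odd positions (using only regularity of $\kappa$), and for the stationary case by Solovay's splitting theorem. If both $A_i$ lay in $\II^+$, their complements would lie in $\II$ by maximality, and then so would $(\kappa \setminus A_0) \cup (\kappa \setminus A_1) = \kappa \setminus (A_0 \cap A_1) = \kappa$, contradicting properness. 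Hence one of the $A_i$ witnesses the required (stationary) tallness of $\II$ for $A$.

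For~(3) and~(4), I would exhibit counterexamples of the same flavour. For~(3), fix a stationary, costationary $S \subseteq \kappa$ and set $\II := \{B \subseteq \kappa \mid B \setminus S \in \bd_\kappa\}$; routine checks show this is a proper ${<}\kappa$-complete ideal extending $\bd_\kappa$ and containing the stationary set $S$. Taking the stationary set $A := \kappa \setminus S$, any $B \subseteq A$ with $B \in \II$ satisfies $B = B \setminus S \in \bd_\kappa$, so no unbounded (let alone stationary) $B \subseteq A$ lies in $\II$, defeating both tallness and stationary tallness. For~(4), replace $S$ by a club $C \subseteq \kappa$ with unbounded complement (e.g.\ the class of limit ordinals below $\kappa$), and set $\II := \{B \subseteq \kappa \mid B \setminus C \in \bd_\kappa\}$; this contains the club $C$ and hence is stationarily tall by~(1), while the argument from~(3) applied with $C$ in place of $S$ shows that no unbounded $B \subseteq \kappa \setminus C$ lies in $\II$, so $\II$ is not tall. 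Beyond Solovay's splitting theorem in~(2), no substantive input is required, and I foresee no real obstacle.
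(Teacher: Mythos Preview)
Your proposal is correct and follows essentially the same approach as the paper: both invoke Proposition~\ref{proposition:stronger_than_J_tall} for~(1), both prove~(2) by partitioning $A$ into two disjoint unbounded (respectively stationary) pieces and using maximality to place one piece in~$\II$, and both use the same two counterexamples for~(3) and~(4) (the ideal generated by $\bd_\kappa$ together with a single stationary co-stationary set, respectively a single club with unbounded complement). The only cosmetic difference is that the paper phrases~(2) directly (``either $A_0$ or $A_1$ is in $\II$'') while you derive a contradiction from both lying in $\II^+$; the content is identical.
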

\begin{proof}
  \begin{enumerate}
    \item Immediately follows from Proposition~\ref{proposition:stronger_than_J_tall},
letting $\JJ=\NS_\kappa$.
    \item Let $\II$ be a maximal ideal, and let $A$ be an unbounded subset of $\kappa$ that is not in $\II$. Partition $A$ into two disjoint unbounded subsets $A_0$ and $A_1$ of $\kappa$. By the maximality of $\II$, either $A_0$ or $A_1$ is an element of $\II$, yielding $\II$ to be tall.

More generally, the same proof yields that $\II$ is $\JJ$-tall, provided that every set in~$\JJ^+$ can be
partitioned into two disjoint sets in~$\JJ^+$. Since every stationary set can be partitioned into two disjoint stationary sets, $\II$ is also stationarily tall.

    \item Let $\II$ be the ideal generated by the bounded ideal and a single stationary and co-stationary subset $S$ of $\kappa$. Then, $\II$ is neither tall nor stationarily tall, for the complement of $S$ contains no unbounded subset of $\kappa$ in $\II$.
    \item Let $\II$ be the ideal generated by the bounded ideal and a single club subset $C$ of $\kappa$, the complement of which is unbounded in $\kappa$. Then, $\II$ is stationarily tall by (1), yet
    the complement of $C$ has no unbounded subset in $\II$, showing that $\II$ is not tall.\qedhere
  \end{enumerate}
\end{proof}

\subsection{On the collection of unbounded sets}\label{subsec:ub_kappa}

Using that $\II$ is ${<}\kappa$-complete, the $\II$-open sets are closed under ${<}\kappa$-intersections, and the $\II$-closed sets are closed under ${<}\kappa$-unions. We may define an $\II$-Borel hierarchy as usual in higher descriptive set theory, through using $\kappa$-intersections, $\kappa$-unions and complements. For example, on the second level of this hierarchy, we have the $\II$-$F_\kappa$-sets, which are the $\kappa$-unions
$\bigcup_{\alpha < \kappa} X_\alpha$
of $\II$-closed sets, and the $\II$-$G_\kappa$-sets, which are the $\kappa$-intersections
$\bigcap_{\alpha < \kappa} X_\alpha$
of $\II$-open sets, etc.\footnote{It is easy to see that if $\kappa^\lambda=\kappa$, then the $\II$-$G_\kappa$-sets are closed under $\lambda$-unions, and correspondingly, the $\II$-$F_\kappa$-sets are closed under $\lambda$-intersections. In particular, if $\kappa^{<\kappa}=\kappa$, then these classes are closed under ${<}\kappa$-unions and ${<}\kappa$-intersections, respectively.}

\medskip

Let  \[\ub_\kappa=\{\chi_x\mid x\textrm{ is an unbounded subset of }\kappa\}\]
be the collection of unbounded subsets of $\kappa$.
Note that
$\ub_\kappa = \bigcap_{\alpha<\kappa} X_\alpha$,
where
$X_\alpha = \{ x\in 2^\kappa \mid  x(\beta)=1 \text{ for some } \beta\geq\alpha\}$ is open
(in the bounded topology), hence $\ub_\kappa$ is $G_\kappa$ (again in the bounded topology); therefore, $\ub_\kappa$ is $\II$-$G_\kappa$ for any ideal~$\II$.

The following is an immediate consequence of Proposition \ref{proposition:talliffpositiveopen},
letting $\JJ=\bd_\kappa$:

\begin{corollary}\label{corollary:tall}
$\II$ is tall
if and only if
$\ub_\kappa$ is $\II$-open.
\end{corollary}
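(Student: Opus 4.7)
The plan is to simply invoke Proposition~\ref{proposition:talliffpositiveopen} with the particular choice $\JJ = \bd_\kappa$, and then unpack the definitions to see that the resulting equivalence is exactly the statement of the corollary. Since the corollary is labelled as ``an immediate consequence'' of that proposition, I do not expect any real obstacle; the content of the argument is essentially bookkeeping.

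More concretely, I would first recall the definition of tallness: by Definition~\ref{definition:tall}, $\II$ being tall means $\II$ being $\bd_\kappa$-tall. This identifies condition (1) of Proposition~\ref{proposition:talliffpositiveopen}, with $\JJ=\bd_\kappa$, with the left-hand side of the corollary.

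Next I would observe that $(\bd_\kappa)^+ = \ub_\kappa$ holds by the definitions given at the very beginning of the introduction (since $\ub_\kappa$ was defined precisely as $(\bd_\kappa)^+$, i.e., the collection of unbounded subsets of~$\kappa$, viewed inside $2^\kappa$ via characteristic functions). Thus condition (2) of Proposition~\ref{proposition:talliffpositiveopen}, with $\JJ=\bd_\kappa$, becomes exactly ``$\ub_\kappa$ is $\II$-open'', matching the right-hand side of the corollary.

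Putting these two identifications together, the equivalence (1) $\Leftrightarrow$ (2) of Proposition~\ref{proposition:talliffpositiveopen} applied to $\JJ=\bd_\kappa$ yields the desired biconditional, completing the proof. If desired, one could also spell out the direct argument (``tall'' gives $\ub_\kappa = \bigcup\{[\one\restr A]\mid A\in\II\cap\ub_\kappa\}$, and conversely any $\II$-cone contained in $\ub_\kappa$ must have a domain in $\II$ containing an unbounded set on which the function is~$1$), but this only reproduces the proof of Proposition~\ref{proposition:talliffpositiveopen} in the special case $\JJ=\bd_\kappa$.
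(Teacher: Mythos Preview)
Your proposal is correct and matches the paper's approach exactly: the paper simply states that the corollary is an immediate consequence of Proposition~\ref{proposition:talliffpositiveopen} with $\JJ=\bd_\kappa$, and your unpacking of the definitions (tall $=$ $\bd_\kappa$-tall, $(\bd_\kappa)^+ = \ub_\kappa$) is precisely what is needed.
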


Note that $\ub_\kappa$ is not $\II$-closed,
 for any choice of ideal $\II$. We will now show that $\ub_\kappa$ can never be an $\II$-$F_\kappa$ set.
This in particular yields an example that whenever $\II$ is tall, there is an $\II$-open set that is not $\II$-$F_\kappa$
(so there is always an Edinburgh open set that is not Edinburgh $F_\kappa$).
For $\X \subseteq \kappa$, let $\ub(\X)$ be the collection of subsets of $\kappa$ which have unbounded intersection with $\X$.

\begin{theorem}\label{theorem:ubnotfsigma}
  $\ub_\kappa$ is not $\II$-$F_\kappa$. In fact, for
every
$\X\notin \II$,
the collection
$\ub(\X)$
is not $\II$-$F_\kappa$.
\end{theorem}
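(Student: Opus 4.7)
The natural approach is a diagonal construction. Suppose for contradiction that $\ub(\X)=\bigcup_{\alpha<\kappa}F_\alpha$ with each $F_\alpha$ being $\II$-closed. The goal is to produce $y\in 2^\kappa$ that lies in $\ub(\X)$ yet avoids every $F_\alpha$. Since $\X\notin\II\supseteq\bd_\kappa$, the set $\X$ is unbounded of order type $\kappa$, and I would fix its increasing enumeration $\langle x_\beta\mid\beta<\kappa\rangle$.

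The main step is to build by transfinite recursion a $\subseteq$-increasing sequence $\langle s_\alpha\mid\alpha<\kappa\rangle$ in $\heart_\II$ satisfying:
\begin{enumerate-(i)}
  \item $[s_{\alpha+1}]\cap F_\alpha=\emptyset$, so that any $y\supseteq s_{\alpha+1}$ avoids $F_\alpha$;
  \item $s_\alpha^{-1}[\{1\}]\cap\X$ is bounded in $\kappa$;
  \item each successor step introduces a new $1$ at some $x_{\beta_\alpha}$ with $\beta_\alpha\geq\alpha$, thus making the final $y$ lie in $\ub(\X)$.
\end{enumerate-(i)}
Any $y\in 2^\kappa$ extending $\bigcup_{\alpha<\kappa}s_\alpha$ then contradicts the assumption, since~(i) gives $y\notin\bigcup_\alpha F_\alpha$ and~(iii) gives $y\in\ub(\X)$.

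The crux is the successor step. Given $s_\alpha$ satisfying~(ii), let $\bar s_\alpha\in 2^\kappa$ denote its extension by zeros outside $\dom(s_\alpha)$. Then $\bar s_\alpha^{-1}[\{1\}]\cap\X=s_\alpha^{-1}[\{1\}]\cap\X$ is bounded, so $\bar s_\alpha\notin\ub(\X)\supseteq F_\alpha$. Using that $F_\alpha$ is $\II$-closed, I would pick $h_\alpha\in\heart_\II$ with $\bar s_\alpha\in[h_\alpha]\subseteq F_\alpha^c$; since $h_\alpha\subseteq\bar s_\alpha$, $h_\alpha$ takes value $0$ on $\dom(h_\alpha)\setminus\dom(s_\alpha)$, so the compatible union $s_\alpha':=s_\alpha\cup h_\alpha$ still preserves~(ii) while inheriting $[s_\alpha']\subseteq F_\alpha^c$. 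Finally, since $\dom(s_\alpha')\in\II$ while $\X\notin\II$, I would pick $x_{\beta_\alpha}\in\X\setminus\dom(s_\alpha')$ with $\beta_\alpha\geq\alpha$ and set $s_{\alpha+1}:=s_\alpha'\cup\{(x_{\beta_\alpha},1)\}$. At limit stages $\alpha<\kappa$, the union $\bigcup_{\beta<\alpha}s_\beta$ suffices: ${<}\kappa$-completeness of $\II$ keeps the domain in $\II$, while regularity of $\kappa$ ensures $s_\alpha^{-1}[\{1\}]\cap\X$ remains bounded, as the union of ${<}\kappa$-many subsets of bounded ordinals.

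The main tension (and potential obstacle) is between~(ii) and~(iii): the extend-by-zeros trick at each successor stage needs $s_\alpha$ to have bounded $\X$-support, while~(iii) forces us to inject cofinally many $1$s into $\X$ along the construction. These reconcile because the $\X$-support, though growing, stays strictly bounded below $\kappa$ at every intermediate stage, and becomes unbounded only in the global union that defines~$y$.
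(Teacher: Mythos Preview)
Your proof is correct and follows essentially the same diagonal strategy as the paper's own argument: at each successor stage, extend the current partial function by zeros to obtain a point outside $\ub(\X)$, use $\II$-closedness of $F_\alpha$ to find an $\II$-cone avoiding it, and then inject a single new $1$ into $\X$ above the current height. The only cosmetic differences are that the paper phrases the step via the normal form $[P_\alpha]_\II$ for closed sets (so the goal becomes $f_{\alpha+1}\notin P_\alpha$ rather than $[s_{\alpha+1}]\cap F_\alpha=\emptyset$), and that it adds the new $1$ \emph{before} the zero-extension rather than after; moreover, the paper keeps the stronger invariant that $s_\alpha^{-1}[\{1\}]$ is bounded in $\kappa$, whereas your weaker invariant (bounded intersection with $\X$) already suffices.
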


\begin{proof}
  Let $\X\notin \II$, and assume for a contradiction that $\ub(\X)$ is $\II$-$F_\kappa$, i.e., that $\ub(\X)=\bigcup_{\alpha<\kappa}[P_\alpha]_\II$, with each $P_\alpha\subseteq\heart_\II$ closed under restrictions. We want to inductively construct a set in $\ub(\X)$ which is not in the above union, and thus reach a contradiction. The key ingredient will be the following claim. If $\B\subseteq\kappa$, we say that $f\colon \B\to 2$ is \emph{bounded} (in $\kappa$) if
  $\{\alpha\in \B\mid f(\alpha)=1\}$  is bounded in $\kappa$.\footnote{Clearly, if $\B=\kappa$, then $f\colon \B\to 2$ is bounded in $\kappa$ if and only if $f$ is bounded in $\kappa$ in the usual sense when identified with a subset of $\kappa$.}

\begin{claim*}
  Suppose $f\in\heart_\II$ is bounded, $\alpha<\kappa$, and $[P]_\II$ is $\II$-closed with $P\subseteq\heart_\II$ closed under restrictions, and such that $[P]_\II$ contains only unbounded subsets of $\kappa$. Then there is an extension $g\supseteq f$ of $f$ in $\heart_\II$ which is bounded, such that $g\not\in P$, and such that $g(\gamma)=1$ for some $\gamma\ge\alpha$ in $\X$.
\end{claim*}
\begin{proof}
Let $f^*\in\heart_\II$ be bounded and extending $f$ such that $f^*(\gamma)=1$ for some $\gamma\ge\alpha$ in $\X$ (using that $\X\notin \II$). For $\B\subseteq \kappa\setminus\dom(f^*)$ in $\II$, let $f^*_\B\in\heart_\II$ denote the extension of $f^*$ with $\dom(f^*_\B)=\dom(f^*)\cup \B\in\II$ and with $f^*_\B(\alpha)=0$ for every $\alpha\in \B$. Now, assume for a contradiction that every such $f^*_\B$ were an element of $P$. But then, letting $x\in 2^\kappa$ be the extension of $f^*$ with $x(\alpha)=0$ for every $\alpha\in\kappa\setminus\dom(f^*)$, it follows, using that $P$ is closed under restrictions, that $x\Restr\II\subseteq P$, and hence that $x\in[P]_\II$. But $x$ is a bounded subset of $\kappa$, contradicting our assumption on $P$. Hence, we may pick $g=f^*_\B$, for some $\B$ as above, for which $f^*_\B\not\in P$.
\end{proof}
Let $f_0=\emptyset$. Given $f_\alpha$, let $f_{\alpha+1}$ be a bounded extension of $f_\alpha$ in $\heart_\II$ with $f_{\alpha+1}(\gamma)=1$ for some $\gamma\ge\alpha$ in $\X$ and with $f_{\alpha+1}\not\in P_\alpha$, by an application of the claim. At limit stages $\alpha<\kappa$, let $f_\alpha=\bigcup_{\beta<\alpha}f_\beta\in\heart_\II$.
Let $f\in 2^\kappa$ be any extension of $\bigcup_{\alpha<\kappa}f_\alpha$. Then $f\in \ub(\X)$,
yet $f\notin \bigcup_{\alpha<\kappa}[P_\alpha]_\II$, which yields our desired contradiction.
\end{proof}

Let us say that $\II$ is
\emph{tall on~$\X$} if
for every unbounded set~$A' \subseteq \X$, there is an unbounded set $\B \subseteq A'$ with $\B \in \II$.

\begin{corollary}\label{corollary:opennotfsigma}
If $\X\notin \II$ and $\II$ is tall on $\X$, then $\ub(\X)$ is $\II$-open,
yet not $\II$-$F_\kappa$.
\end{corollary}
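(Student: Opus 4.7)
The second assertion (that $\ub(\X)$ is not $\II$-$F_\kappa$) is immediate from Theorem~\ref{theorem:ubnotfsigma}, which only requires $\X \notin \II$, so no work is needed there. The content of the corollary is therefore the $\II$-openness of $\ub(\X)$, and the plan is to exhibit $\ub(\X)$ directly as a union of basic $\II$-open sets, in the spirit of the proof of the forward direction of Proposition~\ref{proposition:talliffpositiveopen}.

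The plan is the following: given $x \in \ub(\X)$, the set $x^{-1}[\{1\}] \cap \X$ is an unbounded subset of $\X$. Using that $\II$ is tall on $\X$, I would pick an unbounded $\B \subseteq x^{-1}[\{1\}] \cap \X$ with $\B \in \II$, and consider the partial function $f \in \heart_\II$ with $\dom(f) = \B$ and constant value $1$. Then $f \subseteq x$, so $x \in [f]$, and every $y \in [f]$ satisfies $y^{-1}[\{1\}] \supseteq \B$, so $y \cap \X \supseteq \B$ is unbounded, giving $[f] \subseteq \ub(\X)$. Hence
\[
\ub(\X) \;=\; \bigcup\bigl\{\,[\one\restr \B] \,\big|\, \B \subseteq \X,\ \B \text{ unbounded},\ \B \in \II\,\bigr\}
\]
is $\II$-open, as desired.

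There is no real obstacle here: the tallness-on-$\X$ hypothesis is exactly tailored so that each $x \in \ub(\X)$ has a witness $\B \in \II$ below the unbounded ``trace'' $x \cap \X$, which is precisely what is needed to produce a basic $\II$-open neighbourhood of $x$ inside $\ub(\X)$. Once this is in place, the corollary follows by combining the display above with Theorem~\ref{theorem:ubnotfsigma}.
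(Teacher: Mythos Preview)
Your proof is correct and takes essentially the same approach as the paper: the paper also displays $\ub(\X)=\bigcup\{[\one\restr\B]\mid \B\in\II\text{ is an unbounded subset of }\X\}$ to obtain $\II$-openness, and then invokes Theorem~\ref{theorem:ubnotfsigma} for the non-$\II$-$F_\kappa$ part. You simply provide a bit more detail justifying the displayed equality via the tallness-on-$\X$ hypothesis.
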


\begin{proof}
Note that $\ub(\X)$ is $\II$-open, because
  \[\ub(\X)=\bigcup\{[\one \restr \B]\mid \B\in \II \textrm{ is an unbounded subset of }\X\}.\]
  By Theorem~\ref{theorem:ubnotfsigma}, $\ub(\X)$ is not $\II$-$F_\kappa$, as desired.
\end{proof}

So  whenever $\II$ is tall on a set which is not in $\II$,
there exists an $\II$-open set which is not $\II$-$F_\kappa$.

\subsection{The lowest levels of the ideal Borel hierarchies}\label{section:lowlevels}

Corollary \ref{corollary:opennotfsigma} clearly motivates the question on how the lowest levels of the $\II$-Borel hierarchy are related
 when $\II$ is tall on a set $\X\notin \II$, given the unusual non-implication from being $\II$-open to being $\II$-$F_\kappa$.
  We will consider some of the lowest level natural classes of $\II$-Borel sets: $\II$-open sets, $\II$-closed sets, $\II$-$G_\kappa$-sets, and $\II$-$F_\kappa$-sets,
as well as the
class
\[
\II\textrm{-}\cap = \{ X \cap Y \mid X \textrm{ is }\II\textrm{-open and } Y \textrm{ is }\II\textrm{-closed}\}
\]
of intersections of $\II$-open and $\II$-closed sets,
and the class
\[
\II\textrm{-}\cup = \{ X \cup Y \mid X \textrm{ is }\II\textrm{-open and } Y \textrm{ is }\II\textrm{-closed}\}
\]
of unions of $\II$-open and $\II$-closed sets. Note that the class $\II$-$\cap$ is the second level of the difference hierarchy
over the $\II$-open sets (or the $\II$-closed sets),
and the class $\II$-$\cup$ is its dual.

We will show that, assuming that $\II$ is tall on a set $\X\notin \II$,
\emph{only} the following six trivial implications between the above hold (which hold by the very definition of the classes involved):
\begin{itemize}
  \item $\II$-open sets are $\II$-$G_\kappa$, $\II$-closed sets are $\II$-$F_\kappa$.
  \item $\II$-open sets are both $\II$-$\cap$ and $\II$-$\cup$.
  \item $\II$-closed sets are both $\II$-$\cap$ and $\II$-$\cup$.
\end{itemize}

Let $\I_\kappa=\{\chi_{\{\alpha\}}\mid\alpha<\kappa\}$.
Moreover, fix a set $\X\notin \II$ such that $\II$ is tall on $\X$.
We start by determining exactly to which of the above
classes some of our major examples belong to -- these are marked with a + in the following table. The entries of this table are immediate for $2^\kappa$, and they follow for $\ub(\X)$ by Corollary \ref{corollary:opennotfsigma}.
We will provide the easy verifications of the other entries in Lemma \ref{lemma:basicexamples} below.

\bigskip

\begin{table}[h!]
\centering
\begin{tabular}
{ c | c | c | c | c | c | c }
  & $\II$-open & $\II$-closed & $\II$-$\cap$ & $\II$-$\cup$ & $\II$-$G_\kappa$ & $\II$-$F_\kappa$ \\
\hline
  $2^\kappa$ & + & + & + & + & + & +\\
  $\{\zero\}$ &  & + & + & + & + & +\\
  $\I_\kappa$ &  &  & + &  & + & +\\
  $\ub(\X)$ & + &  & + & + & + &
\end{tabular}
\bigskip
\caption{Borel properties of some basic sets}
\label{table:1}
\end{table}

\begin{lemma}\label{lemma:basicexamples}\
  \begin{enumerate}
    \item $\{\zero\}$ is not $\II$-open,
    yet $\II$-closed and $\II$-$G_\kappa$.
    \item $\I_\kappa$ is neither $\II$-open nor $\II$-closed,
    yet $\II$-$\cap$.
      Moreover, it is not $\II$-$\cup$,
      yet it is both $\II$-$G_\kappa$ and $\II$-$F_\kappa$.
  \end{enumerate}
\end{lemma}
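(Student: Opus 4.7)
The plan is to exploit two simple uniform observations that propagate through every $\II$-topology. First, for any proper ideal $\II \supseteq \bd_\kappa$, every $\II$-cone $[f]$ has cardinality exactly $2^\kappa$: indeed, $\kappa\setminus\dom(f)$ must be unbounded (otherwise $\kappa$ would decompose as a union of two elements of $\II$), hence has size $\kappa$, and the functions in $[f]$ are parametrized freely by their values on $\kappa\setminus\dom(f)$. Second, singletons are always $\II$-closed, because any point $x\ne y$ disagrees with $y$ at some $\alpha<\kappa$, and then $[x\restr\{\alpha\}]$ is a bounded $\II$-cone around $x$ disjoint from $\{y\}$. Both parts of the lemma will reduce to short neighborhood arguments using these, together with a routine passage to $G_\kappa$/$F_\kappa$ decompositions in the bounded topology (which are automatically $\II$-$G_\kappa$/$\II$-$F_\kappa$).

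For part (1), $\{\zero\}$ is not $\II$-open by the cardinality fact (a singleton cannot contain a cone of size $2^\kappa$), it is $\II$-closed as the second observation specializes to $y=\zero$, and it is $\II$-$G_\kappa$ via the standard decomposition $\{\zero\}=\bigcap_{\alpha<\kappa}U_\alpha$, where each $U_\alpha=\{x\in 2^\kappa\mid x(\alpha)=0\}$ is a bounded $\II$-cone.

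For part (2), the same cardinality fact shows $\I_\kappa$ cannot be $\II$-open, and it also rules out $\II$-$\cup$: if $\I_\kappa=U\cup C$ with $U$ $\II$-open and $C$ $\II$-closed, a nonempty $U$ would contain a cone of size $2^\kappa>\kappa=|\I_\kappa|$, forcing $U=\emptyset$ and $\I_\kappa=C$ to be $\II$-closed. Non-closedness is proved by showing $\zero$ lies in the $\II$-closure of $\I_\kappa$: a basic $\II$-neighborhood $[f]$ of $\zero$ has $f$ identically $0$ on its domain, so for any $\beta\in\kappa\setminus\dom(f)$ (nonempty by properness) the point $\chi_{\{\beta\}}$ lies in $[f]\cap\I_\kappa$. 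To see that $\I_\kappa$ is $\II$-$\cap$, write $\I_\kappa=A\cap B$ with $A=2^\kappa\setminus\{\zero\}$, which is $\II$-open by part~(1), and $B=\{x\in 2^\kappa\mid x\text{ has at most one }1\}$, which is $\II$-closed because its complement is the union of the bounded cones $\{x\mid x(\alpha)=x(\beta)=1\}$ over pairs $\alpha<\beta<\kappa$.

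Finally, $\I_\kappa$ is already $G_\kappa$ and $F_\kappa$ in the bounded topology, hence $\II$-$G_\kappa$ and $\II$-$F_\kappa$. The $G_\kappa$-representation is
\[
\I_\kappa=A\cap\bigcap_{\alpha\ne\beta<\kappa}\bigl(\{x\mid x(\alpha)=0\}\cup\{x\mid x(\beta)=0\}\bigr),
\]
a $\kappa$-intersection of bounded open sets since $|\kappa\times\kappa|=\kappa$. The $F_\kappa$-representation is the obvious $\I_\kappa=\bigcup_{\alpha<\kappa}\{\chi_{\{\alpha\}}\}$, each singleton being $\II$-closed by the second observation above. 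No real obstacle arises; the only point that needs care is invoking part~(1) to obtain $A$ as $\II$-open and the second observation to obtain the singletons $\{\chi_{\{\alpha\}}\}$ as $\II$-closed.
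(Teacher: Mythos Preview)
Your proof is correct and follows essentially the same approach as the paper's. Both arguments hinge on the cardinality observation that every nonempty $\II$-open set has size $2^\kappa$, on the cone-around-$\zero$ argument for the non-closedness of $\I_\kappa$, and on reducing the failure of $\II$-$\cup$ to the failure of $\II$-closedness via that cardinality fact; the only difference is that the paper handles the positive properties by simply noting they already hold in the bounded topology and hence transfer upward, whereas you spell out explicit decompositions witnessing $\II$-closed, $\II$-$G_\kappa$, $\II$-$\cap$, and $\II$-$F_\kappa$.
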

\begin{proof}
The properties of $\{\zero\}$ and of $\I_\kappa$ marked with a $+$ in the above table correspond exactly to the respective properties that those sets have in the bounded topology. Whenever a set has such a property in the bounded topology, it inherits to the $\II$-topology: for example, $\{\zero\}$ is closed in the bounded topology, and hence it is $\II$-closed.

For~(1), it remains to show that $\{\zero\}$ is not $\II$-open, which follows as every non-empty $\II$-open set has size~$2^\kappa$. The same argument shows that $\I_\kappa$ is not $\II$-open.
To finish the proof of~(2), it remains to show that $\I_\kappa$ is neither $\II$-closed nor $\II$-$\cup$.
If $\I_\kappa$ were $\II$-closed, its complement would be $\II$-open, hence would contain an $\II$-cone around~$\zero$; but each such $\II$-cone contains an element $\chi_{\{\alpha\}}$ of $\I_\kappa$, a contradiction.
Moreover, if $\I_\kappa$
were
in $\II$-$\cup$, i.e., the union of an $\II$-open and an $\II$-closed set, the above cardinality argument yields that the $\II$-open part is in fact empty, which is impossible by the above.
\end{proof}

Note that
sets with further patterns with respect to the properties in Table~\ref{table:1} can simply be generated by the taking of complements: if $X \subseteq 2^\kappa$ and $Y = 2^\kappa \setminus X$ is the complement of $X$, then $Y$ is $\II$-closed if and only if $X$ is $\II$-open, and vice versa, and correspondingly for the properties $\II$-$\cap$ and $\II$-$\cup$, as well as the properties $\II$-$G_\kappa$ and $\II$-$F_\kappa$. In particular, this allows us to obtain all the five possible patterns for sets which are either $\II$-open or $\II$-closed, by also considering the complements of $\{\zero\}$ and of
$\ub(\X)$.

\medskip

We will show that for sets which are neither $\II$-open nor $\II$-closed, all 16 combinations of the remaining four properties occur (see Table~\ref{table:16}), very much unlike
the case of the bounded topology. The combination of the above then shows that there are no implications between
(any combinations of)
the six properties in Table~\ref{table:1} other than the trivial ones listed above. All of our examples below will be based on the basic sets $2^\kappa$, $\{\zero\}$, $\I_\kappa$ and $\ub(\X)$. We will use the method of taking \emph{unions on disjoint cones}:

\begin{definition}
  Given $X,Y\subseteq 2^\kappa$ we say that $W=X\dot\cup Y$ is \emph{a union of $X$ and $Y$ on disjoint $\II$-cones} in case $[f]$ and $[g]$ are two disjoint $\II$-cones, which are both, using the respective induced topologies, homeomorphic to $2^\kappa$ with the $\II$-topology, via bijections $\pi\colon 2^\kappa\to[f]$ and $\rho\colon2^\kappa\to[g]$, and such that $W=\pi[X]\cup\rho[Y]$. Unions $X\dot\cup Y\dot\cup Z$ of three (or more) sets are defined analogously.\footnote{Note that by Observation \ref{observation:homeomorphiccones}, we always have at least $\kappa$-many disjoint $\II$-cones available that are each homeomorphic to $2^\kappa$ with the $\II$-topology.}
\end{definition}

\begin{lemma}
  Assume that $W=X\dot\cup Y$ is a union of $X$ and $Y$ on disjoint $\II$-cones. Then, for each in the following list of
  classes, $W$ is a member if and only if both $X$ and $Y$ are members -- these classes are: $\II$-open, $\II$-closed, $\II$-$\cap$, $\II$-$\cup$, $\II$-$G_\kappa$, $\II$-$F_\kappa$. An analogous result holds for unions on disjoint $\II$-cones of a larger (finite will be sufficient for our purposes) number of sets.
\end{lemma}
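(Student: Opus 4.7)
The strategy is to reduce everything to two basic observations about the $\II$-clopen sets $[f]$ and $[g]$, and then to exploit the homeomorphisms $\pi,\rho$. Let $\mathcal{P}$ denote any of the six classes listed in the statement.

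The first observation I would record is a \emph{localization lemma}: if $[h]$ is an $\II$-clopen set and $A \subseteq [h]$, then $A$ has property $\mathcal{P}$ as a subset of $2^\kappa$ if and only if $A$ has property $\mathcal{P}$ in the subspace topology on $[h]$. For $\II$-open and $\II$-closed this is immediate since $[h]$ is both $\II$-open and $\II$-closed; for $\II$-$G_\kappa$ and $\II$-$F_\kappa$ it follows by intersecting/unioning each layer of a witness with $[h]$; for $\II$-$\cap$ and $\II$-$\cup$ it follows from the previous cases since the relevant open and closed pieces can be intersected with $[h]$ without leaving the class.

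Combining this with the fact that $\pi\colon 2^\kappa \to [f]$ and $\rho\colon 2^\kappa \to [g]$ are $\II$-homeomorphisms (so they preserve all six properties when passing between $2^\kappa$ and the subspace topologies on $[f]$, $[g]$), the forward direction is straightforward. Indeed, if $W$ has property $\mathcal{P}$ and $[f], [g]$ are disjoint $\II$-clopen, then $W \cap [f] = \pi[X]$ and $W \cap [g] = \rho[Y]$ each have property $\mathcal{P}$, and the localization lemma together with the homeomorphisms $\pi,\rho$ transfers $\mathcal{P}$ back to $X$ and $Y$.

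The backward direction is the main content. Starting from $X, Y$ with property $\mathcal{P}$, I push forward to get $\pi[X] \subseteq [f]$ and $\rho[Y] \subseteq [g]$ with property $\mathcal{P}$ in $2^\kappa$ (again by the localization lemma). The task is then to check that $\mathcal{P}$ is preserved under the union $\pi[X] \cup \rho[Y]$, given that the two summands sit inside disjoint $\II$-clopen sets. For $\II$-open, $\II$-closed and $\II$-$F_\kappa$ this is closure under finite (in fact $\kappa$-sized) unions. For $\II$-$\cup$ it is the trivial grouping $(A_1 \cup A_2) \cup (B_1 \cup B_2)$. The main obstacle, and the only case requiring the disjoint-clopen hypothesis, is $\II$-$G_\kappa$ and $\II$-$\cap$. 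For $\II$-$G_\kappa$: if $\pi[X] = \bigcap_{\alpha<\kappa} U_\alpha$ and $\rho[Y] = \bigcap_{\alpha<\kappa} V_\alpha$ with $U_\alpha, V_\alpha$ $\II$-open, then I would verify
\[
\pi[X] \cup \rho[Y] \;=\; \bigcap_{\alpha<\kappa} \bigl( (U_\alpha \cap [f]) \cup (V_\alpha \cap [g]) \cup (2^\kappa \setminus ([f]\cup[g])) \bigr),
\]
using that $[f]$ and $[g]$ are disjoint $\II$-clopen; the right-hand side is $\II$-$G_\kappa$. For $\II$-$\cap$: writing $\pi[X] = A_1 \cap B_1$ and $\rho[Y] = A_2 \cap B_2$ with $A_i$ $\II$-open and $B_i$ $\II$-closed, and replacing $A_i,B_i$ by their intersections with $[f]$ or $[g]$ (which preserves the open/closed classification since $[f], [g]$ are $\II$-clopen), I would take
\[
A := A_1 \cup A_2, \qquad B := B_1 \cup B_2 \cup (2^\kappa \setminus ([f] \cup [g])),
\]
noting that $A$ is $\II$-open and $B$ is $\II$-closed, and check via a case analysis on the three pairwise disjoint clopen regions $[f], [g], 2^\kappa \setminus ([f]\cup[g])$ that $A \cap B = \pi[X] \cup \rho[Y]$. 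The finite-union generalization is immediate by induction, since a finite disjoint union of $\II$-clopen sets is again $\II$-clopen.
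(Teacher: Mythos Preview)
Your argument is sound in outline and in most details, and since the paper leaves this lemma entirely to the reader (``an easy check''), you have in fact supplied more than the paper does. There is, however, one genuine slip in your $\II$-$G_\kappa$ computation.

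You claim
\[
\pi[X] \cup \rho[Y] \;=\; \bigcap_{\alpha<\kappa} \Bigl( (U_\alpha \cap [f]) \cup (V_\alpha \cap [g]) \cup \bigl(2^\kappa \setminus ([f]\cup[g])\bigr) \Bigr),
\]
but this equality fails on the region $2^\kappa \setminus ([f]\cup[g])$: every term of the intersection contains that entire region, so the right-hand side contains all of $2^\kappa \setminus ([f]\cup[g])$, whereas the left-hand side is contained in $[f]\cup[g]$. The fix is simply to drop the offending term: take
\[
\pi[X] \cup \rho[Y] \;=\; \bigcap_{\alpha<\kappa} \Bigl( (U_\alpha \cap [f]) \cup (V_\alpha \cap [g]) \Bigr),
\]
which is correct on all three regions and exhibits the set as $\II$-$G_\kappa$ since each $(U_\alpha \cap [f]) \cup (V_\alpha \cap [g])$ is $\II$-open. (In your $\II$-$\cap$ case the analogous extra term is harmless, because there it is added to the closed part $B$ and then killed off by the open part $A \subseteq [f]\cup[g]$; but you could drop it there too for symmetry.)
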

\begin{proof}
  An easy check that we would like to leave to our readers.
\end{proof}

Armed with the above lemma, we may now easily construct sets which are neither $\II$-open nor $\II$-closed, and satisfy arbitrary combinations of the remaining four properties of being $\II$-$\cap$, $\II$-$\cup$, $\II$-$G_\kappa$ and $\II$-$F_\kappa$, by simply combining the four basic sets $2^\kappa$, $\{\zero\}$, $\I_\kappa$ and $\ub(\X)$ from above via taking unions on disjoint $\II$-cones, and via taking complements of such sets. We illustrate those results in Table \ref{table:16} below. For a set $X$, we let $\overline{X}$ denote the complement of $X$: for example, $\overline{\{\zero\}}$ denotes $2^\kappa\setminus\{\zero\}$ in the table below. When we put the symbol $\sim$ in our table, this means that a set with exactly the properties indicated by the combination of
$+$'s
in its row can simply be obtained by considering the complement of one of the other sets used in the table (which might only appear further down in the table) -- note that, as we mentioned above, a set is in $\II$-$\cap$ if and only if its complement is in $\II$-$\cup$, and it is in
$\II$-$G_\kappa$ if and only if its complement is in $\II$-$F_\kappa$, i.e., taking complements corresponds to switching the entries of the corresponding columns in the table.
We will leave the completely straightforward task of verifying any of the entries in Table \ref{table:16} to our interested readers.

\renewcommand{\arraystretch}{1.2}
\begin{table}[hbt!]
\centering
\begin{tabular}
{ c | c | c | c | c }
  & $\II$-$\cap$ & $\II$-$\cup$ & $\II$-$G_\kappa$ & $\II$-$F_\kappa$ \\
\hline
\hline
 $\{\zero\}\dot\cup\overline{\{\zero\}}$ & + & + & + & + \\
 $\ub(\X)\dot\cup\{\zero\}$ & + & + & + &  \\
 $\sim$ & + & + &  & + \\
 $\ub(\X)\,\dot\cup\,\overline{\ub(\X)}$ & + & + &  &  \\
\hline
 $\I_\kappa$ & + &  & + & + \\
 $\I_\kappa\,\dot\cup\,\ub(\X)$ & + &  & + &  \\
 $\sim$ & + &  &  & + \\
 $\I_\kappa\,\dot\cup\,\ub(\X)\,\dot\cup\,\overline{\ub(\X)}$ & + &  &  &  \\
\hline
 $\sim$ &  & + & + & + \\
 $\ub(\X)\,\dot\cup\,\overline{\I_\kappa}$ &  & + & + &  \\
 $\sim$ &  & + &  & + \\
 $\sim$ &  & + &  &  \\
\hline
 $\I_\kappa\,\dot\cup\,\overline{\I_\kappa}$ &  &  & + & + \\
 $\ub(\X)\,\dot\cup\,\I_\kappa\,\dot\cup\,\overline{\I_\kappa}$ &  &  & + &  \\
 $\sim$ &  &  &  & + \\
 $\ub(\X)\,\dot\cup\,\overline{\ub(\X)}\,\dot\cup\,\I_\kappa\,\dot\cup\,\overline{\I_\kappa}$ &  &  &  &  \\
\hline
\end{tabular}
\bigskip
\caption{All 16 properties}
\label{table:16}
\end{table}

It remains open whether there is an $\II$-Borel hierarchy which is somewhat similar to the classical Borel hierarchy.

\begin{question}\label{question Borel hierarchy}
 Given that $\II$ contains an unbounded subset of $\kappa$,
 are there $\II$-Borel sets which are substantially more complicated than the examples from Table~\ref{table:16}?
 Do the $\II$-Borel sets even form a strict hierarchy of length $\kappa^+$ (at least if $\II=\NS_\kappa$)?
\end{question}

\subsection{On the collection of closed and unbounded sets}\label{section:clubs}

Let \[\Closed_\kappa=\{\chi_x\mid x\textrm{ is a closed subset of }\kappa\}\]
be the collection of closed (possibly bounded) subsets of $\kappa$, which is a closed set in the bounded topology
(and therefore also in any ideal topology). Let
 \[\Club_\kappa=\{\chi_x\mid x\textrm{ is a club subset of }\kappa\}=\Closed_\kappa\,\cap\,\ub_\kappa\]
be the collection of club subsets of $\kappa$.
Recall from Section~\ref{subsec:ub_kappa} that
$\ub_\kappa$ is $G_\kappa$ (in the bounded topology);
moreover, since
every closed set (in the bounded topology) is $G_\kappa$ (again in the bounded topology),
also
$\Closed_\kappa$ is
$G_\kappa$ (in the bounded topology);
therefore,
$\Club_\kappa$ is $G_\kappa$ (in the bounded topology),
and hence $\Club_\kappa$ is $\II$-$G_\kappa$ for any ideal~$\II$.

If $\II$ is tall (hence in particular if $\II = \NS_\kappa$),
Corollary~\ref{corollary:tall}
implies that
$\Club_\kappa$ is, unlike for the bounded topology, an intersection of an $\II$-open and a closed (and hence also $\II$-closed) set (so in this case,
$\Club_\kappa$ is~$\II$-$\cap$).

We now want to deal with the question when $\Club_\kappa$ can be on any of the other low levels of $\II$-Borel hierarchies. We first characterize exactly when $\Club_\kappa$ is $\II$-closed:

\begin{observation}\label{observation:stationaryclubclosed}
  $\Club_\kappa$ is $\II$-closed if and only if $\II$ contains a stationary subset of $\kappa$.
\end{observation}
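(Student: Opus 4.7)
For the forward direction, my plan is to test the hypothesis against the point $\zero \in 2^\kappa$, which is not the characteristic function of a club (the empty set is not unbounded). Since $\zero$ lies in the $\II$-open complement of $\Club_\kappa$, there exists $f \in \heart_\II$ with $\zero \in [f]$ and $[f] \cap \Club_\kappa = \emptyset$; note that $A := \dom(f) \in \II$ and that $f$ is identically $0$ on $A$. I would then argue by contradiction: if $A$ were nonstationary, there would be a club $C \subseteq \kappa$ disjoint from $A$, but then $\chi_C$ would lie in $[f] \cap \Club_\kappa$, a contradiction. Hence $A \in \II$ is a stationary subset of $\kappa$, as required.

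For the backward direction, I assume $S \in \II$ is stationary and aim to produce, for each $x \in 2^\kappa \setminus \Club_\kappa$, an $\II$-cone around $x$ disjoint from $\Club_\kappa$. Since $x$ is not a club, either $x$ is not closed as a subset of $\kappa$, or $x$ is bounded in $\kappa$. In the first case, I fix a limit point $\alpha$ of $x$ with $x(\alpha) = 0$; the basic bounded-open (hence $\II$-open) cone $[x \restr (\alpha+1)]$ contains $x$, and each element of it fails to be closed at $\alpha$, so avoids $\Club_\kappa$. In the second case, I fix $\beta < \kappa$ with $x(\gamma) = 0$ for all $\gamma \geq \beta$, and let $f$ be the partial function with domain $A := \beta \cup S \in \II$ extending $x \restr \beta$ and set to $0$ on $S \setminus \beta$; then $x \in [f]$, and every $y \in [f]$ is disjoint from the stationary set $S \setminus \beta$ (stationary because removing a bounded set preserves stationarity), hence $y$ is itself nonstationary and in particular not a club.

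I do not anticipate a serious obstacle here. The forward direction hinges on the single observation that $\zero$ is a convenient test point in the complement of $\Club_\kappa$, and the backward direction is a routine case split according to the two ways ``club'' can fail. The one subtle point worth flagging is that the backward direction genuinely uses stationarity of $S$ rather than mere unboundedness: if $S$ were only unbounded, any club disjoint from $S$ would land in $[f]$ and defeat the argument.
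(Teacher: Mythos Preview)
Your proof is correct and follows essentially the same approach as the paper: testing the $\II$-open complement at $\zero$ for the forward direction, and the same two-case split (not closed versus bounded) for the backward direction, with the bounded case handled via an $\II$-cone that forces value $0$ on a stationary tail of $S$. One minor slip to fix: in the bounded case, a set disjoint from a stationary set need not itself be nonstationary (it might be co-stationary), but your conclusion that such a $y$ is not a club is still valid, since any club must meet every stationary set.
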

\begin{proof}
Fix $S\in\II$ stationary. Let $x\subseteq\kappa$ not be in $\Club_\kappa$, i.e., $x$ not closed unbounded.
In case~$x$ is not closed, let $\alpha < \kappa$ be such that $x \restr \alpha$ is not closed; then $[x \restr \alpha] \cap \Club_\kappa = \emptyset$.

If $x$ is bounded, fix $\alpha < \kappa$ such that $x \subseteq \alpha$, and let $S' := S\setminus\alpha\in\II$. Since $S'$ is stationary, it intersects each closed unbounded subset of $\kappa$, and hence $[x\restr S']=[\zero\restr S']$ has empty intersection with $\Club_\kappa$.

The above shows that in each case, $x$ is in the $\II$-interior of the complement of $\Club_\kappa$, and hence that $\Club_\kappa$ is $\II$-closed, as desired.

For the reverse direction, assume that $\Club_\kappa$ is $\II$-closed. Then, its complement contains an $\II$-cone $[f]$ with $\emptyset\in[f]$. Hence, $f$ has to have constant value $0$. Since $[f]$ must not contain a club subset of $\kappa$,
$\dom(f)$ has to be stationary, and $\dom(f) \in \II$,
which finishes the proof.
\end{proof}

Note that if $\Club_\kappa$ has non-empty $\II$-interior, then $\II$ contains a club, which can be seen as follows.
Assume $C\in \Club_\kappa$ is in the $\II$-interior, i.e., there exists $f\in \heart_\II$ with $C\in [f]\subseteq \Club_\kappa$. If $\dom(f)$ does not contain a club, then there exists $x\in [f]$ which is not a club.
By the above Observation~\ref{observation:stationaryclubclosed}, it follows that $\Club_\kappa$ is $\II$-closed whenever $\Club_\kappa$ has non-empty $\II$-interior.
From this, we
get that $\Club_\kappa$ being $\II$-$\cup$ is equivalent to $\Club_\kappa$ being $\II$-closed.

In particular, $\Club_\kappa$ being $\II$-open implies that $\II$ contains a club (and hence, by Observation~\ref{observation:stationaryclubclosed}, that $\Club_\kappa$ is $\II$-closed). We now
investigate the possibility of $\Club_\kappa$ being $\II$-open and
give an exact characterization.
Let $\Lim$ denote the club set of all limit ordinals in~$\kappa$.

\begin{proposition}\label{proposition:clubopen}
  $\Club_\kappa$ is $\II$-open if and only if the following Property (*) holds: $\Lim\in\II$, and for every nonstationary subset $N$ of $\Lim$, there is a regressive function $r\colon N\to\kappa$ such that $\bigcup_{\alpha\in N}[r(\alpha),\alpha)\in\II$.
\end{proposition}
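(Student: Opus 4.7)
The plan is to prove both directions by analyzing explicitly which total extensions of a partial function in $\heart_\II$ are forced to lie in $\Club_\kappa$.

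\smallskip

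\textbf{Forward direction.} Assume $\Club_\kappa$ is $\II$-open. Applied to $\chi_\kappa=\one\in\Club_\kappa$, this yields $f\in\heart_\II$ with $\one\in[f]\subseteq\Club_\kappa$; so $f\equiv 1$ on $D_0:=\dom(f)$, and since $\chi_{D_0}\in[f]$ must be a club, $D_0$ itself is a club. I would then derive $\Lim\subseteq D_0$, which yields $\Lim\in\II$: if $\alpha\in\Lim\setminus D_0$, then since $D_0$ is closed there is $\beta<\alpha$ with $D_0\cap(\beta,\alpha)=\emptyset$; letting $T\subseteq(\beta,\alpha)$ be cofinal in $\alpha$, the set $x:=D_0\cup T\in[f]$ has $\alpha$ as a limit point with $\alpha\notin x$, contradicting $x\in\Club_\kappa$. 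For the second clause of (*), given nonstationary $N\subseteq\Lim$, I would pick a club $C\subseteq\Lim$ disjoint from $N$ (by intersecting any witnessing club with $\Lim$) and some $g\in\heart_\II$ with $C\in[g]\subseteq\Club_\kappa$; write $D:=\dom(g)$, so $g=\chi_C\restr D$. For each $\alpha\in N$, since $C$ is closed with $\alpha\notin C$ there is $\beta_\alpha<\alpha$ with $C\cap(\beta_\alpha,\alpha)=\emptyset$. If $(\beta_\alpha,\alpha)\setminus D$ were unbounded in $\alpha$, then the extension $x$ of $g$ defined by $x\equiv 1$ on $(\beta_\alpha,\alpha)\setminus D$ and on $C\setminus D$, and $x\equiv 0$ elsewhere outside $D$, would lie in $[g]$ and would have $\alpha$ as a limit point with $x(\alpha)=0$, contradicting $x\in\Club_\kappa$. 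Hence some $r(\alpha)<\alpha$ satisfies $[r(\alpha),\alpha)\subseteq D$, and then $\bigcup_{\alpha\in N}[r(\alpha),\alpha)\subseteq D\in\II$ witnesses (*).

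\smallskip

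\textbf{Backward direction.} Assume (*) and let $C$ be an arbitrary club; I need to produce $g\in\heart_\II$ with $C\in[g]\subseteq\Club_\kappa$. The set $N:=\Lim\setminus C$ is nonstationary (being disjoint from the club $C\cap\Lim$), so by (*) there is a regressive $r\colon N\to\kappa$ with $U:=\bigcup_{\alpha\in N}[r(\alpha),\alpha)\in\II$. I would set $D:=\Lim\cup U\in\II$ and $g:=\chi_C\restr D$, giving $C\in[g]$ immediately. For $[g]\subseteq\Club_\kappa$: any $x\in[g]$ satisfies $x\supseteq C\cap\Lim$ (since $C\cap\Lim\subseteq\Lim\subseteq D$ with $g$-value $1$ there), so $x$ is unbounded; for closedness, suppose $\alpha$ is a limit point of $x$, so $\alpha\in\Lim\subseteq D$ and $x(\alpha)=\chi_C(\alpha)$. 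If $\alpha\notin C$, then $\alpha\in N$, so $[r(\alpha),\alpha)\subseteq D$; combined with a $C$-gap $(\beta_\alpha,\alpha)$, this forces $x=\chi_C=0$ on the tail $(\max\{\beta_\alpha,r(\alpha)\},\alpha)$, contradicting $\alpha$'s being a limit of $x$. Hence $\alpha\in C$, so $x(\alpha)=1$ and $x$ is closed.

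\smallskip

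The main technical difficulty lies in the forward direction, where the bad extension $x\in[g]$ has to genuinely witness non-closedness at $\alpha$ while being a valid extension of $g$; the interplay between $C$, $D$, and $\kappa\setminus D$ (note that $C\not\subseteq D$ in general) has to be tracked carefully, and the two clauses of (*) correspond to distinct applications of $\II$-openness (namely to the clubs $\kappa$ and $C$).
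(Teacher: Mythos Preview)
Your proof is correct and follows essentially the same approach as the paper's. The only differences are expository: in the forward direction you additionally observe that $D_0$ is itself a club (by noting $\chi_{D_0}\in[f]$) and then use its closedness to locate a gap below $\alpha$, whereas the paper argues more tersely by directly building an extension that omits $\alpha$ yet is unbounded below it; and in both parts you write out the offending extension $x$ more explicitly than the paper does. The backward direction is virtually identical to the paper's.
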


\begin{proof}
Assume that $\Club_\kappa$ is $\II$-open. Let $[f]$ be an $\II$-cone such that $\kappa\in [f]\subseteq\Club_\kappa$. Then, $f=\one\restr A$ for some $A\in\II$. If $\alpha\in\Lim\setminus A$, then $[f]$ contains a subset of $\kappa$ that does not contain $\alpha$ as an element,
yet is unbounded below $\alpha$, contradicting that $[f]\subseteq\Club_\kappa$, and thus showing that $A\supseteq\Lim\in\II$.

Let $N$ be a nonstationary subset of $\Lim$. Let $C\subseteq\Lim$ be a club that is disjoint from $N$. There is some $[f]\subseteq\Club_\kappa$ with $C\in[f]$, hence $f=C\restr A$ for some $A\in\II$.
If for some $\alpha\in N$, the complement of $A$ were unbounded
in~$\alpha$,
then $[f]$ again contains a subset of $\kappa$ which does not contain $\alpha$ as an element (due to $C \cap N = \emptyset$),
yet is unbounded in $\alpha$, which is again a contradiction.
This now allows us to construct a regressive function $r$
on $N$ that is as desired.

\medskip

Assume now that $\II$ satisfies Property (*). We want to show that $\Club_\kappa$ is $\II$-open. Let $C\subseteq\kappa$ be any club subset of $\kappa$. It suffices to find a function $f\colon A\to 2$ in $\heart_\II$ such that $C\in[f]\subseteq\Club_\kappa$.

  Let $N$
be the nonstationary set $\Lim\setminus C$, let $r\colon N\to\kappa$ be regressive such that $A':=\bigcup_{\alpha\in N}[r(\alpha),\alpha)\in\II$, and let $A=\Lim\cup A'$. Let $f=C\restr A$. Then, clearly, $C\in[f]$. We have to show that $[f]\subseteq\Club_\kappa$. Let $x\in[f]$.

Since $C \cap A$ is unbounded, $x$ is clearly unbounded. It remains to show that $x$ is closed.
  Take any strictly increasing sequence $\langle\alpha_i\mid i<\cof(\alpha)\rangle$ with limit $\alpha$. The only problematic case that we have to consider is if $\alpha\not\in x$, 
  yet for all $i<\cof(\alpha)$, $\alpha_i\in x$.
Since $x$ and $C$ agree on $A \supseteq \Lim$ and $\alpha \in \Lim$, we have $\alpha \notin C$, hence $\alpha \in N$. So by the definition of $A$, $[r(\alpha), \alpha) \subseteq A$, hence all but boundedly many $\alpha_i$ are in $C$, contradicting $C$ being closed.
\end{proof}

It remains to observe that ideals satisfying Property (*) actually exist:

\begin{observation}
  There is an ideal $\II$ such that $\II$ satisfies Property (*).
\end{observation}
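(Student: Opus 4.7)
The plan is to construct $\II$ explicitly as the dual of a concrete filter. Writing $C + 1 := \{\gamma + 1 : \gamma \in C\}$ for $C \subseteq \kappa$, set
\[
\II := \{A \subseteq \kappa : A \cap (C+1) = \emptyset \text{ for some club } C \subseteq \kappa\}.
\]
Equivalently, $\II$ is dual to the filter generated by the sets $C+1$ as $C$ ranges over clubs in $\kappa$.

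First I would verify that $\II$ is a ${<}\kappa$-complete proper ideal extending $\bd_\kappa$. Closure under subsets is immediate. For ${<}\kappa$-completeness I would use that $(C_1 + 1) \cap (C_2 + 1) = (C_1 \cap C_2) + 1$, together with the fact that an intersection of fewer than $\kappa$ clubs in $\kappa$ is a club. Properness follows from $C + 1 \neq \emptyset$ for any club $C$, and $\bd_\kappa \subseteq \II$ follows by taking, for each bounded set, a tail-club disjoint from it. Crucially, $\Lim \in \II$ is immediate since $C+1$ consists entirely of successor ordinals, so $\Lim \cap (C+1) = \emptyset$ for every club $C$.

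For the second part of Property~$(*)$, given a nonstationary $N \subseteq \Lim$, I would fix a club $C \subseteq \kappa$ disjoint from $N$ and define $r(\alpha) := \sup(C \cap \alpha) + 2$ for $\alpha \in N$. Regressivity is clear: since $\alpha \in \Lim \setminus C$ and $C$ is closed, $\sup(C \cap \alpha) < \alpha$, and because $\alpha$ is a limit ordinal, also $\sup(C \cap \alpha) + 2 < \alpha$. To show $\bigcup_{\alpha \in N}[r(\alpha), \alpha) \in \II$ with witness $C$, I would argue by contradiction: if $\gamma + 1 \in [r(\alpha), \alpha)$ for some $\gamma \in C$ and $\alpha \in N$, then $\sup(C \cap \alpha) + 2 \leq \gamma + 1 < \alpha$, which forces $\sup(C \cap \alpha) < \gamma < \alpha$; but $\gamma \in C$ and $\gamma < \alpha$ imply $\gamma \in C \cap \alpha$, hence $\gamma \leq \sup(C \cap \alpha)$, a contradiction.

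The only subtle point is the choice of the constant $+2$ in the definition of $r$: with $r(\alpha) := \sup(C \cap \alpha) + 1$ the argument fails, because then $\sup(C \cap \alpha) + 1$ itself would lie in $(C+1) \cap [r(\alpha), \alpha)$. The extra $+2$ supplies exactly the one ordinal of slack needed to avoid this collision, which is harmless because $\alpha$ is a limit and thus strictly larger than $\sup(C \cap \alpha) + n$ for every finite $n$.
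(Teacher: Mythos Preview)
Your proof is correct and is essentially the same construction as the paper's: your ideal $\II$ coincides with the paper's ${<}\kappa$-complete ideal generated by $\Lim$ together with $\{A^\oplus : A \text{ nonstationary}\}$ (since $A \cap (C+1) = \emptyset$ iff $\{\beta : \beta+1 \in A\}$ is disjoint from $C$, i.e., nonstationary), and your regressive function $r(\alpha) = \sup(C \cap \alpha) + 2$ is exactly the paper's choice $r(\beta) = \max(C \cap \beta) + 2$. The only difference is presentational---you describe the ideal from the filter side rather than via generators.
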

\begin{proof}
  Given $A\subseteq\kappa$, let $A^\oplus=\{\alpha+1\mid\alpha\in A\}$, and let $A^\ominus=\{\alpha-1\mid\alpha\in A$ is a successor ordinal$\}$. Let $\II$ be the (${<}\kappa$-complete) ideal generated by $\Lim$ together with $\{A^\oplus\mid A\subseteq\kappa$ nonstationary$\}$. It is easy to see that by the ${<}\kappa$-completeness of $\NS_\kappa$, $\II$ is a proper ideal on $\kappa$. Let $N$ be a nonstationary subset of $\Lim$, let $C\subseteq\Lim$ be a club that is disjoint from $N$, and let \[A=\bigcup\{[\alpha+2,\beta)\mid\beta\in N\,\land\,\alpha=\max(C\cap\beta)\}.\]
Let $B=A^\ominus$. Then, $B$ is disjoint from $C$, and therefore nonstationary, yielding that $A\in\II$, and showing that $\II$ satisfies Property (*).
\end{proof}

We already observed in the above that $\Club_\kappa$ is
Edinburgh~$G_\kappa$, and also that it is an intersection of an Edinburgh open and an Edinburgh closed set; moreover, we have seen that $\Club_\kappa$ is neither Edinburgh open or Edinburgh closed, nor a union of an Edinburgh open and an Edinburgh closed set.
Let us finally verify that $\Club_\kappa$ is not Edinburgh $F_\kappa$
(and hence $\Club_\kappa$ provides a natural example of an Edinburgh~$G_\kappa$ set which is not Edinburgh~$F_\kappa$),
by an argument that builds on the argument for Theorem~\ref{theorem:ubnotfsigma}.
Recall from its proof that
we say that
$f\colon A\to 2$ is bounded if
$\{\alpha\in A\mid f(\alpha)=1\}$ is bounded in $\kappa$.
We will also need the following.

\begin{definition}
If $A\subseteq\kappa$ and $f\colon A\to 2$, we say that $f$ is \emph{closed} if
$\{\alpha\in A\mid f(\alpha)=1\}$ is
a closed subset of~$\kappa$,\footnote{Clearly, if $A=\kappa$, then $f\colon A\to 2$ is closed if and only if $f$ is closed in the usual sense when identified with a subset of $\kappa$.}
i.e., if the following holds:
whenever $\lambda<\kappa$ and $\langle\alpha_i\mid i<\lambda\rangle\subseteq\dom(f)$
  is an increasing sequence with $f(\alpha_i)=1$ for every $i<\lambda$,
then $\alpha=\bigcup_{i<\lambda}\alpha_i\in\dom(f)$ and $f(\alpha)=1$.
\end{definition}

\begin{theorem}\label{theorem:clubnotfsigma}
  $\Club_\kappa$ is not Edinburgh $F_\kappa$.
\end{theorem}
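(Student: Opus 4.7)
The plan is to mimic the diagonalization of Theorem~\ref{theorem:ubnotfsigma}, but with enough bookkeeping to force the diagonalizing object to be not only unbounded but closed. Assume for contradiction $\Club_\kappa=\bigcup_{\alpha<\kappa}[P_\alpha]_{\NS_\kappa}$ with each $P_\alpha\subseteq\heart_{\NS_\kappa}$ closed under restrictions. I would build a $\subseteq$-increasing sequence $\langle f_\alpha\mid\alpha<\kappa\rangle$ in $\heart_{\NS_\kappa}$ satisfying: $f_\alpha$ is closed and bounded, $\dom(f_\alpha)$ is a bounded initial segment $[0,\beta_\alpha]$ of $\kappa$ with $f_\alpha(\beta_\alpha)=1$, the $\beta_\alpha$'s are strictly increasing with $\beta_\alpha\geq\alpha$, and at each successor step a nonstationary $A_\alpha\subseteq\dom(f_{\alpha+1})$ is recorded with $f_{\alpha+1}\restr A_\alpha\notin P_\alpha$. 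Then $f=\bigcup_{\alpha<\kappa}f_\alpha\in 2^\kappa$ will be the characteristic function of a club while $f\notin[P_\alpha]_{\NS_\kappa}$ for every $\alpha$, a contradiction.

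The engine is the following analogue of the Claim in Theorem~\ref{theorem:ubnotfsigma}: given closed bounded $f\in\heart_{\NS_\kappa}$ with $\dom(f)=[0,\beta]$ and $f(\beta)=1$, a target $\gamma\in(\beta,\kappa)$, and $P\subseteq\heart_{\NS_\kappa}$ closed under restrictions with $[P]_{\NS_\kappa}\subseteq\Club_\kappa$, there is a closed bounded extension $g$ of $f$ with $\dom(g)=[0,\beta']$ for some $\beta'\geq\gamma$, $g(\beta')=1$, and a nonstationary $A\subseteq[0,\beta']$ with $g\restr A\notin P$. To prove it, first extend $f$ to $f^*\colon[0,\gamma+1]\to 2$ by zeros on $(\beta,\gamma]$ and $1$ at $\gamma+1$ (keeping closedness); then for each nonstationary $B\subseteq\kappa\setminus(\gamma+2)$ let $f^*_B$ extend $f^*$ by $0$ on $B$. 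If every such $f^*_B$ were in $P$, the constant-$0$ extension $x\in 2^\kappa$ of $f^*$ would satisfy $x\Restr\NS_\kappa\subseteq P$ (for any nonstationary $D$, set $B:=D\setminus(\gamma+2)$; then $D\subseteq\dom(f^*_B)$, $x$ and $f^*_B$ agree there, and restriction-closedness of $P$ gives $x\restr D\in P$), hence $x\in[P]_{\NS_\kappa}\subseteq\Club_\kappa$, contradicting boundedness of $x^{-1}(1)$. So some $f^*_B\notin P$; fatten it to $g$ on $[0,\beta']$ for any $\beta'>\sup B$ by filling $(\gamma+1,\beta')\setminus B$ with $0$'s and setting $g(\beta')=1$. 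The new $1$-set $f^{-1}(1)\cup\{\gamma+1,\beta'\}$ is clearly closed, and $A:=\dom(f^*_B)$ is the required nonstationary witness.

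The main induction is then routine: at successor stages apply the claim to $f_\alpha,P_\alpha$ with target $\gamma=\max(\alpha,\beta_\alpha)+1$; at limit $\lambda<\kappa$ set $\beta_\lambda=\sup_{\alpha<\lambda}\beta_\alpha<\kappa$ and $f_\lambda=\bigcup_{\alpha<\lambda}f_\alpha\cup\{(\beta_\lambda,1)\}$, whose $1$-set stays closed since $\beta_\lambda$ is a limit of $1$-valued $\beta_\alpha$'s, and whose domain $[0,\beta_\lambda]$ is bounded hence nonstationary. The union $f=\bigcup_{\alpha<\kappa}f_\alpha\in 2^\kappa$ is unbounded (it contains every $\beta_\alpha$) and closed (any limit of $f^{-1}(1)$ below $\kappa$ lies inside some previously closed $f_\alpha^{-1}(1)$), so $f\in\Club_\kappa$, while $f\restr A_\alpha=f_{\alpha+1}\restr A_\alpha\notin P_\alpha$ for every $\alpha<\kappa$, yielding the desired contradiction. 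The delicate point that goes beyond Theorem~\ref{theorem:ubnotfsigma} is arranging closure to propagate through limit stages without interfering with the diagonalization at successors; the device of using initial-segment domains capped by a $1$ at the top is what makes both requirements compatible.
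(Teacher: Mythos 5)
Your proof has a genuine gap, and it sits exactly where you locate the ``delicate point'' incorrectly. In your proof of the key claim, in order to conclude that the constant-$0$ extension $x$ of $f^*$ satisfies $x\Restr\NS_\kappa\subseteq P$, you must let $B$ range over \emph{all} nonstationary subsets of $\kappa\setminus(\gamma+2)$, including unbounded ones: for an unbounded nonstationary $D$, your $B:=D\setminus(\gamma+2)$ is unbounded, and without $f^*_B\in P$ for such $B$ you cannot handle the restriction $x\restr D$. Consequently, the conclusion ``some $f^*_B\notin P$'' may only be witnessed by an \emph{unbounded} $B$, and then your fattening step ``to $[0,\beta']$ for any $\beta'>\sup B$'' is vacuous, since $\sup B=\kappa$.

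Moreover, this gap cannot be repaired within your architecture, because the claim you formulate is false. Let $P$ be the set of all elements of $\heart_{\NS_\kappa}$ with bounded domain. Then $P$ is closed under restrictions, and $[P]_{\NS_\kappa}=\emptyset\subseteq\Club_\kappa$, since every $x\in 2^\kappa$ has a restriction to an unbounded nonstationary set (e.g.\ the set of successor ordinals), and such a restriction is not in $P$. Yet every candidate witness $g\restr A$ with $A\subseteq[0,\beta']$ has bounded domain and hence lies in $P$, so the conclusion of your claim fails for this $P$; your main induction would be stuck at any stage with $P_\alpha=P$, because the final club $f$ indeed lies outside $[P_\alpha]_{\NS_\kappa}$, but this is witnessed only by restrictions of $f$ to \emph{unbounded} nonstationary sets, which your bookkeeping (witnesses $A_\alpha$ inside bounded initial segments) can never produce. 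This is precisely where the paper's proof differs: there the stage-$\alpha$ witness is the whole condition $f_{\alpha+1}\notin P_\alpha$, whose domain is a nonstationary set that is in general unbounded (it contains the complement of a club), with all new values $0$ except at one point $\gamma_{\alpha+1}$; compatibility with these unbounded witnesses is then maintained by carrying along a decreasing sequence of clubs $C_\alpha$ disjoint from $\dom(f_\alpha)$, intersecting them at limits, and threading the diagonal club $\{\gamma_\alpha\mid\alpha<\kappa\}$ through them. Allowing witnesses with unbounded nonstationary domains, together with this club bookkeeping, is the unavoidable extra content beyond Theorem~\ref{theorem:ubnotfsigma} (whose own witnesses, note, also have unbounded domains --- unboundedness of the diagonal object is simply much easier to preserve than closedness).
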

\begin{proof}
Let $\heart$ abbreviate $\heart_{\NS_\kappa}$.
  Assume for a contradiction that
  $\Club_\kappa$ is Edin\-burgh $F_\kappa$,
  i.e., that $\Club_\kappa=\bigcup_{\alpha<\kappa}[P_\alpha]_{\NS_\kappa}$, with each $P_\alpha\subseteq\heart$ closed under restrictions. We want to inductively construct a club subset of $\kappa$ which is not in the above union, and thus reach a contradiction. The key ingredient will be the following claim:

\begin{claim*}
Suppose $f \in \heart$ is closed and bounded,
$C\subseteq\kappa$ is a club subset of $\kappa$ that is disjoint from $\dom(f)$, and $[P]_{\NS_\kappa}$ is Edinburgh closed with $P\subseteq\heart$ closed under restrictions,
and such that $[P]_{\NS_\kappa}$ contains only
club
subsets of $\kappa$. Then there is an extension $g\supseteq f$ of $ f$ in $\heart$ which is closed and bounded, such that $g\not\in P$, and such that for some $\gamma\in C$, $\dom(g) \supseteq \gamma + 1$, and
\[\forall\delta\in\dom(g)\setminus\dom(f)\quad g(\delta)=1\iff\delta=\gamma.\]
\end{claim*}

\begin{proof}
  Let $C^* := C\setminus\{\min(C)\}$. Let $f^*\in\heart$ be the extension of $f$ with $\dom(f^*)=\kappa\setminus C^*$, with $f^*(\min(C))=1$, and with $f^*(\alpha)=0$ whenever $\alpha\in\kappa\setminus(\dom(f)\cup C)$. For $A\subseteq C^*$ in $\NS_\kappa$, let $f_A\in\heart$ denote the extension of $f^*$ with $\dom(f_A)=\dom(f^*)\cup A$ and with $f_A(\alpha)=0$ for every $\alpha\in A$. Assume for a contradiction that every such $f_A$ were an element of $P$. But then, letting $x\in 2^\kappa$ be the extension of $f^*$ with $x(\alpha)=0$ for every $\alpha\in C^*$, it follows, since $P$ is closed under restrictions,
  that $x\Restr\NS_\kappa\subseteq P$, and hence that $x\in[P]_{\NS_\kappa}$.
  But\footnotemark{} $x\in\NS_\kappa$, contradicting our assumption on $P$.
\end{proof}

\footnotetext{It is easy to see that $x$ is actually bounded, so assuming in the claim that $[P]_{\NS_\kappa}$ contains only unbounded sets would be sufficient. Therefore a slight modification of the proof of the
theorem
yields the following stronger result: every Edinburgh $F_\kappa$ set which contains $\Club_\kappa$ also contains some bounded set.}

Let $f_0 :=\emptyset$, and let $C_0:=\kappa$. Then
$f_0\in\heart$ is closed and bounded.
Given
$f_\alpha\in \heart$ closed and bounded
and $C_\alpha$ which is disjoint from $\dom(f_\alpha)$,
  let $f_{\alpha+1}$ and $\gamma_{\alpha+1}$ be obtained by an application of the claim with respect to $C_\alpha$ and $P_\alpha$, that is,
$\gamma_{\alpha+1} \in C_\alpha$,
and
$f_{\alpha+1}$
is an extension of $f_\alpha$ in $\heart$ which is closed and bounded, with $\dom(f_{\alpha+1}) \supseteq \gamma_{\alpha+1}+1$ and
\[\forall\delta\in\dom(f_{\alpha+1})\setminus \dom(f_\alpha)\quad f_{\alpha+1}(\delta)=1\iff\delta=\gamma_{\alpha+1},\] and such that $f_{\alpha+1}\not\in P_\alpha$. Let $C_{\alpha+1}\subseteq C_\alpha\setminus(\gamma_{\alpha+1}+1)$ be a club subset of $\kappa$ that is disjoint from $\dom(f_{\alpha+1})$. At limit stages $\alpha<\kappa$, let $\gamma_\alpha := \bigcup_{\beta<\alpha}\gamma_\beta$,
and let $f_\alpha :=\bigcup_{\beta<\alpha}f_\beta\cup\{(\gamma_\alpha,1)\}$. Note that $\gamma_\alpha\in C_\alpha:=\bigcap_{\beta<\alpha}C_\beta$, and therefore $\gamma_\alpha\not\in\dom(f_\beta)$ for any $\beta<\alpha$. Hence,
$f_\alpha$ is indeed a function, and thus an element of $\heart$.
It is easy to see
that the $\gamma_\beta$'s are strictly increasing,
and that $f_\alpha$ is closed and bounded.
Then $f:=\bigcup_{\alpha<\kappa}f_\alpha \in 2^\kappa$ is closed and unbounded (in fact, the characteristic function of the club set $\{ \gamma_\alpha \with \alpha < \kappa \}$),
yet $f\not\in\bigcup_{\alpha<\kappa}[P_\alpha]_{\NS_\kappa}$.
\end{proof}

This yields
yet another characterization of when $\II$ contains a stationary subset of $\kappa$ (which also shows, by
Observation~\ref{observation:stationaryclubclosed}, that $\Club_\kappa$ is $\II$-$F_\kappa$ if and only if it is $\II$-closed):

\begin{corollary}
  $\Club_\kappa$ is $\II$-$F_\kappa$ if and only if $\II$ contains a stationary subset of $\kappa$.
\end{corollary}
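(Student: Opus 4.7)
The plan is to derive both directions by combining the earlier results, with essentially no new work.

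For the ``if'' direction, suppose $\II$ contains a stationary subset of $\kappa$. By Observation~\ref{observation:stationaryclubclosed}, $\Club_\kappa$ is $\II$-closed, and any $\II$-closed set is trivially $\II$-$F_\kappa$.

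For the ``only if'' direction, I would argue contrapositively: assume $\II$ contains no stationary subset of $\kappa$, i.e., $\II\subseteq\NS_\kappa$. Then the induced basis $\{[f]\mid f\in\heart_\II\}$ is contained in $\{[f]\mid f\in\heart_{\NS_\kappa}\}$, so $\tau_\II\subseteq\tau_{\NS_\kappa}$. In particular, every $\II$-closed set is Edinburgh closed, and consequently every $\II$-$F_\kappa$ set is Edinburgh $F_\kappa$. But by Theorem~\ref{theorem:clubnotfsigma}, $\Club_\kappa$ is not Edinburgh $F_\kappa$, so it cannot be $\II$-$F_\kappa$ either.

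The main ``obstacle'' has already been overcome in the proof of Theorem~\ref{theorem:clubnotfsigma}; once that is in hand, the corollary reduces to the monotonicity of the Borel classes in $\II$ and the characterization of when $\Club_\kappa$ is $\II$-closed given by Observation~\ref{observation:stationaryclubclosed}.
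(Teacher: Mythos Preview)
Your proposal is correct and follows essentially the same approach as the paper: the ``if'' direction uses Observation~\ref{observation:stationaryclubclosed} verbatim, and the ``only if'' direction argues that $\II\subseteq\NS_\kappa$ implies the Edinburgh topology refines the $\II$-topology, so Theorem~\ref{theorem:clubnotfsigma} transfers down.
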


\begin{proof}
  If $\II$ contains a stationary subset of $\kappa$, then $\Club_\kappa$ is $\II$-closed by Observation \ref{observation:stationaryclubclosed}, and hence it is trivially also $\II$-$F_\kappa$. On the other hand, $\Club_\kappa$ is not Edinburgh~$F_\kappa$ by Theorem \ref{theorem:clubnotfsigma}, and hence if $\II\subseteq\NS_\kappa$, it is not $\II$-$F_\kappa$, for the Edinburgh topology then refines the $\II$-topology.
\end{proof}

\subsection{The club filter}\label{section:clubfilter}

Let $\cluf_\kappa$ denote the club filter on $\kappa$, i.e., the collection of all subsets of $\kappa$ that contain a club subset of $\kappa$. In the bounded topologies on higher cardinals, the club filter is usually the standard example for a non-Borel set. The situation is somewhat different for $\II$-topologies. The following is an immediate consequence of
Proposition \ref{proposition:talliffpositiveopen} and of Proposition \ref{proposition:containedimpliesopen},
letting $\JJ=\NS_\kappa$:

\begin{corollary} \
\begin{itemize}
\item $\II$ is stationarily tall if and only if $\cluf_\kappa$ is $\II$-closed.
  \item $\II$ contains a club subset of $\kappa$ if and only if $\cluf_\kappa$ is $\II$-open.
  \end{itemize}
\end{corollary}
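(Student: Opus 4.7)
The plan is to derive both biconditionals by direct specialization of the two earlier general propositions to $\JJ = \NS_\kappa$, using the defining identity $\cluf_\kappa = \NS_\kappa^*$ (the club filter is precisely the dual filter of the nonstationary ideal).

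For the first bullet, I would invoke Proposition~\ref{proposition:talliffpositiveopen} with $\JJ = \NS_\kappa$. By definition, $\II$ is $\NS_\kappa$-tall means exactly that $\II$ is stationarily tall. The proposition gives the equivalence of that condition with ``$\JJ^*$ is $\II$-closed'', which reads ``$\cluf_\kappa$ is $\II$-closed'' after the identification above. So the first bullet is immediate.

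For the second bullet, I would invoke Proposition~\ref{proposition:containedimpliesopen} with $\JJ = \NS_\kappa$. The condition ``$\II$ contains a set in $\JJ^*$'' becomes ``$\II$ contains a club subset of $\kappa$'', and the equivalent condition ``$\JJ^*$ is $\II$-open'' becomes ``$\cluf_\kappa$ is $\II$-open'', again giving the claim directly.

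There is essentially no obstacle here: the whole content of the corollary is the translation $\cluf_\kappa = \NS_\kappa^*$, and the two propositions have already done all the work (including, via Observation~\ref{observation:J_J*_same}, the passage from the ideal side to the dual filter side). The proof therefore reduces to citing the two propositions and remarking on this identification.
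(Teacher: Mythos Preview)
Your proposal is correct and follows exactly the approach the paper indicates: the corollary is stated there as an immediate consequence of Propositions~\ref{proposition:talliffpositiveopen} and~\ref{proposition:containedimpliesopen} with $\JJ=\NS_\kappa$, and you have spelled out precisely this specialization via $\cluf_\kappa=\NS_\kappa^*$.
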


Compare the first item with Observation~\ref{observation:stationaryclubclosed}, which gives a similar characterization of $\Club_\kappa$ being $\II$-closed. In particular, it follows that
$$ \cluf_\kappa \text{ is } \II\text{-closed} \Rightarrow \Club_\kappa \text{ is } \II\text{-closed}.$$

However, if $\II$ is not stationarily tall, then the situation is somewhat less unusual, for then we will show that
certain relativized club filters are not $\II$-Borel.
In case $\II$ does not contain a stationary set (in particular, if $\II = \NS_\kappa$), the club filter itself turns out to be
not $\II$-Borel.
We do not know
the complexity of the club filter in case
$\II$ is not stationarily tall
yet contains a stationary set.
The following argument extends and generalizes \cite[Theorem 4.2]{halkoshelah}, and also builds on the proof of that theorem.\footnote{In retrospect, we realized that also the arguments for the proofs of Theorem~\ref{theorem:ubnotfsigma} and of Theorem~\ref{theorem:clubnotfsigma} are somewhat similar to the arguments in the proof of \cite[Theorem 4.2]{halkoshelah}.}

\medskip

Let us first recall some basic topological concepts for $\II$-topologies.
A set $X$ is \emph{$\II$-nowhere dense} if for any $\II$-cone $[f]$ there is an $\II$-cone $[g]\subseteq [f]$ with $X\cap [g]=\emptyset$.
A set is \emph{$\II$-meager} if it is a $\kappa$-union of $\II$-nowhere dense sets.
 A set is \emph{$\II$-comeager} if its complement is $\II$-meager.
A set $X$ has the \emph{$\II$-Baire property}, if there is an $\II$-open set $U$ such that $X\Delta U$ is $\II$-meager.
Let us observe that, as usual, every set with the $\II$-Baire property is either $\II$-meager, or is $\II$-comeager in an $\II$-cone. Furthermore, by the usual argument, every $\II$-Borel set has the $\II$-Baire property.

\medskip

Given a stationary set~$S\subseteq\kappa$, let
\[\cluf_\kappa^S=\{A \subseteq \kappa \mid\exists C\subseteq\kappa\textrm{ club with }A\supseteq C\cap S\}.\]

\begin{theorem}\label{theorem:cfnotbaire}
Assume that $S$ is a stationary subset of $\kappa$, and that $\II$ contains no stationary subset of $S$. Then, $\cluf_\kappa^S$ does not have the $\II$-Baire property.
\end{theorem}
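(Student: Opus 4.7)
The plan is to derive a contradiction from the assumption that $\cluf_\kappa^S$ has the $\II$-Baire property by exploiting the dichotomy noted just before the theorem: every set with the $\II$-Baire property is either $\II$-meager, or $\II$-comeager in some $\II$-cone. I will rule out both possibilities, crucially using the hypothesis that $\II$ contains no stationary subset of $S$ to guarantee that $\dom(f)\cap S$ is nonstationary whenever $f\in\heart_\II$.

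For the comeager case, suppose $\cluf_\kappa^S$ is $\II$-comeager in some $\II$-cone $[f]$. Since $\dom(f)\cap S\in \II$ is a subset of $S$ in $\II$, it is nonstationary, so $T:=S\setminus\dom(f)$ is stationary and disjoint from $\dom(f)$. By Proposition~\ref{prop:flip}, $H_T$ is a homeomorphism of the $\II$-topology; since $T\cap\dom(f)=\emptyset$, a direct check gives $H_T[[f]]=[f]$. Moreover, $H_T$ maps $\cluf_\kappa^S$ into its complement: if $A\in\cluf_\kappa^S$ with $A\supseteq C\cap S$ for a club $C$, then $A\cap T\supseteq C\cap T$ is stationary (intersection of a club with a stationary set), and hence $S\setminus H_T(A)=S\setminus(A\triangle T)\supseteq A\cap T$ is stationary, so $H_T(A)\notin\cluf_\kappa^S$. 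Therefore both $\cluf_\kappa^S\cap[f]$ and $H_T[\cluf_\kappa^S]\cap[f]$ would be $\II$-comeager in $[f]$, yet they are disjoint; their intersection is then both empty and $\II$-comeager in $[f]$, contradicting Proposition~\ref{the:BCT}.

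For the meager case, suppose $\cluf_\kappa^S\subseteq\bigcup_{\alpha<\kappa}N_\alpha$ with each $N_\alpha$ being $\II$-nowhere dense; I will construct an $A\in\cluf_\kappa^S\setminus\bigcup_\alpha N_\alpha$. Recursively build a $\subseteq$-increasing chain $\langle f_\alpha:\alpha<\kappa\rangle$ in $\heart_\II$, a strictly increasing continuous sequence $\langle\gamma_\alpha:\alpha<\kappa\rangle$ cofinal in $\kappa$, and a $\subseteq$-decreasing sequence of clubs $\langle C_\alpha:\alpha<\kappa\rangle$, with $\gamma_\alpha\in C_\alpha$, $[f_{\alpha+1}]\cap N_\alpha=\emptyset$, and the invariant that $f_\alpha(\gamma_\delta)=1$ for all $\delta\leq\alpha$ with $\gamma_\delta\in S$. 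At successor stage $\alpha+1$: use $\II$-nowhere-density to pick $f'_\alpha\supseteq f_\alpha$ in $\heart_\II$ with $[f'_\alpha]\cap N_\alpha=\emptyset$; the ``bad'' set $B_\alpha:=\{\delta\in S\cap\dom(f'_\alpha):f'_\alpha(\delta)=0\}$ is a subset of $S$ in $\II$, hence nonstationary, so there is a club $C_{\alpha+1}\subseteq C_\alpha\setminus B_\alpha$; pick $\gamma_{\alpha+1}\in C_{\alpha+1}$ with $\gamma_{\alpha+1}>\gamma_\alpha$, and set $f_{\alpha+1}$ to be $f'_\alpha$ together with $(\gamma_{\alpha+1},1)$ if $\gamma_{\alpha+1}\in S\setminus\dom(f'_\alpha)$ (if $\gamma_{\alpha+1}\in S\cap\dom(f'_\alpha)$, the value is already $1$ since $\gamma_{\alpha+1}\notin B_\alpha$). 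At limit $\alpha$: take unions, suprema, and intersections; the key observation is that for any $\beta+1<\alpha$, the tail $\langle\gamma_\delta:\beta+1\leq\delta<\alpha\rangle$ lies in $C_{\beta+1}$, so $\gamma_\alpha\in C_{\beta+1}$ (a closed set), hence $\gamma_\alpha\notin B_\beta$, so any value $f'_\beta(\gamma_\alpha)$ that has already been set (in case $\gamma_\alpha\in S\cap\dom(f'_\beta)$) must be $1$. Any extension $A$ of $\bigcup_\alpha f_\alpha$ to $\kappa$ then satisfies $A\supseteq C\cap S$ for the club $C=\{\gamma_\alpha:\alpha<\kappa\}$, so $A\in\cluf_\kappa^S$, while $A\in[f_{\alpha+1}]$ forces $A\notin N_\alpha$ for every $\alpha$.

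The main obstacle is the limit-stage bookkeeping in the meager case: a priori, at a limit $\alpha$ the ordinal $\gamma_\alpha$ might have landed earlier in some $\dom(f'_\beta)$ with value $0$, which would block our attempt to force $A(\gamma_\alpha)=1$. The decreasing sequence of clubs $C_\alpha$, and specifically the disjointness of $C_{\beta+1}$ from each $B_\beta$, is designed precisely to prevent this, and this is exactly where the hypothesis that $\II$ contains no stationary subset of $S$ is used (to ensure each $B_\beta$ is nonstationary and hence avoidable by a club).
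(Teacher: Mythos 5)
Your proof is correct, and its overall skeleton (the dichotomy: $\II$-meager, or $\II$-comeager in an $\II$-cone) matches the paper's. Your meager case is essentially the paper's construction: the paper likewise builds an increasing chain in $\heart_\II$, a decreasing chain of clubs $\langle C_j\rangle$ with ordinals $\alpha_j=\min(C_j)$, forcing the value $1$ at those $\alpha_j$ that lie in $S$ while entering the $j$-th dense set; the only cosmetic difference is that the paper keeps $\dom(f_j)\cap C_j\cap S=\emptyset$ outright, whereas you permit the domain to meet the club inside $S$ as long as the value there is $1$ (by shrinking the club to avoid your set $B_\alpha$) --- both devices resolve the same limit-stage conflict, and both invoke the hypothesis on $\II$ at the same point (nonstationarity of $\dom(f'_\alpha)\cap S$, resp.\ of $B_\alpha$). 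Where you genuinely diverge is the comeager case: the paper simply reruns the same construction below the cone $[h]$, forcing the value $0$ instead of $1$ at the relevant points of $S$, so that any total extension misses a club relative to $S$ and hence lies outside $\cluf_\kappa^S$ while belonging to all the dense sets. You instead take $T=S\setminus\dom(f)$ (stationary, again by the hypothesis on $\II$) and use the flip homeomorphism $H_T$ from Proposition~\ref{prop:flip}, which fixes $[f]$ setwise and maps $\cluf_\kappa^S$ off itself, producing two disjoint sets each $\II$-comeager in $[f]$ and thereby contradicting Proposition~\ref{the:BCT}. This Steinhaus-style translation argument is shorter and exploits the topological-group structure of $2^\kappa$ that the paper only mentions in a footnote; the paper's approach is more uniform, since a single construction template, with the forced bit flipped, disposes of both cases at once. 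Two small points of polish: state the base case of your recursion (e.g.\ $f_0=\emptyset$, $C_0=\kappa$, with $(\gamma_0,1)$ added to $f_0$ if $\gamma_0\in S$), and make explicit that at a limit $\alpha$ you adjoin the pair $(\gamma_\alpha,1)$ to $\bigcup_{\beta<\alpha}f_\beta$ when $\gamma_\alpha\in S$ is not yet in the domain --- your ``key observation'' proves this is consistent with the earlier conditions, but the adjunction itself should be part of the definition of $f_\alpha$, since the plain union need not contain $\gamma_\alpha$ in its domain.
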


\begin{proof}
Towards a contradiction, suppose that $\cluf_\kappa^S$ has the $\II$-Baire property.  First assume that $\cluf_\kappa^S$ is $\II$-meager. Let $\vec{U}=\langle U_i\mid i<\kappa\rangle$ be a sequence of $\II$-open dense sets whose intersection $U:=\bigcap_{i<\kappa}U_i$ is disjoint from $\cluf_\kappa^S$.
We construct sequences $\vec{f}=\langle f_j\mid j<\kappa\rangle$ in $\heart_\II$, $\vec{C}=\langle C_j\mid j<\kappa\rangle$ in $\club_\kappa$, and $\vec{\alpha}=\langle \alpha_j\mid j<\kappa\rangle$ in $\kappa$ with the following properties:
\begin{enumerate}
\item
$f_i\subseteq f_j$, $C_i\supseteq C_j$ and $\alpha_i<\alpha_j$ for all $i<j<\kappa$,
\item
 \begin{enumerate}
\item
$\alpha_j=\min(C_j)$ for all $j<\kappa$,
\item
$\alpha_\lambda=\sup_{i<\lambda}\alpha_i$ for limits $\lambda<\kappa$,
 \end{enumerate}
\item
 \begin{enumerate}
\item
$[f_{j+1}]\subseteq U_j$ for all $j<\kappa$,
\item
$\dom(f_j)\cap C_j \cap S =\emptyset$ for all $j<\kappa$,  and
\item
$f_j(\alpha_i)=1$ for all $i<j<\kappa$ with
$\alpha_i\in S$.
 \end{enumerate}
\end{enumerate}
The construction proceeds as follows.

\begin{enumerate}
\item[(i)]
Choose $f_0\in \heart_\II$ arbitrary, let $C_0$ be a club disjoint from $\dom(f_0)\cap S$, and let $\alpha_0:=\min(C_0)$.
\item[(ii)]
For successors $j+1$, assume that $f_j$ has been constructed. If $\alpha_j\in S$, let $f_j':=f_j\cup\{(\alpha_j,1)\}$, and let $f_j':=f_j$ otherwise. Since $U_{j}$ is $\II$-open dense, there is some $f_{j+1}$ extending $f_j'$ with $[f_{j+1}]\subseteq U_{j}$. Find a club $C_{j+1}\subseteq C_j\setminus (\alpha_j+1)$ disjoint from $\dom(f_{j+1})\cap S$ and let $\alpha_{j+1}:=\min(C_{j+1})$.
\item[(iii)]
For limits $\lambda<\kappa$, let $f_\lambda:=\bigcup_{i<\lambda} f_i$, $C_\lambda:=\bigcap_{i<\lambda}C_i$ and $\alpha_\lambda:=\min(C_\lambda)$.
\end{enumerate}
Then, $f:=\bigcup_{i<\kappa} f_i$ is constant with value $1$ on the intersection of the club $C:=\{\alpha_i\mid i<\kappa\}$ with $S$ by (3)(c).
Since $[f_{j+1}]\subseteq U_j$ for all $j<\kappa$ by (3)(a), any total extension $g\supseteq f$ contradicts that $\cluf_\kappa^S$ is disjoint from $U$.

Finally, assume that $\cluf_\kappa^S$ is $\II$-comeager in $[h]$ for some $h\in \heart_\II$.
But then, virtually the same construction, letting $f_0=h$ and setting
$f(\alpha_j)=0$ (instead of $f(\alpha_j)=1$) in case $\alpha_j\in S$, yields a contradiction just as in the previous case.
\end{proof}

\begin{corollary} \
\label{club filter is no Edinburgh Borel}
  \begin{enumerate}
    \item
    $\cluf_\kappa$ does not have the Edinburgh Baire property, hence is not Edinburgh~Borel.
    \item $\II$ is stationarily tall if and only if for every stationary $S\subseteq\kappa$, $\cluf_\kappa^S$ is $\II$-Borel.
 \item If $\II$ is not stationarily tall, then there is a set without the $\II$-Baire property.
    \item If $\II$ is not stationarily tall, then there is a set which is not $\II$-Borel.
  \end{enumerate}
\end{corollary}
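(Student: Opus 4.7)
My plan is to derive all four items from Theorem~\ref{theorem:cfnotbaire}, the standard fact (recalled just before that theorem) that every $\II$-Borel set has the $\II$-Baire property, and Proposition~\ref{proposition:talliffpositiveopen}. The only real work lies in the forward direction of~(2); the other items fall out directly.

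For~(1), I would take $\II = \NS_\kappa$ and $S = \kappa$ in Theorem~\ref{theorem:cfnotbaire}: the hypothesis is trivially met because $\NS_\kappa$ contains no stationary subset of $\kappa$, and the conclusion reads that $\cluf_\kappa^\kappa = \cluf_\kappa$ lacks the Edinburgh Baire property, hence is not Edinburgh Borel. For~(3), (4), and the right-to-left direction of~(2): if $\II$ is not stationarily tall, then unwinding Definition~\ref{definition:tall} with $\JJ = \NS_\kappa$ yields a stationary set $S \subseteq \kappa$ no stationary subset of which lies in $\II$; Theorem~\ref{theorem:cfnotbaire} applied to this particular $S$ then produces $\cluf_\kappa^S$ as an explicit set lacking the $\II$-Baire property, and hence also failing to be $\II$-Borel.

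For the forward direction of~(2), I would realize $\cluf_\kappa^S$ as a continuous preimage of $\NS_\kappa$. Define $h\colon 2^\kappa \to 2^\kappa$ by $h(x)(\alpha) = \chi_S(\alpha)\cdot(1 - x(\alpha))$, so that, under the standard identification of subsets of $\kappa$ with their characteristic functions, $h(x) = S \setminus x$. Since $x \in \cluf_\kappa^S$ if and only if $S \setminus x$ is nonstationary, we obtain $\cluf_\kappa^S = h^{-1}[\NS_\kappa]$. Stationary tallness of $\II$, via Proposition~\ref{proposition:talliffpositiveopen} applied with $\JJ = \NS_\kappa$, says that $\NS_\kappa$ is $\II$-closed, so the only remaining point is the $\II$-continuity of $h$.

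The continuity check is the main technical step, though I expect it to be routine. The preimage of a basic $\II$-open set $[g]$ with $g\colon D \to 2$ and $D \in \II$ is either empty (namely when $g$ takes the value $1$ at some $\alpha \in D \setminus S$), or equal to $[g']$, where $g'\colon D \cap S \to 2$ is given by $g'(\alpha) = 1 - g(\alpha)$; since $D \cap S \subseteq D \in \II$, this is indeed a basic $\II$-open set. I do not anticipate any obstacle beyond this bookkeeping; the key conceptual move is the reformulation of $\cluf_\kappa^S$ via the continuous map $h$, which transfers the $\II$-closedness of $\NS_\kappa$ to the $\II$-closedness, and hence $\II$-Borelness, of $\cluf_\kappa^S$.
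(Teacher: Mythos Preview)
Your proof is correct. The derivations of (1), (3), (4), and the backward direction of (2) match the paper's essentially verbatim: both simply invoke Theorem~\ref{theorem:cfnotbaire} together with the fact that $\II$-Borel sets have the $\II$-Baire property.

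The forward direction of (2) is where the two arguments diverge in presentation. The paper proceeds directly: given $x\notin\cluf_\kappa^S$, the set $(\kappa\setminus x)\cap S$ is stationary, so stationary tallness supplies a stationary $S'\in\II$ with $S'\subseteq(\kappa\setminus x)\cap S$; one then checks by hand that the $\II$-cone $[x\restr S']$ avoids $\cluf_\kappa^S$, whence the complement of $\cluf_\kappa^S$ is $\II$-open. Your route instead packages this as a continuous preimage: you observe that $\cluf_\kappa^S=h^{-1}[\NS_\kappa]$ for the $\II$-continuous map $h(x)=S\setminus x$, and then invoke Proposition~\ref{proposition:talliffpositiveopen} (which already encapsulates the stationary-tallness step) to get that $\NS_\kappa$ is $\II$-closed. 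Unwinding your argument recovers exactly the paper's cone: the witness $S'$ arises as a stationary subset of $h(x)$ in $\II$, and $h^{-1}[[\one\restr S']]=[\zero\restr S']=[x\restr S']$. So the two proofs are the same computation at the core; yours is more conceptual and reuses existing machinery, while the paper's is self-contained and avoids the (admittedly routine) continuity verification.
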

\begin{proof}
  The above are immediate from
Theorem~\ref{theorem:cfnotbaire}
  and the comment preceding it that $\II$-Borel sets have the $\II$-Baire property, except that we still have to verify for (2) that if $\II$ is stationarily tall, then for every stationary set~$S\subseteq\kappa$, $\cluf_\kappa^S$ is $\II$-Borel (in fact: $\II$-closed).
  Let $x \notin \cluf_\kappa^S$. Then
  for all clubs $ C\subseteq \kappa$, $(\kappa\setminus x)\cap C\cap S\neq\emptyset$,
  so $(\kappa\setminus x) \cap S$ is stationary. Using that $\II$ is stationarily tall, we can fix a stationary set~$S' \in\II$ such that $S' \subseteq (\kappa \setminus x) \cap S$. Let $y\in [x\restr S']$. Since $S'$ is disjoint from~$x$, $y$ is disjoint from $S'$. Assume there exists a club $B$ such that $y\supseteq B\cap S$; then also $B\cap S\cap S'=\emptyset$, but $S\cap S'=S'$ is stationary, which
  contradicts the fact that $B$ is club. Hence $y\notin \cluf_\kappa^S$, and therefore $\cluf_\kappa^S$ is disjoint from~$[x \restr S']$, as desired.
\end{proof}

Let us finally remark the following, which was brought to our attention by Vincenzo Dimonte.

\begin{observation}
  For any ideal $\II$ on $\kappa$, if
  $2^{(2^{<\kappa})}=2^\kappa$
  (in particular, if $2^{<\kappa}=\kappa$),
  then
  there is a set that
  does not have the $\II$-Baire property, and hence
  is not $\II$-Borel.
\end{observation}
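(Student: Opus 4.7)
The plan is to adapt the classical Bernstein construction of a set without the Baire property. Under the hypothesis $2^{(2^{<\kappa})}=2^\kappa$, the collection of $\bd_\kappa$-closed subsets of $2^\kappa$ has cardinality at most $2^{(2^{<\kappa})}=2^\kappa$, since each such set is determined by a subtree of $2^{<\kappa}$. In particular, the sub-collection of those $\bd_\kappa$-closed sets which are homeomorphic to $2^\kappa$ in the bounded topology -- the ``perfect'' sets produced by Corollary~\ref{corollary:mycielski} -- can be enumerated as $\langle P_\alpha\mid\alpha<\theta\rangle$ for some cardinal $\theta\le 2^\kappa$; each such $P_\alpha$ has cardinality $2^\kappa$.

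A routine transfinite Bernstein recursion then yields disjoint sets $B,C\subseteq 2^\kappa$ each meeting every~$P_\alpha$: at stage $\alpha<\theta$, pick two distinct new points $b_\alpha,c_\alpha\in P_\alpha\setminus\{b_\beta,c_\beta\mid\beta<\alpha\}$ -- possible, since fewer than $2^\kappa$ points have been used while $|P_\alpha|=2^\kappa$ -- and set $B:=\{b_\alpha\mid\alpha<\theta\}$ and $C:=\{c_\alpha\mid\alpha<\theta\}$.

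To finish, I would show that $B$ lacks the $\II$-Baire property. Assume otherwise; by the standard dichotomy noted earlier, $B$ is then either $\II$-meager or $\II$-comeager in some $\II$-cone~$[f]$. If $B$ is $\II$-meager, then $2^\kappa\setminus B$ is $\II$-comeager and hence contains some $P_\alpha$ by Corollary~\ref{corollary:mycielski}, contradicting $B\cap P_\alpha\ne\emptyset$. Otherwise, the proof of Corollary~\ref{corollary:mycielski} adapts directly to the cone~$[f]$ -- simply starting the branching construction from the seed $f_0:=f$ -- to produce a bounded-perfect subset of $[f]\cap B$. Since $[f]$ is itself $\bd_\kappa$-closed in~$2^\kappa$, this subset is a $\bd_\kappa$-closed copy of~$2^\kappa$, hence some $P_\alpha$, contradicting $C\cap P_\alpha\ne\emptyset$. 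Since every $\II$-Borel set has the $\II$-Baire property, $B$ is then also not $\II$-Borel. The only point requiring care is the relativized Mycielski construction in the second case, which rests on the fact that the $\II$-open sets witnessing that $B$ is $\II$-comeager in $[f]$ remain $\II$-dense in~$[f]$, so that the branching construction can be carried out without ever leaving~$[f]$.
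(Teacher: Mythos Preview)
Your proposal is correct and follows essentially the same approach as the paper: construct a Bernstein set (possible because the hypothesis bounds the number of perfect subsets of $2^\kappa$ by $2^\kappa$), then derive a contradiction from either horn of the $\II$-Baire dichotomy via Corollary~\ref{corollary:mycielski} and its relativization to an $\II$-cone. The paper's proof is more terse, simply asserting the existence of a Bernstein set and invoking ``a relativized version of Corollary~\ref{corollary:mycielski}'' without spelling out the recursion or the seed $f_0:=f$, but the content is the same.
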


\begin{proof}
  Since $2^{(2^{<\kappa})}=2^\kappa$,
  there exists a Bernstein subset of $2^\kappa$ (in the sense of the higher Cantor space $2^\kappa$), that is a set $X$ such that both $X$ and $2^\kappa\setminus X$ intersect every perfect subset of $2^\kappa$,
   simply because by our assumption, there are only $2^\kappa$-many perfect subsets of $2^\kappa$.
   Let us argue that
   a Bernstein set cannot have the $\II$-Baire property (and hence is not $\II$-Borel).
   If it were $\II$-meager, its complement would contain a perfect set by Corollary \ref{corollary:mycielski}, a contradiction. But otherwise, our Bernstein set would have to be
   $\II$-comeager in an $\II$-cone, but then it would contain a perfect set by a relativized version of Corollary \ref{corollary:mycielski}, which is also a contradiction.
\end{proof}

We do not know how to come up with an example of a non-$\II$-Borel set if $2^{<\kappa}>\kappa$ and $\II$ is stationarily tall:

\begin{question}
Does there always exist a set which is not $\II$-Borel (or even does not have the $\II$-Baire property)?
\end{question}

\section{Sequences in ideal topologies}\label{section:sequences}

\subsection{Convergence and accumulation points}

A prominent notion in analysis is that of \emph{$\II$-convergence}, a generalized notion of convergence with respect to an ideal $\II$ on the set of natural numbers. The idea is that for a sequence to $\II$-converge, it only needs to enter every neighbourhood (in the bounded topology) of its limit on a set in $\II^*$, thus yielding a weakening of the standard notion of convergence. Such a generalized notion of convergence -- namely statistical convergence -- was first considered in \cite{Steinhaus} and~\cite{fast}, and the generalized notion of $\II$-convergence was introduced only much later in \cite{ksw}. If we consider topologies other than the bounded topology, it seems most natural to generalize the concept of convergence in two respects, to that of \emph{$(\II,\JJ)$-convergence}.

\begin{definition}
  Given ideals $\II$ and $\JJ$ on $\kappa$, and a sequence $\vec x=\langle x_\alpha\mid\alpha<\kappa\rangle$ of elements of $2^\kappa$, we say that
  \begin{itemize}
    \item $\vec x$ \emph{$(\II,\JJ)$-converges} to $x\in 2^\kappa$ if for every $\II$-open set $\mathcal O$ containing $x$, $\{\alpha<\kappa\mid x_\alpha\in\mathcal O\}\in\JJ^*$; we call $x$ the \emph{$(\II,\JJ)$-limit} of $\vec x$;
    \item $\vec x$ \emph{$\II^2$-converges} to $x$ if $\vec x$ $(\II,\II)$-converges to $x$; we call $x$ the \emph{$\II^2$-limit} of $\vec x$;\footnote{When context makes this obvious, we may sometimes talk about \emph{limits} when we actually mean $(\II,\JJ)$-limits or $\II^2$-limits.}
    \item as usual, of course, to $(\II,\JJ)$-converge means to $(\II,\JJ)$-converge to some $x\in 2^\kappa$; similarly for $\II^2$-convergence.
  \end{itemize}
\end{definition}

If we only change the ideal that induces our topology,
yet leave the condition for convergence as usual, then in many cases, we do not obtain an interesting notion:

\begin{proposition}
Assume that $\II$ is tall, and that $\vec x=\langle x_i\mid i<\kappa\rangle$ does $(\II,\bd_\kappa)$-converge to $x\in 2^\kappa$. Then, $\vec x$ is eventually constant.
\end{proposition}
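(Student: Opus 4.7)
The plan is to argue by contradiction: suppose $\vec x=\langle x_i\mid i<\kappa\rangle$ does $(\II,\bd_\kappa)$-converge to $x$ but is not eventually constant, and then produce an $\II$-open neighbourhood of $x$ witnessing a failure of $(\II,\bd_\kappa)$-convergence, i.e., an $\II$-open $\mathcal O \ni x$ for which $\{\alpha<\kappa\mid x_\alpha\in\mathcal O\}$ is not co-bounded. If the conclusion fails, then the set $A:=\{\alpha<\kappa\mid x_\alpha\neq x\}$ is unbounded in $\kappa$, and for every $\alpha\in A$ we may pick some $\gamma_\alpha<\kappa$ with $x_\alpha(\gamma_\alpha)\neq x(\gamma_\alpha)$. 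The goal is then to find $D\in\II$ such that $\{\alpha\mid x_\alpha\in[x\restr D]\}$ has unbounded complement; note that $[x\restr D]$ is automatically an $\II$-open neighbourhood of $x$.

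The construction of $D$ naturally splits according to the size of the image $\Gamma:=\{\gamma_\alpha\mid\alpha\in A\}$. If $\Gamma$ is bounded, then since $A=\bigcup_{\gamma\in\Gamma}\{\alpha\in A\mid\gamma_\alpha=\gamma\}$ is a union of fewer than $\kappa$-many sets, the regularity of $\kappa$ forces at least one fibre $F^*:=\{\alpha\in A\mid\gamma_\alpha=\gamma^*\}$ to be unbounded. In this case I take $D:=\{\gamma^*\}\in\bd_\kappa\subseteq\II$; every $\alpha\in F^*$ then satisfies $x_\alpha\notin[x\restr\{\gamma^*\}]$, and the contradiction is immediate.

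The case where $\Gamma$ is unbounded is where tallness is used—and is indeed essential, as can be seen by taking $\II=\bd_\kappa$ and letting $x_\alpha$ differ from $x$ only at position $\alpha$, which gives a sequence that $(\bd_\kappa,\bd_\kappa)$-converges to $x$ without being eventually equal to $x$. In this case, tallness of $\II$ supplies an unbounded $D\subseteq\Gamma$ with $D\in\II$. For each $\gamma\in D$ pick $\alpha(\gamma)\in A$ with $\gamma_{\alpha(\gamma)}=\gamma$. The key observation is that the assignment $\gamma\mapsto\alpha(\gamma)$ is injective, since $\gamma=\gamma_{\alpha(\gamma)}$ is recoverable from $\alpha(\gamma)$; hence $|\{\alpha(\gamma)\mid\gamma\in D\}|=|D|=\kappa$ by regularity of $\kappa$, so this image is unbounded. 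By construction, each such $\alpha(\gamma)$ satisfies $x_{\alpha(\gamma)}(\gamma)\neq x(\gamma)$ with $\gamma\in D$, hence $x_{\alpha(\gamma)}\notin[x\restr D]$, finishing this case as well.

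The main conceptual obstacle, more than a technical one, is recognizing that tallness must be used in exactly this second case—it is precisely the mechanism that converts the unbounded spread of first-difference locations into a \emph{single} set $D\in\II$ that is still large enough to exclude unboundedly many $x_\alpha$'s. The injectivity trick in the second case is what ensures that a set $D$ which is merely large in $\kappa$ (not in $A$) nevertheless witnesses failure of convergence along an unbounded set of indices~$\alpha$.
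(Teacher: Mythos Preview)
Your proof is correct and follows essentially the same strategy as the paper: produce a set $D\in\II$ so that the $\II$-cone $[x\restr D]$ excludes unboundedly many terms of the sequence, contradicting $(\II,\bd_\kappa)$-convergence.

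The only difference is in the setup. The paper avoids your case split by directly constructing strictly increasing sequences $\langle\alpha_i\rangle$ and $\langle\beta_i\rangle$ with $x_{\beta_i}(\alpha_i)\neq x(\alpha_i)$; the point (left implicit there) is that the convergence hypothesis itself forces the disagreement positions to be unbounded, since for any $\delta<\kappa$ the $\bd_\kappa$-cone $[x\restr\delta]$ is an $\II$-neighbourhood of $x$ and hence eventually contains all $x_i$. Your case split does the same work without invoking convergence in the construction: the bounded-$\Gamma$ case is handled by a pigeonhole argument instead. Both routes then apply tallness to the unbounded set of positions in the same way, so the arguments are really variants of one another rather than genuinely different proofs.
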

\begin{proof}
Assume for a contradiction that $\vec{x}$ is not eventually constant.
Then there are strictly increasing sequences $\vec{\alpha}=\langle \alpha_i\mid i<\kappa\rangle$ and $\vec{\beta}=\langle\beta_i\mid i<\kappa\rangle$ with $x_{\beta_i}(\alpha_i)\neq x(\alpha_i)$ for all $i<\kappa$.
Let $A=\{\alpha_i \mid i<\kappa\}$.
Since $\II$ is tall, there's an unbounded subset $B$ of $A$ with $B\in\II$.
But then, $x_{\beta_i}\notin [x{\upharpoonright}B]$ for all $i\in B$, contradicting the assumption of the proposition.
\end{proof}

Let us provide examples showing that $\bd_\kappa^2$-convergence and $\II^2$-convergence for $\II\supsetneq\bd_\kappa$ are independent of each other:

\begin{observation}\label{observation:compareconvergence}
Let $\II$ be an ideal on $\kappa$ that contains an unbounded subset $A$ of~$\kappa$. Then, the following hold true.
\begin{enumerate}
    \item There are $\II^2$-convergent sequences that are not  $\bd_\kappa^2$-convergent.
    \item There are  $\bd_\kappa^2$-convergent sequences that are not $\II^2$-convergent.
  \end{enumerate}
\end{observation}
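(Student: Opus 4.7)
The plan is to exploit the fact that passing from $\bd_\kappa$ to a strictly larger ideal $\II$ has two opposing effects on $\II^2$-convergence: there are more $\II$-open neighborhoods to check (making convergence harder), but the admissible index sets form the larger filter $\II^*$ (making convergence easier). Each of (1) and (2) will be witnessed by an explicit sequence that plays one of these effects off against the other, using the unbounded set $A\in\II$.

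For (1), I would let $x_\alpha=\zero$ for $\alpha\in\kappa\setminus A$ and $x_\alpha=\one$ for $\alpha\in A$. Then for every $\II$-open neighborhood $\mathcal O$ of $\zero$ one has $\{\alpha<\kappa\mid x_\alpha\in\mathcal O\}\supseteq\kappa\setminus A\in\II^*$, so $\vec x$ $\II^2$-converges to $\zero$. To see that $\vec x$ does not $\bd_\kappa^2$-converge to any $y\in 2^\kappa$, I would pick any coordinate $\beta<\kappa$ and consider the basic bounded-open neighborhood $[y\restr\{\beta\}]$; since both $A$ and $\kappa\setminus A$ are unbounded, the value $x_\alpha(\beta)$ takes both $0$ and $1$ cofinally in $\alpha$, so the index set cannot be cobounded regardless of $y(\beta)$.

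For (2), I would define $x_\alpha\in 2^\kappa$ by $x_\alpha(\beta)=0$ if $\beta\le\alpha$ and $x_\alpha(\beta)=1$ otherwise. Given any basic bounded-open neighborhood $[\zero\restr\beta]$ of $\zero$ (with $\beta<\kappa$), we have $x_\alpha\in[\zero\restr\beta]$ for every $\alpha\ge\beta$, so $\{\alpha<\kappa\mid x_\alpha\in[\zero\restr\beta]\}$ is cobounded and $\vec x$ $\bd_\kappa^2$-converges to $\zero$. On the other hand, the $\II$-open set $\mathcal O:=[\zero\restr A]$ is a neighborhood of $\zero$, but since $A$ is unbounded, for every $\alpha<\kappa$ there is some $\beta\in A$ with $\beta>\alpha$, hence $x_\alpha(\beta)=1$ and $x_\alpha\notin\mathcal O$. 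Thus $\{\alpha<\kappa\mid x_\alpha\in\mathcal O\}=\emptyset\notin\II^*$, so $\vec x$ does not $\II^2$-converge to $\zero$.

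The only small subtlety is ruling out alternative limits in part~(2): if $\vec x$ $\II^2$-converged to some $y\in 2^\kappa$, then for each $\beta<\kappa$ the $\II$-open set $[y\restr\{\beta\}]$ would have to contain $x_\alpha$ on a set in $\II^*$; for $\alpha\ge\beta$ we have $x_\alpha(\beta)=0$, so if $y(\beta)=1$ the relevant index set would be contained in the bounded set $\beta$, which is impossible. Hence $y=\zero$, but this case has already been excluded, so $\vec x$ does not $\II^2$-converge at all. I expect these verifications to be routine; the main point of the argument is simply to identify the two contrasting constructions based on $A$.
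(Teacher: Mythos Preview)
Your proof is correct. Part~(1) is exactly the paper's example, with a slightly more detailed verification of non-$\bd_\kappa^2$-convergence (you use that $\kappa\setminus A$ is unbounded, which follows since $\II$ is proper and contains $\bd_\kappa$).

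For part~(2) you take a genuinely different example from the paper. The paper enumerates $A$ increasingly as $\langle x_\alpha\mid\alpha<\kappa\rangle$ and considers the sequence $\vec y=\langle\{x_\alpha\}\mid\alpha<\kappa\rangle$ of singletons; this $\bd_\kappa^2$-converges to $\zero$ since each $x_\alpha$ eventually exceeds any bound, while $[\zero\restr A]$ contains no $\{x_\alpha\}$ at all. Your example uses the ``tail'' sequence $x_\alpha=\chi_{\kappa\setminus(\alpha+1)}$ instead. Both constructions exploit the same mechanism---the single $\II$-neighborhood $[\zero\restr A]$ excludes every term of the sequence---and both rule out alternative limits by the same coordinate-wise argument. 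The paper's example has the minor extra feature (noted in a footnote) that the sequence even lacks an $\II^2$-accumulation point; your sequence shares this property for the same reason, since $\{\alpha:x_\alpha\in[\zero\restr A]\}=\emptyset$.
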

\begin{proof}
  \begin{enumerate}
    \item Any sequence such that $x_\alpha=\zero$ for all $\alpha$ in $\kappa\setminus A$ is $\II^2$-convergent with limit $\zero$,
    but it will not be  $\bd_\kappa^2$-convergent for example if we additionally let $x_\alpha=\one$ for all $\alpha$ in $A$.
    \item Let $\vec x=\langle x_\alpha\mid \alpha<\kappa\rangle$ enumerate $A$ in increasing order. Then, $\vec y=\langle\{x_\alpha\}\mid\alpha<\kappa\rangle$ is clearly  $\bd_\kappa^2$-convergent with limit $\zero$, and it is easy to see that $\zero$ is the only possible $\II^2$-limit for $\vec y$,
    yet the cone $[\zero\restr A]$ contains no $\{x_\alpha\}$, yielding that $\vec y$
    does not $\II^2$-converge.\footnote{\label{footnote:also_for_accu}The proof clearly shows that $\vec{y}$ has not even an $\II^2$-accumulation point (see Definition~\ref{definition:accumulation_point}).}\qedhere
  \end{enumerate}
\end{proof}

A notion closely connected to convergence is that of an accumulation point of a sequence. Given the above, it is pretty obvious what should be the right definition of this notion in our generalized context.

\begin{definition}\label{definition:accumulation_point}
  With $\II$, $\JJ$ and $\vec x$ as above, we say that $x\in 2^\kappa$ is an \emph{$(\II,\JJ)$-accumulation point} of $\vec x$ in case that for every $\II$-open set $\mathcal O$ containing $x$, $\{\alpha<\kappa\mid x_\alpha\in\mathcal O\}\in\JJ^+$. \emph{$\II^2$-accumulation points} are $(\II,\II)$-accumulation points.
\end{definition}

We make some simple observations:

\begin{observation}\
  \begin{enumerate}
    \item If $\vec x$ is $(\II,\JJ)$-convergent with limit $x\in 2^\kappa$, then $x$ is the unique $(\II,\JJ)$-accumulation point of $\vec x$ -- this is easily shown as usual (using that $\JJ^*\subseteq\JJ^+$). It is then also
a $\bd_\kappa^2$-accumulation point of $\vec x$ (using that $\JJ^*\subseteq\ub_\kappa$).
    \item
Any $(\II,\JJ)$-convergent sequence has a unique $(\II,\JJ)$-limit, and if $\JJ$ is a maximal ideal, then every sequence has at most one $(\II,\JJ)$-accumulation point. (This uses that the $\II$-topology is Hausdorff and $\JJ^*$ is a filter.)
  \end{enumerate}
\end{observation}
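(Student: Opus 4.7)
The statement packages together several quick consequences of Hausdorffness of the $\II$-topology together with elementary facts about proper ideals, their dual filters, and positive sets. My plan is to address the four assertions in order, each as a short unraveling of definitions plus one use of separation.

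For the first half of (1), assume $\vec x$ is $(\II,\JJ)$-convergent with limit $x$, and let $\mathcal O$ be any $\II$-open set containing $x$. By definition of $(\II,\JJ)$-convergence, $\{\alpha<\kappa\mid x_\alpha\in\mathcal O\}\in\JJ^*$. Since $\II$ contains $\bd_\kappa$ and $\JJ$ is proper, $\JJ^*\subseteq\JJ^+$, so the same set lies in $\JJ^+$, which is exactly the clause required to say that $x$ is an $(\II,\JJ)$-accumulation point. Uniqueness of that accumulation point follows from Hausdorffness of $\tau_\II$: if $y\neq x$ were another accumulation point, pick disjoint $\II$-open sets $U\ni x$ and $V\ni y$; then $\{\alpha<\kappa\mid x_\alpha\in U\}\in\JJ^*$ forces $\{\alpha<\kappa\mid x_\alpha\in V\}$, being contained in its complement, to lie in $\JJ$, contradicting that it must be in $\JJ^+$.

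For the second half of (1), take any $\bd_\kappa$-open set $\mathcal O$ containing $x$. Because $\bd_\kappa\subseteq\II$, $\mathcal O$ is $\II$-open, so $\{\alpha<\kappa\mid x_\alpha\in\mathcal O\}\in\JJ^*$. The assumption $\bd_\kappa\subseteq\JJ$ implies $\JJ^*\subseteq(\bd_\kappa)^*\subseteq\ub_\kappa$, so this set is unbounded, which is exactly what it means to be an $(\bd_\kappa,\bd_\kappa)$-accumulation point (i.e.\ $\bd_\kappa^2$-accumulation point).

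For (2), uniqueness of the $(\II,\JJ)$-limit: if $x\neq y$ were both limits, use Hausdorffness to separate them by disjoint $\II$-open $U\ni x$, $V\ni y$; then both $A:=\{\alpha<\kappa\mid x_\alpha\in U\}$ and $B:=\{\alpha<\kappa\mid x_\alpha\in V\}$ lie in $\JJ^*$, and since $\JJ^*$ is a filter it is closed under intersections, so $A\cap B\in\JJ^*$ and in particular is nonempty, contradicting $U\cap V=\emptyset$. Finally, if $\JJ$ is maximal, then for any $A\subseteq\kappa$ we have $A\notin\JJ$ iff $\kappa\setminus A\in\JJ$ iff $A\in\JJ^*$, so $\JJ^+=\JJ^*$; the same Hausdorff-plus-filter argument, applied verbatim but using only that each relevant set lies in $\JJ^+=\JJ^*$, then shows that two distinct points cannot both be $(\II,\JJ)$-accumulation points. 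No step here is genuinely hard; the only mild subtlety is remembering that $\JJ^*\subseteq\JJ^+$ requires properness of $\JJ$ and $\JJ^*\subseteq\ub_\kappa$ requires $\bd_\kappa\subseteq\JJ$, both of which are standing assumptions on the ideals in the paper.
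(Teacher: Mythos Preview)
Your proof is correct and follows precisely the approach indicated by the paper, which gives no separate proof but only the inline hints (that $\JJ^*\subseteq\JJ^+$, that $\JJ^*\subseteq\ub_\kappa$, and that the $\II$-topology is Hausdorff while $\JJ^*$ is a filter). One inessential remark: in the first half of (1) you write ``since $\II$ contains $\bd_\kappa$ and $\JJ$ is proper, $\JJ^*\subseteq\JJ^+$'', but only the properness of $\JJ$ is needed for that inclusion.
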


Our next observation is trivial to verify.

\begin{observation}\label{observation:lim_accu}
Let $\II\subseteq \II'$ and $\JJ\subseteq \JJ'$ be ideals on $\kappa$ and let $\vec{x}$ be a sequence. The following implications hold:
\begin{enumerate}
\item $z \textrm { is the } (\II',\JJ)\textrm{-limit of } \vec{x}\Rightarrow
    z \textrm{ is the } (\II,\JJ')\textrm{-limit of }\vec{x}$.
\item $z \textrm{ is an } (\II',\JJ')\textrm{-accumulation point of } \vec{x}\Rightarrow$\\
    $z \textrm{ is an } (\II,\JJ)\textrm{-accumulation point of }\vec{x}$.
\end{enumerate}
\end{observation}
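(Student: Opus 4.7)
The plan is to simply unwind the definitions; there is no substantive difficulty here, since the observation is asserting that both the limit and accumulation point notions are (anti)monotone in the obvious way in their parameters. The two key monotonicity facts I will isolate at the start are: (a) if $\II\subseteq\II'$, then every $\II$-open set is also $\II'$-open (this is the already-noted fact that $\tau_{\II'}$ refines $\tau_\II$), and (b) if $\JJ\subseteq\JJ'$, then $\JJ^*\subseteq\JJ'^*$ and $\JJ'^+\subseteq\JJ^+$. Fact (b) is immediate: a complement of an element of $\JJ$ is a complement of an element of $\JJ'$, and if $A\notin\JJ'$ then a fortiori $A\notin\JJ$.

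For item~(1), suppose $z$ is the $(\II',\JJ)$-limit of $\vec x$, and let $\mathcal O$ be any $\II$-open set containing $z$. By~(a), $\mathcal O$ is also $\II'$-open, so by the hypothesis $\{\alpha<\kappa\mid x_\alpha\in\mathcal O\}\in\JJ^*$, and by~(b) this set lies in $\JJ'^*$. Hence $z$ is the $(\II,\JJ')$-limit of $\vec x$.

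For item~(2), suppose $z$ is an $(\II',\JJ')$-accumulation point of $\vec x$, and let $\mathcal O$ be any $\II$-open set containing $z$. Again by~(a), $\mathcal O$ is $\II'$-open, so by the hypothesis $\{\alpha<\kappa\mid x_\alpha\in\mathcal O\}\in\JJ'^+$, and by~(b) this set lies in $\JJ^+$. Hence $z$ is an $(\II,\JJ)$-accumulation point of $\vec x$.

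Since the argument is purely a definition-chase, there is no genuine obstacle; the only thing worth explicitly checking once is the contravariance of $\JJ\mapsto\JJ^+$ versus the covariance of $\JJ\mapsto\JJ^*$, which is what makes the two items point in the directions stated.
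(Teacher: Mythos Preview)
Your proof is correct and is exactly the routine definition-chase one would expect. The paper itself gives no proof for this observation, simply stating that it is ``trivial to verify,'' so your write-up in fact supplies more detail than the original.
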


Let us observe the following.

\begin{observation}\label{observation:compact_accu}
Let $\II$ and $\JJ$ be ideals on $\kappa$, such that $\II$ contains an unbounded subset of $\kappa$.
Then, there is a sequence $\vec x$ without an $(\II,\JJ)$-accumulation point.
\end{observation}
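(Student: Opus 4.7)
The plan is to essentially reuse the construction already alluded to in footnote~\ref{footnote:also_for_accu} (from the proof of Observation~\ref{observation:compareconvergence}(2)), with an easy argument showing that the sequence in question not only fails to $\II^2$-converge but has no $(\II,\JJ)$-accumulation point for \emph{any} ideal $\JJ$ (as long as $\JJ$ extends $\bd_\kappa$, which is a standing assumption of the paper).

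More precisely, I would let $A\in\II$ be an unbounded subset of $\kappa$, enumerate it increasingly as $A=\{\alpha_\xi\mid\xi<\kappa\}$, and set $x_\xi:=\chi_{\{\alpha_\xi\}}\in 2^\kappa$. For any candidate $z\in 2^\kappa$, I would consider the basic $\II$-open set $[z\restr A]$, which contains $z$ and is indeed $\II$-open since $A\in\II$. The key observation is then that $x_\xi\in[z\restr A]$ iff $x_\xi\restr A=z\restr A$, and since $x_\xi\restr A$ is the characteristic function of the singleton $\{\alpha_\xi\}\subseteq A$, this can happen for at most one $\xi<\kappa$ (namely only when $z\restr A$ itself is such a singleton characteristic function, in which case the unique $\xi$ is determined).

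Consequently, $\{\xi<\kappa\mid x_\xi\in[z\restr A]\}$ has cardinality at most $1$, so in particular it lies in $\bd_\kappa\subseteq\JJ$, and therefore fails to be in $\JJ^+$. This shows that $z$ is not an $(\II,\JJ)$-accumulation point of $\vec x$, and since $z\in 2^\kappa$ was arbitrary, $\vec x$ has no $(\II,\JJ)$-accumulation point at all. I do not anticipate any real obstacle: the only thing one has to remember is the blanket assumption that every ideal on $\kappa$ considered in the paper extends $\bd_\kappa$, which is exactly what makes the ``at most one $\xi$'' conclusion sufficient to place the relevant set into $\JJ$.
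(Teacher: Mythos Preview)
Your proof is correct and follows essentially the same idea as the paper's: both pick $\kappa$-many points lying in pairwise disjoint $\II$-cones with common domain $A\in\II$, and then observe that for any $z\in 2^\kappa$ the cone $[z\restr A]$ contains at most one term of the sequence. The paper phrases this abstractly by invoking Observation~\ref{observation:manyopensets} to get the disjoint cones, whereas you give a concrete instantiation via the singleton characteristic functions $\chi_{\{\alpha_\xi\}}$ (exactly the sequence from Observation~\ref{observation:compareconvergence}(2)), but the underlying argument is the same.
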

\begin{proof}
Using Observation \ref{observation:manyopensets}, let $\{[f_\alpha]\mid \alpha<\kappa\}$ be a family of disjoint $\II$-cones. For every $\alpha<\kappa$, let $x_\alpha\in[f_\alpha]$. Then, $\vec x=\langle x_\alpha\mid\alpha<\kappa\rangle$ is clearly as desired.
\end{proof}

When $\II\supseteq \NS_\kappa$, then there are strong restrictions on what $\II^2$-convergent sequences (and also sequences with $\II^2$-accumulation points) can look like.

\begin{definition}\label{definition:sequentially_tall}
We say that an ideal~$\II$ on $\kappa$ is \emph{sequentially tall} if
for each sequence $\{ y_i \mid i < \kappa \}$ of
unbounded subsets of~$\kappa$,
there exists a set
$y\in \II$
such that $y$ has
non-empty intersection with every $y_i$.
\end{definition}

It is straightforward to check that every sequentially tall ideal is tall.
However, we do not know whether the converse holds true:

\begin{question}
Is there a tall ideal which is not sequentially tall?
\end{question}

It is easy to see (for a similar proof, see Observation~\ref{observation:stationary_tall_etc}(2))
that every maximal ideal is sequentially tall.
Note that if $\II$
is
sequentially tall, then any  $\II' \supseteq \II$ is sequentially tall.

\begin{proposition}\label{proposition:NS_sequentially_tall}
$\NS_\kappa$ is sequentially tall.
\end{proposition}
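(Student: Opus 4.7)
The plan is to construct a nonstationary set of the desired form by a simple recursion, using the standard club-of-closure-points trick to guarantee nonstationarity. Given a sequence $\langle y_i \mid i<\kappa\rangle$ of unbounded subsets of $\kappa$, I would build a strictly increasing sequence $\langle \alpha_i \mid i<\kappa\rangle$ with $\alpha_i\in y_i$, and then set $y:=\{\alpha_i\mid i<\kappa\}$. Then $y\cap y_i\ni \alpha_i$ is automatic, so the only real content is arranging that $y\in \NS_\kappa$.

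First I would carry out the recursion: given $\langle\alpha_j\mid j<i\rangle$, let $\gamma_i:=\sup_{j<i}(\alpha_j+1)$, which is an ordinal below $\kappa$ by the regularity of $\kappa$, and pick any $\alpha_i\in y_i$ with $\alpha_i>\gamma_i$ (possible since $y_i$ is unbounded). Let $f\colon\kappa\to\kappa$ be defined by $f(i):=\alpha_i$; by construction, $f$ is strictly increasing.

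Next, I would consider the club $C$ of $\beta<\kappa$ which are limit ordinals and satisfy $f[\beta]\subseteq\beta$. This is a club as the intersection of the set of closure points of $f$ with the club of limit ordinals. I claim that $y\cap C=\emptyset$, which shows that $y$ is nonstationary and completes the proof. Fix $\beta\in C$; we must show that $\beta\neq\alpha_i$ for every $i<\kappa$. If $i<\beta$, then $\alpha_i=f(i)<\beta$ by definition of $C$. If $i=\beta$, then, since $f$ is strictly increasing and $f[\beta]\subseteq\beta$, the relation $\alpha_j\geq j$ (proved by easy induction) gives $\sup_{j<\beta}f(j)=\beta$, so $\alpha_\beta=f(\beta)>\beta$. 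Finally, if $i>\beta$, then $\alpha_i=f(i)>f(\beta)>\beta$.

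The main obstacle is essentially bookkeeping to ensure the construction produces a set disjoint from a club; aside from that, nothing else is needed, and I do not foresee any difficulty.
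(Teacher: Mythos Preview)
Your proof is correct and follows essentially the same approach as the paper: recursively pick $\alpha_i\in y_i$ strictly increasing, then show the resulting set misses a club. The only cosmetic difference is that the paper interleaves auxiliary ordinals $\delta_i$ with the chosen points $\gamma_i$ during the recursion so that $\{\delta_i\mid i<\kappa\}$ is visibly a club disjoint from $y$, whereas you construct only the $\alpha_i$'s and recover the disjoint club afterwards as the set of limit closure points of $i\mapsto\alpha_i$; these are equivalent standard maneuvers.
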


\begin{proof}
Given $\{ y_i \mid i < \kappa \}$ with $y_i$ unbounded for each $i$,
define $y=\{ \gamma_i \mid i < \kappa\} $ recursively as follows.
Let $\delta_0\in \kappa$ be arbitrary and pick $\gamma_0> \delta_0$ such that $\gamma_0\in y_0 $.
Assume that $\delta_i$ and $\gamma_i$ have been defined. Pick some $\delta_{i+1}>\gamma_i$ and some $\gamma_{i+1}>\delta_{i+1}$, such that $\gamma_{i+1}\in y_{i+1}$.
If $i$ is a limit ordinal, and $\delta_j$ and $\gamma_j$ have been defined for each $j<i$, let $\delta_i := \sup_{j<i} \delta_j$ and pick some $\gamma_i>\delta_i$ such that $\gamma_i\in y_i$.

Note that $\{ \delta_i \mid i <\kappa\}$ is a club subset of $\kappa$ which is disjoint from $y$, hence $y\in \NS_\kappa$, and $\gamma_i\in y\cap y_i$ for each $i < \kappa$.
\end{proof}

\begin{lemma}\label{lemma:convergencerestriction}
  Let $\II$ and $\JJ$ be ideals,
  let $\II$ be sequentially tall
  and let $\vec x=\langle x_\alpha\mid\alpha<\kappa\rangle$ be a sequence that $(\II,\JJ)$-converges to $x\in 2^\kappa$. Then, there is a set $C\in\JJ^*$ such that
  the symmetric difference\footnotemark{} $x_\alpha\Delta x$
  is bounded whenever $\alpha\in C$. If $x$ is only an $(\II,\JJ)$-accumulation point of $\vec x$, then we only get this for a set of $\alpha$'s in $\JJ^+$ (rather than $\JJ^*$).
\end{lemma}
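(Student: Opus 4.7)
The plan is to prove the contrapositive-style statement: set $C := \{\alpha < \kappa \mid x_\alpha \Delta x \text{ is bounded in } \kappa\}$ and show directly that $C \in \JJ^*$, equivalently, that $D := \kappa \setminus C \in \JJ$. Suppose towards a contradiction that $D \in \JJ^+$; since $\bd_\kappa \subseteq \JJ$, $D$ cannot be bounded, so $|D|=\kappa$ (using regularity of $\kappa$), and I can enumerate $D = \{\alpha_i \mid i < \kappa\}$.

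For each $i < \kappa$, by the very definition of $D$, the set $y_i := x_{\alpha_i} \Delta x$ is an unbounded subset of $\kappa$. Invoking sequential tallness of $\II$ applied to the sequence $\langle y_i \mid i < \kappa\rangle$, I obtain a set $y \in \II$ with $y \cap y_i \neq \emptyset$ for every $i$. Consider the $\II$-cone $\mathcal O := [x \restr y]$, which is an $\II$-open neighbourhood of $x$. For each $i < \kappa$, fixing any $\beta \in y \cap y_i$ gives $x_{\alpha_i}(\beta) \neq x(\beta)$, so $x_{\alpha_i} \notin \mathcal O$. Thus
\[
D \;=\; \{\alpha_i \mid i<\kappa\} \;\subseteq\; \{\alpha<\kappa \mid x_\alpha \notin \mathcal O\}.
\]
By $(\II,\JJ)$-convergence of $\vec x$ to $x$, the right-hand side lies in $\JJ$, so $D \in \JJ$, contradicting our assumption $D \in \JJ^+$.

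For the accumulation point version, I would run exactly the same argument, now aiming at a contradiction with the weaker hypothesis. Assume $C \notin \JJ^+$, so $C \in \JJ$ and hence $D \in \JJ^*$. Enumerate $D = \{\alpha_i \mid i < \kappa\}$ bijectively, obtain $y \in \II$ meeting every $y_i = x_{\alpha_i} \Delta x$ by sequential tallness, and conclude as above that $D \subseteq \{\alpha < \kappa \mid x_\alpha \notin [x \restr y]\}$; taking complements yields $\{\alpha < \kappa \mid x_\alpha \in [x \restr y]\} \subseteq \kappa \setminus D \in \JJ$, and properness of $\JJ$ then contradicts that $x$ is an $(\II,\JJ)$-accumulation point of $\vec x$. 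The only substantive ingredient — and the place where sequential tallness is used in an essential way — is the simultaneous diagonalization step: from the $\kappa$-many individually unbounded symmetric differences $y_i$, one needs a \emph{single} $y \in \II$ meeting them all, so that one $\II$-open neighbourhood of $x$ rules out every $x_{\alpha_i}$ at once. Without sequential tallness one would only be able to exclude boundedly many of the bad indices with any given basic $\II$-open set, which would not suffice to derive the required membership in $\JJ$.
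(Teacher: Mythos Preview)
Your proof is correct and follows essentially the same idea as the paper's: use sequential tallness to produce a single $y\in\II$ meeting every unbounded symmetric difference, so that the one $\II$-neighbourhood $[x\restr y]$ excludes all the bad indices at once. The paper streamlines this by first replacing each $x_\alpha$ by $x_\alpha\Delta x$ (so that one may assume $x=\zero$) and then argues directly rather than by contradiction, but the substance is the same.
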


\footnotetext{Note that $x \Delta y = x + y$,
where $x + y$ is the bitwise sum (modulo $2$) of $x$ and~$y$.}

\begin{proof}
 Considering $x_\alpha\Delta x$ rather than $x_\alpha$ for every $\alpha<\kappa$, we may as well assume that $x=\zero$.
Since $\II$ is sequentially tall,  there exists $y \in \II$
which intersects every $x_\alpha$ which is unbounded, hence $[\zero\restr y]$ contains no unbounded $x_\alpha$,
yet it is supposed to contain $\JJ^*$-many (or, $\JJ^+$-many if we consider the case of $x$ being only an $(\II,\JJ)$-accumulation point) $x_\alpha$'s. Thus, $\JJ^*$-many (or, $\JJ^+$-many) $x_\alpha$'s have to be bounded, i.e., after translating back via taking symmetric differences with $x$ once again, $x_\alpha\Delta x$ is bounded, as desired.
\end{proof}

Let us say that for a set $X\subseteq 2^\kappa$, the \emph{$\II$-closure} of $X$ is the $\subseteq$-minimal $\II$-closed set $Y\supseteq X$, which exists
because
$\II$-closed sets are closed under the taking of arbitrary intersections.

\begin{observation}\label{observation:I-closure}
The $\II$-closure of $\II^*$
is all of $2^\kappa$.
\end{observation}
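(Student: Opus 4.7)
The plan is to show directly that $\II^*$ is $\II$-dense in $2^\kappa$, which is equivalent to the claim: indeed, if $\II^*$ is $\II$-dense, then the only $\II$-closed set containing $\II^*$ is $2^\kappa$ itself (any proper $\II$-closed set would have a non-empty $\II$-open complement, which would have to miss $\II^*$).

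To establish $\II$-density, it suffices to show that every basic $\II$-open set $[f]$, with $f \in \heart_\II$, meets $\II^*$. So fix $f\colon D\to 2$ with $D\in\II$. The natural candidate is the extension $x\in 2^\kappa$ of $f$ defined by
\[
x(\alpha) = \begin{cases} f(\alpha) & \text{if } \alpha \in D, \\ 1 & \text{if } \alpha \in \kappa\setminus D. \end{cases}
\]
Then $x\in [f]$ by construction. Viewing $x$ as the characteristic function of the set $A=\{\alpha<\kappa\mid x(\alpha)=1\}$, we have $\kappa\setminus A = \{\alpha\in D\mid f(\alpha)=0\}\subseteq D\in\II$, so $\kappa\setminus A\in\II$ and hence $A\in\II^*$. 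Thus $x\in [f]\cap\II^*$, as required.

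The only potential subtlety is the identification of $\II^*$ with a subset of $2^\kappa$ via characteristic functions (as established earlier in the paper), and making sure that the choice of filling the values outside $\dom(f)$ with $1$'s indeed produces an element whose complement is small. Since $\dom(f)\in\II$, this is immediate. I do not see any real obstacle; the argument is a one-line density check followed by the general fact that $\II$-density implies that the $\II$-closure is the whole space.
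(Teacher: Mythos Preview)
Your argument is correct and follows essentially the same approach as the paper: both show that $\II^*$ is $\II$-dense by observing that every non-empty $\II$-open set contains an element of $\II^*$. You simply spell out the explicit witness (extend $f$ by $1$'s outside $\dom(f)$), whereas the paper leaves this verification implicit.
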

\begin{proof}
Since every non-empty $\II$-open set contains an element of $\II^*$, it follows that the complement of the $\II$-closure of $\II^*$ has to be empty, i.e., that the $\II$-closure of $\II^*$ is all of $2^\kappa$.
\end{proof}

Our next result shows that for many interesting ideals $\II$, $\II$-closure cannot be characterized through
$(\II,\JJ)$-limits or through $(\II,\JJ)$-accumulation points of sequences.

\begin{corollary}
  Let $\II$ and $\JJ$ be ideals, where
$\II$ is sequentially tall.
Then, the following hold true:
  \begin{enumerate}
    \item If $\vec x$ is a sequence of elements of $\II^+$, then all $(\II,\JJ)$-accumulation points
of $\vec x$ are in $\II^+$. In particular, if $\vec{x}$ is $(\II,\JJ)$-convergent, its $(\II,\JJ)$-limit is in~$\II^+$.\footnote{The same holds true for $\II^*$ in place of $\II^+$.}
    \item $\II^+$ is closed under $(\II,\JJ)$-accumulation points of sequences, and under $(\II,\JJ)$-limits of sequences. In particular, this shows that $\II$-closed sets cannot be characterized as being closed under $(\II,\JJ)$-accumulation points or closed under $(\II,\JJ)$-limits.
\end{enumerate}
\end{corollary}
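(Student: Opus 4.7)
The plan is to derive the entire corollary from Lemma~\ref{lemma:convergencerestriction}, in which the assumption that $\II$ be sequentially tall has already done the real work. That lemma guarantees that whenever $x$ is an $(\II,\JJ)$-accumulation point of $\vec x$, there are $\JJ^+$-many indices $\alpha$ (in fact $\JJ^*$-many if $x$ is the $(\II,\JJ)$-limit) for which $d_\alpha := x_\alpha \Delta x$ is a bounded subset of $\kappa$, hence lies in $\bd_\kappa \subseteq \II$. Once any one such $\alpha$ has been fixed, the statement (1) reduces to manipulating a single ideal equation, so I expect no genuine obstacle: the potentially tricky step (organizing the convergence data) has been absorbed into the cited lemma.

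For (1), fix such an $\alpha$, so that $x_\alpha = x \Delta d_\alpha$ with $d_\alpha \in \II$. Since $\II$ is closed under finite unions, $x \in \II$ would imply $x_\alpha = x \Delta d_\alpha \in \II$, contradicting $x_\alpha \in \II^+$; hence $x \in \II^+$, as required. The parenthetical $\II^*$-version claimed in the footnote follows by the symmetric computation with complements: $\kappa \setminus x = (\kappa \setminus x_\alpha) \Delta d_\alpha$, so if $\kappa \setminus x_\alpha \in \II$ and $d_\alpha \in \II$ then $\kappa \setminus x \in \II$, i.e., $x \in \II^*$. The statement about $(\II,\JJ)$-limits is then just the special case in which the unique $(\II,\JJ)$-accumulation point is the limit.

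For (2), the first assertion — that $\II^+$ is closed under $(\II,\JJ)$-accumulation points of sequences and under $(\II,\JJ)$-limits of sequences — is exactly~(1) rephrased. To deduce that $\II$-closed sets cannot be characterized via closure under these sequential operations, I will exhibit one concrete $X \subseteq 2^\kappa$ which enjoys such closure but fails to be $\II$-closed, namely $X = \II^+$. It is closed under $(\II,\JJ)$-accumulation points and $(\II,\JJ)$-limits by what we have just shown, but it is not $\II$-closed: since $\II^* \subseteq \II^+$, the $\II$-closure of $\II^+$ contains the $\II$-closure of $\II^*$, which by Observation~\ref{observation:I-closure} equals all of $2^\kappa$; yet $\emptyset \notin \II^+$, so $\II^+ \neq 2^\kappa$, and hence $\II^+$ properly differs from its $\II$-closure. (The analogous verification with $\II^*$ in place of $\II^+$, using the $\II^*$-version of~(1) from the footnote, gives the same conclusion.)
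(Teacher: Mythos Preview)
Your argument is correct and follows essentially the same route as the paper: both derive (1) directly from Lemma~\ref{lemma:convergencerestriction} together with $\bd_\kappa\subseteq\II$, and both derive (2) from (1) combined with Observation~\ref{observation:I-closure} and the containment $\II^*\subseteq\II^+$. Your version is just more explicit about the ideal arithmetic (writing out $x_\alpha = x\Delta d_\alpha$ and the complement identity for the $\II^*$ case), whereas the paper compresses this into the phrase ``using that $\bd_\kappa\subseteq\II$''.
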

\begin{proof}
  \begin{enumerate}
    \item By Lemma \ref{lemma:convergencerestriction}, if $\vec x$ had an $(\II,\JJ)$-accumulation point outside of~$\II^+$, then, using that $\bd_\kappa\subseteq\II$, $\JJ^+$-many $x_\alpha$'s would have to be outside of~$\II^+$, contradicting our assumption.
    \item Immediate by (1) and by Observation~\ref{observation:I-closure}, for $\II^+\supseteq\II^*$.\qedhere
  \end{enumerate}
\end{proof}

We have so far only seen one trivial example of an $\II^2$-convergent sequence in Observation \ref{observation:compareconvergence}, (1). Let us provide an example of an $\II^2$-convergent sequence which is slightly less trivial, in case $\II=\NS_\kappa$:

\begin{observation}\label{observation:enumerationsofclubsconverge}
  Let $\II=\NS_\kappa$, and let $x\in\II^*$ be enumerated in increasing order by $\langle x_\alpha\mid\alpha<\kappa\rangle$. Then, $\vec y=\langle\{x_\alpha\}\mid\alpha<\kappa\rangle$ $\II^2$-converges to $\zero$.
\end{observation}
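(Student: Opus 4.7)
The plan is to reduce the convergence claim to a condition on basic $\II$-open neighbourhoods of $\zero$ and then to exploit the fact that the increasing enumeration of any $x\in\NS_\kappa^*$ has a club of fixed points. Since the $\II$-cones $[f]$ with $f\in\heart_\II$ form a basis of $\tau_\II$, I first reduce the task to verifying that for each such $[f]$ with $\zero\in[f]$, the set $\{\alpha<\kappa\mid\{x_\alpha\}\in[f]\}$ belongs to $\NS_\kappa^*$. Writing $D:=\dom(f)\in\NS_\kappa$, the assumption $\zero\in[f]$ forces $f\equiv 0$ on $D$, so $\{x_\alpha\}\in[f]$ if and only if $x_\alpha\notin D$. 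Thus the task reduces to showing that $\{\alpha<\kappa\mid x_\alpha\notin D\}$ contains a club.

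The key step is then to observe that the strictly increasing enumeration $e\colon\kappa\to x$, defined by $e(\alpha)=x_\alpha$, has a club $F$ of fixed points. To prove this, I would fix a club $C_x\subseteq x$ (which exists since $x\in\NS_\kappa^*$) and use that its normal enumeration function $f_{C_x}$ has a club of fixed points; for such a fixed point $\alpha$, one has $\alpha\in C_x\subseteq x$ and $|C_x\cap\alpha|=\alpha$, so $x\cap\alpha$ has order type at least $\alpha$ while being contained in $\alpha$, and therefore order type exactly $\alpha$. This says precisely that the $\alpha$-th element of $x$ equals $\alpha$, i.e., $e(\alpha)=\alpha$.

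To conclude, since $D$ is nonstationary, I would pick a club $E$ disjoint from $D$. For every $\alpha$ in the club $F\cap E$, one has $x_\alpha=\alpha\in E$, hence $x_\alpha\notin D$ and $\{x_\alpha\}\in[f]$, as desired. The only slightly nontrivial point in this argument is the fixed-point statement about $e$, but as sketched above this is a routine consequence of the fact that the increasing enumeration of a club is a normal function, so I do not expect any real obstacle.
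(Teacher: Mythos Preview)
Your proof is correct. The reduction to basic $\II$-cones around $\zero$ is exactly right, and the fixed-point argument for the enumeration $e$ is a standard and valid way to finish.

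The paper's proof follows the same reduction but finishes with a slightly different club-theoretic fact. Instead of passing through the club $F$ of fixed points of $e$, the paper directly takes a club $C\subseteq x$ disjoint from the given nonstationary set $A$ (which exists because $x\in\NS_\kappa^*$) and observes that the index set $D=\{\alpha\mid x_\alpha\in C\}=e^{-1}[C]$ is again a club; for $\alpha\in D$ one has $x_\alpha\in C$ and hence $x_\alpha\notin A$. So the paper pulls back a single club through $e$, whereas you intersect the fixed-point club with a club avoiding $A$. Both arguments are elementary and of the same length; the paper's version avoids singling out fixed points, while yours makes the index computation completely explicit via $x_\alpha=\alpha$. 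Neither approach has any real advantage over the other.
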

\begin{proof}
  Let $A\subseteq\kappa$ be nonstationary. Let $C\subseteq x$ be a club subset of $\kappa$ that is disjoint from $A$, and let $D$ be the set of indices $\alpha$ such that $x_\alpha\in C$. Note that $D$ is again a club subset of $\kappa$. Now, $[\zero\restr A]$ contains $\{x_\alpha\}$ for all $\alpha\in D$, yielding that $\vec y$ does indeed $\II^2$-converge to $\zero$.
\end{proof}

\subsection{Subsequences}

It should not come as a surprise that the usual concept of subsequence is not of much use in generalized $\II$-topologies.\footnote{With $\vec y$ being a \emph{subsequence} of $\vec x$, we mean (as usual) that there is a strictly increasing sequence of ordinals $\langle\beta_\alpha\mid\alpha<\kappa\rangle$, and $y_\alpha=x_{\beta_\alpha}$ for every $\alpha<\kappa$.} Let us demonstrate this with the following trivial observation:

\begin{observation}\label{observation:II_convergent}
  Assume that $\II$ contains an unbounded subset $A$ of $\kappa$. Then, the following hold true:
  \begin{enumerate}
    \item There is a sequence $\vec x$ with no $\II^2$-accumulation points which has an $\II^2$-convergent subsequence $\vec y$.
     \item There is an $\II^2$-convergent sequence (that is also  $\bd_\kappa^2$-convergent) with a subsequence that has no $\II^2$-accumulation points.
     \item There is an $\II^2$-convergent sequence with an $\II^2$-convergent subsequence that has a different $\II^2$-limit.
  \end{enumerate}
\end{observation}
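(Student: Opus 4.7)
The plan is to exhibit three explicit sequences in $2^\kappa$, one for each part, all built from a fixed unbounded set $A=\{a_\alpha\mid\alpha<\kappa\}\in\II$ enumerated in increasing order (note $|A|=\kappa$ by regularity of $\kappa$). The guiding principle is the obstruction already exploited in the proof of Observation~\ref{observation:compareconvergence}(2) and its footnote~\ref{footnote:also_for_accu}: on the one hand, the $\II$-cone $[\zero\restr A]$ separates $\zero$ from every function whose support meets $A$; on the other hand, a cone $[z\restr\{\gamma\}]$ around a nonzero $z$ with $z(\gamma)=1$ can contain a singleton $\chi_{\{\beta\}}$ only when $\beta=\gamma$, hence for at most one index.

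For~(3), I would first dispose of the easiest case by taking $x_\alpha=\one$ for $\alpha\in A$ and $x_\alpha=\zero$ otherwise. Since $\{\alpha<\kappa\mid x_\alpha\in[\zero\restr D]\}\supseteq\kappa\setminus A\in\II^*$ for every $D\in\II$, the sequence $\II^2$-converges to $\zero$; but the subsequence indexed by the increasing enumeration of $A$ is constantly $\one$, hence trivially $\II^2$-converges to $\one\ne\zero$.

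For~(2), I would take $\vec x=\langle\chi_{\{\alpha\}}\mid\alpha<\kappa\rangle$. A direct computation gives $\{\alpha<\kappa\mid x_\alpha\in[\zero\restr D]\}=\kappa\setminus D$, which lies in $\bd_\kappa^*$ when $D\in\bd_\kappa$ and in $\II^*$ when $D\in\II$, so $\vec x$ is both $\bd_\kappa^2$- and $\II^2$-convergent to $\zero$. The subsequence indexed by the increasing enumeration of $A$ is $\langle\chi_{\{a_\alpha\}}\mid\alpha<\kappa\rangle$, which is precisely the sequence of Observation~\ref{observation:compareconvergence}(2) and, by the remark in its footnote~\ref{footnote:also_for_accu}, has no $\II^2$-accumulation point.

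For~(1), the construction is $x_\alpha=\zero$ when $\alpha\in A$ and $x_\alpha=\chi_{\{a_\alpha\}}$ when $\alpha\notin A$, so that the subsequence along the increasing enumeration of $A$ is constantly $\zero$ and thus trivially $\II^2$-converges. The core step, which I expect to be the only place requiring care, is to verify that $\vec x$ itself has no $\II^2$-accumulation point, by a case split on $z\in 2^\kappa$. If $z\ne\zero$, pick $\gamma$ with $z(\gamma)=1$ and use $[z\restr\{\gamma\}]$: the terms $x_\alpha=\zero$ for $\alpha\in A$ miss this cone, while among $\alpha\notin A$ only the unique $\alpha$ with $a_\alpha=\gamma$ (if any) lies in it, so the index set has at most one element. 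If $z=\zero$, the cone $[\zero\restr A]$ contains $x_\alpha$ precisely for $\alpha\in A$, since for $\alpha\notin A$ one has $x_\alpha(a_\alpha)=1$ with $a_\alpha\in A$; hence the index set equals $A\in\II$. In either case the index set lies in $\II$, which precludes $z$ from being an $\II^2$-accumulation point.
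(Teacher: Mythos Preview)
Your proof is correct. Parts~(2) and~(3) coincide with the paper's argument: for~(2) the paper uses exactly the sequence $\langle\chi_{\{\alpha\}}\mid\alpha<\kappa\rangle$ and its subsequence along~$A$, and for~(3) the paper simply says the construction is easy and refers to~\cite[Proposition~3.1(ii)]{ksw}, which is precisely your two-valued sequence.

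For part~(1) you take a slightly different route. The paper puts the constant value on the indices in~$A$ and, for $\alpha\notin A$, places $x_\alpha$ into one of $2^\kappa$-many pairwise disjoint $\II$-cones $[f_\alpha]$ from Observation~\ref{observation:manyopensets}. Then any $z\in 2^\kappa$ lies in a unique such cone $[f_{\alpha_0}]$, and the index set $\{\alpha\mid x_\alpha\in[f_{\alpha_0}]\}\subseteq A\cup\{\alpha_0\}\in\II$ immediately rules out~$z$ as an accumulation point. Your construction instead uses the concrete choice $x_\alpha=\chi_{\{a_\alpha\}}$ for $\alpha\notin A$, which avoids invoking Observation~\ref{observation:manyopensets} but forces the case split on $z=\zero$ versus $z\neq\zero$ and the use of two different separating cones. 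Both arguments are short; the paper's is more uniform, yours more self-contained.
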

\begin{proof}
  \begin{enumerate}
    \item Simply take $x_\alpha$ to have constant value for an unbounded subset of $\alpha$'s in $\II$, and for the other $\alpha$'s, pick $x_\alpha$ in disjoint $\II$-cones $[f_\alpha]$, as provided in Observation \ref{observation:manyopensets}. Then, $\vec x$ is clearly as desired, with the subsequence of $y_\alpha$'s being of the above constant value.
    \item Let $x_\alpha=\{\alpha\}$ for $\alpha<\kappa$. Then $\vec x=\langle x_\alpha\mid\alpha<\kappa\rangle$ is easily seen to be $\II^2$-convergent,
    yet the sequence of $\{\alpha\}$'s for $\alpha\in A$ is a subsequence of $\vec x$ that does not
have an $\II^2$-accumulation point (see Observation \ref{observation:compareconvergence} (2) and Footnote~\ref{footnote:also_for_accu}).
    \item Easy, and essentially the same as the proof of \cite[Proposition 3.1~(ii)]{ksw}.\qedhere
  \end{enumerate}
\end{proof}

The following easy observation provides a positive result for certain ideals~$\II$:

\begin{observation}
If $\II$ has a
basis
of size $\kappa$, then every sequence with an $(\II,\JJ)$-accumulation point $x$ has a subsequence with $(\II,\JJ)$-limit $x$.
\end{observation}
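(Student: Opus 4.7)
The plan is to mimic the classical argument from first-countable spaces, exploiting the fact that a basis of size $\kappa$ for $\II$ translates into a neighbourhood basis of size $\kappa$ at every point. So I would first fix a basis $\{B_i \mid i < \kappa\}$ for $\II$ and, in order to have a decreasing neighbourhood basis at $x$, replace it by the ${\subseteq}$-increasing sequence $B'_i := \bigcup_{j \leq i} B_j$, which is still in $\II$ because $\II$ is ${<}\kappa$-complete (and still forms a basis). Then $\{\mathcal{O}_i \mid i < \kappa\}$ with $\mathcal{O}_i := [x \restr B'_i]$ is a decreasing neighbourhood basis of $x$ in the $\II$-topology.

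Next, since $x$ is an $(\II,\JJ)$-accumulation point of $\vec{x} = \langle x_\alpha \mid \alpha < \kappa\rangle$, for every $i < \kappa$ the set
\[
A_i := \{\alpha < \kappa \mid x_\alpha \in \mathcal{O}_i\}
\]
lies in $\JJ^+$. Because $\bd_\kappa \subseteq \JJ$, each $A_i$ is in particular unbounded in $\kappa$, so I can recursively pick $\beta_i \in A_i$ with $\beta_i > \sup_{j < i} \beta_j$, yielding a strictly increasing sequence $\langle \beta_i \mid i < \kappa\rangle$ and thus a genuine subsequence $\vec{y} = \langle x_{\beta_i} \mid i < \kappa\rangle$ of $\vec{x}$.

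Finally I would verify that $\vec{y}$ is $(\II,\JJ)$-convergent with limit $x$. Given an arbitrary $\II$-open set $\mathcal{O}$ containing $x$, there exists $i_0 < \kappa$ with $\mathcal{O}_{i_0} \subseteq \mathcal{O}$ (since the $\mathcal{O}_i$'s form a neighbourhood basis at $x$). For every $i \geq i_0$, the construction gives $x_{\beta_i} \in \mathcal{O}_i \subseteq \mathcal{O}_{i_0} \subseteq \mathcal{O}$, so
\[
\{i < \kappa \mid x_{\beta_i} \in \mathcal{O}\} \supseteq \kappa \setminus i_0,
\]
and the left-hand side is therefore in $\JJ^*$, once again using $\bd_\kappa \subseteq \JJ$. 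No step looks genuinely hard; the only small subtlety is ensuring that an increasing basis is available (handled by the ${<}\kappa$-completeness of $\II$) and noting that the $A_i$ being in $\JJ^+$ automatically makes them unbounded, which is all that is needed to extract a strictly increasing subsequence.
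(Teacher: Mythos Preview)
Your argument is correct and is exactly the standard first-countability argument the paper has in mind; the paper states this as an easy observation and gives no proof of its own.
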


Let us propose the following generalized notion of subsequence, which corresponds to the usual notion of subsequence being based on an \emph{unbounded} set of indices when working with the \emph{bounded} topology:

\begin{definition}
  Let $\vec x=\langle x_\alpha\mid\alpha<\kappa\rangle$ be a sequence in $2^\kappa$. We say that $\vec y=\langle y_\alpha\mid\alpha<\kappa\rangle$ is a
$\JJ$-subsequence of $\vec x$, and denote this property as $\vec y \Jsubseq  \vec x$, if there is a strictly increasing sequence $\langle\beta_\alpha\mid\alpha<\kappa\rangle$ of ordinals below $\kappa$ such that $y_\alpha=x_{\beta_\alpha}$ for every $\alpha<\kappa$, and such that $\{\beta_\alpha\mid\alpha<\kappa\}\in\JJ^+$.
\end{definition}

The following definition provides two properties of ideals which will be shown to be equivalent to some natural properties of the $\JJ$-subsequence relation. If $S$ is a set of ordinals, let $\pi_S$ denote the transitive collapsing map of $S$.

\begin{definition}
Let $\JJ$ be an ideal on $\kappa$. We say that $\JJ$ is

\begin{itemize}

\item \emph{closed under re-enumerations} if
for each $S \in \JJ^+$ and each $A \subseteq S$ with $A \in \JJ$, we have
$\pi_S[A] \in \JJ$.

\item \emph{almost closed under re-enumerations} if
for each $S \in \JJ^+$ and each $A \subseteq S$ with $A \in \JJ$, we have $\pi_S[A] \notin \JJ^*$.

\end{itemize}
\end{definition}

\begin{observation} \label{observation:max_re-enum}
If $\JJ$ is a maximal ideal which is almost closed under re-enumer\-ations, then it is closed under re-enumerations, just because $\JJ^+=\JJ^*$.
\end{observation}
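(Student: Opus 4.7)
The plan is to unpack the hint already given in the statement: the identity $\JJ^+ = \JJ^*$, which characterises maximal ideals, forces ``not in $\JJ^*$'' to coincide with ``in $\JJ$'', so ``almost closed under re-enumerations'' upgrades automatically to ``closed under re-enumerations''.

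Concretely, I would start by recalling that maximality of $\JJ$ means that for every $X \subseteq \kappa$ exactly one of $X \in \JJ$ and $\kappa \setminus X \in \JJ$ holds; equivalently $\JJ^+ = \JJ^*$, since a set is $\JJ$-positive precisely when its complement lies in $\JJ$, and for a maximal $\JJ$ this is in turn the definition of being in the dual filter $\JJ^*$. Now fix $S \in \JJ^+$ and $A \subseteq S$ with $A \in \JJ$, and set $B := \pi_S[A]$. By the assumption that $\JJ$ is almost closed under re-enumerations, $B \notin \JJ^*$. Using the identity $\JJ^+ = \JJ^*$ in the contrapositive form $\mathcal P(\kappa) \setminus \JJ^* = \mathcal P(\kappa) \setminus \JJ^+ = \JJ$, we conclude $B \in \JJ$, which is exactly the required conclusion.

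There is no genuine obstacle here; the only thing to double check is that the definition of $\JJ^*$ used in the paper ($\JJ^* = \{\kappa \setminus x \mid x \in \JJ\}$) really does coincide with $\JJ^+$ when $\JJ$ is maximal, which is immediate from the two-valued dichotomy of a maximal ideal. Hence the argument reduces to the single line indicated in the statement, and no additional ingredients (closure properties of $\pi_S$, properties of $S$ beyond $S \in \JJ^+$, etc.) are needed.
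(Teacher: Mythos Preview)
Your proposal is correct and is exactly the argument the paper intends: the observation is stated without a separate proof, the justification ``just because $\JJ^+=\JJ^*$'' being the entire content, and you have faithfully unpacked that one line. There is nothing to add.
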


\begin{proposition}
$\bd_\kappa$ is
closed under re-enumerations.
\end{proposition}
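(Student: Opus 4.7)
The plan is to use the regularity of $\kappa$ (which is assumed throughout the paper) together with the fact that the transitive collapse is order-preserving. I will show that $\pi_S$ sends any bounded subset of $S$ to a bounded subset of $\kappa$, directly unpacking the definitions.

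First I would fix $S \in (\bd_\kappa)^+$, so that $S$ is an unbounded subset of $\kappa$, and $A \subseteq S$ with $A \in \bd_\kappa$, so that $A$ is bounded in $\kappa$. Since $\kappa$ is regular and $S \subseteq \kappa$ is unbounded, the order type of $S$ is exactly $\kappa$, and hence $\pi_S \colon S \to \kappa$ is an order-preserving bijection. Let $\beta < \kappa$ be such that $A \subseteq \beta$; then $A \subseteq S \cap \beta$, and therefore $\pi_S[A] \subseteq \pi_S[S \cap \beta]$.

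Next I would observe that since $\pi_S$ is order-preserving and $S \cap \beta \subseteq \beta$, the set $\pi_S[S \cap \beta]$ is (being the image under the Mostowski collapse of an initial segment of $S$) an ordinal whose value equals the order type of $S \cap \beta$, which is at most $\beta < \kappa$. In particular, $|\pi_S[A]| \leq |\beta| < \kappa$. By regularity of $\kappa$, any subset of $\kappa$ of cardinality strictly less than $\kappa$ is bounded in $\kappa$, so $\pi_S[A] \in \bd_\kappa$, as required.

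There is essentially no obstacle here; the statement is a direct consequence of regularity and the order-preserving nature of the transitive collapse. The only mild point to be careful about is not to confuse ``bounded in $\kappa$'' with ``of cardinality less than $\kappa$'' in general --- these coincide only because $\kappa$ is regular, which is precisely the hypothesis in force.
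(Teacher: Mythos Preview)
Your proof is correct and follows essentially the same approach as the paper: both arguments reduce to the observation that, by regularity of~$\kappa$, membership in $\bd_\kappa$ is purely a matter of cardinality, and $\pi_S$ is a bijection. Your version adds a bit more detail (tracking the initial segment $S\cap\beta$ and its order type), but the core idea is identical.
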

\begin{proof} This holds because being in $\bd_\kappa$ is just a matter of cardinality:
Assume $S$ is in $\bd_\kappa^+$, i.e., it is has size $\kappa$ and $A\subseteq S$ is bounded, i.e., it has size $<\kappa$. Then clearly $\pi_S[A]$ has size $<\kappa$, so $\pi_S[A]\in \bd_\kappa$.
\end{proof}

For
$\lambda < \kappa$ regular, let
$\cof\,\lambda := \{ \alpha < \kappa \with \cof(\alpha ) = \lambda \}$.

\begin{proposition}\label{proposition:NSnotclosedunderreenumerations}
If $\kappa>\omega_1$, then $\NS_\kappa$ is
not closed under re-enumerations.
\end{proposition}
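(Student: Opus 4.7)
The plan is to construct directly a stationary set $S\subseteq\kappa$ together with a nonstationary subset $A\subseteq S$ whose collapse image $\pi_S[A]$ is stationary. Since $\kappa>\omega_1$ and $\kappa$ is regular uncountable, we have $\kappa\geq\omega_2$, so both $\cof\,\omega$ and $\cof\,\omega_1$ are stationary subsets of~$\kappa$. I take $S:=\cof\,\omega$, enumerate it in increasing order as $\langle s_\alpha\mid\alpha<\kappa\rangle$, and let $A:=\{s_\alpha\mid\cof(\alpha)=\omega_1\}\subseteq S$. Then $\pi_S[A]=\cof\,\omega_1$, which is stationary, so the entire content of the proof lies in verifying that $A$ is nonstationary in~$\kappa$.

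The key observation is that the enumeration must be discontinuous at every index of cofinality~$\omega_1$: if $\cof(\alpha)=\omega_1$ and $s_\alpha=\sup_{\beta<\alpha}s_\beta$, then $s_\alpha$ would be the supremum of a strictly increasing $\omega_1$-sequence of ordinals, hence would have cofinality~$\omega_1$, contradicting $s_\alpha\in S\subseteq\cof\,\omega$. Therefore $\sup_{\beta<\alpha}s_\beta<s_\alpha$ whenever $\cof(\alpha)=\omega_1$.

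With this in hand, I would consider the set $C:=\{\gamma<\kappa\mid \gamma>0\text{ and }\sup(S\cap\gamma)=\gamma\}$ of (nonzero) limit points of $S$ in~$\kappa$, and verify that it is a club: unboundedness is clear from $S$ being cofinal in~$\kappa$ (the supremum of any strictly increasing $\omega$-sequence in~$S$ lies in $C$), and closedness is routine since a supremum of elements of $C$ is again a limit point of~$S$. For any $s_\alpha\in A$, the elements of $S$ strictly below $s_\alpha$ are precisely $\{s_\beta\mid\beta<\alpha\}$ by strict monotonicity of the enumeration, so $\sup(S\cap s_\alpha)=\sup_{\beta<\alpha}s_\beta$, which is strictly less than $s_\alpha$ by the key observation. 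Hence $s_\alpha\notin C$, giving $A\cap C=\emptyset$ and showing that $A$ is nonstationary.

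The main (and essentially only nontrivial) step is the cofinality argument establishing the gap $\sup_{\beta<\alpha}s_\beta<s_\alpha$ at $\omega_1$-cofinality indices; once this is in place, everything else is straightforward bookkeeping with the enumeration.
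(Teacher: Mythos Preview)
Your proof is correct and follows essentially the same idea as the paper, with a minor difference in presentation. The paper also takes $S=\cof\,\omega$ but writes down $A$ explicitly as $\{\alpha+\omega\mid\cof(\alpha)=\omega_1\}$; it then observes that $A$ misses the club given by the closure of $\cof\,\omega_1$ (since each $\alpha+\omega$ is neither in $\cof\,\omega_1$ nor a limit point of it), and asserts $\pi_S[A]=\cof\,\omega_1$. Your $A=\{s_\alpha\mid\cof(\alpha)=\omega_1\}$ is in fact the very same set: if $\cof(\alpha)=\omega_1$ and $\gamma:=\sup_{\beta<\alpha}s_\beta$, then $\cof(\gamma)=\omega_1$ and $s_\alpha$ is the least element of $\cof\,\omega$ above $\gamma$, namely $\gamma+\omega$. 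The trade-off is that your description makes $\pi_S[A]=\cof\,\omega_1$ immediate (since $\pi_S(s_\alpha)=\alpha$) at the cost of a short cofinality argument for nonstationarity, whereas the paper's description makes nonstationarity immediate but leaves the computation of $\pi_S[A]$ implicit. Either packaging is fine.
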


\begin{proof}
Let $S=\cof\,\omega$, and let $A=\{\alpha+\omega\mid\cof(\alpha)=\omega_1\}$. Then, $A$ is clearly nonstationary, as witnessed by the club that is the closure of $\cof\,\omega_1$,
yet $\pi_S[A]=\cof\,\omega_1$ is stationary.
\end{proof}

\begin{question}
Is
 $\NS_{\omega_1}$ closed under re-enumerations?
\end{question}

However, the weaker of the above properties does hold true for $\NS_\kappa$:

\begin{proposition}\label{prop:almost_closed_under}
Let $\JJ \supseteq \NS_\kappa$. Then $\JJ$ is
almost closed under re-enumerations.
\end{proposition}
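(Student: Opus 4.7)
The plan is to unravel what ``almost closed under re-enumerations'' actually asks for in this setting and reduce it to a standard fact about stationary sets. Fix $\JJ \supseteq \NS_\kappa$, fix $S \in \JJ^+$ and $A \subseteq S$ with $A \in \JJ$. First I would note that, because $\NS_\kappa \subseteq \JJ$, we have $\JJ^* \subseteq \NS_\kappa^* = \cluf_\kappa$, so it is enough to show the stronger statement $\pi_S[A] \notin \cluf_\kappa$; equivalently, that the complement $\kappa \setminus \pi_S[A]$ is stationary. Since $\pi_S$ collapses $S$ bijectively onto its order type (which is $\kappa$, because $S$ is unbounded as $\bd_\kappa \subseteq \JJ$ and $S \in \JJ^+$), this complement equals $\pi_S[S \setminus A]$. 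Setting $B := S \setminus A$, the assumption $S \in \JJ^+$ and $A \in \JJ$ forces $B \in \JJ^+$ (otherwise $S = A \cup B \in \JJ$). Again using $\NS_\kappa \subseteq \JJ$, both $S$ and $B$ are stationary subsets of $\kappa$.

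The heart of the argument is then: if $S$ is stationary and $B \subseteq S$ is stationary, then $\pi_S[B]$ is stationary. This will follow from the standard observation that the set
\[ E := \{\alpha < \kappa : \ot(S \cap \alpha) = \alpha\} \]
is a club. I would justify this by noting that the map $g(\alpha) := \ot(S \cap \alpha)$ is non-decreasing, continuous (since $S \cap \alpha = \bigcup_{\beta<\alpha} S \cap \beta$ for limit $\alpha$), satisfies $g(\alpha) \le \alpha$, and has range unbounded in $\kappa$ (because $\ot(S) = \kappa$); the fixed-point set of such a $g$ is well-known to be a club. Observe next that for $\alpha \in E \cap S$, the ordinal $\alpha$ is precisely the $\alpha$-th element of $S$, so $\pi_S(\alpha) = \alpha$.

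With $E$ in hand, the stationarity of $\pi_S[B]$ is immediate: given any club $C \subseteq \kappa$, the intersection $B \cap C \cap E$ is stationary (as $B$ is stationary and $C \cap E$ is club), and for any $\alpha$ in this intersection one has $\pi_S(\alpha) = \alpha \in C$, witnessing $\pi_S[B] \cap C \ne \emptyset$. Consequently $\pi_S[S \setminus A] = \pi_S[B]$ is stationary, so $\pi_S[A]$ does not contain a club, and therefore $\pi_S[A] \notin \cluf_\kappa \supseteq \JJ^*$, as required.

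I do not expect a major obstacle here; the only step worth presenting carefully is the clubness of $E$, which I would either prove in a few lines or simply cite as the standard fact that in every stationary $S \subseteq \kappa$ of order type $\kappa$, the set of $\alpha \in S$ satisfying $\pi_S(\alpha) = \alpha$ is stationary. Everything else is a bookkeeping translation between the definition of ``almost closed under re-enumerations'', the dual filter, and stationarity.
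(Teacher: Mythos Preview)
Your proposal is correct and follows essentially the same route as the paper: both arguments hinge on the existence of a club on which $\pi_S\restr S$ is the identity, and then intersect with $S\setminus A$ to produce a large set disjoint from $\pi_S[A]$. The only notable differences are cosmetic: the paper establishes the club via Fodor's lemma (arguing that $\{\alpha\in S:\pi_S(\alpha)\neq\alpha\}$ cannot be stationary), whereas you use the fixed-point set of the normal function $\alpha\mapsto\ot(S\cap\alpha)$; and the paper works directly inside $\JJ$, while you first reduce to $\NS_\kappa$ by observing $\JJ^*\subseteq\cluf_\kappa$.
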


This implies, together with Observation~\ref{observation:max_re-enum}, that any maximal ideal $\JJ\supseteq \NS_\kappa$ is closed under re-enumerations.

\begin{proof}[Proof of Proposition~\ref{prop:almost_closed_under}]
Let $S \in \JJ^+$ and $A \subseteq S$ with $A \in \JJ$.

Let us first prove the following claim:
    \begin{claim*}
      There is a club $D$ such that $\pi_S\restr\,(D\cap S)=\id$.
    \end{claim*}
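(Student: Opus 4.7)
The plan is to argue via the increasing enumeration of $S$. Since $\JJ \supseteq \NS_\kappa$ and $S \in \JJ^+$, the set $S$ is not nonstationary, hence stationary, hence unbounded; as an unbounded subset of the regular cardinal $\kappa$ it has order type exactly $\kappa$. Let $e\colon \kappa \to S$ be its strictly increasing enumeration, so that $\pi_S = e^{-1}$ and, for $\alpha \in S$, the condition $\pi_S(\alpha) = \alpha$ is equivalent to $\ot(S \cap \alpha) = \alpha$, which is equivalent to $e(\alpha) = \alpha$. Thus the claim reduces to producing a club $D \subseteq \kappa$ all of whose elements are fixed points of $e$ (when also lying in $S$).

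I would then consider the set $C := \{\alpha < \kappa \mid e[\alpha] \subseteq \alpha\}$ of ordinals closed under $e$. This is a club by the standard argument: given $\beta < \kappa$, iteratively set $\beta_0 := \beta$ and $\beta_{n+1} := \sup(e[\beta_n]) + 1$ (which stays below $\kappa$ by the regularity of $\kappa$); then $\sup_n \beta_n \in C$ is above $\beta$, showing unboundedness, while closure is immediate from the definition. Let $D := C \cap \Lim$ be the club of limit ordinals in $C$. For any $\alpha \in D$, the restriction $e\restr \alpha$ is a strictly increasing map from $\alpha$ into $\alpha$, so $e[\alpha]$ has order type $\alpha$ with $\sup e[\alpha] = \alpha$. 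Moreover, for any $\beta \in S \cap \alpha$, writing $\beta = e(\xi)$ we must have $\xi < \alpha$, since $\xi \geq \alpha$ would force $e(\xi) \geq e(\alpha) \geq \alpha > \beta$; hence $S \cap \alpha = e[\alpha]$, so $\ot(S \cap \alpha) = \alpha$. Therefore for every $\alpha \in D \cap S$, $\pi_S(\alpha) = \ot(S \cap \alpha) = \alpha$, which is exactly what the claim asks for.

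There is no real obstacle in this argument: it is the standard observation that the increasing enumeration of an unbounded subset of a regular uncountable cardinal has a club of fixed points. The only mild point to keep track of is that elements of $C$ need not themselves lie in $S$, which is precisely why we only claim $\pi_S \restr (D \cap S) = \id$ rather than $\pi_S\restr D = \id$; restricting to $D \cap S$ is then enough for the subsequent use of the claim, where the argument will combine $D \cap S \subseteq \mathrm{Fix}(\pi_S)$ with the fact that $D \cap S \in \JJ^+$ (because $D$ is club and $\JJ \supseteq \NS_\kappa$ cannot absorb $S \in \JJ^+$) to conclude that $\pi_S[A] \notin \JJ^*$.
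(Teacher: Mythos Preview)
Your proof is correct. You construct the club $D$ directly as the set of limit ordinals closed under the increasing enumeration $e$ of $S$, and then verify that for any $\alpha\in D$ one has $S\cap\alpha=e[\alpha]$, so $\pi_S(\alpha)=\ot(S\cap\alpha)=\alpha$ whenever $\alpha\in D\cap S$. (The restriction to limit ordinals is harmless but in fact unnecessary: the argument that $S\cap\alpha=e[\alpha]$ and $\ot(e[\alpha])=\alpha$ works for every $\alpha\in C$.)

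The paper proceeds quite differently: it argues by contradiction via Fodor's lemma. If no such club existed, then $S':=\{\alpha\in S\mid \pi_S(\alpha)\neq\alpha\}$ would be stationary; since $\pi_S$ is the collapsing map, $\pi_S(\alpha)<\alpha$ on $S'$, so $\pi_S\restr S'$ is regressive, and Fodor yields a stationary subset on which $\pi_S$ is constant, contradicting injectivity. Your approach is more elementary in that it avoids Fodor and gives an explicit club, while the paper's argument is shorter and highlights that the non-fixed points of any collapsing map on a stationary set must be nonstationary. Both are entirely standard.
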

    \begin{proof}
		Assume not, i.e., for every club $D$, $\pi_S\restr\,(D\cap S)\neq\id$. Hence the set $S':=\{\alpha\in S \mid \pi_S(\alpha) \neq \alpha\}$ is stationary. Since $\pi_S$ is a transitive collapsing map, $\pi_S(\alpha)<\alpha$ whenever $\pi(\alpha)\neq \alpha$. So $\pi_S\restr S'$ is a regressive function, and hence by Fodor, there is a stationary subset on which it is constant, contradicting the fact that $\pi_S$ is injective.
    \end{proof}

		Let $C\in \JJ^*$ be the complement of $A$.
    Use the claim to obtain $D$, and observe that $D\in \JJ^*$ and let $D':= D\cap C\in \JJ^*$; then $\pi_S[D'\cap S]=D'\cap S\in \JJ^+$ is disjoint from $\pi_S[A]$, showing that $\pi_S[A]\notin \JJ^*$.
\end{proof}

\begin{proposition}\label{proposition:re-enumerations}
Let $\II$ and $\JJ$ be ideals on $\kappa$.
Then, the following are equivalent:
  \begin{enumerate}

    \item $\JJ$ is closed under re-enumerations.

    \item Whenever $\vec y$ is a $\JJ$-subsequence of $\vec x$, then
    \[
    z \textrm{ is the } (\II,\JJ)\textrm{-limit of } \vec x \implies
    z \textrm{ is the } (\II,\JJ)\textrm{-limit of } \vec y.
    \]

    \item Whenever $\vec y$ is a $\JJ$-subsequence of $\vec x$, then
    \[
    z \textrm{ is an } (\II,\JJ)\textrm{-accumulation point of } \vec y \implies  \]
    \[
    z \textrm{ is an } (\II,\JJ)\textrm{-accumulation point of } \vec x.
    \]

    \item Being a $\JJ$-subsequence is a transitive relation, i.e.,
    \[
    \vec z\Jsubseq  \vec y \Jsubseq  \vec x
    \implies
    \vec z \Jsubseq  \vec x.
    \]

  \end{enumerate}
\end{proposition}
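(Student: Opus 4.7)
The plan is to prove each of the equivalences (1)$\Leftrightarrow$(2), (1)$\Leftrightarrow$(3), and (1)$\Leftrightarrow$(4) separately, rather than running a single cycle. The central computational observation underlying all six implications is: if $\vec y$ is a $\JJ$-subsequence of $\vec x$ witnessed by the increasing enumeration $\langle\beta_\alpha\mid\alpha<\kappa\rangle$ of some $S\in\JJ^+$, then for every $\mathcal{O}\subseteq 2^\kappa$,
\[
\{\alpha<\kappa\mid y_\alpha\in\mathcal{O}\}=\pi_S\bigl[\{\beta<\kappa\mid x_\beta\in\mathcal{O}\}\cap S\bigr],
\]
so comparing these two sets modulo $\JJ$ is precisely the content of closure under re-enumerations applied to $S$.

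For the three forward directions I would argue as follows. (1)$\Rightarrow$(2): given an $\II$-open $\mathcal{O}$ around the limit $z$, apply (1) to the $\JJ$-set $(\kappa\setminus\{\beta\mid x_\beta\in\mathcal{O}\})\cap S$ to conclude that $\{\alpha\mid y_\alpha\notin\mathcal{O}\}\in\JJ$. (1)$\Rightarrow$(3): argue contrapositively, applying (1) directly to $\{\beta\mid x_\beta\in\mathcal{O}\}\cap S$. (1)$\Rightarrow$(4): given $\vec z\Jsubseq\vec y\Jsubseq\vec x$ via $S$ and $T$, the composite index set equals $\pi_S^{-1}[T]\subseteq S$; if this were in $\JJ$, closure under re-enumerations would force $T=\pi_S[\pi_S^{-1}[T]]\in\JJ$, contradicting $T\in\JJ^+$.

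For the three reverse directions I would produce counterexamples from a single failure of (1): fix $S\in\JJ^+$ and $A\subseteq S$ with $A\in\JJ$ but $\pi_S[A]\in\JJ^+$, and let $\langle\beta_\alpha\rangle$ enumerate $S$. Since $\bd_\kappa\subseteq\II$, the $\II$-cone $[\zero\restr\{0\}]$ is an $\II$-open neighborhood of $\zero$ that misses $\one$, and it will serve as the separating cone in each case. For (2)$\Rightarrow$(1), set $x_\beta:=\zero$ off $A$ and $x_\beta:=\one$ on $A$; then $\vec x$ $(\II,\JJ)$-converges to $\zero$ because $\kappa\setminus A\in\JJ^*$, but $\vec y=\langle x_{\beta_\alpha}\rangle$ does not, since $\{\alpha\mid y_\alpha\in[\zero\restr\{0\}]\}=\kappa\setminus\pi_S[A]\notin\JJ^*$. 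For (3)$\Rightarrow$(1), dualize by swapping $\zero$ and $\one$: the separating cone then shows $\zero$ is not an $(\II,\JJ)$-accumulation point of $\vec x$, while $\{\alpha\mid y_\alpha=\zero\}=\pi_S[A]\in\JJ^+$ makes $\zero$ an $(\II,\JJ)$-accumulation point of $\vec y$. For (4)$\Rightarrow$(1), take $x_\beta:=\chi_{\{\beta\}}$ to make $\vec x$ injective, and form $\vec y=\langle x_{\beta_\alpha}\rangle$ and $\vec z=\langle y_{\gamma_\alpha}\rangle$ where $\langle\gamma_\alpha\rangle$ enumerates $\pi_S[A]$; the two-step composition gives $\vec z\Jsubseq\vec y\Jsubseq\vec x$, but injectivity of $\vec x$ forces the unique index set representing $\vec z$ as a $\JJ$-subsequence of $\vec x$ to be $\pi_S^{-1}[\pi_S[A]]=A\in\JJ$, so $\vec z\not\Jsubseq\vec x$.

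The main technical point to watch will be the bookkeeping between $\pi_S$ and $\pi_S^{-1}$, together with the observation that the counterexample for (4)$\Rightarrow$(1) genuinely needs an injective $\vec x$ so that the candidate index set of any subsequence representation is uniquely determined by the values.
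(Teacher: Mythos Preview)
Your proposal is correct and follows essentially the same approach as the paper: the same six implications, the same counterexample sequences built from $\zero$ and $\one$ according to membership in $A$, and the same bookkeeping via $\pi_S$. Your treatment of (4)$\Rightarrow$(1) is in fact slightly more careful than the paper's: the paper simply says ``let $\vec{x}$ be a sequence'' and concludes $\vec z\not\Jsubseq\vec x$ because the natural index set is $A\in\JJ$, but since the relation $\Jsubseq$ is existential in the witnessing index set, one really does need injectivity of $\vec x$ (as you use $x_\beta=\chi_{\{\beta\}}$) to rule out any alternative witness; your explicit remark about this is a genuine improvement.
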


\begin{proof}
\emph{(1)$\implies$(2)}: Let $z$ be the $(\II,\JJ)$-limit of $\vec{x}$, so for every $\II$-neighbourhood $\mathcal{O}$ of $z$ there is a $\JJ^*$-set $C$ such that $x_\alpha\in \mathcal{O}$ for every $\alpha\in C$. Let $S\in \JJ^+$ be the index set of $\vec{y}$. Since $\JJ$ is closed under re-enumerations, $\pi_S[C\cap S]\in \JJ^*$, and therefore, $z$ is the $(\II,\JJ)$-limit of $\vec{y}$.

\emph{(1)$\implies$(3)}:  Let $z$ be an $(\II,\JJ)$-accumulation point of $\vec{y}$, so for every $\II$-neighbourhood $\mathcal{O}$ of $z$ there is a $\JJ^+$-set $S'$ such that $y_\alpha\in \mathcal{O}$ for every $\alpha\in S'$. Let $S\in \JJ^+$ be the index set of $\vec{y}$. Since $\JJ$ is closed under re-enumerations,
$\pi_S^{-1}[S']\in \JJ^+$, and therefore, $z$ is an $(\II,\JJ)$-accumulation point of $\vec{x}$.

\emph{(1)$\implies$(4)}:
Let $S\in \JJ^+$ be the index set of $\vec{y}$ as a subsequence of $\vec{x}$, and let $S'\in\JJ^+$ be the index set of $\vec{z}$ as a subsequence of $\vec{y}$. Since $\JJ$ is closed under re-enumerations,
we have $\pi_S^{-1}[S']\in \JJ^+$,
hence $\vec{z}$ is a $\JJ$-subsequence of $\vec{x}$.

\emph{(2)$\implies$(1)}: Assume that $\JJ$ is not closed under re-enumerations, as witnessed by $S$ and by $A$. Let $x_\alpha=\one$ for $\alpha\in A$, and let $x_\alpha=\zero$ for $\alpha\notin A$. Clearly, $\zero$ is the $(\II,\JJ)$-limit of $\vec{x}$. Now $\vec{y}:=\{ x_\alpha\mid \alpha\in S\}$ is a $\JJ$-subsequence, and $\zero$ is not the $(\II,\JJ)$-limit of $\vec{y}$, because $\pi_S[A]\in \JJ^+$.

\emph{(3)$\implies$(1)}: Assume that $\JJ$ is not closed under re-enumerations, as witnessed by $S$ and by $A$. Let $x_\alpha=\one$ for $\alpha\in A$ and $x_\alpha=\zero$ for $\alpha\notin A$. Clearly $\one$ is not an $(\II,\JJ)$-accumulation point of $\vec{x}$. Now $\vec{y}:=\{ x_\alpha\mid \alpha\in S\}$ is a $\JJ$-subsequence, and $\one$ is an $(\II,\JJ)$-accumulation point of $\vec{y}$, because $\pi_S[A]\in \JJ^+$.

\emph{(4)$\implies$(1)}: Assume that $\JJ$ is not closed under re-enumerations, as witnessed by $S$ and by $A$. Let $\vec{x}$ be a sequence, let $\vec{y}:=\{ x_\alpha\mid \alpha\in S\}$, and let $\vec{z}:=\{ x_\alpha\mid \alpha\in A\}$. Then, $\vec{z}\Jsubseq  \vec{y}\Jsubseq \vec{x}$, 
yet $\vec{z}\Jsubseq \vec{x}$ does not hold.
\end{proof}

It follows that $\bd_\kappa$-subsequences have all these nice properties, while $\NS_\kappa$-sub\-se\-quen\-ces do not have them (at least if $\kappa>\omega_1$). But since $\NS_\kappa$ is almost closed under re-enumerations, the following proposition gives weaker properties which hold for $\NS_\kappa$-subsequences.

\begin{proposition}\label{proposition:almost_re-enumerations}
Let $\II$ and $\JJ$ be ideals on $\kappa$.
Then, the following are equivalent:
  \begin{enumerate}

    \item $\JJ$ almost closed under re-enumerations.

    \item Whenever $\vec y$ is a $\JJ$-subsequence of $\vec x$, then
    \[
    z \textrm{ is } (\II,\JJ)\textrm{-limit of } \vec x \implies
    z \textrm{ is } (\II,\JJ)\textrm{-accumulation point of } \vec y.
    \]

    \item Whenever $\vec y$ is a $\JJ$-subsequence of $\vec x$, then
    \[
    z \textrm{ is } (\II,\JJ)\textrm{-limit of } \vec y \implies
    z \textrm{ is } (\II,\JJ)\textrm{-accumulation point of } \vec x.
    \]

  \end{enumerate}
\end{proposition}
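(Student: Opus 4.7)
The plan is to parallel the proof of Proposition~\ref{proposition:re-enumerations}, noting that the ``one-sided'' weakening from ``closed'' to ``almost closed'' under re-enumerations corresponds exactly to the weakening from ``limit implies limit'' to ``limit implies accumulation point''. Throughout, the key observation is that if $S\in\JJ^+$ then $S$ has order type $\kappa$ (since $\JJ\supseteq\bd_\kappa$ and $\kappa$ is regular), so $\pi_S\colon S\to\kappa$ is a bijection, and the $\JJ$-subsequence $\vec y$ obtained by re-indexing through $\pi_S$ is genuinely of length~$\kappa$.

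First I would establish the implications out of~(1). For (1)$\Rightarrow$(2), assume $\vec y$ is the $\JJ$-subsequence of $\vec x$ with index set $S\in\JJ^+$, let $z$ be the $(\II,\JJ)$-limit of $\vec x$, and fix an $\II$-neighbourhood $\mathcal O$ of $z$; pick $C\in\JJ^*$ witnessing $x_\alpha\in\mathcal O$ for $\alpha\in C$, and set $A:=S\setminus C\in\JJ$. Applying~(1) gives $\pi_S[A]\notin\JJ^*$, so its complement $\pi_S[S\cap C]$ is in~$\JJ^+$ and is exactly the set of $\beta$ with $y_\beta\in\mathcal O$, as desired. The implication (1)$\Rightarrow$(3) is dual: starting from a $\JJ^*$-set witnessing convergence of $\vec y$ to $z$, if the set $B=\{\beta\in S\mid x_\beta\in\mathcal O\}$ were in $\JJ$, then $\pi_S[B]=\{\alpha\mid y_\alpha\in\mathcal O\}$ would lie in $\JJ^*$, contradicting~(1) applied to $S$ and $B$; hence $\{\beta\mid x_\beta\in\mathcal O\}\in\JJ^+$.

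For the converses, I would simply recycle the counterexample construction from the proof of Proposition~\ref{proposition:re-enumerations}. Assume $\JJ$ is not almost closed under re-enumerations and fix witnesses $S\in\JJ^+$ and $A\subseteq S$ with $A\in\JJ$ and $\pi_S[A]\in\JJ^*$. Define $\vec x$ by $x_\alpha=\one$ for $\alpha\in A$ and $x_\alpha=\zero$ otherwise, and let $\vec y$ be the subsequence indexed by~$S$. Then $\zero$ is the $(\II,\JJ)$-limit of $\vec x$ (since $\kappa\setminus A\in\JJ^*$), but the basic $\II$-cone $[\zero\restr\{0\}]$ shows $\zero$ is not an $(\II,\JJ)$-accumulation point of $\vec y$, because $\{\beta\mid y_\beta\ne\zero\}=\pi_S[A]\in\JJ^*$; this refutes~(2). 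Symmetrically, $\one$ is the $(\II,\JJ)$-limit of $\vec y$ but, since $\{\alpha\mid x_\alpha=\one\}=A\in\JJ$, not an $(\II,\JJ)$-accumulation point of $\vec x$, refuting~(3).

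No step looks genuinely hard; the only care needed is to track on which side of $\pi_S$ one is working and which of $\JJ$, $\JJ^+$, $\JJ^*$ applies there. The main conceptual point is the symmetry of (2) and (3) relative to~(1): under a bijective re-enumeration, sending ``large in $\vec x$'' to ``positive in $\vec y$'' is exactly the same phenomenon as sending ``large in $\vec y$'' to ``positive in $\vec x$'', and ``almost closed under re-enumerations'' is precisely the common content of these two statements.
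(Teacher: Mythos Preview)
Your proposal is correct and follows essentially the same approach as the paper's own proof: the same counterexample sequence $x_\alpha=\one$ on $A$ and $\zero$ elsewhere is used for the converse directions, and the forward directions use the same translation via $\pi_S$ between $\JJ^*$-sets and $\JJ^+$-sets. Your argument is in places slightly more explicit than the paper's (e.g., naming the witnessing neighbourhood $[\zero\restr\{0\}]$ and spelling out the contrapositive in (1)$\Rightarrow$(3)), but there is no substantive difference.
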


\begin{proof}
\emph{(1)$\implies$(2)}:
Let $\mathcal O$ be an $\II$-open set containing $x$. Since $\vec x$ $(\II,\JJ)$-converges to $x$, $x_\alpha$ is an element of $\mathcal O$ for a $\JJ^*$-set of $\alpha$'s. But then, since $\JJ$ is almost closed under re-enumerations this implies that $y_\alpha$ is an element of $\mathcal O$ for a $\JJ^+$-set of $\alpha$'s, yielding that $x$ is an $(\II,\JJ)$-accumulation point of $\vec y$, as desired.

\emph{(1)$\implies$(3)}:  Let $z$ be an $(\II,\JJ)$-limit of $\vec{y}$, so for every $\II$-neighbourhood $\mathcal{O}$ of $z$ there is a $\JJ^*$-set $C$ such that $y_\alpha\in \mathcal{O}$ for every $\alpha\in C$. Let $S\in \JJ^+$ be the index set of $\vec{y}$. Since $\JJ$ is closed under re-enumerations, $\pi_S^{-1}[C]\in \JJ^+$, and therefore $z$ is an $(\II,\JJ)$-accumulation point of $\vec{x}$.

\emph{(2)$\implies$(1)}: Assume that $\JJ$ is not almost closed under re-enumerations and let $S$ and $A$ be witnesses for that. Let $x_\alpha=\one$ for $\alpha\in A$ and $x_\alpha=\zero$ for $\alpha\notin A$. Clearly, $\zero$ is the $(\II,\JJ)$-limit of $\vec{x}$. Now $\vec{y}:=\{ x_\alpha\mid \alpha\in S\}$ is a $\JJ$-subsequence, and $\zero$ is not an $(\II,\JJ)$-accumulation point of $\vec{y}$, because $\pi_S[A]\in \JJ^*$. In fact, $\one$ is the $(\II,\JJ)$-limit of $\vec{y}$.

\emph{(3)$\implies$(1)}: Assume that $\JJ$ is not almost closed under re-enumerations and let $S$ and $A$ be witnesses for that. Let $x_\alpha=\one$ for $\alpha\in A$ and $x_\alpha=\zero$ for $\alpha\notin A$. Clearly $\one$ is not an $(\II,\JJ)$-accumulation point of $\vec{x}$. Now $\vec{y}:=\{ x_\alpha\mid \alpha\in S\}$ is a $\JJ$-subsequence of $\vec x$, and $\one$ is the $(\II,\JJ)$-limit of $\vec{y}$, because $\pi_S[A]\in \JJ^*$.
\end{proof}

\begin{remark}\label{remark:not_unique_acc}
If $\JJ$ is almost closed under re-enumerations, but not closed under re-enumerations, then the example from the proof of \emph{(2)$\implies$(1)} of Proposition~\ref{proposition:re-enumerations} yields sequences $\vec y \Jsubseq  \vec x$ such that $x$ is the $(\II,\JJ)$-limit of $\vec x$, such that $x$ is an $(\II,\JJ)$-accumulation point of $\vec{y}$ (by (2) of Proposition~\ref{proposition:almost_re-enumerations}), and such that $x$ is not the unique $(\II,\JJ)$-accumulation point of $\vec{y}$.
\end{remark}

Let us provide an example of a strong failure of Proposition~\ref{proposition:re-enumerations}.

\begin{proposition}\label{proposition:subsequencenegative}
Let $\II$ be an ideal on $\kappa$ that contains an unbounded subset of $\kappa$, and let $\JJ$ be an ideal that is not closed under re-enumerations. Then, there is a sequence $\vec x$ with no $(\II,\JJ)$-accumulation points, which has a $\JJ$-subsequence $\vec{y}$ with a unique $(\II,\JJ)$-accu\-mu\-lation point, such that $\vec{y}$ has an $(\II,\JJ)$-convergent $\JJ$-subsequence~$\vec{z}$.\footnote{Note that, since $\NS_\kappa$ is almost closed under re-enumerations, this $\JJ$-subsequence $\vec{y}$ cannot be $(\II,\JJ)$-convergent, for otherwise its $(\II,\JJ)$-limit were an $(\II,\JJ)$-accumulation point of $\vec{x}$.
 This also shows again that being a $\JJ$-subsequence is not a transitive relation, because if $\vec{z}$ were a $\JJ$-subsequence of $\vec{x}$, its $(\II,\JJ)$-limit would be an $(\II,\JJ)$-accumulation point of $\vec{x}$ by Proposition~\ref{proposition:almost_re-enumerations}.}
\end{proposition}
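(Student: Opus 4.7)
The plan is to build all three sequences from a single construction. First I fix witnesses $S\in\JJ^+$ and $A\subseteq S$ with $A\in\JJ$ and $\pi_S[A]\in\JJ^+$ to the failure of $\JJ$ being closed under re-enumerations, together with an unbounded $A'\in\II$; since $\kappa$ is regular, $|A'|=\kappa$, so the family $\{[f]\mid f\colon A'\to 2\}$ partitions $2^\kappa$ into $2^\kappa$ many disjoint $\II$-cones by Observation~\ref{observation:manyopensets}. I then fix an injection $\iota\colon\kappa\setminus A\to\{f\colon A'\to 2\}\setminus\{\zero\restr A'\}$, which exists since $|\kappa\setminus A|\le\kappa<2^\kappa$, and define $\vec x=\langle x_\alpha\mid\alpha<\kappa\rangle$ by setting $x_\alpha:=\zero$ for $\alpha\in A$ and $x_\alpha:=\iota(\alpha)\cup(\zero\restr(\kappa\setminus A'))$ for $\alpha\in\kappa\setminus A$. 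Let $\vec y:=\langle x_{\pi_S^{-1}(\alpha)}\mid\alpha<\kappa\rangle$, which is a $\JJ$-subsequence of $\vec x$ with index set $S\in\JJ^+$, and let $\vec z:=\langle y_{\gamma_\alpha}\mid\alpha<\kappa\rangle$, where $\langle\gamma_\alpha\mid\alpha<\kappa\rangle$ enumerates $\pi_S[A]$ in increasing order (of length $\kappa$ since $\pi_S[A]$ is unbounded in the regular cardinal $\kappa$); this is a $\JJ$-subsequence of $\vec y$ with index set $\pi_S[A]\in\JJ^+$.

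Next I verify the properties. Since $y_{\gamma_\alpha}=\zero$ for every $\alpha<\kappa$, $\vec z$ is constantly $\zero$ and thus trivially $(\II,\JJ)$-converges to $\zero$. That $\zero$ is an $(\II,\JJ)$-accumulation point of $\vec y$ is clear, because every $\II$-open neighbourhood of $\zero$ contains all $y_\alpha$ with $\alpha\in\pi_S[A]$. The uniqueness of this accumulation point, together with the fact that $\vec x$ has no $(\II,\JJ)$-accumulation points at all, rests on the following dichotomy applied to a given $z\in 2^\kappa$: if $z\restr A'\ne\zero\restr A'$, then the $\II$-cone $[z\restr A']$ is a neighbourhood of $z$ that contains at most one $x_\alpha$ (and hence at most one $y_\alpha$) by the injectivity of $\iota$, so the corresponding index set is in $\bd_\kappa\subseteq\JJ$. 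If instead $z\restr A'=\zero\restr A'$, the cone $[\zero\restr A']$ contains exactly the $x_\alpha$ for $\alpha\in A$ (since no $\iota(\alpha)$ equals $\zero\restr A'$), an index set in $\JJ$, so $z$ is not an accumulation point of $\vec x$; for $\vec y$ the same cone contains exactly the $y_\alpha$ for $\alpha\in\pi_S[A]\in\JJ^+$, so for $z\ne\zero$ I instead pick some $\gamma\in\kappa\setminus A'$ with $z(\gamma)=1$ and use the basic $\II$-open neighbourhood $\{w\in 2^\kappa\mid w(\gamma)=1\}$ of $z$, which contains no $y_\alpha$ whatsoever since every $y_\alpha$ vanishes outside $A'$.

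I expect the main subtlety to be arranging the construction so that the single ``bad'' cone $[\zero\restr A']$ does all the work simultaneously: it must carry only a $\JJ$-small share of $\vec x$ (achieved by placing the $\zero$-values on $A\in\JJ$ and having $\iota$ avoid this cone) while carrying a $\JJ$-large share of $\vec y$ (which happens automatically because $\pi_S[A]\in\JJ^+$). The choice to extend the non-trivial $x_\alpha$'s by $0$ outside $A'$ is what enables a single coordinate $\gamma\notin A'$ to separate any non-zero $z$ with $z\restr A'=\zero\restr A'$ from $\vec y$, closing out the uniqueness case.
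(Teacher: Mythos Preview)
Your proof is correct and follows essentially the same approach as the paper: fix witnesses $S$ and $A$ to the failure of closure under re-enumerations, put a constant value on the indices in $A$, scatter the remaining $x_\alpha$'s into pairwise disjoint $\II$-cones, and read off $\vec y$ and $\vec z$ from $S$ and $\pi_S[A]$. The paper's proof is terser and does not spell out the uniqueness of the accumulation point of $\vec y$; your explicit choice to extend the non-constant $x_\alpha$'s by $0$ outside $A'$ (and to have $\iota$ avoid $\zero\restr A'$) is exactly what is needed to make that uniqueness argument go through cleanly, so in this respect your write-up is more complete.
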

\begin{proof}
Let $S$ and $A$ be witnesses for $\JJ$ not being closed under re-enumerations. By Observation \ref{observation:manyopensets}, we may pick disjoint $\II$-cones $[f_\alpha]$ for $\alpha<\kappa$. Define a sequence $\vec x$ by taking $x_\alpha$ to have constant value on $A$, and for $\alpha\not\in A$, pick some $x_\alpha\in[f_\alpha]$. Then, $\vec x$ has no $(\II,\JJ)$-accumulation points. However, letting $\langle\beta_\alpha\mid\alpha<\kappa\rangle$ enumerate $S$ in increasing order, and letting $y_\alpha=x_{\beta_\alpha}$ for $\alpha<\kappa$, we obtain a sequence $\vec y$ with unique $(\II,\JJ)$-accumulation point, which is the constant value on the $\JJ^+$-set of indices $\pi_S[A]$. The final statement of the proposition now follows by considering the $\JJ$-subsequence of $\vec y$ with domain $\pi_S[A]$.
\end{proof}

We give
yet another example of a sequence that does not $\II^2$-converge, for $\II=\NS_\kappa$. This should be compared to Observation \ref{observation:compareconvergence}, (1) and to Observation \ref{observation:enumerationsofclubsconverge}.

\begin{corollary}
  If $\kappa>\omega_1$ is regular and $\II=\NS_\kappa$, then if $\langle s_\alpha\mid\alpha<\kappa\rangle$ is the increasing enumeration of $\cof\,\omega$, the sequence $\vec s=\langle\{s_\alpha\}\mid\alpha<\kappa\rangle$ does not $\II^2$-converge.
\end{corollary}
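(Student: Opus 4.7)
My plan is to argue by contradiction: assume $\vec{s}$ has an $\II^2$-limit $x \in 2^\kappa$ and then exhibit an $\II$-open neighborhood of $x$ that contains $\{s_\alpha\}$ for too few $\alpha$. The starting point is to show that any such $x$ must be bounded. Since $\NS_\kappa$ is sequentially tall by Proposition~\ref{proposition:NS_sequentially_tall}, I would apply Lemma~\ref{lemma:convergencerestriction} with $\II = \JJ = \NS_\kappa$ to obtain a club $C \subseteq \kappa$ such that $\{s_\alpha\} \Delta x$ is bounded for every $\alpha \in C$. Fixing any single $\alpha \in C$ forces $x$ to be bounded (as $\{s_\alpha\}$ is), say $x \subseteq \gamma$ for some $\gamma < \kappa$.

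The heart of the argument recycles the concrete witness to the failure of $\NS_\kappa$ being closed under re-enumerations from Proposition~\ref{proposition:NSnotclosedunderreenumerations}. I would set $E := \{\delta + \omega \mid \delta \in \cof\,\omega_1\} \setminus (\gamma + 1)$, which is contained in $\cof\,\omega$ (each $\delta + \omega$ has cofinality $\omega$) and is nonstationary (the closure of $\cof\,\omega_1$ is a club disjoint from it), yet whose image $\pi_{\cof\,\omega}[E]$ equals $\cof\,\omega_1$ up to at most a bounded set and is therefore stationary. Because $E$ lies above $\gamma$ and $x \subseteq \gamma$, we have $f := x \restr E = \zero \restr E \in \heart_{\NS_\kappa}$, and $[f]$ is an $\II$-open neighborhood of $x$.

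To finish, I would unpack the definitions: $\{s_\alpha\} \in [f]$ is equivalent to $s_\alpha \notin E$, while $\{\alpha < \kappa \mid s_\alpha \in E\} = \pi_{\cof\,\omega}[E]$ is stationary by the previous step. Hence $\{\alpha < \kappa \mid \{s_\alpha\} \in [f]\}$ has stationary complement, so it does not lie in $\NS_\kappa^* = \II^*$, contradicting $\II^2$-convergence of $\vec{s}$ to $x$. The only delicate step I anticipate is pinning down $x$ as bounded in the first step; once boundedness is secured, the contradiction follows essentially for free by transplanting the counterexample of Proposition~\ref{proposition:NSnotclosedunderreenumerations} to a neighborhood of $x$.
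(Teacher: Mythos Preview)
Your proposal is correct and follows essentially the same route as the paper: use the nonstationary set $A=\{\delta+\omega\mid\delta\in\cof\,\omega_1\}\subseteq\cof\,\omega$ from Proposition~\ref{proposition:NSnotclosedunderreenumerations}, observe that $\pi_{\cof\,\omega}[A]$ is stationary, and conclude that the $\II$-neighbourhood $[\zero\restr A]$ of the putative limit fails to contain $\{s_\alpha\}$ for stationarily many~$\alpha$. The only difference is in how you pin down the limit: the paper simply asserts that $\vec s$ could only $\II^2$-converge to $\zero$ (which follows immediately, since for any $\beta$ with $x(\beta)=1$ the bounded-topology cone $[\{(\beta,1)\}]$ contains at most one $\{s_\alpha\}$), whereas you take the more circuitous route through Lemma~\ref{lemma:convergencerestriction} to conclude only that $x$ is bounded, and then trim $A$ above $\gamma\supseteq x$ so that $x\restr E=\zero\restr E$. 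This works, but the direct observation that $x=\zero$ is shorter and makes the trimming of $A$ unnecessary.
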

\begin{proof}
  The sequence $\vec s$ could only $\II^2$-converge to $\zero$. Let $\pi$ denote the transitive collapsing map of $\cof\,\omega$, and let $A$ be a nonstationary subset of $\cof\,\omega$ for which $\pi[A]$ is stationary, as provided by Proposition \ref{proposition:NSnotclosedunderreenumerations}. This means that there is a stationary set $T$ of indices $\alpha$ for which $s_\alpha\in A$. Assuming that $\vec s$ does indeed $\II^2$-converge to $\zero$, there is a club $C$ of indices $\alpha$ for which $\{s_\alpha\}\in[\zero\restr A]$, i.e., $s_\alpha\not\in A$. Since $T\cap C\ne\emptyset$, 
  this yields a contradiction.
\end{proof}

\section{Connections with topologies generated by forcing partial orders}\label{section:connection}

Ideal topologies can be seen as a special case of topologies connected to \emph{tree-like} forcing notions, that we will describe below, following \cite{fkk}.\footnote{Even more generally, this may be seen as a special case of the natural topology that can be constructed on the Stone space of any partial order, see for example \cite[Section 3]{daisuke}.} While this connection may be interesting enough to be mentioned here in its own right, we will also make use of this connection later on. We start with the following definition from \cite{fkk}, slightly adapted to our present purposes.

\begin{definition}\cite[Definition 3.1]{fkk}
  A forcing notion $\PP$ is called \emph{$\kappa$-tree-like} if
  \begin{enumerate}
    \item conditions of $\PP$ are pruned and ${<}\kappa$-closed trees on $2^{<\kappa}$, ordered by inclusion,    \item $2^{<\kappa}\in\PP$, and whenever $T\in\PP$ and $s\in T$, then $\{t\in T\mid s\subseteq t$ or $t\subseteq s\}\in\PP$, and
    \item if $\langle T_\alpha\mid\alpha<\lambda\rangle$ is a decreasing sequence of conditions of length $\lambda<\kappa$, then $\bigcap_{\alpha<\lambda}T_\alpha\in\PP$.
  \end{enumerate}
\end{definition}

In many cases, $\kappa$-tree-like forcing notions induce natural topologies on $2^\kappa$. The following is a minor modification of \cite[Definition 3.6, 1]{fkk}, as the definition used in that paper seems to be slightly too weak in order to yield a topology basis.

\begin{definition}
  A $\kappa$-tree-like notion of forcing $\PP$ is \emph{topological} if $\{[T]\mid T\in\PP\}$ forms a topology basis for $2^\kappa$, that is whenever $S,T\in\PP$, and $x\in[S]\cap[T]$, then there is $R\in\PP$ such that $x\in[R]\subseteq[S]\cap[T]$. In this case we call the topology generated by the basic open sets of the form $[T]$ for $T\in\PP$ the \emph{topology generated by $\PP$}, or the \emph{$\PP$-topology}.
\end{definition}

Ideal topologies are generated by generalizations of Grigorieff forcing to uncountable cardinals:

\begin{definition}
  Let $\kappa$ be an infinite cardinal and let $\II$ be an ideal on $\kappa$. \emph{Grigorieff forcing with the ideal $\II$} is the notion of forcing consisting of conditions from $\heart_\II$, ordered by reverse inclusion.
\end{definition}

At first sight, Grigorieff forcing may not seem to be a $\kappa$-tree-like notion of forcing, 
but it can be represented as one: We identify a condition $f\in\heart_\II$ with a tree $T$ on $2^{<\kappa}$, which we construct by induction on $\alpha$ as follows: Given $t\in T$ of order-type $\alpha$, let $t^\conc 0\in T$ if and only if $f(\alpha)\ne 1$, and let $t^\conc 1\in T$ if and only if $f(\alpha)\ne 0$ (these are both supposed to include the cases when $\alpha$ is not in the domain of $f$, i.e., $t$ is splitting if and only if $\alpha$ is not in the domain of $f$). At limit levels $\alpha$, we extend every branch through the tree constructed so far. It is straightforward to check that the resulting forcing is indeed a $\kappa$-tree-like forcing, using that $\II\supseteq \bd_\kappa$ and that $\II$ is ${<}\kappa$-complete.
 Note moreover that if $T$ is the tree on $2^{<\kappa}$ corresponding to the condition $f\in\heart_\II$, then $[T]=[f]$.

\medskip

For any ideal $\II$, Grigorieff forcing with the ideal $\II$ is topological: Given $f,g\in\heart_\II$, assuming that $[f]\cap[g]\ne\emptyset$, it follows that $f\cup g\in\heart_\II$, and that $[f\cup g]=[f]\cap[g]$.

\medskip

The following is now immediate by comparing the basic open sets (which are the same) used to generate the respective topologies:

\begin{observation}
  The topology on $2^\kappa$ generated by Grigorieff forcing with the ideal~$\II$ is exactly the topology $\tau_\II$.
\end{observation}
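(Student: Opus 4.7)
The plan is essentially to unpack the identifications that have already been set up in the paragraphs immediately preceding the statement, and observe that the two topologies in question have literally the same basis.

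First I would recall that, by definition, $\tau_\II$ is the topology on $2^\kappa$ generated by the basis $\{[f] \mid f \in \heart_\II\}$. On the other hand, the $\PP$-topology (for $\PP$ the Grigorieff forcing with the ideal $\II$) is by definition the topology generated by the basis $\{[T] \mid T \in \PP\}$, where $[T]$ denotes the body of the tree $T$.

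Next I would invoke the tree representation of Grigorieff conditions set up just above: every condition $f \in \heart_\II$ is identified with a pruned, ${<}\kappa$-closed tree $T_f$ on $2^{<\kappa}$ whose splitting levels are exactly the ordinals outside $\dom(f)$. The key observation, already recorded in the excerpt, is that $[T_f] = [f]$. Conversely, every tree $T \in \PP$ arises from some $f \in \heart_\II$ in this way. Hence the two sets of basic open sets
\[
\{[f] \mid f \in \heart_\II\} \quad \text{and} \quad \{[T] \mid T \in \PP\}
\]
coincide as subsets of $\mathcal{P}(2^\kappa)$. Since two topologies on the same underlying set that are generated by the same basis must agree, the $\PP$-topology is exactly $\tau_\II$.

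There is essentially no obstacle here; the only subtle point — already handled in the discussion preceding the observation — is to confirm that Grigorieff forcing really is a $\kappa$-tree-like topological forcing (so that the notion of the ``$\PP$-topology'' is well-defined in the first place), which uses $\bd_\kappa \subseteq \II$ together with the ${<}\kappa$-completeness of $\II$, and the fact that for $f, g \in \heart_\II$ with $[f] \cap [g] \ne \emptyset$ the union $f \cup g$ lies in $\heart_\II$ and satisfies $[f \cup g] = [f] \cap [g]$.
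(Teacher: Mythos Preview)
Your proposal is correct and is exactly the paper's approach: the paper states that the observation is ``immediate by comparing the basic open sets (which are the same) used to generate the respective topologies,'' having already recorded that $[T_f]=[f]$ for each $f\in\heart_\II$. Your write-up simply spells out this one-line remark.
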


Another notion of forcing that is closely connected to $\II$-topologies is \emph{$\kappa$-Silver forcing}, which is sometimes also called \emph{$\kappa$-club-Silver forcing}:

\begin{definition}
  Given a regular cardinal $\kappa$, $\kappa$-Silver forcing is the notion of forcing consisting of all conditions $f\in\heart_\II$ for which $\kappa\setminus\dom(f)$ is a club
  subset of $\kappa$, ordered by reverse inclusion.
\end{definition}

Note that clearly, $\kappa$-Silver forcing is a dense subset of Grigorieff forcing with the ideal $\NS_\kappa$. In fact, whenever $p$ is a condition in the latter forcing and $x\in 2^\kappa$ is such that $p\subseteq x$, then $p$ can be extended to a condition $q\subseteq x$ in $\kappa$-Silver forcing. This implies both that $\kappa$-Silver forcing can be represented as a $\kappa$-tree-like notion of forcing (see also \cite[Example 3.2, 6]{fkk}), that $\kappa$-Silver forcing is topological,\footnote{The opposite is wrongly claimed in \cite{fkk} without justification, see for example \cite[Table 1]{fkk}.} and that it generates the same topology as does Grigorieff forcing with $\NS_\kappa$, namely the Edinburgh topology on $2^\kappa$.

\medskip

Let us finally remark that it is straightforward to formulate and verify an analogue of Proposition~\ref{normal form} for the more general setting of topologies that are generated by $\kappa$-tree-like forcing notions. We will
leave this to the interested reader, for we will not need it in the remainder of our paper.

\section{On $\kappa$-Silver forcing and Axiom $A^*$}\label{section:axioma}

In \cite[Definition 3.6, 2]{fkk}, a strengthening of Axiom A is defined as follows:

\begin{definition}\label{definition:axioma}
  We say that a $\kappa$-tree-like notion of forcing $\PP$ satisfies Axiom $A^*$ if there are orderings $\{\le_\alpha\mid\alpha<\kappa\}$ with $\le_0=\le$,
  satisfying:\footnote{Since this definition is about $\kappa$-tree-like notions of forcing, one would usually use $S$ and $T$ to denote the elements of the forcings. We use $f$ and $g$ instead, for we will only be concerned with the special case of Grigorieff forcing.}
  \begin{enumerate}
    \item $g\le_\beta f$ implies $g\le_\alpha f$ for all $\alpha\le\beta$.
    \item If $\langle f_\alpha\mid\alpha<\lambda\rangle$ is a sequence of conditions with $\lambda\le\kappa$ satisfying that $f_\beta\le_\alpha f_\alpha$ for all $\alpha\le\beta$, then there is $f\in\PP$ such that $f\le_\alpha f_\alpha$ for all $\alpha<\lambda$.
    \item For all $f\in\PP$, $D$ dense below $f$ in $\PP$, and $\alpha<\kappa$, there exists $E\subseteq D$ and $g\le_\alpha f$ such that $|E|\le\kappa$ and $E$ is predense below $g$, such that\footnotemark{} additionally $[g]\subseteq\bigcup\{[h]\mid h\in E\}$. \footnotetext{Without this additional clause, this would be the usual Axiom $A$.}
  \end{enumerate}
\end{definition}

Many of our subsequent proofs are going to work either under the assumption that $\kappa$ is inaccessible, or that $\diamondsuit_\kappa$ holds.
Recall that, for $\kappa$ being regular uncountable,
$\diamondsuit_\kappa$ holds if there exists a \emph{$\diamondsuit_\kappa$-sequence} $\langle A_\alpha\mid\alpha<\kappa\rangle$, that is, for every $A \subseteq \kappa$, there is a stationary set of $\alpha$'s such that $A_\alpha=A \cap \alpha$.
\begin{definition} We say that $\kappa$ is \emph{simple}, if $\kappa$ is inaccessible or $\diamondsuit_\kappa$ holds.
\end{definition}

The assumption $2^{<\kappa}=\kappa$ is very common in higher descriptive set theory. It is necessary (but not quite sufficient) for $\kappa$ being simple.

\begin{observation}\label{obs:simple_implies} If $\kappa$ is simple, then $2^{{<}\kappa}=\kappa$.
\end{observation}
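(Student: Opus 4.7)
The plan is to split into the two cases from the definition of \emph{simple}.

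First, suppose $\kappa$ is inaccessible. Then $\kappa$ is a strong limit, so $2^\lambda < \kappa$ for every $\lambda < \kappa$. Since $\kappa$ is regular, this gives $2^{<\kappa} = \sup_{\lambda < \kappa} 2^\lambda \le \kappa$, and the reverse inequality is trivial because $|2^\lambda| \ge \lambda$ for each $\lambda < \kappa$. Hence $2^{<\kappa} = \kappa$.

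Second, suppose $\diamondsuit_\kappa$ holds, witnessed by a sequence $\langle A_\alpha \mid \alpha < \kappa \rangle$. I would show that every subset of every $\lambda < \kappa$ appears as some $A_\alpha$, which immediately bounds $2^{<\kappa}$ by $\kappa$. Fix $\lambda < \kappa$ and $B \subseteq \lambda$, and view $B$ as a subset of $\kappa$ (identifying it with $B \subseteq \kappa$ via zero-padding). By $\diamondsuit_\kappa$, the set $S_B := \{\alpha < \kappa \mid A_\alpha = B \cap \alpha\}$ is stationary, hence unbounded; pick any $\alpha \in S_B$ with $\alpha > \lambda$, so that $A_\alpha = B \cap \alpha = B$. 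Thus the map sending $B \subseteq \lambda$ to the least such $\alpha$ is injective from $\mathcal P(\lambda)$ into $\kappa$, giving $2^\lambda \le \kappa$. Taking the supremum over $\lambda < \kappa$ yields $2^{<\kappa} \le \kappa$, and again the reverse inequality is trivial.

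There is no real obstacle here; the only (mild) subtlety is making sure one uses that the stationary set guessing $B$ is in particular unbounded, so that one can pick $\alpha > \lambda$ and turn the guess $A_\alpha = B \cap \alpha$ into an exact match $A_\alpha = B$.
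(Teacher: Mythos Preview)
Your proof is correct and is the standard argument. The paper itself does not give a proof of this observation; it is stated without proof as a well-known fact, so there is nothing to compare against beyond noting that your two-case split (inaccessible versus $\diamondsuit_\kappa$) with the guessing argument in the second case is exactly the folklore justification one would expect.
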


On the other hand, by results of Shelah (see \cite{shelah}), if $\kappa$ is a successor cardinal and $\kappa>\omega_1$, then $2^{{<}\kappa}=\kappa$ implies that $\diamondsuit_\kappa$ holds. For $\kappa=\omega_1$, however, it is consistent that $2^{<\kappa}=\kappa$ (i.e., CH holds) and $\diamondsuit_{\kappa}$ (i.e., $\diamondsuit$) fails.
The case $\kappa=\omega_1$ is therefore particularly interesting for potential counterexamples.

\medskip

It is well-known that the standard proof to verify that Silver forcing (on $\omega$) satisfies Axiom $A$ can be adapted to show that $\kappa$-Silver forcing satisfies Axiom $A$ in case $\kappa$ is inaccessible, and it is easy to see from the proof that in fact it even yields Axiom $A^*$. We want to show that the same conclusion also holds under the assumption $\diamondsuit_\kappa$. We will then apply this result in the next section.

\begin{theorem}\label{theorem:axioma}
  If $\diamondsuit_\kappa$ holds, then $\kappa$-Silver forcing satisfies Axiom $A^*$. (So $\kappa$-Silver forcing satisfies Axiom $A^*$ whenever $\kappa$ is simple.)
\end{theorem}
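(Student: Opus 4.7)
The plan is to follow the standard fusion proof of Axiom~$A$ for Silver forcing, defining the orderings via preservation of splitting points, and to exploit the $\diamondsuit_\kappa$-sequence to schedule extensions when the combinatorial bound $2^\beta$ would otherwise obstruct the argument.

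For a $\kappa$-Silver condition $f$, let $C_f := \kappa \setminus \dom(f)$ and let $\langle c^f_\beta : \beta < \kappa \rangle$ be the increasing enumeration of $C_f$. I would define $g \le_\alpha f$ to mean $g \supseteq f$ together with $c^g_\beta = c^f_\beta$ for all $\beta < \alpha$. Property~(1) of Axiom~$A^*$ is then immediate. For property~(2), given a coherent descending chain $\langle f_\beta : \beta < \lambda \rangle$ with $\lambda \le \kappa$, let $f := \bigcup_{\beta < \lambda} f_\beta$; setting $\gamma_\beta := c^{f_{\beta+1}}_\beta$, the coherence conditions imply that $\gamma_\beta$ lies in every $C_{f_\eta}$ (for $\eta < \lambda$), hence in $C_f = \bigcap_{\beta < \lambda} C_{f_\beta}$. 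The set $\{\gamma_\beta : \beta < \lambda\}$ is unbounded when $\lambda = \kappa$, and a straightforward induction shows $c^f_\beta = \gamma_\beta$ for every $\beta < \lambda$, so $C_f$ contains a club and $f \le_\beta f_\beta$ holds for every $\beta < \lambda$.

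For property~(3), given $f$, $D$ dense below $f$, and $\alpha < \kappa$, the goal is to construct $g \le_\alpha f$ such that for every $s \in 2^\alpha$, which I identify with an assignment of values to the preserved splitting points $\{c^f_\beta : \beta < \alpha\}$, the condition $g \cup s$ (itself a valid Silver condition, since $\dom(g)$ is augmented by only $\alpha < \kappa$ points) extends some $h_s \in D$. Granting such a $g$, Observation~\ref{obs:simple_implies} yields $2^\alpha \le 2^{<\kappa} = \kappa$, so $E := \{h_s : s \in 2^\alpha\}$ has $|E| \le \kappa$; moreover $[g] = \bigsqcup_{s \in 2^\alpha}[g \cup s]$, and since $[g \cup s] \subseteq [h_s]$ one obtains $[g] \subseteq \bigcup_{h \in E}[h]$, which also implies predensity of $E$ below $g$.

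The hard part is constructing such a $g$, and this is where $\diamondsuit_\kappa$ enters. A naive fusion processing all $s \in 2^\beta$ at stage $\beta < \alpha$ fails as soon as $2^\beta = \kappa$ (which happens, for example, for $\beta \ge \omega$ when $\kappa = \omega_1$ and CH holds), since simultaneously handling $\kappa$ many extensions may add a stationary set to $\dom(g)$. To circumvent this, I would fix a $\diamondsuit_\kappa$-sequence $\langle A_\delta : \delta < \kappa \rangle$, identifying subsets of $\kappa$ with codes for branches $s \in 2^{<\kappa}$ together with provisional conditions in $D$, and perform a single bookkeeping recursion of length $\kappa$. At each stage $\delta$, the current approximation $g_\delta$ will already satisfy $g_\delta \le_\alpha f$; the data $A_\delta$ is used to select at most one branch $s$ to be handled at this stage, one extends $g_\delta \cup s$ to some $h_s \in D$, and then absorbs the new values of $h_s$ (on $\dom(h_s) \setminus \dom(g_\delta) \setminus \{c^f_\beta : \beta < \alpha\}$) into $g_{\delta+1}$, with the extensions arranged to lie cofinally above $\delta$ so that the total added domain avoids the closure-point club of the construction. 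By the $\diamondsuit_\kappa$ prediction property, every $s \in 2^\alpha$ is guessed at stationarily many stages, so each is eventually processed; by the localization, $g := \bigcup_{\delta < \kappa} g_\delta$ has $C_g$ club, satisfies $g \le_\alpha f$, and $g \cup s$ extends a witness in $D$ for every $s \in 2^\alpha$. This yields the required $E$ and completes the verification of Axiom~$A^*$.
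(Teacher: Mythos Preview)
Your setup for $\le_\alpha$ and the verification of (1) and (2) are fine and match the paper. The problem is in your construction for (3).

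You aim for a $g \le_\alpha f$ (with $\alpha$ fixed throughout) such that $g \cup s$ extends an element of $D$ for every $s \in 2^\alpha$, built by a length-$\kappa$ recursion in which diamond schedules which $s$ to handle at stage $\delta$. But at each such stage you extend $g_\delta \cup s$ to some $h_s \in D$ and absorb $h_s$ into $g_{\delta+1}$; since $D$ is merely dense in the Silver partial order, $\dom(h_s) \setminus \dom(g_\delta)$ is in general an \emph{unbounded} (nonstationary) subset of $\kappa$, not a bounded one. Your appeal to a ``closure-point club'' therefore fails: there is no reason the added domain at stage $\delta$ lies above any prescribed threshold. After $\kappa$ stages, $C_g = \bigcap_\delta C_{g_\delta}$ is an intersection of $\kappa$-many clubs, which need not be unbounded; since you only maintain $g_\delta \le_\alpha f$, nothing protects any splitting point beyond $c^f_{\alpha}$, and $C_g$ may collapse to the bounded set $\{c^f_\beta : \beta < \alpha\}$. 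So you have not produced a Silver condition.

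The paper avoids this by doing a genuine fusion with \emph{growing} ordering index: one builds $\langle f_i : i \le \kappa\rangle$ with $f_{i+1} \le_i f_i$ (starting with $f_i = f$ for $i \le \alpha$), so that property~(2) itself guarantees $f_\kappa$ is a Silver condition with $C_{f_\kappa} = \{\alpha_j : j < \kappa\}$. Diamond is used differently, and this is the key idea you are missing: at stage $i$, one does \emph{not} try to handle some $s \in 2^\alpha$; instead, $A_i$ is read as a guess for the values at the first $i$ splitting points $\alpha_0,\ldots,\alpha_{i-1}$, one extends this guess by both $0$ and $1$ at $\alpha_i$ into $D$ to get $g_i^0, g_i^1$, and sets $f_{i+1} = g_i^1 \restr (\dom(g_i^1) \setminus \{\alpha_j : j \le i\})$. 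The covering $[f_\kappa] \subseteq \bigcup_i([g_i^0] \cup [g_i^1])$ then follows because any $x \in [f_\kappa]$ is correctly guessed on its first $i$ splitting values by $A_i$ for stationarily many $i$, whence $x \in [g_i^{x(\alpha_i)}]$. The growing fusion index is exactly what keeps the limit a condition while still allowing each single step to add an unbounded piece to the domain.
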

\begin{proof}
  Let $\langle\PP,\le\rangle$ denote $\kappa$-Silver forcing. For any $\alpha<\kappa$, let $g\le_\alpha f$ if $g\le f$ and the first $\alpha$-many elements of the complements of the domains of $f$ and of $g$ are the same. It is clear that Items (1) and (2) in Definition \ref{definition:axioma} are thus satisfied, and we only have to verify Item (3).

(If $\kappa$ is inaccessible, this follows from standard arguments, as mentioned above; also compare with the proofs of Theorem~\ref{theorem:cones} and Lemma~\ref{lemma:reaping} in which we provide details for both the case that $\kappa$ is inaccessible and the case that $\diamondsuit_\kappa$ holds.)

 Fix a $\diamondsuit_\kappa$-sequence $\langle A_i\mid i<\kappa\rangle$.
Let $f\in\PP$, let $\alpha<\kappa$, and let $D\subseteq\PP$ be dense below $f$. We inductively construct a decreasing sequence $\langle f_i\mid i\le\kappa\rangle$ of conditions in $\PP$ with $f_i=f$ for $i\le\alpha$, and a sequence $\langle\alpha_i\mid i<\kappa\rangle$ of ordinals with the property that $\langle\alpha_j\mid j\le i\rangle$ enumerates the first $(i+1)$-many elements of $\kappa\setminus\dom(f_i)$ for every $i\le\kappa$, as follows. Let $\langle\alpha_i\mid i\le\alpha\rangle$ enumerate the first $\alpha+1$-many elements of the complement of the domain of $f$.

Assume that we have constructed $f_i$, and also $\alpha_j$ for $j\le i$.

Using that $D$ is dense below $f$, let $g_i^0\le f_i$ be such that
\begin{itemize}
  \item $g_i^0(\alpha_j)=A_i(j)$ for all $j<i$,
  \item $g_i^0(\alpha_i)=0$, and
  \item $g_i^0\in D$,
\end{itemize}
and let $g_i^1\le g_i^0\restr(\dom(g_i^0)\setminus\{\alpha_i\})$ be such that
\begin{itemize}
  \item $g_i^1(\alpha_i)=1$, and
  \item $g_i^1\in D$.
\end{itemize}
Let $f_{i+1}=g_i^1\restr(\dom(g_i^1)\setminus\{\alpha_j\mid j\le i\})$, and note that $f_{i+1}\le_i f_i$, for $\{\alpha_j\mid j<i\}$ is contained in the complement of $\dom(f_{i+1})$. Let $\alpha_{i+1}$ be the least element of $\kappa\setminus\dom(f_{i+1})$ above $\alpha_i$.

For limit ordinals $i\le\kappa$, let $f_i=\bigcup_{j<i}f_j$, and if $i<\kappa$, let $\alpha_i=\bigcup_{j<i}\alpha_j$ be the least element of $\kappa\setminus\dom(f_i)$, using that the intersection of ${<}\kappa$-many club subsets of $\kappa$ is again a club subset of $\kappa$. Let $E=\{g_i^0\mid i<\kappa\}\cup\{g_i^1\mid i<\kappa\}$.

In order to verify Axiom $A$, first note that $f_\kappa\leq_\alpha f$. Now we want to show that $E$ is predense below $f_\kappa$. Thus, let $h\le f_\kappa$ be given. Using the properties of our diamond sequence, pick $i<\kappa$ such that $i\ge\alpha$, and such that for all $j<i$ with $\alpha_j\in\dom(h)$, $A_i(j)=h(\alpha_j)$. Pick $\delta\in\{0,1\}$ such that $h(\alpha_i)=\delta$ in case $\alpha_i\in\dom(h)$. Then, $g_i^\delta$ is compatible to $h$, as desired.

In order to check the additional property for Axiom $A^*$, note that any extension $x$ of $f_\kappa$ to a total function from $\kappa$ to $2$ can be treated in the same way as $h$ above, yielding some $i<\kappa$ and $\delta\in\{0,1\}$ such that $x\in[g_i^\delta]$.
\end{proof}

\section{Edinburgh cones and $\II$-meagerness}\label{section:premeager}

 In this section, we provide two results on properties of simple cardinals with respect to Edinburgh cones, and two consequences on Edinburgh meager sets.

\medskip

The first result is due to S.\ Friedman, Khomskii and Kulikov (\cite[Section~3.2]{fkk}) in case $\kappa$ is inaccessible,
yet, making use of Theorem \ref{theorem:axioma}, it
holds
also under the assumption of $\diamondsuit_\kappa$:

\begin{theorem}\label{theorem:cones2}
  If $\kappa$ is simple, then every $\kappa$-intersection of Edinburgh open dense subsets of $2^\kappa$ contains an Edinburgh open dense set,\footnote{In order to avoid any possible confusion, let us remark that by a convention made earlier on in this paper, \emph{Edinburgh open dense} means Edinburgh open and Edinburgh dense.}
  i.e., the $\edinburgh$ meager sets are the same as the $\edinburgh$ nowhere dense sets.
\end{theorem}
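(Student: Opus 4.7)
The plan is to exploit Axiom $A^*$ for $\kappa$-Silver forcing (which holds whenever $\kappa$ is simple, by Theorem~\ref{theorem:axioma}) via a standard fusion argument. Let $\PP$ denote $\kappa$-Silver forcing, with fusion orderings $\langle\le_\alpha\mid\alpha<\kappa\rangle$ witnessing Axiom~$A^*$, and recall from Section~\ref{section:connection} that $\PP$ is dense in Grigorieff forcing with $\NS_\kappa$, so the cones $[p]$ for $p\in\PP$ form a basis for the Edinburgh topology.

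Given a sequence $\langle U_\alpha\mid\alpha<\kappa\rangle$ of Edinburgh open dense sets with intersection $U$, I will put
\[
D_\alpha := \{p\in\PP\mid [p]\subseteq U_\alpha\},
\]
and first observe that each $D_\alpha$ is dense in $\PP$: below any $p\in\PP$, denseness of $U_\alpha$ supplies some $x\in U_\alpha\cap[p]$, openness of $U_\alpha$ supplies some $f\in\heart_{\NS_\kappa}$ with $x\in[f]\subseteq U_\alpha$, and since $p\cup f\in\heart_{\NS_\kappa}$ (as $\dom(p)\cup\dom(f)$ is nonstationary), it can be extended to a Silver condition $q\le p$ with $[q]\subseteq U_\alpha$.

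The main step will be to show that for every $p\in\PP$ there exists $q\le p$ in $\PP$ with $[q]\subseteq U$. Given $p$, I will build a fusion sequence $\langle p_\alpha\mid\alpha\le\kappa\rangle$ with $p_0=p$ as follows. At stage $\alpha+1$, apply clause~(3) of Axiom~$A^*$ to $p_\alpha$, $\alpha$ and $D_\alpha$ to obtain $p_{\alpha+1}\le_\alpha p_\alpha$ together with $E_\alpha\subseteq D_\alpha$ of size at most $\kappa$ that is predense below $p_{\alpha+1}$ and satisfies
\[
[p_{\alpha+1}]\subseteq\bigcup\{[h]\mid h\in E_\alpha\}\subseteq U_\alpha,
\]
where the last inclusion holds because $E_\alpha\subseteq D_\alpha$. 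At limit stages $\alpha\le\kappa$, clause~(2) of Axiom~$A^*$ will yield $p_\alpha\in\PP$ with $p_\alpha\le_\beta p_\beta$ for all $\beta<\alpha$. Then $q:=p_\kappa$ refines $p_{\alpha+1}$ for every $\alpha<\kappa$, so $[q]\subseteq U_\alpha$ for every $\alpha$, whence $[q]\subseteq U$ as required.

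Having established this refinement property, the Edinburgh open set $V:=\bigcup\{[p]\mid p\in\PP,\,[p]\subseteq U\}$ will be contained in $U$ and Edinburgh dense (by applying the previous step to each Silver condition), which is the desired conclusion. The crucial ingredient on which the argument turns is the extra clause in Axiom~$A^*$ — namely the requirement $[p_{\alpha+1}]\subseteq\bigcup\{[h]\mid h\in E_\alpha\}$, as opposed to mere predensity — since this is precisely what propagates the inclusion $[p_{\alpha+1}]\subseteq U_\alpha$ through successor stages and survives the fusion limit; Theorem~\ref{theorem:axioma} supplies exactly this strengthening under the hypothesis that $\kappa$ is simple, bridging the inaccessible and $\diamondsuit_\kappa$ cases uniformly.
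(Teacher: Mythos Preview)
Your proof is correct and follows essentially the same approach as the paper: both invoke Theorem~\ref{theorem:axioma} to get Axiom~$A^*$ for $\kappa$-Silver forcing, and then deduce that Edinburgh meager coincides with Edinburgh nowhere dense. The paper merely asserts this deduction is ``straightforward'' and cites \cite[Lemma~3.8]{fkk}, whereas you have written out the fusion argument in full; your verification that each $D_\alpha$ is dense in $\PP$ and your use of the extra covering clause of Axiom~$A^*$ at successor stages are exactly the details the paper suppresses.
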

\begin{proof}
  Let us first observe that the two conclusions of the theorem are simply equivalent, using that the Edinburgh closure of an $\edinburgh$ nowhere dense set (that is, the smallest Edinburgh closed set containing it) is still $\edinburgh$ nowhere dense.\footnote{We mostly provide the formulation via Edinburgh open dense sets here as well in order to relate this result more obviously to the statement of Theorem \ref{theorem:cones} below.}
It follows from
Theorem \ref{theorem:axioma}, that $\kappa$-Silver forcing satisfies Axiom $A^*$, and we have observed  in Section~\ref{section:connection} that $\kappa$-Silver forcing is topological and generates the Edinburgh topology on $2^\kappa$. It is straightforward to derive from Axiom $A^*$ that the $\edinburgh$ meager sets are the same as the $\edinburgh$ nowhere dense sets, as desired (see also \cite[Lemma 3.8]{fkk}).\footnote{It would be possible, 
yet less informative, to verify Theorem \ref{theorem:cones2} without making use of Axiom $A^*$, by extending on and making appropriate modifications to the proof of Theorem \ref{theorem:cones} below.}
\end{proof}

Note that the above theorem is a strengthening (of a special case) of Corollary~\ref{corollary:mycielski}.
However, we do not know the answer to the following question. 

\begin{question}\label{question:meagernowheredense}
Is it consistent that the Edinburgh meager sets are not the same as the Edinburgh nowhere dense sets? (By the above theorem, $\kappa$ would necessarily not be simple.)
\end{question}

Note that the
 proof of Theorem~\ref{theorem:cones2} actually yields the following more general fact:
 If Grigorieff forcing with $\II$ satisfies Axiom $A^*$, then $\II$-meager is the same as $\II$-nowhere dense.

\begin{question}\label{question:other_ideal_Axiom_A}
Are there ideals $\II$ other than $\NS_\kappa$ such that Grigorieff forcing with $\II$ satisfies Axiom $A^*$ (or such that $\II$-meager implies $\II$-nowhere dense)?
\end{question}

The arguments for the next result are essentially the same as the argument in the proof of \cite[Lemma 4.9, 6]{fkk} in case $\kappa$ is inaccessible, and they can again be generalized to include the case when $\diamondsuit_\kappa$ holds. Since the context and notation in \cite{fkk} are somewhat different to ours, we would like to include a proof of this theorem in both cases, for the benefit of our readers.

\begin{theorem}\label{theorem:cones}
Let $\kappa$ be simple.
If $s\in\heart_{\bd_\kappa}$,
then every $\kappa$-intersection of open dense subsets of $2^\kappa$ contains an $\edinburgh$ cone $[f]$ with $[f]\subseteq [s]$.
In other words,
every
comeager subset of $2^\kappa$ contains a dense set that is Edinburgh open.
\end{theorem}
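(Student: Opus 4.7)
The plan is to reformulate the statement as a Silver fusion and carry out a construction of length $\kappa$, closely mirroring the proof of Theorem~\ref{theorem:axioma}. Since $s\in\heart_{\bd_\kappa}$ has bounded domain, $\kappa\setminus\dom(s)$ contains a tail, hence a club, so we may regard $s$ as a $\kappa$-Silver condition. Enumerate the given open dense sets (in the bounded topology) as $\langle U_\alpha\mid\alpha<\kappa\rangle$. For each $\alpha$, the set $D_\alpha:=\{p\in\kappa\textrm{-Silver}\mid [p]\subseteq U_\alpha\}$ is dense in $\kappa$-Silver forcing: given any Silver condition $p$, picking $x\in[p]$ and using that $U_\alpha$ is open dense in the bounded topology, there is a bounded $t\subseteq x$ with $[t]\subseteq U_\alpha$, and adjoining the values of $t$ to $p$ on $\dom(t)\setminus\dom(p)$ produces a Silver extension of $p$ in~$D_\alpha$.

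I will construct a decreasing Silver fusion sequence $\langle p_\alpha\mid\alpha\le\kappa\rangle$ with $p_0=s$, $p_{\alpha+1}\le_\alpha p_\alpha$ in the sense of Definition~\ref{definition:axioma}, $[p_{\alpha+1}]\subseteq U_\alpha$, and $p_\lambda=\bigcup_{\beta<\lambda}p_\beta$ at limits $\lambda\le\kappa$. Writing $\beta^\alpha_i$ for the $i$-th element of $\kappa\setminus\dom(p_\alpha)$, the fusion ordering guarantees that $\beta^\alpha_\alpha$ stabilises from stage $\alpha$ onwards, and the standard diagonal argument (reviewed in the proof of Theorem~\ref{theorem:axioma}) shows that $\{\beta^\alpha_\alpha\mid\alpha<\kappa\}$ is a club contained in $\kappa\setminus\dom(p_\kappa)$. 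Thus $f:=p_\kappa\in\heart_{\NS_\kappa}$, and by construction $f\supseteq s$ and $[f]\subseteq\bigcap_{\alpha<\kappa}[p_{\alpha+1}]\subseteq\bigcap_{\alpha<\kappa}U_\alpha$, yielding the desired Edinburgh cone.

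The heart of the argument is the successor step: given $p_\alpha$, produce $p_{\alpha+1}\le_\alpha p_\alpha$ with $[p_{\alpha+1}]\subseteq U_\alpha$. Morally, for each filling $\sigma\colon\alpha\to 2$ of the first $\alpha$ free ordinals of $p_\alpha$, density of $D_\alpha$ below $p_\alpha\cup\sigma$ provides a Silver extension landing inside $U_\alpha$, and these extensions must be amalgamated into a single extension of $p_\alpha$ that preserves the first $\alpha$ free ordinals. If $\kappa$ is inaccessible, then $2^{|\alpha|}<\kappa$: one enumerates the fillings as $\langle\sigma_j\mid j<2^{|\alpha|}\rangle$ and performs an inner sub-fusion of length $2^{|\alpha|}$, at the $j$-th step strengthening so that the cone corresponding to $\sigma_j$ lies inside $U_\alpha$, adding only a bounded amount each time. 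Since $2^{|\alpha|}<\kappa$, the union is again a Silver condition with the first $\alpha$ free ordinals preserved.

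The main technical obstacle is the case $\diamondsuit_\kappa$ without inaccessibility, when $2^{|\alpha|}$ may equal $\kappa$ so that the above inner sub-fusion does not finish in fewer than $\kappa$ steps. The remedy is to collapse the two layers of induction (over $\alpha<\kappa$ and over fillings $\sigma\in 2^\alpha$) into a single $\kappa$-length recursion, using a $\diamondsuit_\kappa$-sequence $\langle A_i\mid i<\kappa\rangle$ exactly as in the proof of Theorem~\ref{theorem:axioma}: at stage $i$ one simultaneously fixes a target open dense set (determined by a suitable book-keeping) and uses $A_i\colon i\to 2$ to guess which filling of the first $i$ free ordinals to handle, extending into the corresponding $D_\alpha$ so that the cone with that filling lies in $U_\alpha$. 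The diamond property ensures that every pair $(\alpha,\sigma)$ is caught on a stationary set of stages, and openness of $U_\alpha$ in the bounded topology (so that $x\in U_\alpha$ is witnessed by some bounded initial segment $t\subseteq x$) upgrades this to the statement that \emph{every} $x\in[f]$ lies in \emph{every} $U_\alpha$; this last upgrade is precisely the ``$A^*$ over $A$'' strengthening that makes Theorem~\ref{theorem:axioma} applicable here.
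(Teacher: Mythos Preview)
Your overall strategy---a Silver fusion producing $f=p_\kappa\in\heart_{\NS_\kappa}$ with $[f]\subseteq\bigcap_\alpha U_\alpha$---is correct and is essentially the paper's approach; the inaccessible successor step is handled the same way in both.

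There is, however, a genuine gap in your diamond case. You propose that ``book-keeping'' selects a target $U_\alpha$ at stage~$i$ while the diamond element $A_i$ guesses the filling, and claim that ``every pair $(\alpha,\sigma)$ is caught on a stationary set of stages.'' But for this you would need, for every $x\in[f]$ and every $\alpha$, a stage $i$ at which the book-keeping targets~$\alpha$ \emph{and} $A_i$ agrees with $x$'s filling. The diamond set $\{i:A_i=A_x\cap i\}$ is merely stationary, and nothing you say guarantees that it meets the book-keeping preimage of each~$\alpha$ (two stationary sets need not intersect). The paper avoids book-keeping altogether: it first replaces the $D_i$'s by a $\subseteq$-decreasing sequence (a harmless WLOG), and at stage~$i$ simply targets~$D_i$; then for any $x$ the diamond guesses correctly at unboundedly many $i$, and $x\in D_i\subseteq D_j$ for all $j\le i$ finishes the argument. (The paper also handles both values at $\alpha_i$ via $h_i^0,h_i^1$, a detail you omit.)

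More to the point, your first two paragraphs already contain a complete proof once you cite Theorem~\ref{theorem:axioma} as a black box rather than re-prove it: Axiom~$A^*$ Item~(3) applied to the Silver-dense set $D_\alpha=\{p:[p]\subseteq U_\alpha\}$ yields directly $p_{\alpha+1}\le_\alpha p_\alpha$ with $[p_{\alpha+1}]\subseteq\bigcup\{[h]:h\in E_\alpha\}\subseteq U_\alpha$, and Item~(2) supplies the fusion $p_\kappa\in\PP$. This makes your paragraphs~3--4 unnecessary.
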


\begin{proof}
If $\kappa$ is not inaccessible, $\diamondsuit_\kappa$ holds by the assumption that $\kappa$ is simple; in this case, let us fix a $\diamondsuit_\kappa$-sequence $\langle A_\alpha\mid\alpha<\kappa\rangle$.

\medskip
Let $s\in\heart_{\bd_\kappa}$, and let $\langle D_i\mid i<\kappa\rangle$ be a sequence of open dense subsets of $2^\kappa$. Since the intersection of less than $\kappa$-many open dense subsets of $2^\kappa$ is open dense, we can assume that $D_j\subseteq D_i$ for $j>i$. By induction on $i<\kappa$, we define a $\subseteq$-increasing sequence of functions $\langle f_i\mid i<\kappa\rangle$ in $\heart_{\bd_\kappa}$ with $f_0=s$,
 as well as a club subset $C=\{\alpha_j \with j < \kappa\}$ of $\kappa$ that is disjoint from $\dom(f_i)$ for each $i<\kappa$. Then, letting $f:=\bigcup_{i<\kappa}f_i$, we show that $[f]\subseteq\bigcap_{i<\kappa}D_i$, and since $[f]\subseteq [s]$, this finishes the proof.

 Let $f_0=s$, and pick $\alpha_0>\sup\dom(s)$.
Let $i<\kappa$, and assume that $\langle\alpha_j\mid j\le i\rangle$ and $f_i\in\heart_{\bd_\kappa}$ are defined, such that $\{\alpha_j\mid j\le i\}$ is a closed subset of $\kappa$ that is disjoint from $\dom(f_i)$. For the successor step we consider two cases:

\medskip

\textbf{Case 1: $\kappa$ is inaccessible --}
Let $\prec$ be a wellorder of $2^{i+1}$.
 For $t\in 2^{i+1}$, by induction on $\prec$,
we pick $g_{i}^t\in\heart_{\bd_\kappa}$ such that
\begin{enumerate}
  \item $g_i^t$ extends $f_i$,
  \item $g_i^t(\alpha_j)=t(j)$ for $j\le i$,
  \item $g_i^t$ extends $g_i^u$ on $\dom(g_i^u)\setminus \{ \alpha_j\with j\leq i\}$ whenever $u\in 2^{i+1}$ and $u\prec t$, and
  \item $[g_i^t]\subseteq D_i$.
\end{enumerate}
This is possible because $\kappa$ is inaccessible, hence in particular $|2^{i+1}|<\kappa$, and because $D_i$ is open dense.
Let
\[
f_{i+1}:= \bigcup_{t\in 2^{i+1}} (g_i^t\restr\,(\dom(g_i^{t})\setminus\{\alpha_j\mid j\le i\})),
\]
 and let
 $\alpha_{i+1}>\alpha_i$ such that $\alpha_{i+1}>\sup\dom(f_{i+ 1})$. Note that our above construction ensures that $f_{i+1}\in\heart_{\bd_\kappa}$, and that $[f_{i+1}]\subseteq D_i$, since $[f_{i+1}]\subseteq\bigcup_{t\in 2^{i+1}}[g_i^t]\subseteq D_i$.

\medskip

\textbf{Case 2: $\diamondsuit_\kappa$ holds --}
Using that $D_i$ is open dense, pick $h_i^0\in\heart_{\bd_\kappa}$ such that
\begin{enumerate}
\item $h_i^0$ extends $f_i$,
\item $h_i^0(\alpha_j)=A_i(j)$ for $j<i$,
\item $h_i^0(\alpha_i)=0$, and
\item $[h_i^0] \subseteq D_i$.
\end{enumerate}
Now, using again that $D_i$ is open dense, pick $h_i^1\in\heart_{\bd_\kappa}$ such that
\begin{enumerate}
\item $h_i^1$ extends $h_i^0$ on $\dom(h_i^0)\setminus\{\alpha_i\}$,
\item $h_i^1(\alpha_i)=1$, and
\item $[h_i^1]\subseteq D_i$.
\end{enumerate}
Let $f_{i+1}=h_i^1\,\restr\,(\dom(h_i^1)\setminus\{\alpha_j\mid j\le i\})$, and pick some $\alpha_{i+1}>\sup\dom(f_{i+1})$.

\medskip

 Now again in both cases, for limit ordinals $i\le\kappa$, let $f_i=\bigcup_{j<i}f_j$ and, in case $i<\kappa$, let  $\alpha_i=\sup_{j<i}\alpha_j<\kappa$, by the regularity of $\kappa$. Thus, $C=\{\alpha_i\mid i<\kappa\}$ is a club subset of $\kappa$, and, letting $f=f_\kappa$, $\dom(f)\subseteq\kappa\setminus C$ is nonstationary, yielding that $f\in\heart_{\NS_\kappa}$.

 Our above construction ensured that $[f]\subseteq\bigcap_{i<\kappa}D_i$:
In the case of $\kappa$ inaccessible, this holds by construction, as mentioned above.
Now assume
that $\kappa$ is a successor cardinal for which $\diamondsuit_\kappa$ holds.
Given $x \in [f]$, let $A=\{i<\kappa \with x(\alpha_i)=1\}$.
Since $\langle A_\alpha \mid \alpha<\kappa\rangle$ is a $\diamondsuit_\kappa$-sequence there exists a stationary set $S$ such that
$A \cap i=A_i$ for $i \in S$. For any such $i$, by our above construction of $f_{i+1}$, we have
$x \in [h_i^0]\subseteq D_i$ if $x(\alpha_i)=0$ and $x \in [h_i^1]\subseteq D_i$ if $x(\alpha_i)=1$.
In both cases, $x\in D_i$.
Since we assumed the sequence $\langle D_i \mid i<\kappa\rangle$ to be $\subseteq$-decreasing, and $S$ is unbounded, this yields that $x$ is in the intersection of all the $D_i$, as desired.\qedhere
\end{proof}

The above now allows us to easily infer the following:

\begin{corollary}\label{corollary:imeagerbairemeager}
If $\kappa$ is simple, $X\subseteq 2^\kappa$ has the Baire property and is $\II$-meager for $\II\supseteq\NS_\kappa$, then $X$ is meager.
\end{corollary}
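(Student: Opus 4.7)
The plan is to argue by contradiction: suppose $X$ has the Baire property in the bounded topology, is $\II$-meager, yet is not meager. First I will use the Baire property dichotomy to locate a nonempty basic open set in which $X$ is comeager. Since $X$ has the Baire property and fails to be meager, we have $X = U \Delta M$ for some open set $U \neq \emptyset$ and some meager set $M$. Pick $s \in 2^{<\kappa}$ with $[s] \subseteq U$; then $[s] \setminus M \subseteq X$, so $X$ is comeager in $[s]$.

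Next, I will bring this into the shape demanded by Theorem \ref{theorem:cones}. Writing $M = \bigcup_{i<\kappa} N_i$ with each $N_i$ nowhere dense and setting $D_i := 2^\kappa \setminus \overline{N_i}$, we get open dense sets with $\bigcap_{i<\kappa} D_i \subseteq 2^\kappa \setminus M$, and hence
\[
[s] \cap \bigcap_{i<\kappa} D_i \;\subseteq\; [s] \setminus M \;\subseteq\; X.
\]
Now I invoke Theorem \ref{theorem:cones}, which is where the hypothesis that $\kappa$ is simple enters: it produces an Edinburgh cone $[f]$ with $[f] \subseteq [s]$ and $[f] \subseteq \bigcap_{i<\kappa} D_i$, and in particular $[f] \subseteq X$.

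Finally, I close the contradiction by moving from the Edinburgh topology to the $\II$-topology. Since $\NS_\kappa \subseteq \II$, we have $f \in \heart_\II$, so $[f]$ is an $\II$-cone. By Proposition \ref{the:BCT} (specifically, by the remark immediately following its statement), every $\II$-cone is non-$\II$-meager, so $[f]$ is non-$\II$-meager; but $[f] \subseteq X$ would then force $X$ to be non-$\II$-meager, contradicting the hypothesis. There is no real obstacle in this argument: it is essentially a one-line reduction to Theorem \ref{theorem:cones} combined with the Baire property dichotomy, with the simplicity assumption on $\kappa$ carried entirely by that theorem.
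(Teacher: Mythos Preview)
Your proof is correct and follows essentially the same approach as the paper: use the Baire property dichotomy to find a basic open $[s]$ in which $X$ is comeager, apply Theorem~\ref{theorem:cones} to obtain an Edinburgh cone $[f]\subseteq X$, observe that $f\in\heart_{\NS_\kappa}\subseteq\heart_\II$, and conclude via Proposition~\ref{the:BCT} that $[f]$ (and hence $X$) is not $\II$-meager. The only difference is that you frame it as a contradiction and spell out the construction of the open dense sets $D_i$ more explicitly, but the argument is the same.
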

\begin{proof}
Assume that $X$ has the Baire property and is not meager. We show that $X$ is not $\II$-meager. By our assumptions on $X$, there is an $s\in\heart_{\bd_\kappa}$ such that $X\cap [s]$ is comeager in $[s]$. In other words: there exists a sequence $\langle D_\alpha\mid\alpha<\kappa\rangle$ of open dense sets such that $\bigcap_{\alpha<\kappa}D_\alpha\cap [s]\subseteq X$. Applying Theorem \ref{theorem:cones}, there exists $f\supseteq s$, $f\in \heart_{\NS_\kappa}\subseteq\heart_\II$  with $[f]\subseteq \bigcap_{\alpha<\kappa}D_\alpha\cap [s]\subseteq X$, but $[f]$ is not $\II$-meager by Baire Category for the $\II$-topology (see Proposition~\ref{the:BCT}), thus $X$ is not $\II$-meager.
\end{proof}

Again, we do not know the following:
\begin{question}
  Is it consistent that there is $X\subseteq 2^{\kappa}$ with the Baire property which is $\edinburgh$ meager, but not meager?
In particular: Is the above consistent for $\kappa=\omega_1$ together with $2^{<\kappa}=\kappa$
(by Corollary~\ref{corollary:imeagerbairemeager}, $\diamondsuit$ has to fail)?
\end{question}

We will also apply Theorem \ref{theorem:cones} once again in Section \ref{section:baire} below.

\section{The reaping number and some of its variants}\label{section:reaping}

In this section, we will take a small detour in order to investigate some cardinal invariants of the higher Cantor space $2^\kappa$ (with our results
also applying to the classical Cantor space $2^\omega$), and we will apply these results in our later sections. We thus assume that $\kappa$ is a regular infinite cardinal. In particular, we also allow for $\kappa=\omega$. Remember that for $a,b \in\ub_\kappa$, $a$ \emph{splits} $b$ if $a \cap b$ and $b \setminus a$ are both of size $\kappa$.

\begin{definition}\
\begin{itemize}
  \item An \emph{unsplit family} at $\kappa$ is a set $F\subseteq\ub_\kappa$ for which there is no $a\subseteq\kappa$ such that for all $b\in F$, $a$ splits $b$.
  \item The \emph{reaping number} $\mathfrak r(\kappa)$ is the smallest size of an unsplit family at~$\kappa$.
  \item A \emph{strongly unsplit family} at $\kappa$ is a set $F\subseteq\ub_\kappa$ such that for every $a\subseteq\kappa$, there is $b\in F$ for which either $a\cap b=\emptyset$ or $b\setminus a=\emptyset$.
  \item We let $\mathfrak R(\kappa)$ denote the smallest size of a strongly unsplit family at $\kappa$.
  \item A \emph{cone covering family} at $\kappa$ is a set $\mathcal{F} \subseteq \heart_{\ub_\kappa}$ such that \[\bigcup_{f \in\mathcal{F}} [f] = 2^\kappa.\]
  \item $R(\kappa)$ is the smallest size of a cone covering family at $\kappa$.
\item We let $R^*(\kappa)$ denote the smallest size of a family $\mathcal{F}\subseteq\heart_{\ub_\kappa}$ such that there exists a comeager set $X\subseteq 2^\kappa$ with $X\subseteq \bigcup_{f \in\mathcal{F}} [f]$.

\end{itemize}
\end{definition}

The next lemma collects some easy basic facts about these cardinal invariants, together with the result that $\mathfrak R(\kappa)=R(\kappa)$.

\begin{lemma}\label{lemma:reapingbasics}
  $\kappa^+\le\mathfrak r(\kappa)\le\mathfrak R(\kappa)=R(\kappa)\le 2^{<\kappa}\cdot\mathfrak r(\kappa)\le 2^\kappa$, and $\kappa^+\leq R^*(\kappa)\le R(\kappa)$.
\end{lemma}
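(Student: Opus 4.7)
The plan is to verify each inequality in the chain by a routine construction; most steps are straightforward set-theoretic bookkeeping, and only the lowest-bound for $R^*(\kappa)$ needs a genuine topological input.

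For $\kappa^+ \le \mathfrak{r}(\kappa)$, given any family $F = \{b_\alpha \mid \alpha < \kappa\} \subseteq \ub_\kappa$, I construct a common splitter $a$ by a $\kappa$-length induction: at stage $\alpha$, for each $\beta \le \alpha$, I pick two fresh ordinals in $b_\beta$ above all previously chosen ones, sending one into $a$ and the other into $\kappa \setminus a$. By the regularity of $\kappa$, both $a \cap b_\beta$ and $b_\beta \setminus a$ end up of size $\kappa$. The inequality $\mathfrak{r}(\kappa) \le \mathfrak{R}(\kappa)$ is then immediate, since a strongly unsplit family is a fortiori unsplit: if $a \cap b = \emptyset$ or $b \setminus a = \emptyset$, then $a$ fails to split $b$.

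For $\mathfrak{R}(\kappa) = R(\kappa)$ I will use dual constructions. From a strongly unsplit family $F$, the family $\{\zero\restr b,\ \one\restr b \mid b \in F\}$ is cone covering of the same cardinality: applying strong unsplitness to $a := x^{-1}(1)$ gives $b \in F$ with $a \cap b = \emptyset$ or $b \subseteq a$, placing $x$ in $[\zero\restr b]$ or $[\one\restr b]$ respectively. Conversely, from a cone covering family $\mathcal{F}$ I obtain a strongly unsplit family $\{E_f \mid f \in \mathcal{F}\}$ by choosing $E_f \in \{f^{-1}(0), f^{-1}(1)\}$ to be unbounded (possible since $|\dom(f)| = \kappa$ by regularity): given $a \subseteq \kappa$, pick $f$ with $\chi_a \in [f]$, so that $E_f$ is either contained in or disjoint from $a$. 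For the upper bound $R(\kappa) \le 2^{<\kappa}\cdot\mathfrak{r}(\kappa)$, by the preceding identity it suffices to observe the stronger $\mathfrak{R}(\kappa) \le \mathfrak{r}(\kappa)$: from an unsplit $F$ of size $\mathfrak{r}(\kappa)$, the family $\{b \setminus \alpha \mid b \in F,\ \alpha < \kappa\}$ is strongly unsplit, because if $a$ does not split $b$, then $a \cap b$ or $b \setminus a$ is bounded below some $\alpha < \kappa$, and then $b \setminus \alpha$ is accordingly disjoint from or contained in $a$; this family has size $\mathfrak{r}(\kappa)\cdot\kappa = \mathfrak{r}(\kappa)$. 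The outermost bound $2^{<\kappa}\cdot\mathfrak{r}(\kappa) \le 2^\kappa$ is trivial.

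For the second chain, $R^*(\kappa) \le R(\kappa)$ is immediate from the definitions, since a cone covering family covers all of $2^\kappa$ and hence every comeager set. The main step (and the only one requiring care) is $\kappa^+ \le R^*(\kappa)$: each $[f]$ with $f \in \heart_{\ub_\kappa}$ is nowhere dense in the bounded topology, because any $s \in 2^{<\kappa}$ admits an extension $s' \in 2^{<\kappa}$ with $s'(\beta) = 1 - f(\beta)$ for some $\beta \in \dom(f)$ beyond $\dom(s)$ (such $\beta$ exists since $\dom(f)$ is unbounded), giving $[s'] \subseteq [s]$ with $[s'] \cap [f] = \emptyset$. Consequently any union of at most $\kappa$ such cones is meager in the bounded topology and hence cannot contain a comeager subset of $2^\kappa$, for otherwise $2^\kappa$ would itself be meager, contradicting Proposition~\ref{the:BCT} applied to $\bd_\kappa$. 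The main obstacle, modest as it is, is not in any single construction but in ensuring that this final Baire category argument is referred to the correct (bounded) topology, in which cones over $\heart_{\ub_\kappa}$ conditions are genuinely nowhere dense.
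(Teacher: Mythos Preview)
Your proof is correct and follows the paper's approach in all but one step, where you actually do better. For the bound $\mathfrak R(\kappa)\le 2^{<\kappa}\cdot\mathfrak r(\kappa)$, the paper passes from an unsplit family $F$ to the family $F'=\{a\subseteq\kappa\mid\exists b\in F\ (a\subseteq b\ \land\ |b\setminus a|<\kappa)\}$, which has size at most $2^{<\kappa}\cdot|F|$. You instead use only the end-segments $b\setminus\alpha$ for $\alpha<\kappa$, obtaining a strongly unsplit family of size $\kappa\cdot|F|=|F|$; the point, which the paper does not exploit, is that by the regularity of $\kappa$ any subset of $\kappa$ of size ${<}\kappa$ is already bounded, so removing an initial segment suffices. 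This yields the sharper equality $\mathfrak r(\kappa)=\mathfrak R(\kappa)$ outright, which the paper only obtains under the additional hypothesis $2^{<\kappa}\le\mathfrak r(\kappa)$ (see the remark after the lemma). The remaining steps---the equality $\mathfrak R(\kappa)=R(\kappa)$ via the dual constructions with constant functions and preimages, and the lower bound $\kappa^+\le R^*(\kappa)$ via nowhere-denseness of $[f]$ for $f\in\heart_{\ub_\kappa}$ and Baire category---match the paper's arguments exactly.
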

\begin{proof}
  It is well-known (and very easy to check) that $\kappa^+\le\mathfrak r(\kappa)\le 2^\kappa$. Clearly, every strongly unsplit family at $\kappa$ is an unsplit family at $\kappa$, and this directly yields that $\mathfrak r(\kappa)\le\mathfrak R(\kappa)$. If $F$ is an unsplit family at $\kappa$, then \[F'=\{a\subseteq\kappa\mid\exists b\in F\ a\subseteq b\,\land\,|b\setminus a|<\kappa\}\] is clearly a strongly unsplit family at $\kappa$, of size $2^{<\kappa}\cdot|F|$, yielding that $\mathfrak R(\kappa)\le 2^{<\kappa}\cdot\mathfrak r(\kappa)$. That $R^*(\kappa)\le R(\kappa)$ is immediate from the definitions. To see that $\kappa^+\le R^*(\kappa)$ note that $[f]$ is nowhere dense for every $f\in \heart_{\ub_\kappa}$ (see Proposition~\ref{meagerbutnotImeager}), hence the union of $\kappa$-many such cones is meager. Therefore it cannot cover any comeager set, because $2^\kappa$ is not meager by the Baire Category Theorem for the bounded topology.
 It remains to show that $\mathfrak R(\kappa)=R(\kappa)$.

 $R(\kappa)\leq\mathfrak R(\kappa)$:
 Let $F$ be a strongly unsplit family at $\kappa$.
 For any set $A$ and $i \in 2$, let $c^A_i$ denote the function with domain $A$ and constant value $i$.
 We show that $\{c^b_i\with b\in F,\,i\in 2\}$ is a cone covering family at $\kappa$. Since $F$ is a strongly unsplit family, for every $x\in 2^\kappa$, there is $b\in F$ and $i\in 2$ such that
$x^{-1}[\{i\}]\cap b=\emptyset$.
 Therefore, $x\in [c^b_{1-i}]$.

$\mathfrak R(\kappa)\leq R(\kappa)$: Let $\mathcal F$ be a cone covering family at $\kappa$. Let \[F := \{ f^{-1}[\{i\}]\mid f \in\mathcal F,\,i \in 2 \}\cap\ub_\kappa.\]
We show that $F$ is a strongly unsplit family at $\kappa$. Let $a\subseteq \kappa$. Since $\mathcal F$ is a cone covering family, there is $f\in\mathcal F$ with $\chi_a\in [f]$. Thus, $f^{-1}[\{1\}]\subseteq a$, and hence $f^{-1}[\{1\}]\setminus a=\emptyset$.
On the other hand, $f^{-1}[\{0\}]\subseteq \kappa\setminus a$, hence $f^{-1}[\{0\}]\cap a=\emptyset$.
Since $\dom(f)\in\ub_\kappa$, either $f^{-1}[\{1\}]$ or $f^{-1}[\{0\}]$ has to be an unbounded subset of~$\kappa$, and hence an element of $F$.
\end{proof}

This shows that in particular if $2^{<\kappa}\le\mathfrak r(\kappa)$, then $\mathfrak r(\kappa)=\mathfrak R(\kappa)=R(\kappa)$. Our next result will show that if $\kappa$ is simple or $\kappa=\omega$,
$R^*(\kappa)$ is equal to the other invariants as well. We first need an auxiliary result:

\begin{lemma}\label{lemma:reaping}
  Let $\kappa$ be simple or $\kappa=\omega$,
and let $X$ be a comeager subset of $2^\kappa$. Then, there is a tree $T\subseteq 2^{<\kappa}$ of height $\kappa$ such that $[T]\subseteq X$, and such that the following two properties hold:
  \begin{enumerate}
\item[(a)]
$T$ has uniform splitting (i.e., on each level, either all nodes are splitting nodes, or none of them is) and the set of splitting levels of $T$ form a club\footnote{For $\kappa=\omega$, a club subset is just an unbounded subset.} subset $C$
of $\kappa$, such that $0\in C$.
\item[] For each $i < \kappa$, let $\prevspl{i}{C}$ be the largest $\delta \leq i$ with $\delta \in C$ (such a $\delta$ exists since $C$ is closed and contains $0$).
\item[(b)]
 For any two $x, y \in [T]$, and any $i < \kappa$, $x(i) = y(i)$ if and only if $x(\prevspl{i}{C}) = y(\prevspl{i}{C})$.
So we can define a function $\marlene\colon \kappa \times 2 \rightarrow 2$, such that for any $x \in [T]$ and any $i < \kappa$, we have
\begin{equation}\label{eqn:main_beth}
x(\prevspl{i}{C}) = \marlene(i,x(i)).
\end{equation}
\end{enumerate}
\end{lemma}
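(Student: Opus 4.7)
The plan is to build the desired tree $T$ by recursively specifying a pair $(C, p)$, where $C = \{c_\alpha \mid \alpha < \kappa\}$ is the club of splitting levels with $c_0 = 0$, and $p(k, \cdot) \colon \{0,1\} \to \{0,1\}$ is a bijection for each $k \notin C$. Any such data canonically determines a tree $T \subseteq 2^{<\kappa}$ whose body consists precisely of those $x \in 2^\kappa$ satisfying equation~\eqref{eqn:main_beth}; such a $T$ automatically has uniform splitting at exactly the levels of $C$, and bijectivity of each $p(k, \cdot)$ is exactly the iff of property~(b). The entire content of the lemma therefore reduces to arranging $[T] \subseteq X$.

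Fix a $\subseteq$-decreasing sequence $\langle D_i \mid i < \kappa\rangle$ of open dense subsets of $2^\kappa$ with $\bigcap_{i<\kappa} D_i \subseteq X$. I will construct $c_\alpha$ and extend $p$ by recursion on $\alpha < \kappa$. At limit $\alpha$ I set $c_\alpha = \sup_{\beta < \alpha} c_\beta$, which is below $\kappa$ by regularity and which ensures closure of $C$ automatically. At successor stage $\alpha+1$ I process the dense open set $D_\alpha$. The key move at the successor stage is: pick an already-committed partial branch $s_t$ of the current tree, extend it to some $u_t \in 2^{<\kappa}$ with $[u_t] \subseteq D_\alpha$ using density; the newly added coordinates $k$ then force $p(k, u_t(k)) := s_t(\prevspl{k}{C})$, and the bijection requirement forces $p(k, 1 - u_t(k)) := 1 - s_t(\prevspl{k}{C})$; all other currently committed partial branches are then prolonged to the new common length by applying this $p$ coordinatewise. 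The crucial technical point is that extending a string only shrinks its cone, so a branch already secured inside $D_\alpha$ remains inside $D_\alpha$ after its tail is overwritten by the bijection rule.

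If $\kappa$ is inaccessible or $\kappa = \omega$, the current tree has fewer than $\kappa$ branches at stage $\alpha+1$, so I can process them all iteratively in turn as above, taking the common length at the end as $c_{\alpha+1}$; every branch is then placed in every $D_\alpha$ and consequently in $X$. If instead $\diamondsuit_\kappa$ holds (the only remaining sub-case of simplicity, by Observation~\ref{obs:simple_implies}), bulk processing is infeasible, and I will copy the strategy of Case~2 of the proof of Theorem~\ref{theorem:cones}: with $\langle A_\alpha \mid \alpha < \kappa\rangle$ a $\diamondsuit_\kappa$-sequence, at stage $\alpha+1$ I process only the two branches $t$ that satisfy $t(c_j) = A_{\alpha+1}(j)$ for all $j \leq \alpha$, one for each value of $t(c_{\alpha+1}) \in \{0,1\}$. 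The verification that $[T] \subseteq \bigcap_i D_i$ then proceeds exactly as in Theorem~\ref{theorem:cones}: for any $x \in [T]$, the set $\{\alpha : x(c_\alpha) = 1\}$ is correctly guessed by $A_\alpha$ on a stationary set of $\alpha$, and at each such $\alpha$ the branch $x$ was one of the two processed, giving $x \in D_\alpha$; combined with the $D_i$ being decreasing this places $x$ in $\bigcap_i D_i \subseteq X$.

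The main difficulty I expect is the bookkeeping of the bijection-consistent extension step, where fixing an extension of one branch propagates, via bijectivity of $p$, to forced values on every other branch at the same new coordinates. The care needed is to verify that this propagation never takes a previously processed branch out of the cone it was placed into; this holds because overwriting tail coordinates preserves cone containment $[u_t^{\text{new}}] \subseteq [u_t^{\text{old}}] \subseteq D_\alpha$, but phrasing it cleanly across successively longer extensions and across the full round of $2^{\alpha+1}$ (respectively, two) processed branches at each stage will require a bit of care.
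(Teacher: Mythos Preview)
Your approach is essentially the paper's: build $T$ level by level, at each successor stage extend selected branches into $D_\alpha$ and propagate to the remaining branches via a bit-flip constraint (your bijective $p(k,\cdot)$ is exactly the paper's explicit $u^0 = 1 - u^1$ in Case~1 and $u_1 = 1 - u_0$, $v_0 = 1 - v_1$ in Case~2). Watch your indices in the $\diamondsuit$ case: at stage $\alpha+1$ you should use $A_\alpha$ (not $A_{\alpha+1}$, which need not guess correctly at any successor ordinal) to fix $t(c_j)$ for $j<\alpha$, with the two processed branches differing at $c_\alpha$ (not $c_{\alpha+1}$, which is not yet defined); once corrected, your compressed guessing of $\langle x(c_j)\mid j<\kappa\rangle$ is a slight streamlining over the paper's guessing of $x\restr\delta_i$, since it avoids the vacuous ``$A_{\delta_i}\notin S_i$'' sub-case.
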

\begin{proof}
  In case $\diamondsuit_\kappa$ holds,
fix a $\diamondsuit_\kappa$-sequence $\langle A_\alpha\mid\alpha<\kappa\rangle$.

\medskip

Since $X$ is a comeager subset of $2^\kappa$, there exists a sequence $\langle D_i\mid i<\kappa\rangle$ of open dense subsets of $2^\kappa$ such that $X\supseteq\bigcap_{i<\kappa}D_i$. Since the intersection of less than $\kappa$-many open dense subsets of $2^\kappa$ is again open dense, we can assume that the sequence of $D_i$'s is $\subseteq$-decreasing.

  We construct $T$ by induction in $\kappa$-many steps, by constructing a sequence $\vec{S}=\langle S_i\mid i<\kappa\rangle$ of unboundedly many levels of $T$; we let $\delta_i$ be the
  length
  of the elements of $S_i$ (all sequences in $S_i$ will have the same
length)
  for all $i<\kappa$, and we let our club $C=\{\delta_i\mid i<\kappa\}$.
Let $S_0=\{\emptyset\}$.
Given $S_i$, consisting of binary sequences of equal length, we will define $S_{i+1}$ by distinguishing two cases.

Let us fix the following notation: for $u \in 2^{<\kappa}$,
let $1-u$ denote the sequence with all the bits of $u$ flipped, i.e.,
$1 - u \in 2^{<\kappa}$ is such that
$\dom(1-u) = \dom(u)$ and $(1-u)(\beta) = 1 -u(\beta)$ for each $\beta \in \dom(u)$.

\medskip

\textbf{Case 1: $\kappa$ is inaccessible or $\kappa=\omega$ --}
Let $\lambda := |S_i|$, and let $\{ t_l \with l<\lambda \}$ be an enumeration of~$S_i$. Note that $\lambda < \kappa$.
We will define an increasing sequence $\{ u^0_l \mid l < \lambda \}$ in $2^{<\kappa}$ by induction on $l < \lambda$, and afterwards, again by induction on $l < \lambda$, we will define another increasing sequence $\{ u^1_l \mid l < \lambda \}$ in $2^{<\kappa}$.

Let $u^0_0 := \emptyset$. For $l < \lambda$, let
$u^0_{l+1} \supseteq u^0_l$ be such that
\[
[t_l^\conc 0^\conc u^0_{l+1}] \subseteq D_i.
\]
At limits $l \leq \lambda$, let $u^0_l := \bigcup_{j < l} u^0_j$.
Let $u^1_0:=1- u^0_\lambda$.
For $l < \lambda$, let
$u^1_{l+1} \supseteq u^1_l$ be such that
\[
[t_l^\conc 1^\conc u^1_{l+1}] \subseteq D_i.
\]
At limits $l \leq \lambda$, let $u^1_l := \bigcup_{j < l} u^1_j$.

Let $u^1 := u^1_\lambda$, and let
$u^0:= 1-u^1$.
Finally, let
\[
S_{i+1} :=
\{ t_l^\conc 0^\conc u^0 \mid l < \lambda \} \cup \{ t_l^\conc 1^\conc u^1 \mid l < \lambda \}.
\]
This finishes the construction of $S_{i+1}$ in case $\kappa$ is inaccessible.
Note that every node in~$S_i$ is a splitting node of~$T$,
and
the elements of $S_{i+1}$ are sequences of equal length less than $\kappa$ (which we call $\delta_{i+1}$), using that $\lambda<\kappa$.
Finally, it is straightforward to check that
$[s] \subseteq D_i$ for each
$s \in S_{i+1}$.

\medskip

\textbf{Case 2: $\diamondsuit_\kappa$ holds --}
If $A_{\delta_i}\not\in S_i$, we simply let
$$
S_{i+1} = \{ t^\conc 0 \with t\in S_i\} \cup  \{ t^\conc 1 \with t\in S_i\}.
$$
Otherwise, we proceed as follows: Let
$u_0\in 2^{<\kappa}$ be such that $[A_{\delta_i}^\conc 0^\conc u_0]\subseteq D_{i}$, and let
$u_1 := 1-u_0$.
Let $v_1\in 2^{<\kappa}$ be such that $[A_{\delta_i}^\conc 1^\conc u_1^\conc v_1]\subseteq D_i$, and let
$v_0 := 1- v_1$.
Finally, let
\[S_{i+1}= \{t^\conc 0^\conc u_0^\conc v_0 \with t\in S_i\}\cup\{ t^\conc 1^\conc u_1^\conc v_1 \with t\in S_i\}.\]
This finishes the construction of $S_{i+1}$ in case $\diamondsuit_\kappa$ holds.
Note that, again, every node in~$S_i$ is a splitting node of~$T$,
and
the elements of $S_{i+1}$ are sequences of equal length less than $\kappa$ (which we call $\delta_{i+1}$).

\medskip

In both cases, for limit ordinals $i<\kappa$, we take unions, i.e., we let \[S_i= \{ t \in 2^{<\kappa} \with \exists\langle t_j\mid j<i\rangle\ t=\bigcup \limits_{j<i} t_j  \,\land\, t_j\in S_j\text{ for } j<i\}.\]
Note that elements of $S_i$ are again sequences of length less than $\kappa$ by the regularity of $\kappa$.
Finally, let $T=\{ t \with \exists i<\kappa\; \exists s\in S_i\; t\subseteq s\}$ be the tree induced by the $S_i$'s.
It remains to check that $T$ is as desired. It is straightforward to check from our construction that $T$ satisfies Properties (a) and (b) in either of our two cases. We have to check that moreover, our construction ensures that $[T]\subseteq\bigcap_{i<\kappa}D_i$ (and hence $[T]\subseteq X$).

\medskip

\textbf{Case 1: $\kappa$ is inaccessible or $\kappa=\omega$ --} Given $x\in[T]$ and $i<\kappa$,
note that $x\restr \delta_{i+1} \in S_{i+1}$, so, as discussed above, $[x\restr \delta_{i+1}] \subseteq D_i$, and hence $x \in D_i$, as desired.

\medskip

\textbf{Case 2: $\diamondsuit_\kappa$ holds --} Given $x\in[T]$ and $i<\kappa$, let $j\ge i$ be such that $A_{\delta_j}=x\restr\delta_j$, which is possible because $\{\delta_j \with i\le j<\kappa\}$ is a club and the $\diamondsuit_\kappa$-sequence guesses correctly on a stationary set.
Let $\xi := x(\delta_j)$. Note that,
by construction,
$x \in [A_{\delta_j}^\conc \xi^\conc u_\xi^\conc v_\xi] \subseteq D_j \subseteq D_i$, as desired.
\end{proof}

We are now ready to show that in many cases, all the cardinal invariants introduced in this section are actually equal.

\begin{theorem}\label{theorem:comeagerreaping}
 If $\kappa$ is simple or $\kappa=\omega$,
 then \[\mathfrak r(\kappa)=\mathfrak R(\kappa)=R(\kappa)=R^*(\kappa).\]
\end{theorem}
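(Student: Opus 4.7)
The plan is to establish the four-way equality in two stages, combining Lemma~\ref{lemma:reapingbasics} with Lemma~\ref{lemma:reaping}. Under the hypothesis that $\kappa$ is simple or $\kappa=\omega$, Observation~\ref{obs:simple_implies} (or a trivial check at $\omega$) gives $2^{<\kappa}=\kappa$; since $\mathfrak r(\kappa)\ge\kappa^+$, we have $2^{<\kappa}\cdot\mathfrak r(\kappa)=\mathfrak r(\kappa)$, so the chain $\mathfrak r(\kappa)\le\mathfrak R(\kappa)=R(\kappa)\le 2^{<\kappa}\cdot\mathfrak r(\kappa)$ from Lemma~\ref{lemma:reapingbasics} collapses to $\mathfrak r(\kappa)=\mathfrak R(\kappa)=R(\kappa)$. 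Since $R^*(\kappa)\le R(\kappa)$ is also already in that lemma, it remains only to prove the reverse inequality $R^*(\kappa)\ge R(\kappa)$.

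For this, let $\mathcal F\subseteq\heart_{\ub_\kappa}$ witness $R^*(\kappa)$, so that $|\mathcal F|=R^*(\kappa)$ and there is a comeager set $X\subseteq\bigcup_{f\in\mathcal F}[f]$. Applying Lemma~\ref{lemma:reaping} to $X$ produces a tree $T\subseteq 2^{<\kappa}$ of height~$\kappa$ with $[T]\subseteq X$, uniformly splitting on a club $C=\{\delta_j\mid j<\kappa\}$, together with the function $\marlene\colon\kappa\times 2\to 2$ satisfying equation~\eqref{eqn:main_beth}. The ``iff'' in Property~(b) forces each $\marlene(i,\cdot)$ to be a bijection of $\{0,1\}$. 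Hence the map $\phi\colon[T]\to 2^\kappa$, $\phi(x)(j):=x(\delta_j)$, is a bijection: any $y\in 2^\kappa$ has a unique preimage obtained by setting values along $C$ to match $y$ and then extending to the non-splitting levels via~\eqref{eqn:main_beth}.

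The key move is to push each $f\in\mathcal F$ through $\phi$. For $i\in\dom(f)$, the constraint $x(i)=f(i)$ is, by~\eqref{eqn:main_beth}, equivalent (for $x\in[T]$) to the constraint $x(\delta_{j(i)})=\marlene(i,f(i))$, where $j(i)$ is the unique index with $\delta_{j(i)}=\prevspl{i}{C}$. I define $g_f$ on $\{j(i)\mid i\in\dom(f)\}$ by $g_f(j(i)):=\marlene(i,f(i))$, provided this assignment is well-defined (i.e.\ consistent whenever $j(i)=j(i')$); if it is not, then $[f]\cap[T]=\emptyset$ and $f$ is discarded. In the consistent case, $\phi([f]\cap[T])=[g_f]$ by construction. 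Since $\dom(f)\in\ub_\kappa$, $\prevspl{i}{C}\le i$, and $C$ is cofinal in $\kappa$, the domain of $g_f$ is again unbounded, so $g_f\in\heart_{\ub_\kappa}$.

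Setting $\mathcal F':=\{g_f\mid f\in\mathcal F,\ [f]\cap[T]\ne\emptyset\}$, I compute $\bigcup_{g\in\mathcal F'}[g]=\phi\bigl(\bigcup_{f\in\mathcal F}([f]\cap[T])\bigr)=\phi([T])=2^\kappa$, so $\mathcal F'$ is a cone covering family at $\kappa$. Hence $R(\kappa)\le|\mathcal F'|\le|\mathcal F|=R^*(\kappa)$, completing the proof. The only real obstacle is the consistency of $g_f$ when two distinct $i,i'\in\dom(f)$ satisfy $\prevspl{i}{C}=\prevspl{i'}{C}$, but the observation that any inconsistency forces $[f]\cap[T]=\emptyset$ disposes of it immediately.
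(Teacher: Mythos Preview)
Your proof is correct and follows essentially the same route as the paper: reduce to $R^*(\kappa)\ge R(\kappa)$ via Lemma~\ref{lemma:reapingbasics} and $2^{<\kappa}=\kappa$, apply Lemma~\ref{lemma:reaping} to a comeager $X$ covered by $\mathcal F$, and push the cones $[f]$ through the canonical bijection between $[T]$ and $2^\kappa$ using~\eqref{eqn:main_beth}. The only difference is cosmetic: the paper first thins each $f$ so that $\dom(f)$ meets each interval $[\delta_i,\delta_{i+1})$ in at most one point (making $\Omega'(f)$ automatically well-defined), whereas you leave $f$ alone and instead observe that any resulting inconsistency in $g_f$ forces $[f]\cap[T]=\emptyset$, so such $f$ can simply be discarded.
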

\begin{proof}
If $\kappa=\omega$, we clearly have $2^{<\kappa}=\kappa$; if $\kappa$ is simple, the same holds by Observation~\ref{obs:simple_implies}.
So,
  by Lemma \ref{lemma:reapingbasics}, we only need to show that $R^*(\kappa)\ge R(\kappa)$. Let $X$ be a comeager subset of $2^\kappa$ and let $\mathcal{F}\subseteq \heart_{\ub_\kappa}$ be such that $X\subseteq \bigcup_{f\in\mathcal F}[f]$. To finish the proof, it is enough to find such a family of the same size which covers the whole space~$2^\kappa$.

Let $T$, $C$, and $p$ be as provided by\footnote{Note that our assumption on $\kappa$ is only needed in order to be able to invoke Lemma \ref{lemma:reaping} at this point.} Lemma \ref{lemma:reaping} with respect to $X$, and let $\{ \delta_i \mid i<\kappa\}$ be an increasing enumeration of~$C$.
Since $[T]\subseteq X$ also $[T]\subseteq \bigcup_{f\in\mathcal F}[f]$.

 By passing to a suitable function $f'\subseteq f$ (with $f'\in \heart_{\ub_\kappa}$) for each $f\in\mathcal F$, we may assume that, for every $f\in\mathcal F$ and every $i<\kappa$, $\dom(f)$ and the ordinal interval $[\delta_i,\delta_{i+1})$ intersect in at most one element.\footnote{This uses that, clearly, if $f'\subseteq f$, then $[f']\supseteq[f]$.}

Now, define $\Omega,\Omega'\colon
\mathcal{F}\rightarrow\heart_{\ub_\kappa}$ as follows: for each $f \in\mathcal{F}$,
let
\begin{equation}\label{eqn:Omega_def}
\Omega(f) := \{ (\prevspl{i}{C}, \marlene(i, f(i))) \with i \in \mathrm{dom}(f) \}.
\end{equation}
Let $\Omega'(f)$  be
such that $i\in\mathrm{dom}(\Omega'(f))$ if and only if $\delta_i \in \mathrm{dom}(\Omega(f))$,
and, for such~$i$, let $\Omega'(f)(i) := \Omega(f)(\delta_i)$.
Clearly, for each $f\in\mathcal F$, $\Omega'(f)\in\heart_{\ub_\kappa}$,
and we want to finish our argument by showing that
\[
2^\kappa \subseteq \bigcup \limits_{f\in \mathcal{F}}[\Omega'(f)].
\]

To see this, let $\psi\colon 2^{<\kappa} \rightarrow T$ be the embedding for which for $s \in 2^{<\kappa}$, we have $\psi(s)(\delta_i) = s(i)$.\footnote{This is the unique isomorphism between $2^{<\kappa}$ and the set $\mathrm{split}(T)$ of splitting nodes of $T$ that preserves lexicographical order. } Let $\psi$ also denote its canonical extension to $2^\kappa$, i.e., for $y\in 2^\kappa$, let $\psi(y) = \bigcup_{i < \kappa} \psi(y \upharpoonright i)$.
Now let $y \in 2^\kappa$. Let $x := \psi(y)$. Since $x \in [T]$, we can fix $f \in \mathcal{F}$ such that $x \in [f]$, i.e., $x \supseteq f$.
By~\eqref{eqn:main_beth} from Lemma~\ref{lemma:reaping} and~\eqref{eqn:Omega_def}, it follows that $x \supseteq \Omega(f)$ as well.
Consequently, $y \supseteq \Omega'(f)$, i.e., $y \in [\Omega'(f)]$, thus finishing the argument.
\end{proof}

\begin{question}
  Is it consistent that $R^*(\kappa)<\mathfrak r(\kappa)$ for any regular uncountable cardinal~$\kappa$?
\end{question}

\section{Meager sets in ideal topologies}\label{section:meager}

In this section, we continue our investigation of the notion of meagerness in ideal topologies. One
point of the results of this section is to highlight some aspects of the complex relationship between the bounded topology and generalized ideal topologies on $2^\kappa$, and in particular the Edinburgh topology. Our first simple proposition shows that meagerness does not imply $\II$-meagerness.

\begin{proposition}\label{meagerbutnotImeager}
 If $f\in \heart_{\ub_\kappa}$, then $[f]$ is meager (in fact, closed nowhere dense).
 Thus, there is a meager subset of $2^\kappa$ which is not $\II$-meager whenever $\II\supsetneq\bd_\kappa$.
\end{proposition}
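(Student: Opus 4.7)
The plan is to prove the two assertions in turn, building on the Baire category theorem for ideal topologies (Proposition~\ref{the:BCT}).

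For the first assertion, I would first observe that $[f]$ is closed in the bounded topology: it is the intersection, over $\alpha \in \dom(f)$, of the clopen sets $\{g \in 2^\kappa \mid g(\alpha) = f(\alpha)\}$. Since $[f]$ is closed, showing it is nowhere dense reduces to showing it has empty interior. Given any $s \in 2^{<\kappa}$, pick $\alpha > \sup(\dom(s))$ and use that $\dom(f)$ is unbounded to choose some $\beta \geq \alpha$ with $\beta \in \dom(f)$. Extend $s$ to some $s' \in 2^{<\kappa}$ with $\dom(s') = \beta + 1$ and $s'(\beta) = 1 - f(\beta)$. Then $[s'] \subseteq [s]$ while $[s'] \cap [f] = \emptyset$, so $[s]$ is not contained in $[f]$. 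This shows $[f]$ has empty interior in the bounded topology, hence is closed nowhere dense and in particular meager.

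For the second assertion, suppose $\II \supsetneq \bd_\kappa$ and pick any $A \in \II \setminus \bd_\kappa$; such an $A$ is unbounded. Choose any function $f \colon A \to 2$ (e.g.\ $f = \zero \restr A$). Then $f \in \heart_\II \subseteq \heart_{\ub_\kappa}$, so by the first part $[f]$ is meager in the bounded topology. On the other hand, $[f]$ is a nonempty basic $\II$-open set, so by the remark following Proposition~\ref{the:BCT} (which records that Baire category for the $\II$-topology is equivalent to every $\II$-cone being non-$\II$-meager), $[f]$ is not $\II$-meager. Hence $[f]$ witnesses the second assertion.

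There is no real obstacle here: the only mild point is to be sure that the chosen $f$ lies in both $\heart_\II$ and $\heart_{\ub_\kappa}$, which is guaranteed the moment we pick $A \in \II$ unbounded, using precisely the hypothesis $\II \supsetneq \bd_\kappa$.
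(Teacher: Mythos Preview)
Your argument is correct and follows essentially the same route as the paper: both show $[f]$ is closed and nowhere dense in the bounded topology by exploiting unboundedness of $\dom(f)$ to flip a single bit, and both invoke Proposition~\ref{the:BCT} to conclude that the $\II$-cone $[f]$ is not $\II$-meager. One small slip: you write ``$f \in \heart_\II \subseteq \heart_{\ub_\kappa}$'', but this inclusion is false (bounded-domain functions lie in $\heart_\II$ but not in $\heart_{\ub_\kappa}$); what you mean, and what suffices, is $f \in \heart_\II \cap \heart_{\ub_\kappa}$.
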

\begin{proof}
  $[f]$ is closed, for \[2^\kappa\setminus [f]=\bigcup\limits_{\alpha\in\dom(f), f(\alpha)\neq i } [\{(\alpha,i)\}]\] is open.

  Let $s\in\heart_{\bd_\kappa}$. Since $\dom(f)$ is unbounded in $\kappa$, we may pick some $\alpha\in\dom(f)\setminus\dom(s)$. Let $t=s\cup\{(\alpha,1-f(\alpha))\}\in\heart_{\bd_\kappa}$. Then, $[t]\cap [f]=\emptyset$, hence $[f]$ is nowhere dense.

Finally, if $\II\supsetneq\bd_\kappa$, there is $f \in \heart_\II \cap \heart_{\ub_\kappa}$; then $[f]$ is meager, but
$[f]$ is not $\II$-meager by Baire Category for the $\II$-topology (see Proposition~\ref{the:BCT}).
\end{proof}

We now show that there cannot be a small basis for the ideal~$\mathcal{M}_\II$ of $\II$-meager sets (provided that $\II$ is not the bounded ideal). The proof
uses a similar strategy as corresponding
proofs in the context of tree forcings on~$\omega$ which have large antichains
(see~\cite{cofbkw}).
As usual, let $\cof(\mathcal{M}_\II)$
denote the smallest size of a basis for the ideal~$\mathcal{M}_\II$.

\begin{proposition}
Let $\II\supsetneq \bd_\kappa$. Then $\cof(\mathcal{M}_\II)>2^\kappa$.
\end{proposition}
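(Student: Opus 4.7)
The plan is to argue by contradiction. Suppose $\{N_\alpha : \alpha < 2^\kappa\}$ is a cofinal family in $\mathcal{M}_\II$, i.e., every $\II$-meager set is contained in some $N_\alpha$. We will construct an $\II$-meager set $M \subseteq 2^\kappa$ not contained in any $N_\alpha$, yielding a contradiction. The key structural ingredient is the large antichain supplied by Observation~\ref{observation:manyopensets}: since $\II \supsetneq \bd_\kappa$, pick an unbounded $A \in \II$, so that the $\II$-cones $\{[f] \mid f \in 2^A\}$ form a family of $2^\kappa$ pairwise disjoint cones. Fix any bijection $\alpha \mapsto f_\alpha$ between $2^\kappa$ and $2^A$.

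For each $\alpha < 2^\kappa$, the $\II$-cone $[f_\alpha]$ is not $\II$-meager by the Baire Category Theorem (Proposition~\ref{the:BCT}), so $[f_\alpha] \not\subseteq N_\alpha$, and we may pick $x_\alpha \in [f_\alpha] \setminus N_\alpha$. Set $M := \{x_\alpha \mid \alpha < 2^\kappa\}$. Then $M \not\subseteq N_\alpha$ for any $\alpha$, witnessed by $x_\alpha \in M \setminus N_\alpha$. The crucial remaining task is to verify that $M$ is actually $\II$-meager; we will prove the stronger property that $M$ is $\II$-nowhere dense.

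To verify that $M$ is $\II$-nowhere dense, fix an arbitrary basic $\II$-cone $[g]$ with $g \in \heart_\II$. The idea is first to extend $g$ to a condition $h$ that fully determines the values on $A$, thereby pinning down the unique candidate point of $M$ that can lie in $[h]$. We pick any $f \in 2^A$ agreeing with $g$ on $A \cap \dom(g)$ and set $h := g \cup (f \restr (A \setminus \dom(g)))$, so that $\dom(h) = \dom(g) \cup A \in \II$ by closure under finite unions, and $h \restr A = f$. Since $f = f_{\alpha_0}$ for a unique $\alpha_0$, every $\alpha \neq \alpha_0$ satisfies $x_\alpha \restr A = f_\alpha \neq f_{\alpha_0} = h \restr A$, whence $x_\alpha \notin [h]$; thus $[h] \cap M \subseteq \{x_{\alpha_0}\}$. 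Since $\II$ is proper, $\kappa \setminus \dom(h) \neq \emptyset$: pick $\gamma$ in it and set $h' := h \cup \{(\gamma, 1 - x_{\alpha_0}(\gamma))\}$ (or $h' := h$ if $x_{\alpha_0} \notin [h]$ already). Then $h' \in \heart_\II$, $[h'] \subseteq [g]$, and $[h'] \cap M = \emptyset$, as required. So $M$ is $\II$-nowhere dense, hence $\II$-meager, contradicting cofinality.

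The main — and really only — subtle step is the one exploiting the bijectivity of $\alpha \mapsto f_\alpha$: once $g$ is extended to determine all values on $A$, exactly one index $\alpha_0$ is matched to the chosen $A$-piece, so at most one point of $M$ needs to be excluded from the extended cone. This is precisely where the large antichain structure of Grigorieff forcing is indispensable; the match in cardinality $|2^A| = 2^\kappa$ with the index set of the purported cofinal family is what enables the bijection-based diagonalization.
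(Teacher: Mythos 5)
Your proof is correct and is essentially the paper's own argument: both diagonalize against a family of $2^\kappa$-many $\II$-meager sets by choosing one point $x_\alpha \in [f_\alpha]\setminus N_\alpha$ in each cone of the partition $\{[f]\mid f\in 2^A\}$ from Observation~\ref{observation:manyopensets}(1) (using Proposition~\ref{the:BCT} to see the cones are not $\II$-meager), and then verify that the resulting set is $\II$-nowhere dense because any $\II$-cone can be extended into a single cone of the partition, where at most one point of the diagonal set remains to be avoided. The only differences are presentational: you phrase it as a contradiction with a purported cofinal family and spell out the final one-point-avoidance extension explicitly, whereas the paper states the diagonalization directly and leaves that last extension implicit.
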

\begin{proof}
Let $\{ f_i \mid i<2^\kappa\}$ with $f_i\in \heart_\II$ be as in Observation~\ref{observation:manyopensets}(1), i.e., $\{ [f_i] \mid i<2^\kappa\}$ is a partition of $2^\kappa$. Fix $\{ X_i \mid i<2^\kappa\}$ with $X_i$
being
$\II$-meager. We will show that there exists an $\II$-nowhere dense (and hence $\II$-meager) set $Y$, which is not contained in any $X_i$. This shows that $\cof(\mathcal{M}_\II)>2^\kappa$.

Since $[f_i]$ is not $\II$-meager by Proposition~\ref{the:BCT}, we can pick $y_i\in [f_i]\setminus X_i$ for every $i<2^\kappa$. Let $Y:=\{ y_i \mid i<2^\kappa\}$. Clearly $Y\nsubseteq X_i$ for every $i<2^\kappa$. It remains to show that $Y$ is $\II$-nowhere dense.

 Let $f\in \heart_\II$. Clearly, there is an $i$ such that $[f]\cap [f_i]\neq \emptyset$ and hence $g:= f\cup f_i$ is in $\heart_\II$ and $[g]\subseteq [f_i]$. Since $[f_i]$ is disjoint from $[f_j]$ for all $j$ with $j\neq i$, there is at most one element in $Y\cap [g]$. Therefore, we can extend $g$ to $h\in \heart_\II$ such that $[h]\cap Y=\emptyset$, as desired.
\end{proof}

We now want to look at the question whether $\II$-meagerness could possibly imply meagerness, which we can answer negatively in many cases.

\begin{proposition}\label{proposition:smallinowheredense}
  If $\II$ is tall,\footnote{\label{footnote:weakly_tall}In fact, we only need that every $\II$-cone contains $2^\kappa$-many disjoint $\II$-cones, which is guaranteed by the following property of~$\II$ (which is actually weaker than tallness):
for each $Z \in \II$, there exists an unbounded set~$Z'$ in~$\II$ with $Z' \subseteq \kappa \setminus Z$.} then every set of size $<2^\kappa$ is $\II$-nowhere dense.
\end{proposition}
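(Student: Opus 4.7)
The plan is to directly unpack the definition of $\II$-nowhere dense and exploit tallness to partition any given $\II$-cone into $2^\kappa$-many disjoint sub-$\II$-cones, so that a set of size $<2^\kappa$ cannot meet all of them.

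More concretely, fix $X\subseteq 2^\kappa$ with $|X|<2^\kappa$, and let $[f]$ be an arbitrary $\II$-cone, so $f\in\heart_\II$ and $\dom(f)\in\II$. Since $\II$ is proper and extends $\bd_\kappa$, the complement $\kappa\setminus\dom(f)$ is unbounded in $\kappa$. By tallness of $\II$, I can choose an unbounded set $B\subseteq\kappa\setminus\dom(f)$ with $B\in\II$ (this is exactly where tallness — or the weaker property indicated in the footnote — is used).

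Now for every function $h\colon B\to 2$, the extension $f\cup h$ lies in $\heart_\II$ (since $\dom(f)\cup B$ is a union of two sets in $\II$, hence in $\II$), and clearly $[f\cup h]\subseteq[f]$. Moreover, for distinct $h,h'\colon B\to 2$, the cones $[f\cup h]$ and $[f\cup h']$ are disjoint. Since $|B|=\kappa$, this produces $2^\kappa$-many pairwise disjoint $\II$-cones inside $[f]$.

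Finally, because these $2^\kappa$-many cones are pairwise disjoint, each element of $X$ lies in at most one of them; hence at most $|X|<2^\kappa$ of these cones meet $X$. Therefore at least one $h\colon B\to 2$ yields an $\II$-cone $[g]:=[f\cup h]\subseteq[f]$ with $[g]\cap X=\emptyset$. This shows $X$ is $\II$-nowhere dense. The argument has no real obstacle — the only subtle point is verifying that tallness indeed supplies the unbounded set $B\in\II$ disjoint from $\dom(f)$, which is immediate from applying tallness to the unbounded set $\kappa\setminus\dom(f)$.
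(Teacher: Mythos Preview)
Your proof is correct and essentially identical to the paper's own argument: both apply tallness to the unbounded set $\kappa\setminus\dom(f)$ to obtain an unbounded $B\in\II$ disjoint from $\dom(f)$, and then use the $2^\kappa$-many pairwise disjoint sub-cones $[f\cup h]$ for $h\colon B\to 2$ together with the cardinality bound $|X|<2^\kappa$ to find one avoiding~$X$.
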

\begin{proof}
Assume $|X|<2^\kappa$, and let $f\in\heart_\II$. Using that $\II$ is tall, let $S\subseteq \kappa\setminus \dom(f)$ with $S\in\II\cap\ub_\kappa$. There are $2^\kappa$-many distinct extensions of $f$ to $\dom(f)\cup S$, and the cones of these are disjoint. Since $|X|<2^\kappa$, there exists $g\supseteq f$ in $\heart_\II$ with $[g]\cap X=\emptyset$. This shows that $X$ is $\II$-nowhere dense, as desired.
\end{proof}

Let $\mathcal{M_\kappa} = \mathcal{M_{\bd_\kappa}}$ denote the ideal of meager sets (in the bounded topology on~$2^\kappa$), and let
$\non(\mathcal{M_\kappa})$ denote its uniformity, i.e., the smallest size of a set which is not meager.
The following is an immediate consequence of the above proposition:

\begin{corollary}
If $\II$ is tall and $\non(\mathcal{M_\kappa})<2^\kappa$, then there is an $\II$-nowhere dense set that is not meager.
\end{corollary}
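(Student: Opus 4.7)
The plan is to combine Proposition~\ref{proposition:smallinowheredense} with the very definition of $\non(\mathcal{M_\kappa})$. Since $\non(\mathcal{M_\kappa}) < 2^\kappa$, I can fix a witness, namely a non-meager set $X \subseteq 2^\kappa$ with $|X| = \non(\mathcal{M_\kappa}) < 2^\kappa$. This set is then a candidate for the desired example.

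I would then verify the two required properties of $X$ in sequence. First, $X$ is not meager by the choice of $X$. Second, since $|X| < 2^\kappa$ and $\II$ is tall, Proposition~\ref{proposition:smallinowheredense} directly gives that $X$ is $\II$-nowhere dense. This finishes the argument.

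There is essentially no obstacle here: the corollary is a straightforward juxtaposition of Proposition~\ref{proposition:smallinowheredense} with the definition of the uniformity of the meager ideal, and the whole proof fits in one or two lines.
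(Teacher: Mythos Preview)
Your proposal is correct and is exactly the argument the paper has in mind: the paper simply states that the corollary ``is an immediate consequence of the above proposition'' (Proposition~\ref{proposition:smallinowheredense}) and gives no further proof. Your write-up just spells out the obvious combination of that proposition with the definition of $\non(\mathcal{M}_\kappa)$.
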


As a side note, let us show the following:

\begin{proposition}
There is a closed\footnote{This implies that the set is $\II$-closed for every ideal $\II$.} set $X$ of size $2^\kappa$ which is $\II$-nowhere dense for every ideal~$\II$. In fact, $X=[T]$ for a perfect subtree $T$ of $2^{<\kappa}$.
\end{proposition}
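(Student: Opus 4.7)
The plan is to construct $T$ explicitly by pairing consecutive ordinals and requiring equality within each pair. For $\xi < \kappa$, write uniquely $\xi = \lambda + n$ with $\lambda$ either $0$ or a limit ordinal and $n < \omega$; say that $\xi$ is \emph{odd} if $n$ is odd. I will take
$$T = \{\, s \in 2^{<\kappa} \mid s(\xi) = s(\xi-1) \text{ for every odd } \xi \in \dom(s) \,\}.$$
Verifying the basic properties is routine: since each constraint at position $\xi$ involves only positions $\leq \xi$, $T$ is a tree; at any $s \in T$ the bit at the next even position extending $s$ is free, yielding two incompatible extensions in $T$, so $T$ is perfect; a branch of $T$ is determined by (and any choice determines) its values at the even ordinals, giving $|[T]| = 2^\kappa$; and $[T]$ is closed in the bounded topology (hence in every $\II$-topology), because any $x \notin [T]$ admits an odd $\xi$ with $x(\xi) \neq x(\xi-1)$, and then the bounded cone $[x \restr (\xi+1)]$ is disjoint from $[T]$.

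The nontrivial step is showing that $[T]$ is $\II$-nowhere dense for every ideal $\II$ as in the setting of the paper. Given $f \in \heart_\II$, I seek $g \supseteq f$ in $\heart_\II$ with $[g] \cap [T] = \emptyset$. If there is already an odd $\xi \in \dom(f)$ with $\xi - 1 \in \dom(f)$ and $f(\xi) \neq f(\xi - 1)$, then $[f] \cap [T] = \emptyset$ and I take $g = f$. Otherwise, since $\II$ is proper and $\dom(f) \in \II$, we have $\dom(f) \neq \kappa$, so I may fix some $\beta \notin \dom(f)$ and let $\xi$ be the odd element of the pair $\{\xi-1,\xi\}$ containing $\beta$. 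At least one of $\xi, \xi-1$ is outside $\dom(f)$, so I can extend $f$ by at most two additional values to obtain a $g \in \heart_\II$ with $g(\xi) \neq g(\xi-1)$ (set the value at the free position of the pair to disagree with the other; if both positions are free, set them to $0$ and $1$). The new domain lies in $\II$ since $\bd_\kappa \subseteq \II$. Every $x \in [g]$ then satisfies $x(\xi) \neq x(\xi-1)$, so $x \notin [T]$.

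The conceptual obstacle that this pairing idea overcomes is that one cannot hope to make $[T]$ be $\II$-nowhere dense for \emph{all} $\II$ by imposing a ``constant value'' constraint on a fixed set $S \subseteq \kappa$, because some ideal $\II$ (for instance, a maximal ideal extending $\bd_\kappa$ that contains $S$) would then leave no room to extend $f$ outside $\dom(f)$ at positions inside $S$. The pair-based definition circumvents this by using nothing beyond the fact that $\II$ is a proper ideal, which already guarantees that some position in every pair is available for extension.
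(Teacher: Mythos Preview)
Your proof is correct and is essentially the same as the paper's: the paper takes $X = \{ x \in 2^{\kappa} \mid x(2i)=x(2i+1) \text{ for each } i < \kappa \}$, which is exactly the set of branches through your tree $T$ (your pairs $\{\xi-1,\xi\}$ for odd $\xi$ are precisely the pairs $\{2i,2i+1\}$ in ordinal arithmetic). The paper's nowhere-dense argument is the same one-pair extension you give, just slightly more terse.
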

\begin{proof}
Let $X := \{ x \in 2^{\kappa} \mid x(2i)=x(2i+1) \textrm{ for each } i < \kappa \}$.

Let $\II$ be an ideal. To show that $X$ is $\II$-nowhere dense, let $f \in \heart_\II$.
Since $\kappa\notin \II$, $\dom(f) \neq \kappa$, so there exists an $i < \kappa$ such that either $2i \notin \dom(f)$ or $2i+1 \notin \dom(f)$.
Since $\bd_\kappa\subseteq \II$, we can extend $f$ to a function $g \in \heart_\II$ with $g \supseteq f$ such that $g(2i) \neq g(2i+1)$. Consequently, $[g] \subseteq [f]$ and $[g] \cap X = \emptyset$, as desired.
\end{proof}

The following yields another situation in which we obtain $\II$-meager sets that are not meager, and should also be contrasted with Corollary \ref{corollary:imeagerbairemeager}.

\begin{theorem} \label{theorem:indfromreaping}
 If $\kappa$ is\footnote{\label{footnote:GCH}What this proof actually needs is $R^*(\kappa)=2^\kappa$ and $2^{(2^{<\kappa})}=2^\kappa$.
So the assumptions of the theorem on $\kappa$ can also be replaced by GCH at $\kappa$ and $2^{<\kappa}=\kappa$ (which does not imply the assumptions of the theorem).
} simple, $\reap = 2^\kappa$, and
$\II$ is tall,\footnote{We need tallness only in order to be able to apply Proposition~\ref{proposition:smallinowheredense}. Therefore,
the weaker property from Footnote~\ref{footnote:weakly_tall} is in fact sufficient.}
then there exists an $\II$-nowhere dense set $X$ of size $2^\kappa$ which does not have the Baire property. In particular, $X$ is not meager.
\end{theorem}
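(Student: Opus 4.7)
Since $\kappa$ is simple, Observation~\ref{obs:simple_implies} gives $2^{<\kappa}=\kappa$, and Theorem~\ref{theorem:comeagerreaping} upgrades $\mathfrak{r}(\kappa)=2^\kappa$ to $R^*(\kappa)=2^\kappa$. The plan is to build $X$ by a Bernstein-style recursion of length $2^\kappa$, putting chosen points $x_\alpha$ into $X$ while keeping other chosen points $y_\alpha$ out, and at the same time reserving, for each $f\in\heart_\II$, an $\II$-extension $g\supseteq f$ with $[g]\cap X=\emptyset$ (to witness that $X$ is $\II$-nowhere dense). Enumerate $\{f_\alpha\mid\alpha<2^\kappa\}=\heart_\II$ and $\{(s_\alpha,D_\alpha)\mid\alpha<2^\kappa\}$, the collection of all pairs with $s_\alpha\in 2^{<\kappa}$ and $D_\alpha$ a dense $G_\kappa$ subset of $[s_\alpha]$; the latter enumeration uses $2^{<\kappa}=\kappa$ to bound the number of basic open sets, and hence of $G_\kappa$-sets, by $2^\kappa$.

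At stage $\alpha<2^\kappa$, with $X_\alpha=\{x_\beta\mid\beta<\alpha\}$, $Y_\alpha=\{y_\beta\mid\beta<\alpha\}$, and $G_\alpha=\bigcup_{\beta<\alpha}[g_\beta]$ already chosen, I first use tallness of $\II$ (as in the proof of Proposition~\ref{proposition:smallinowheredense}) to pick $g_\alpha\in\heart_\II$ with $g_\alpha\supseteq f_\alpha$, $\dom(g_\alpha)\in\II\cap\ub_\kappa$, and $[g_\alpha]\cap(X_\alpha\cup Y_\alpha)=\emptyset$; in particular $g_\alpha\in\heart_{\ub_\kappa}$. Setting $G_{\alpha+1}=G_\alpha\cup[g_\alpha]$, I then pick distinct $x_\alpha,y_\alpha$ in $D_\alpha\setminus(G_{\alpha+1}\cup X_\alpha\cup Y_\alpha)$, and finally let $X=\{x_\alpha\mid\alpha<2^\kappa\}$. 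The step that needs justification is the existence of such $x_\alpha,y_\alpha$, for which I claim $|D_\alpha\setminus G_{\alpha+1}|=2^\kappa$. Otherwise, adjoining to the at most $|\alpha|+1$ cones $[g_\beta]$ (each in $\heart_{\ub_\kappa}$) the singleton cones $[z]$ for $z\in D_\alpha\setminus G_{\alpha+1}$ (each $z$ being a function on $\kappa\in\ub_\kappa$), one covers $D_\alpha$ by fewer than $2^\kappa$ many $\heart_{\ub_\kappa}$-cones. Pulling this cover back through the bounded-topology homeomorphism $[s_\alpha]\cong 2^\kappa$ from Observation~\ref{observation:homeomorphiccones}, which preserves both comeagerness and membership in $\heart_{\ub_\kappa}$, yields a comeager subset of $2^\kappa$ covered by fewer than $2^\kappa$ many $\heart_{\ub_\kappa}$-cones, contradicting $R^*(\kappa)=2^\kappa$. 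This is the main obstacle of the argument.

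Finally, $|X|=2^\kappa$ since the $x_\alpha$ are pairwise distinct, and $X$ is $\II$-nowhere dense because for each $f_\alpha$ the extension $g_\alpha$ satisfies $[g_\alpha]\cap X=\emptyset$ (the $x_\beta$ with $\beta<\alpha$ are excluded by the first step at stage $\alpha$, and the $x_\beta$ with $\beta\geq\alpha$ by the second step at stage $\beta$, since $x_\beta\notin G_{\beta+1}\supseteq[g_\alpha]$). To see $X$ lacks the Baire property, suppose $X\Delta U$ is meager for some open $U$: if $U$ contains a basic cone $[s]$, then $X\cap[s]$ is comeager in $[s]$ and thus contains a dense $G_\kappa$ of $[s]$, which appears as some $D_\alpha$, giving the contradiction $y_\alpha\in D_\alpha\subseteq X$; if $U=\emptyset$, then $X$ itself is meager, so $2^\kappa\setminus X$ contains a dense $G_\kappa$ of $2^\kappa$, which is $D_\alpha$ for some $\alpha$ with $s_\alpha=\emptyset$, yielding $x_\alpha\in D_\alpha\subseteq 2^\kappa\setminus X$, a contradiction. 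In particular, $X$ is not meager.
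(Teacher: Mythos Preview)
Your proof is correct and follows essentially the same Bernstein-style recursion as the paper: build $X$ and a disjoint ``witness set'' $Y$ in $2^\kappa$ many steps, using $R^*(\kappa)=2^\kappa$ at each stage to find points in a given comeager set that avoid fewer than $2^\kappa$ many $\heart_{\ub_\kappa}$-cones. The only organizational difference is in how $\II$-nowhere density is secured. The paper merely requires $x_i\notin[f_l]$ for $l<i$ (so that $|X\cap[f_l]|<2^\kappa$ for every $f_l\in\heart_{\II\cap\ub_\kappa}$) and then invokes Proposition~\ref{proposition:smallinowheredense} once at the end; you instead reserve an explicit sub-cone $[g_\alpha]\subseteq[f_\alpha]$ disjoint from $X$ during the construction, front-loading the use of tallness. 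Both amount to the same idea, and the verification that $X$ lacks the Baire property is the same in both proofs.
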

\begin{proof}
	Let $\lambda=|2^\kappa|$. We construct $X$ in $\lambda$-many steps.
Fix an enumeration $\langle D_i\with i<\lambda\rangle$ of all $\kappa$-intersections of open dense sets (this is possible since  $2^{<\kappa}=\kappa$ by Observation~\ref{obs:simple_implies}), an enumeration $\langle[s_i]\with i<\kappa\rangle$ of the basic open sets (of the bounded topology on $2^\kappa$), and an enumeration $\langle f_i\with i<\lambda\rangle$ of $\heart_{\II\cap\ub_\kappa}$.
Let $\varphi\colon\lambda\rightarrow\lambda\times\kappa$ be a bijection.

\begin{enumerate}
\item Let $X_0=\emptyset$, $Y_0=\emptyset$.
\item Let $\varphi(i)=:(j,k)$. Let $X_{i+1}=X_i\cup \{x_{i}\}$, $Y_{i+1}=Y_i\cup \{y_i\}$ with
\begin{itemize}
  \item $x_i\in D_j\setminus Y_i$, $x_i\notin[f_l]$ for $l<i$, and
  \item $y_i\in [s_k]\cap D_j \setminus X_{i+1}$.
\end{itemize}
Such $x_i$ exists since $D_j$ is comeager, $Y_i$ has size less than $\lambda$, and $\reap=\lambda=R^*(\kappa)$ by assumption and by Theorem \ref{theorem:comeagerreaping}, hence\footnote{Note that $\bigcup\{[y] \mid y\in Y_i\}=\bigcup\{\{y\} \mid y\in Y_i\}=Y_i$.} $\{[f_l]\mid l<i\}\cup \{[y] \mid y\in Y_i\}$ cannot cover
$D_j$.
Such $y_i$ exists, because $D_j$ is comeager, thus $[s_k]\cap D_j$ is comeager in $[s_k]$, and therefore\footnote{It is well-known that comeager sets have full size, see
 Corollary~\ref{corollary:mycielski}.}
 $|[s_k]\cap D_j|=\lambda$ while $|X_{i+1}|<\lambda$.
\item For limit ordinals $i\le\lambda$, let $X_i=\bigcup\limits_{j<i} X_j$ and $Y_i=\bigcup\limits_{j<i} Y_j$.
\end{enumerate}
Let $X=X_\lambda$ and $Y=Y_\lambda$.

$X$ is $\II$-nowhere dense: Let $f\in\heart_\II$.
Since $\II$ contains unbounded sets, we can pick $i < \lambda$ with $f \subseteq f_i$.
By construction, $|X\cap[f_i]|<2^\kappa$,
but every set of size $<2^\kappa$ is $\II$-nowhere dense by Proposition~\ref{proposition:smallinowheredense}.
So there exists $g \in \heart_\II$ with $[g] \subseteq [f_i] \subseteq [f]$ such that $X\cap [g]=\emptyset$, as desired.

$X$ does not have the Baire property:
Let $U$ be open.
If $U=\emptyset$,  $X\Delta U = X$, and $X$ is not meager, because
$X\cap D \neq \emptyset$
for every comeager set $D$.
If $U\neq \emptyset$, $X\Delta U\supseteq U\setminus X \supseteq Y \cap [s_k]$ for some $k<\kappa$. But $Y\cap [s_k]\cap D\neq \emptyset$ for every comeager set $D$. Thus, $X\Delta U$ is not meager.
\end{proof}

\begin{question}\label{question:invariants}
  Is there always an $\II$-meager set that is not meager, at least if $\kappa$ is simple? By our above results, this clearly relates to the question whether it is consistent that $\kappa$ is simple, and
  $\reap<\non(\mathcal{M}_\kappa)=2^\kappa$,
  which also appears to be open.
\end{question}

Certain consequences of $\reap<\non(\mathcal{M}_\kappa)=2^\kappa$ have appeared as open questions in the literature. For instance, it holds that $\mathfrak{b}_\kappa\leq\reap$ for all regular $\kappa$
(see \cite[Lemma~7 and the remark afterwards]{rs}). Brendle, Brooke-Taylor, Friedman and Montoya asked (\cite[Question 20 and Question 24]{bbm}) whether it is consistent that
$\mathfrak{b}_\kappa<\non(\mathcal{M}_\kappa)$ holds for some uncountable successor cardinal $\kappa$ with $2^{<\kappa}=\kappa$ or for some strongly inaccessible cardinal $\kappa$.

\section{The Baire property in the nonstationary topology}\label{section:baire}

In this section, we provide two results on the connections between the Baire property in the bounded and in the nonstationary topology.

\begin{proposition}\label{proposition:baire}
Assume that $\II=\NS_\kappa$.
There is a subset of $2^\kappa$ with the Baire property, but not the $\II$-Baire property.
\end{proposition}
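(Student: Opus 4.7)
The plan is to take $X := \cluf_\kappa \cap [f]$, where $f := \one \restr A$ for $A$ an unbounded nonstationary subset of $\kappa$ (for example, the set of successor ordinals below~$\kappa$, which misses the club of limit ordinals). Since $A \in \NS_\kappa$ and $A$ is unbounded, we have $f \in \heart_\II \cap \heart_{\ub_\kappa}$, so $[f]$ is simultaneously an $\II$-cone and, by Proposition~\ref{meagerbutnotImeager}, a closed nowhere dense subset of $2^\kappa$ in the bounded topology. In particular, $X \subseteq [f]$ is meager in the bounded topology and therefore has the Baire property in the bounded topology.

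To show that $X$ fails the $\II$-Baire property, I will first extract from the proof of Theorem~\ref{theorem:cfnotbaire} the following relativized strengthening: for every $\II$-cone $[h]$, both $\cluf_\kappa \cap [h]$ and $[h] \setminus \cluf_\kappa$ are non-$\II$-meager. The second statement is exactly what the ``$\II$-comeager in $[h]$'' case of that proof establishes. The first statement is obtained by running the ``$\II$-meager'' case of the same proof with starting condition $f_0 := h$ (in place of the arbitrary $f_0$ used there): the construction then produces an element of $[h] \cap \cluf_\kappa$ that lies in any prescribed $\kappa$-sequence of $\II$-open dense sets, contradicting $\cluf_\kappa \cap [h]$ being $\II$-meager. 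This is the only step requiring any real work beyond what is already written in the paper.

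Assuming toward a contradiction that $X = U \Delta M$ for some $\II$-open $U$ and $\II$-meager $M$, I will argue by cases on $U$. If $U = \emptyset$, then $X = M$ is $\II$-meager, contradicting that $\cluf_\kappa \cap [f]$ is non-$\II$-meager. Otherwise $U$ contains some $\II$-cone $[g]$. If $[g] \cap [f] = \emptyset$, then $X \cap [g] = \emptyset$, so $[g] \subseteq U \setminus X \subseteq X \Delta U = M$, contradicting that non-empty $\II$-cones are non-$\II$-meager by Proposition~\ref{the:BCT}. If instead $f$ and $g$ are compatible, set $h := f \cup g \in \heart_\II$; then $[h] \subseteq U \cap [f]$, and a direct computation gives $(X \Delta U) \cap [h] = [h] \setminus \cluf_\kappa$, which would force $[h] \setminus \cluf_\kappa$ to be $\II$-meager, again contradicting the strengthening. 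The main subtlety is the extraction of the strengthening of Theorem~\ref{theorem:cfnotbaire}; the remaining case analysis is routine manipulation of symmetric differences.
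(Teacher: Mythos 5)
Your proof is correct, but it takes a genuinely different route from the paper's. The paper also works with a cone $[f]$ for $f\in\heart_{\ub_\kappa}\cap\heart_{\II}$ and uses the same first half (every subset of the nowhere dense set $[f]$ is meager, hence has the Baire property), but it finishes abstractly: by Corollary~\ref{club filter is no Edinburgh Borel}(3), since $\NS_\kappa$ is not stationarily tall, \emph{some} subset of $2^\kappa$ lacks the $\II$-Baire property, and Lemma~\ref{lem:every_cone_homeomorphic} (the cone $[f]$ is $\II$-homeomorphic to $2^\kappa$, via the stationarity-preserving bijection) transfers such a set into $[f]$. You instead exhibit the explicit witness $\cluf_\kappa\cap[f]$ and verify its failure of the $\II$-Baire property by hand, through a relativization of Theorem~\ref{theorem:cfnotbaire} to cones together with a symmetric-difference case analysis. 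Your relativization is legitimate: in the $\II$-meager case the paper's construction already starts from an \emph{arbitrary} condition $f_0$ (whose domain is nonstationary, so the club $C_0$ can be chosen disjoint from it), so setting $f_0:=h$ produces an element of $[h]\cap\cluf_\kappa$ inside any prescribed $\kappa$-intersection of $\II$-open dense sets, which is exactly your first statement; and the $\II$-comeager case is already carried out for an arbitrary cone $[h]$, giving your second statement. The trade-off between the two arguments: the paper's is shorter given its earlier lemmas and uses Theorem~\ref{theorem:cfnotbaire} only as a black box, but it leans on the cone-homeomorphism lemma, which is a nontrivial fact specific to $\NS_\kappa$; yours avoids that lemma entirely and yields a concrete non-$\II$-Baire set, at the price of reopening the proof of Theorem~\ref{theorem:cfnotbaire} rather than citing its statement. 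Note also that your case analysis (with the non-meagerness of cones from Proposition~\ref{the:BCT}) amounts to verifying the paper's remark that any set with the $\II$-Baire property is either $\II$-meager or $\II$-comeager in some $\II$-cone, and ruling out both alternatives for $\cluf_\kappa\cap[f]$; organizing it that way would slightly streamline your final step.
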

\begin{proof}
Take any\footnote{In fact, this proof works for every ideal~$\II$ with the following two properties:
first, there is an $f\in \heart_{\ub_\kappa} \cap \heart_{\II}$ such that
$[f]$ is $\II$-homeomorphic to $2^\kappa$, and second,
the assumption of Corollary~\ref{club filter is no Edinburgh Borel}(3) is satisfied, i.e.,
$\II$ is not stationarily tall.}
 $f\in \heart_{\ub_\kappa} \cap \heart_{\II}$.
Since $[f]$ is nowhere dense (see Proposition~\ref{meagerbutnotImeager}), every subset of $[f]$ is meager and thus has the Baire property.

Since $\NS_\kappa$ does not contain any stationary set, it is not stationarily tall, hence by Corollary~\ref{club filter is no Edinburgh Borel}(3), $2^\kappa$
has a subset without the $\II$-Baire property.
Since $2^\kappa$ and $[f]$ are $\II$-homeomorphic by
Lemma~\ref{lem:every_cone_homeomorphic},
there exists a subset of $[f]$ without the $\II$-Baire property (which has the Baire property by the above).
\end{proof}

Note that on the other hand, by Theorem \ref{theorem:indfromreaping}, there is an Edinburgh meager set without the Baire property, assuming that $\kappa$ is simple and $\reap=2^\kappa$ (or $2^{<\kappa}=\kappa$ and $2^\kappa=\kappa^+$, see Footnote~\ref{footnote:GCH}).

\medskip

We will need the following, which may also be of independent interest:

\begin{observation}\label{observation:simpleissimple}
Let $\kappa$ be simple, let $\PP$ denote $\kappa$-Silver forcing, and let $\II=\NS_\kappa$. Then, $X\subseteq 2^\kappa$ satisfies the $\II$-Baire property if and only if for every $f\in\PP$, there is $g\le f$ in $\PP$ such that either $[g]\subseteq X$ or $[g]\cap X=\emptyset$.\footnote{It suffices to assume that each $\II$-meager set is $\II$-nowhere dense (which is in particular the case if $\kappa$ is simple and $\II = \NS_\kappa$).}
\end{observation}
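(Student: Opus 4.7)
The plan is to reduce the observation to two facts already available in the paper: first, by Theorem~\ref{theorem:cones2}, under simplicity every $\II$-meager set is $\II$-nowhere dense (this is the only place simplicity enters); second, by the discussion in Section~\ref{section:connection}, $\PP$ is dense in Grigorieff forcing with $\NS_\kappa$, so every basic $\II$-open set $[h]$ with $h \in \heart_\II$ contains a $[g]$ for some $g \in \PP$. These two ingredients let us freely pass between ``thinning inside $\PP$'' and ``thinning inside $\heart_\II$''.

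For the direction from right to left, I would let $U$ denote the $\II$-interior of $X$, so $U$ is $\II$-open and $U \subseteq X$; hence $X \Delta U = X \setminus U$. To verify the $\II$-Baire property, it suffices to show that $X \setminus U$ is $\II$-nowhere dense. Given any basic $\II$-open set $[h]$ with $h \in \heart_\II$, pick $f \le h$ in $\PP$ by density; by the right-hand side, pick $g \le f$ in $\PP$ with $[g] \subseteq X$ or $[g] \cap X = \emptyset$. In the first case $[g] \subseteq U$ by definition of the $\II$-interior, so $[g] \cap (X \setminus U) = \emptyset$; in the second case this is immediate. Thus $X \setminus U$ is $\II$-nowhere dense, as required.

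For the direction from left to right, assume $X$ has the $\II$-Baire property, and fix an $\II$-open $U$ with $M := X \Delta U$ being $\II$-meager. By Theorem~\ref{theorem:cones2}, $M$ is $\II$-nowhere dense. Given $f \in \PP$, I would split into two cases according to whether $U \cap [f]$ is empty or not. If $U \cap [f] = \emptyset$, use $\II$-nowhere density of $M$ to find $h' \le f$ in $\heart_\II$ with $[h'] \cap M = \emptyset$; then on $[h']$ the sets $X$ and $U$ coincide, so $[h'] \cap X = \emptyset$, and a final thinning to $g \le h'$ in $\PP$ (by density of $\PP$ in Grigorieff forcing) gives $[g] \cap X = \emptyset$. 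If $U \cap [f] \neq \emptyset$, pick a basic $\II$-open $[h] \subseteq U \cap [f]$ with $h \in \heart_\II$, and again by $\II$-nowhere density of $M$ find $h' \le h$ in $\heart_\II$ with $[h'] \cap M = \emptyset$; then $[h'] \subseteq U$ combined with $[h'] \cap M = \emptyset$ gives $[h'] \subseteq X$, and a final thinning to $g \le h'$ in $\PP$ gives $[g] \subseteq X$ with $g \le f$.

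The argument is essentially bookkeeping once one has Theorem~\ref{theorem:cones2} in hand; the only non-routine point is remembering to bounce back and forth between $\PP$-conditions and general $\heart_\II$-conditions at the right moments, and it is this interplay (rather than any delicate combinatorics) that I expect to be the most error-prone step in writing things out carefully.
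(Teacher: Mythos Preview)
Your proof is correct and is precisely the straightforward verification the paper alludes to: the paper's own proof merely says it is ``straightforward to check, using that under our assumptions, every $\II$-meager set is $\II$-nowhere dense (see Theorem~\ref{theorem:cones2}),'' and optionally cites \cite[Lemma 3.8]{fkk} together with Theorem~\ref{theorem:axioma}. You have spelled out exactly those details, including the passage between $\PP$-conditions and $\heart_\II$-conditions via the density of $\kappa$-Silver forcing in Grigorieff forcing noted in Section~\ref{section:connection}.
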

\begin{proof} This is straightforward to check, using that under our assumptions, every $\II$-meager set is $\II$-nowhere dense (see Theorem~\ref{theorem:cones2}).

Alternatively, it follows from our observation in Section~\ref{section:connection} that $\kappa$-Silver forcing is topological and generates the Edinburgh topology on $2^\kappa$, together with  the combination of \cite[Lemma 3.8, 1 and 2]{fkk} and of Theorem \ref{theorem:axioma}.
\end{proof}

The following is shown for inaccessible $\kappa$ in \cite[Lemma 4.9, 6]{fkk},
yet, making use of Theorem \ref{theorem:cones}, it can also be shown to hold under the assumption of $\diamondsuit_\kappa$. It also shows that the statement of \cite[Corollary 3.14]{fkk} for $\kappa$-Silver forcing can be generalized to include the case that $\diamondsuit_\kappa$ holds: indeed, note that for regular and uncountable cardinals $\kappa$, adding $\kappa^+$-many Cohen subsets of $\kappa$ forces that every $\boldsymbol{\Delta}^1_1$ subset of $2^\kappa$ has the Baire property (see for example \cite[Theorem~3.13(1)]{fkk}),
 and also forces $\diamondsuit_\kappa$ by an easy folklore standard argument.

\begin{theorem}\label{Delta11Baire}
  If $\kappa$ is simple and  every $\boldsymbol{\Delta}^1_1$ subset of $2^\kappa$ has the Baire property, then every $\boldsymbol{\Delta}^1_1$ subset of $2^\kappa$ has the $\II$-Baire property for $\II=\NS_\kappa$.\footnote{In the language of \cite{fkk}, this means that whenever $\kappa$ is simple, then $\boldsymbol{\Delta}^1_1(\mathbb C_\kappa)\to\boldsymbol{\Delta}^1_1(\mathbb V_\kappa)$, where $\mathbb C_\kappa$ denotes $\kappa$-Cohen forcing, and $\mathbb V_\kappa$ denotes $\kappa$-Silver forcing.}
\end{theorem}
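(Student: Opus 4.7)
The plan is to reduce via Observation~\ref{observation:simpleissimple}, which, under the present assumptions on $\kappa$ and $\II$, characterises the $\II$-Baire property of a set $X\subseteq 2^\kappa$ by the following: for every $f\in\PP$ (where $\PP$ denotes $\kappa$-Silver forcing) there exists $g\le f$ in $\PP$ with either $[g]\subseteq X$ or $[g]\cap X=\emptyset$. Hence, given a $\boldsymbol{\Delta}^1_1$ set $X$ and $f\in\PP$, my task is to produce such a deciding Silver condition $g\le f$.

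Given $f\in\PP$, I would apply Lemma~\ref{lem:every_cone_homeomorphic} to obtain a map $\rho\colon[f]\to 2^\kappa$ which is simultaneously a homeomorphism for the bounded and for the Edinburgh topology. Since $[f]$ is closed in the bounded topology, $X\cap[f]$ is $\boldsymbol{\Delta}^1_1$ in $[f]$, and therefore $X':=\rho[X\cap[f]]$ is $\boldsymbol{\Delta}^1_1$ in $2^\kappa$, as bounded-topology homeomorphisms preserve the $\boldsymbol{\Delta}^1_1$ pointclass. By the hypothesis of the theorem, $X'$ has the Baire property with respect to the bounded topology. I now distinguish two cases. If $X'$ is meager, then $2^\kappa\setminus X'$ is comeager, so by Theorem~\ref{theorem:cones} it contains an Edinburgh cone $[h]$. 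Otherwise, by the Baire property there is a bounded basic open set $[s]$ such that $[s]\setminus X'$ is meager in $2^\kappa$; this places $X'\cap[s]$ over a $\kappa$-intersection of bounded open dense subsets of $2^\kappa$, and Theorem~\ref{theorem:cones} then yields an Edinburgh cone $[h]$ with $[h]\subseteq [s]$ and $[h]\subseteq X'$. In either case, $\rho^{-1}[[h]]$ is a non-empty Edinburgh-open subset of $[f]$ which is either disjoint from $X$ or contained in $X$.

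Finally, I would refine $\rho^{-1}[[h]]$ to a Silver condition extending $f$. Since the induced Edinburgh topology on $[f]$ has basic open sets of the form $[h_0]$ for $h_0\in\heart_\II$ with $h_0\supseteq f$ (using that $\II$ is closed under finite unions to absorb $f$), I pick such an $h_0$ with $[h_0]\subseteq\rho^{-1}[[h]]$. Since $\dom(h_0)$ is nonstationary, I fix a club $C\subseteq\kappa\setminus\dom(h_0)$ and extend $h_0$ arbitrarily (say by zeros) to a function $g$ with domain $\kappa\setminus C$; then $g\in\PP$, $g\le f$, and $[g]\subseteq[h_0]\subseteq\rho^{-1}[[h]]$, so $[g]$ decides $X$ as required. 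The only non-routine point is the invocation of Theorem~\ref{theorem:cones} (whose proof uses $\diamondsuit_\kappa$ in the non-inaccessible case) and of Observation~\ref{observation:simpleissimple} (which in turn rests on Theorem~\ref{theorem:axioma}); once these are in hand, the argument is essentially a mechanical transfer between the two topologies along the homeomorphism $\rho$, so I do not expect a serious technical obstacle beyond verifying that $\rho$ preserves $\boldsymbol{\Delta}^1_1$.
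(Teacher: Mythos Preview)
Your proposal is correct and follows essentially the same route as the paper's own proof: reduce via Observation~\ref{observation:simpleissimple}, transfer along the simultaneous homeomorphism of Lemma~\ref{lem:every_cone_homeomorphic}, apply the Baire-property hypothesis, and invoke Theorem~\ref{theorem:cones} to produce a deciding Edinburgh cone. The only cosmetic differences are that the paper uses the inverse map $\varphi=\rho^{-1}\colon 2^\kappa\to[f]$ and justifies $X'\in\boldsymbol{\Delta}^1_1$ simply via closure of $\boldsymbol{\Delta}^1_1$ under continuous preimages (which is marginally cleaner than arguing through ``$\boldsymbol{\Delta}^1_1$ in $[f]$''), and that it leaves the final refinement to a Silver condition implicit, whereas you spell it out.
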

\begin{proof}
  Let $\PP$ denote $\kappa$-Silver forcing, and let $\Gamma$ denote the collection of $\boldsymbol{\Delta}^1_1$ subsets of $2^\kappa$. All we will actually need in the argument, as in \cite[Lemma 4.9]{fkk}, is that $\Gamma$ is closed under continuous preimages.

Let $X\in\Gamma$, and let $f\in\PP$.
By Observation \ref{observation:simpleissimple} it is enough to show that there exists a non-empty $\II$-open subset of~$[f]$ which is either contained in $X$ or disjoint from~$X$.

By
Lemma~\ref{lem:every_cone_homeomorphic},
there exists $\varphi\colon 2^\kappa\rightarrow [f]$ which is an homeomorphism with respect to the bounded topology and with respect to the
nonstationary topology
(and the respective induced topologies on $[f]$).
Let $X'=\varphi^{-1}[X]$, which is again in $\Gamma$ because $\varphi$ is continuous, and hence has the Baire property by our assumption. This means that either $X'$ is meager, or it is comeager in some basic open set $[s]$ of the bounded topology on $2^\kappa$. If $X'$ is meager, then $2^\kappa\setminus X'$ is comeager, so Theorem~\ref{theorem:cones} yields an $\II$-cone $[g]$ that is disjoint from~$X'$.
If it is comeager in $[s]$, then there is a comeager set $D$ such that $D\cap [s]\subseteq X'$, so Theorem \ref{theorem:cones}
yields an $\II$-cone $[g] \subseteq D \cap[s] \subseteq X'$.

 But then, since $\varphi$ is an $\II$-homeomorphism, $\varphi[[g]]\subseteq[f]$ is an $\II$-open set that is either disjoint from or contained in $X$.
\end{proof}

\bibliography{edinburgh}
\bibliographystyle{alpha}

\end{document}